\documentclass[12pt]{amsart}
\usepackage[top=1.08in, left=1.28in, bottom=1.08in, right=1.28in]{geometry}
\usepackage{amsmath,amsfonts,amsbsy,amsgen,amscd,mathrsfs,amssymb,amsthm,mathtools,bbm,enumerate}
\usepackage[colorlinks=true,citecolor=teal!80!black,linkcolor=blue]{hyperref}
\usepackage[foot]{amsaddr}

\usepackage{hhline}
\usepackage{fouriernc}

\usepackage{pgfplots}
\usepackage{tikz-cd}
\usetikzlibrary{arrows,automata}


\numberwithin{equation}{section}
\newtheorem{theorem}{Theorem}[section]

\newtheorem{lemma}[theorem]{Lemma}
\newtheorem{proposition}[theorem]{Proposition}
\theoremstyle{definition}
\newtheorem{assumption}[theorem]{Assumption}
\newtheorem{definition}[theorem]{Definition}
\newtheorem{example}[theorem]{Example}
\newtheorem{notation}[theorem]{Notation}
\newtheorem{problem}[theorem]{Problem}

\newtheorem{remark}[theorem]{Remark}


\makeatletter\renewenvironment{proof}[1][\proofname] {\par\pushQED{\qed}\normalfont\topsep6\p@\@plus6\p@\relax\trivlist\item[\hskip\labelsep\bfseries#1\@addpunct{.}]\ignorespaces}{\popQED\endtrivlist}

\newcommand\al{\alpha}
\newcommand\be{\beta}
\newcommand\dd{\mathrm d}
\newcommand\De{\Delta}
\newcommand\de{\delta}
\newcommand\deq{\stackrel{\mathrm{distr.}}{=}}
\newcommand\eps{\varepsilon}

\newcommand\Ga{\Gamma}
\newcommand\ka{\kappa}
\newcommand\La{\Lambda}
\newcommand\la{\lambda}
\newcommand{\om}{\omega}
\newcommand\Om{\Omega}

\newcommand\si{\sigma}

\newcommand\ze{\zeta}
\newcommand\fo{\overrightarrow}

\renewcommand\bar{\overline}
\renewcommand\d{~\mathrm d}
\renewcommand\phi{\varphi}
\renewcommand\rho{\varrho}
\renewcommand\th{\vartheta}
\renewcommand\hat{\widehat}

\newcommand\bs{\boldsymbol}
\newcommand\mbb{\mathbb}
\newcommand\mbf{\mathbf}
\newcommand\mc{\mathcal}
\newcommand\mf{\mathfrak}
\newcommand\mr{\mathrm}

\newcommand\msf{\mathsf}

\begin{document}

\title[Trace Moments for
Schr\"odinger + Matrix Noise
and the Rigidity of MSAO]{Trace Moments for Schr\"odinger Operators with Matrix White Noise and the Rigidity of the Multivariate
Stochastic Airy Operator}
\author{Pierre Yves Gaudreau Lamarre}
\address{Department of Mathematics,
Syracuse University,
Syracuse, NY 13244}
\email{pgaudrea@syr.edu}
\maketitle

\begin{abstract}
We study the semigroups of random Schr\"odinger operators of the form $\hat Hf=-\frac12f''+(V+\xi)f$,
where $f:I\to\mbb F^r$ ($\mbb F=\mbb R,\mbb C,\mbb H$) are vector-valued functions on a possibly infinite interval $I\subset\mbb R$
that satisfy a mix of Robin and Dirichlet boundary conditions,
$V$ is a deterministic diagonal potential with power-law growth at infinity,
and $\xi$ is a matrix white noise.
Our main result consists of Feynman-Kac formulas for trace moments of
the form
$\mbf E\Big[\prod_{k=1}^n\mr{Tr}\big[\mr e^{-t_k\hat H}\big]\Big]$
($n\in\mbb N$, $t_k>0$).

One notable example covered by our main result consists of the multivariate stochastic
Airy operator (SAO) of Bloemendal and Vir\'ag \cite{BloemendalVirag2},
which characterizes the soft-edge eigenvalue
fluctuations of critical rank-$r$ spiked Wishart and GO/U/SE random matrices.
As a corollary of our main result, we prove that if
 $V$'s growth is at least linear
(this includes the multivariate SAO), then $\hat H$'s spectrum is number rigid in the sense
of Ghosh and Peres \cite{GhoshPeres}. Together with the rigidity of the scalar SAO \cite{Bufetov,RigiditySAO},
this completes the characterization of number rigidity in the soft-edge limits
of Gaussian $\be$-ensembles and their finite-rank spiked versions.
\end{abstract}

\setcounter{tocdepth}{1}
\tableofcontents

\section{Introduction}

\subsection{General Setup and Main Result}

In this paper, we are interested in the semigroup theory of a class of random
vector-valued Schr\"odinger operators defined as follows:
Let $\mbb F$ be the field of real numbers $\mbb R$, the field of complex numbers $\mbb C$, or
the ring of quaternions $\mbb H$.
Let $r>0$ be a fixed integer, and $I\subset\mbb R$ be a (possibly infinite) interval.
Consider the operator
\[Hf(i,x)=-\tfrac12f''(i,x)+V(i,x)f(i,x),\qquad f:I\to\mbb F^r,\]
where $\big(f(i,\cdot)\big)_{1\leq i\leq r}$ denote the components of a vector-valued function $f$,
the second derivative $f''$ acts on the variable $x\in I$ only, and $V(i,\cdot)$ are locally integrable deterministic
functions with sufficient growth when $I$ is infinite (see Assumption \ref{Assumption: Potential}).
Assume that $H$ acts on functions whose components $f(i,\cdot)$ satisfy a mix of Robin boundary conditions
$f'(i,c)+\al f(i,c)=0$ ($\al\in\mbb R$)
and Dirichlet boundary conditions $f(i,c)=0$ at the boundary points $c\in\partial I$; if there are any
(see Assumption \ref{Assumption: Boundary}).

We are interested in random perturbations of $H$ of the form
\begin{align}
\label{Equation: Operator}
\hat H=H+\xi,
\end{align}
where $\xi$ is a matrix white noise. That is, if we let $\cdot^*$ denote conjugation
in $\mbb F$, then $\xi$ is the singular matrix multiplication operator
\[\xi f(i,\cdot)=\sum_{j=1}^r\xi_{i,j}f(j,\cdot),\]
where $\xi_{i,j}=\xi_{j,i}^*$ for all $1\leq i,j\leq r$, and informally,
we view $(\xi_{i,i})_{1\leq i\leq r}$ as the derivatives of i.i.d. Brownian motions
in $\mbb R$ and $(\xi_{i,j})_{1\leq i<j\leq r}$ as the derivatives of i.i.d. Brownian motions in
$\mbb F$ (see Definition \ref{Definition: White Noise} for more details).

The aim of this paper is to provide a new avenue to study the
eigenvalues of $\hat H$ via probabilistic representations of
the semigroup $\mr e^{-t\hat H}$.
Our main result in this direction, {\bf Theorem \ref{Theorem: Trace Moment Formulas}}, consists of
Feynman-Kac formulas for the joint moments
\begin{align}
\label{Equation: Mixed Moments}
\mbf E\left[\mr{Tr}\big[\mr e^{-t_1\hat H}\big]\mr{Tr}\big[\mr e^{-t_2\hat H}\big]\cdots\mr{Tr}\big[\mr e^{-t_n\hat H}\big]\right],\qquad
n\in\mbb N,~t_1,\ldots,t_n\geq0.
\end{align}
Letting $\la_1(\hat H)\leq\la_2(\hat H)\leq\cdots$ denote the eigenvalues of $\hat H$, we recall that
\[\textstyle\mr{Tr}\big[\mr e^{-t\hat H}\big]=\sum_{k=1}^\infty\mr e^{-t\la_k(\hat H)}\qquad\text{for every }t>0.\]
Thus, a calculation of the quantities \eqref{Equation: Mixed Moments} provides a means of characterizing the distribution of $\hat H$'s
eigenvalue point process via its Laplace transform's joint moments.

A notable feature of our Feynman-Kac formulas
when $r>1$ is the following insight: In addition to the usual Brownian motion $\big(B(t)\big)_{t\geq0}$
(coming from the diffusion operator $f\mapsto-\tfrac12f''$), the dynamics are decorated by
a continuous-time random walk $\big(U(t)\big)_{t\geq0}$ on the complete graph with $r$ vertices, whose jumps times
$\tau_1\leq\tau_2\leq\cdots$ are conditioned to coincide with self-intersections of both $U$ and $B$. That is,
there must exist a way to partition the jump times
into pairs $\{\tau_{\ell_1},\tau_{\ell_2}\}$ such that
\begin{enumerate}
\item the jumps in $U$ that occur at times $\tau_{\ell_1}$ and $\tau_{\ell_2}$
are between the same two vertices in $\{1,\ldots,r\}$, and
\item $B(\tau_{\ell_1})=B(\tau_{\ell_2})$.
\end{enumerate}
Among other things, this induces a surprisingly rich combinatorial structure in the mixed moments
\eqref{Equation: Mixed Moments}, coming from counting paths of the random walk $U$ for which such pairings exist.
We refer to the statement of Theorem \ref{Theorem: Trace Moment Formulas} for the full details,
and to Section \ref{Section: Probabilistic OE} for a high-level explanation of this result and how it is proved.

The remainder of this introduction is organized as follows:
In Section \ref{Section: Motivation and Applications}, we discuss
the motivation for our work (mainly, understanding the multivariate stochastic Airy operators), as well as an application in the study of number rigidity of eigenvalue point processes.
In Section \ref{Section: Proof Methods and Past Results},
we provide a high-level explanation of our method of proof,
and we survey past results related to the main technical innovation in this paper.
In Section \ref{Section: Pathwise}, we formulate an open problem
regarding a pathwise Feynman-Kac formula for $\hat H$.
Then, in Section \ref{Section: Organization},
we discuss the organization of the rest of the paper.

\subsection{Motivation and Application}
\label{Section: Motivation and Applications}

\subsubsection{Motivation: Multivariate Stochastic Airy Operators}
\label{Section: mSAO}

Our main motivation for developing the semigroup theory of $\hat H$ comes from
the special case where $I=(0,\infty)$, $V(i,x)=\frac{rx}2$, and $\frac1{\sqrt2}\xi$ is a {\it standard}
matrix white noise (that is, the derivative of a matrix Brownian motion with GOE/GUE/GSE
increments; see Remark \ref{Remark: GOE/GUE/GSE Case} for more details).
In this special case, $2\hat H$ is the multivariate stochastic Airy operator
(SAO) introduced by Bloemendal and Vir\'ag in \cite{BloemendalVirag2}
(see also \cite{BloemendalVirag,RamirezRiderVirag} for previous results concerning the scalar case $r=1$).

The interest in the multivariate SAO lies in the fact that its spectrum describes the asymptotic
soft-edge scaling limits of critical rank-$r$ perturbations of Wishart and Gaussian
invariant ensembles; the so-called spiked models. More specifically, consider an
$r\times r$ generalized matrix of the form
\begin{align}
\label{Equation: W Matrix Boundary}
W=\sum_{i=1}^r(-\bar\al_i)u_iu_i^\dagger,\qquad \bar\al_i\in(-\infty,\infty],
\end{align}
where the vectors $u_i\in\mbb F^r$ form an orthonormal basis
and $\dagger$ denotes the conjugate transpose,
and let $2\hat H$ be the multivariate SAO with boundary conditions
\begin{align}
\label{Equation: Standard Basis Boundary}
\begin{cases}
f'(i,0)+\bar\al_if(i,0)=0&\text{if }\bar\al_i\in\mbb R,\\
f(i,0)=0&\text{if }\bar\al_i=-\infty.
\end{cases}
\end{align}
\begin{enumerate}[$\bullet$]
\item On the one hand, \cite[Theorem 1.2]{BloemendalVirag2} showed that
$2\hat H$'s spectrum describes the limiting fluctuations
(as $n\to\infty$)
of the top eigenvalues of $n\times n$ random matrices of the form $X_n+(W_n\oplus 0_{n-r})$,
where $X_n$ is a GOE, GUE, or GSE matrix (depending on
whether $\be=1,2,$ or $4$),
\begin{align}
\label{Equation: Critical Spike 1}
W_n\approx\sqrt{n}\,\mr{Id}_r-n^{1/6}W
\end{align}
is a critical additive perturbation of rank $r$,
and $0_k$ and $\mr{Id}_k$ denote respectively the $k\times k$ zero and identity matrices.
\item On the other hand, \cite[Theorem 1.3]{BloemendalVirag2} showed that
$2\hat H$'s spectrum also describes the limit fluctuations
(as $n,p\to\infty$) of the top eigenvalues of $p$-variate real, complex, or quaternion
(again depending on
whether $\be=1,2,$ or $4$) Wishart matrices
with sample size $n$ and a rank-$r$ spiked covariance of the form $W_{n,p}\oplus \mr{Id}_{p-r}$,
where
\begin{align}
\label{Equation: Critical Spike 2}
W_{n,p}\approx \mr{Id}_r+\sqrt{\frac{p}{n}} \left(\mr{Id}_r-\left(\frac{1}{\sqrt{n}}+\frac{1}{\sqrt{p}}\right)^{2/3}W\right).
\end{align}
\end{enumerate}

Since the celebrated works of Baik, Ben Arous, and P\'ech\'e
\cite{BaikBenArousPeche,Peche},
the extreme eigenvalue fluctuations of the above random matrix models
have been known to undergo a phase transition depending on the size of the perturbations
$W_n$ or $W_{n,p}$:
\begin{enumerate}[$\bullet$]
\item In the {\it subcritical} regime (i.e., $W_n<\sqrt{n}\mr{Id}_r$ or $W_{n,p}<(1+\sqrt{p/n})\mr{Id}_n$), the perturbations are so small that
the top eigenvalues behave as though $W_n=W_{n,p}=0$. In particular, the limit
fluctuations follow the Tracy-Widom distributions (equivalently, the joint eigenvalue distribution
of any multivariate SAO with only Dirichlet boundary conditions; i.e., $\bar\al_i=-\infty$ for all $1\leq i\leq r$).
\item In the {\it supercritical} regime (i.e., $W_n>\sqrt{n}\mr{Id}_r$ or $W_{n,p}>(1+\sqrt{p/n})\mr{Id}_n$), the perturbations are so large that
the top eigenvalues separate from the edge of the semicircle/Marchenko-Pastur
distributions, and thus become outliers.
\end{enumerate}
The perturbation sizes described in \eqref{Equation: Critical Spike 1}
and \eqref{Equation: Critical Spike 2} interpolate between these two settings,
and thus allow to describe {\it critical} perturbations. More specifically, the non-Dirichlet
boundary conditions in $2\hat H$ (i.e., the
indices $1\leq i\leq r$ such that $\bar\al_i>-\infty$ in \eqref{Equation: W Matrix Boundary}
and \eqref{Equation: Standard Basis Boundary}) correspond to critical spikes,
while the Dirichlet boundary conditions (i.e., $\bar\al_i=-\infty$) correspond to subcritical spikes.
In particular, the results of \cite{BloemendalVirag2} established the existence of a 
continuous family of deformations of the classical Tracy-Widom laws, which
are parametrized by the non-Dirichlet components of the boundary
condition \eqref{Equation: Standard Basis Boundary}.
We also refer to \cite{BaikBenArousPeche,Mo,Peche,Wang} for partial results in this direction,
and more generally to the introductions of \cite{BloemendalVirag,BloemendalVirag2}
for detailed expositions of these results
and their significance in the theories of random matrices and covariance estimation.
In this view, one of our main goals in this paper is to develop a new set of tools to study
the properties of limit laws that arise in critical spiked models.

\begin{remark}
The boundary conditions on the multivariate SAO used in \cite{BloemendalVirag2} are slightly different from those that we stated in \eqref{Equation: Standard Basis Boundary}; namely: 
\begin{align}
\label{Equation: Matrix Boundary Conditions}
\begin{cases}
u_i^\dagger\big(f'(0)-w_if(0)\big)=0&\text{if }w_i\in\mbb R,\\
u_i^\dagger f(0)=0&\text{if }w_i=\infty,
\end{cases}
\end{align}
for some $w_i\in(-\infty,\infty]$. The differences in weights
(i.e., $\bar\al_i=-w_i$) is merely a matter of
convention; we use $\bar\al_i$ to get a slightly neater
statement of the Feynman-Kac formula. In contrast to this, at first glance the
presence of $u_i^\dagger$ in \eqref{Equation: Matrix Boundary Conditions} appears
genuinely different from \eqref{Equation: Standard Basis Boundary}, as the boundary conditions
are imposed in the directions of the $u_i$ rather than the standard basis.

That said, we note that the potential $V(i,x)=\frac{rx}{2}$ does not
depend on $i$ (i.e., $V$ is a multiple of the identity matrix), and that $\frac1{\sqrt2}\xi$ in the case of the SAO is the derivative of a matrix Brownian motion
with GOE/GUE/GSE increments. Thus, since the GOE/GUE/GSE are invariant (in distribution) with respect to conjugation by the unitary matrix
with columns $u_i$, the eigenvalues of the SAO with boundary conditions \eqref{Equation: Matrix Boundary Conditions} have the same joint distribution
as the eigenvalues of the SAO with boundary conditions \eqref{Equation: Standard Basis Boundary},
provided we take $\bar\al_i=-w_i$. Given that
we are only interested in the eigenvalues of $\hat H$, in this paper we assume (without loss of generality)
that the SAO's boundary condition are diagonal (i.e., of the form \eqref{Equation: Standard Basis Boundary}).

\end{remark}

\subsubsection{Application: Number Rigidity}

A point process is number rigid if for a certain class of Borel sets $\mc K$,
the number of points inside $\mc K$ is uniquely determined by the configuration of points outside of $\mc K$.
(A common choice is to impose that $\mc K$ be bounded; in our paper we allow sets that are
merely bounded from above, as per Definition \ref{Definition: Number Rigidity}.)
Number rigidity was first introduced by Ghosh and Peres in \cite{GhoshPeres},
following-up on a number of studies of related notions of rigidity of point processes
(e.g., insertion and deletion singularity; notably \cite{HolroydSoo}).
Throughout the past decade, an extensive literature on the notion was developed,
wherein number rigidity has been proved (or disproved) for many point processes
using a variety of techniques.

Looking more specifically at the point processes that arise
as scaling limits of random matrices and/or the spectrum of random Schr\"odinger operators:
Number rigidity was proved for the Ginibre process in
\cite{GhoshPeres} and for the Sine$_{\be}$ processes in \cite{ChhaibiNajnudel,DHLM,Ghosh}.
In the scalar case $r=1$, the rigidity of the SAO
was proved in \cite{Bufetov,RigiditySAO}; more general one-dimensional random Schr\"odinger
operators (continuous and discrete) were considered in \cite{Prev2,Prev1}.

As an application of the main results of this paper, we prove in {\bf Theorem \ref{Theorem: Rigidity}}
that if $V(i,x)\geq \ka|x|-\nu$ for some $\ka,\nu>0$
(this is trivially true when $I$ is bounded), then $\hat H$'s eigenvalue point process is number rigid.
This of course includes the multivariate SAO, as in this case $x\in(0,\infty)$ and $V(i,x)=\frac{rx}{2}$.
Consequently, once combined with \cite{Bufetov,RigiditySAO},
Theorem \ref{Theorem: Rigidity} completely settles the question of number rigidity
in the point processes that describe the asymptotic soft-edge fluctuations of $\be$-ensembles
and their critical rank-$r$ spiked versions. More broadly, when combined with
\cite{Prev2,Prev1}, the present work shows that rigidity phenomena in random Schr\"odinger
operator eigenvalues also arise in general situations with matrix white noise.

Theorem \ref{Theorem: Rigidity} follows from
the standard methodology to prove number rigidity introduced in
\cite[Theorem 6.1]{GhoshPeres} and \cite[Proposition 7.1 and Lemma 7.2]{HolroydSoo}:
Control the fluctuations of sequences of linear statistics that converge to
1 uniformly on compact sets.
More specifically, following previous
works on the number rigidity of random Schr\"odinger
operators---i.e, \cite{Prev2,Prev1}, and most notably \cite{RigiditySAO}---we
prove rigidity using the following sufficient condition (see \cite[Section 4.1]{RigiditySAO} for a proof):

\begin{proposition}
\label{Proposition: Rigidity Sufficient Condition}
Suppose that
\begin{align}
\label{Equation: Rigidity Sufficient Condition 1}
\limsup_{t\to0}\mbf{Var}\Big[\mr{Tr}\big[\mr e^{-t\hat H}\big]\Big]<\infty
\end{align}
and that for every fixed $t_1\in(0,1]$, one has
\begin{align}
\label{Equation: Rigidity Sufficient Condition 2}
\lim_{t_2\to0}\mbf{Cov}\Big[\mr{Tr}\big[\mr e^{-t_1\hat H}\big],\mr{Tr}\big[\mr e^{-t_2\hat H}\big]\Big]=0.
\end{align}
Then, $\hat H$'s spectrum is number rigid.
\end{proposition}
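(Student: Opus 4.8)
The plan is to run the classical $L^2$ argument for number rigidity. Write $\Pi=\{\la_k(\hat H):k\geq1\}$ for the eigenvalue point process and, for $a\in\mbb R$, let $\mc F_{>a}$ be the $\si$-algebra generated by the restriction of $\Pi$ to $(a,\infty)$, and $N_a:=\#\{k:\la_k(\hat H)\leq a\}$. Since the spectrum is bounded below and $\la_k(\hat H)\to\infty$, $N_a<\infty$ a.s., and it suffices to prove that $N_a$ is $\mc F_{>a}$-measurable up to a null set for every $a$: any Borel set $\mc K$ bounded above by $a$ then satisfies $\#(\Pi\cap\mc K)=N_a-\#\big(\Pi\cap((-\infty,a]\setminus\mc K)\big)$, and both terms on the right are $\si(\Pi|_{\mc K^c})$-measurable because $\mc F_{>a}\subseteq\si(\Pi|_{\mc K^c})$ and $(-\infty,a]\setminus\mc K\subseteq\mc K^c$.

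Fix $a$, and for $t>0$ split $\mr{Tr}\big[\mr e^{-t\hat H}\big]=Y^a_t+Z^a_t$, where $Y^a_t:=\sum_{\la_k(\hat H)>a}\mr e^{-t\la_k(\hat H)}$ is $\mc F_{>a}$-measurable and $0\leq Y^a_t\leq\mr{Tr}[\mr e^{-t\hat H}]$, while $Z^a_t:=\sum_{\la_k(\hat H)\leq a}\mr e^{-t\la_k(\hat H)}$. By \eqref{Equation: Rigidity Sufficient Condition 1} fix $t_0\in(0,1]$ and $M<\infty$ with $\mbf{Var}\big[\mr{Tr}[\mr e^{-t\hat H}]\big]\leq M$ for all $t\in(0,t_0]$; since the first trace moment is finite as well, $G:=\mr{Tr}[\mr e^{-t_0\hat H}]$ lies in $L^2$. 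The first step is to control $Z^a_t$. From the elementary bound $\mr e^{-t\la}\leq\mr e^{-t_0\la}+1$, valid for all $\la\in\mbb R$ and $t\in(0,t_0]$, one gets $Z^a_t\leq G+N_a$ and $N_a\leq\mr e^{t_0a}G$, hence $0\leq Z^a_t\leq(1+\mr e^{t_0a})G$ uniformly in $t\in(0,t_0]$; and since $\la_k(\hat H)\to\infty$ makes $Z^a_t$ a finite sum for each realization with $Z^a_t\to N_a$ as $t\to0$, dominated convergence gives $N_a\in L^2$ and $Z^a_t\to N_a$ in $L^2$.

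Next set $X_t:=\mr{Tr}[\mr e^{-t\hat H}]-\mbf E\big[\mr{Tr}[\mr e^{-t\hat H}]\big]$. By the variance bound the family $\{X_t\}_{t\in(0,t_0]}$ is bounded in $L^2$, so by weak sequential compactness of bounded sets there is a sequence $t_m\downarrow0$, without loss of generality in $(0,t_0]$, with $X_{t_m}\rightharpoonup X_\infty$ weakly in $L^2$ for some $X_\infty\in L^2$. Testing this weak convergence against $X_{t_m}$ and invoking \eqref{Equation: Rigidity Sufficient Condition 2} with $t_1=t_m$ and $t_2=t_j\to0$ gives $\langle X_\infty,X_{t_m}\rangle=\lim_j\langle X_{t_j},X_{t_m}\rangle=\lim_j\mbf{Cov}\big[\mr{Tr}[\mr e^{-t_j\hat H}],\mr{Tr}[\mr e^{-t_m\hat H}]\big]=0$ for every $m$, and therefore $\|X_\infty\|_{L^2}^2=\lim_m\langle X_\infty,X_{t_m}\rangle=0$, i.e.\ $X_\infty=0$ a.s. On the other hand $Y^a_{t_m}-\mbf E\big[\mr{Tr}[\mr e^{-t_m\hat H}]\big]=X_{t_m}-Z^a_{t_m}$ converges weakly in $L^2$ to $X_\infty-N_a=-N_a$, while each variable on the left lies in the closed — hence weakly closed — subspace $L^2(\mc F_{>a})$; thus $-N_a\in L^2(\mc F_{>a})$, so $N_a$ is $\mc F_{>a}$-measurable, which is what we wanted.

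As for the main difficulty: the proposition is soft once its hypotheses are available, and the only points requiring care are (i) the uniform-in-$t$ $L^2$ bound on $Z^a_t$ — needed so that $Z^a_t\to N_a$ in $L^2$ and not merely almost surely — which is exactly what the estimate $\mr e^{-t\la}\leq\mr e^{-t_0\la}+1$ provides, and (ii) the use of \eqref{Equation: Rigidity Sufficient Condition 2} to force $X_\infty=0$; without this last ingredient one would only conclude that $N_a$ differs from an $\mc F_{>a}$-measurable variable by $X_\infty$, which is insufficient. The genuinely hard work is upstream, namely verifying \eqref{Equation: Rigidity Sufficient Condition 1}--\eqref{Equation: Rigidity Sufficient Condition 2} themselves from the Feynman--Kac formulas of Theorem \ref{Theorem: Trace Moment Formulas}, which is the content of Theorem \ref{Theorem: Rigidity}.
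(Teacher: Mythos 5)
The paper does not supply its own proof of this proposition; it cites \cite[Section 4.1]{RigiditySAO} and only indicates the general Ghosh--Peres/Holroyd--Soo methodology of controlling linear statistics that tend to 1 on compacts. So a line-by-line comparison is not possible, but your blind argument is correct, and it takes a somewhat different route than the classical one suggested by the paper's references. In the classical route, given only a \emph{bounded} variance together with the decorrelation in \eqref{Equation: Rigidity Sufficient Condition 2}, one typically constructs an averaged statistic $\tfrac1N\sum_{n\le N}\mr{Tr}[\mr e^{-t_n\hat H}]$ along a sufficiently fast-decreasing sequence $t_n\downarrow 0$, verifies that its variance vanishes, and then invokes \cite[Theorem 6.1]{GhoshPeres} and \cite[Proposition 7.1, Lemma 7.2]{HolroydSoo}. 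You instead reduce to showing that $N_a=\#\{k:\la_k(\hat H)\le a\}$ is $\mc F_{>a}$-measurable, split $\mr{Tr}[\mr e^{-t\hat H}]=Y^a_t+Z^a_t$ with $Y^a_t$ measurable with respect to the exterior $\si$-algebra and $Z^a_t\to N_a$ in $L^2$ (via the correct domination $\mr e^{-t\la}\le\mr e^{-t_0\la}+1$), and then use weak $L^2$-compactness of the centered traces $X_t$: hypothesis \eqref{Equation: Rigidity Sufficient Condition 2} forces the subsequential weak limit $X_\infty$ to be orthogonal to each $X_{t_m}$, hence $\|X_\infty\|^2=\lim_m\langle X_\infty,X_{t_m}\rangle=0$, and the conclusion follows because $L^2(\mc F_{>a})$ is norm-closed, hence weakly closed. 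This buys you the conclusion with less bookkeeping: you never have to build an explicit vanishing-variance sequence of test functions. The background ingredients you use implicitly --- that the spectrum is bounded below with no accumulation point (so $N_a<\infty$ a.s.) and that $\mr{Tr}[\mr e^{-t\hat H}]\in L^2$ for small $t$ --- are supplied by Proposition \ref{Proposition: Operator Definition} and by the well-posedness of the variance in \eqref{Equation: Rigidity Sufficient Condition 1}, so the proof is self-contained given the hypotheses. All steps check out.
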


The proof of \eqref{Equation: Rigidity Sufficient Condition 1} and \eqref{Equation: Rigidity Sufficient Condition 2}
under the hypothesis $V(i,x)\geq \ka|x|-\nu$
relies on a direct application of our Feynman-Kac formulas for \eqref{Equation: Mixed Moments}.

\subsubsection{Future Applications}

The main technical difficulties encountered in this paper
that are directly related to the generality of our setting
come from the cases where $I$ has a boundary and $\mbb F=\mbb H$
(see Remark \ref{Remark: From Scratch} for more details).
Both of these features are present in the SAOs. In particular, there is no additional
technical challenge that comes from considering generic one-dimensional operators,
which explains the generality of our setting beyond
just the multivariate SAO.
In this context, one potential source of future applications for our results
lies in the many recent advances in the study of random Schr\"odinger operators
with white noise potentials (one manifestation of the Anderson model); e.g., \cite{AllezChouk,ChoukvanZuijlen,DumazLabbe,DumazLabbe2,DumazLabbe3,DumazLabbe4,HsuLabbe,Labbe,MatsudavanZuijlen,Matsuda}. In these works, the potential energy and the functions on which
the operators act are assumed to be scalar-valued (i.e., $r=1$). The generality of our setting could
be used to begin exploring extensions of this theory to the vector-valued setting (i.e., $r>1$).

Going beyond spectral theory, the techniques developed in this paper could potentially be
used to study the moments of finite systems of stochastic PDEs on $I$
coupled by a matrix white noise. More specifically, if we define the function
$u(t;i,x)=\mr e^{-t\hat H}f(i,x)$, for a fixed $f:I\to\mbb F^r$, then $u$
formally solves the system of stochastic PDEs
\begin{align}
\label{Equation: SPDE}
\partial_t u(t;i,\cdot)=\frac12\partial_x^2u(t;i,\cdot)-V(i,\cdot)u(t;i,\cdot)-\sum_{j=1}^r\xi_{i,j}u(t;j,\cdot),\qquad 1\leq i\leq r,
\end{align}
with initial conditions $u(t;i,\cdot)=f(i,\cdot)$ and a mix of boundary conditions of the form
$\partial_xu(t;i,c)+\al u(t;i,c)=0$ or $u(t;i,c)=0$ for $c\in\partial I$.
When $r=1$ and $V=0$, \eqref{Equation: SPDE} reduces to the Stratonovich parabolic Anderson model
with time-independent white noise; e.g., \cite{ChenBook,Chen14,HuHuangNualartTindel,Mansmann}.
While the results of this paper only concern the case where
$f(i,\cdot)=\de_x(\cdot)$ (as we study the diagonal
of $\mr e^{-t\hat H}$'s integral kernel), treating general $f$'s would
amount to a very minor extension of our results.

Finally, we expect that our technology
could be used to obtain various asymptotic expansions of the moments
of $\mr{Tr}\big[\mr e^{-t\hat H}\big]$ or $\mr e^{-t\hat H}f(i,x)$ for some function $f$, either as $t\to\infty$ or $t\to0^+$. Indeed, despite the complexity
of the moment formulas obtained in Theorem \ref{Theorem: Trace Moment Formulas}, asymptotic expansions typically only involve a few dominant terms,
whose combinatorics should be tractable. In particular, in a forthcoming paper
\cite{GLPrep}, we use Theorem \ref{Theorem: Trace Moment Formulas}
to calculate the leading terms in the small-time (i.e., $t\to0^+$) asymptotic expansion of $\mr{Tr}\big[\mr e^{-t\mr{SAO}/2}\big]$; given the connection
between SAO spectra and critically-spiked matrix models
(see Section \ref{Section: mSAO}), this result will lead to new applications related to
the recovery of critical signals. A natural next step would be to investigate
the intermittency properties of \eqref{Equation: SPDE} using large-time
(i.e., $t\to\infty$) asymptotics of moment formulas similar to those in
Theorem \ref{Theorem: Trace Moment Formulas}.

\subsection{Proof Method and Past Results}
\label{Section: Proof Methods and Past Results}

In this section, we provide a high-level explanation of the novel difficulties that arise in
the proof of our main result, as well as the method that we use to solve them. We
also take this opportunity to provide references to past results similar to
our method of proof.

\begin{remark}
In order to simplify
the exposition, we only consider the case where $I=\mbb F=\mbb R$ in this outline, with the
understanding that our results do apply to more general domains and fields/rings. In fact, as we highlight
in Remark \ref{Remark: From Scratch}, the consideration of domains with boundaries
and quaternion-valued functions (both of which are mainly motivated
by the SAO) induces significant technical challenges.
\end{remark}

\subsubsection{The Classical Feynman-Kac Formula and its Vector-Valued Extension}

Consider a generic Schr\"odinger operator that acts on scalar-valued functions (i.e., $r=1$):
\begin{align}
\label{Equation: 1D Schrodinger on R}
\mc Hf(x)=-\tfrac12f''(x)+\mc V(x)f(x),\qquad f:\mbb R\to\mbb R.
\end{align}
In this classical setting, the Feynman-Kac formula has a well-known statement:

\begin{theorem}
\label{Theorem: Scalar F-K on R}
Let $\mc H$ be as in \eqref{Equation: 1D Schrodinger on R}.
If $\mc V$ is sufficiently well-behaved (see, e.g., \cite[Theorem 4.9]{Sznitman}
for a precise statement), then for every $t>0$, $\mr e^{-t\mc H}$ has the kernel
\[\mr e^{-t\mc H}(x,y)=\Pi_B(t;x,y)\mbf E\left[\mr e^{-\int_0^t\mc V(B^{x,y}_t(s))\d s}\right],\qquad t\geq0,~x,y\in\mbb R,\]
where $B$ is a standard Brownian motion on $\mbb R$,
\[\Pi_B(t;x,y)=\mbf P\big[B(t)\in\dd y~\big|~B(0)=x\big]\] is $B$'s transition kernel,
and $B^{x,y}_t$ denotes $B$ conditioned on
$\big\{B(0)=x\text{ and }B(t)=y\big\}$.
\end{theorem}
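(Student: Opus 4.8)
This is the classical Feynman--Kac formula, so the plan is to follow the standard two-step route (as in \cite[Section A.2]{SimonSemigroup}): first treat bounded continuous $\mc V$ by the Trotter product formula, then remove boundedness by truncation.

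For the first step, suppose $\mc V$ is bounded and continuous. By the Trotter--Kato product formula, in the strong operator topology on $L^2(\mbb R)$,
\[
\mr e^{-t\mc H}=\lim_{n\to\infty}\Big(\mr e^{\frac t{2n}\partial_{xx}}\,\mr e^{-\frac tn\mc V}\Big)^{n},
\]
where $\mr e^{\frac t{2n}\partial_{xx}}$ is the free heat semigroup, whose kernel is $\Pi_B(t/n;\cdot,\cdot)$, and $\mr e^{-\frac tn\mc V}$ is multiplication by the function $\mr e^{-\frac tn\mc V}$. Hence the $n$-th operator on the right is an integral operator with kernel
\[
\int_{\mbb R^{n-1}}\Big(\prod_{k=0}^{n-1}\Pi_B(t/n;x_k,x_{k+1})\Big)\exp\!\Big(-\tfrac tn\sum_{k=1}^{n}\mc V(x_k)\Big)\d x_1\cdots\d x_{n-1},\qquad x_0=x,~x_n=y.
\]
By the Chapman--Kolmogorov equation, integrating the product of transition kernels over $x_1,\dots,x_{n-1}$ produces $\Pi_B(t;x,y)$ times the law of the bridge $B^{x,y}_t$ sampled at the times $kt/n$, $1\le k\le n$; so the displayed kernel equals $\Pi_B(t;x,y)\,\mbf E\big[\exp(-\tfrac tn\sum_{k=1}^{n}\mc V(B^{x,y}_t(kt/n)))\big]$. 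Since $\mc V$ is continuous and the bridge paths are continuous, the exponent is a Riemann sum converging pointwise to $-\int_0^t\mc V(B^{x,y}_t(s))\d s$, and bounded convergence identifies the limit with the claimed formula---first as an identity of $L^2$ operators (i.e.\ of matrix elements), and then pointwise, by writing $\mr e^{-t\mc H}=\mr e^{-\frac t2\mc H}\mr e^{-\frac t2\mc H}$ to upgrade the quadratic-form identity to a statement about the (jointly continuous) kernels.

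For the second step, I would remove the boundedness hypothesis by truncation. For the class of potentials covered by \cite[Theorem A.2.7]{SimonSemigroup} (a Kato-type condition), put $\mc V_N=(\mc V\wedge N)\vee(-N)$. On the operator side, $\mc H_N\to\mc H$ in the strong resolvent sense, so $\mr e^{-t\mc H_N}\to\mr e^{-t\mc H}$ strongly. On the probabilistic side, the Kato-type hypothesis is precisely what guarantees $\mbf E\big[\mr e^{\int_0^t|\mc V|(B^{x,y}_t(s))\d s}\big]<\infty$, so dominated convergence yields $\mbf E\big[\mr e^{-\int_0^t\mc V_N(B^{x,y}_t(s))\d s}\big]\to\mbf E\big[\mr e^{-\int_0^t\mc V(B^{x,y}_t(s))\d s}\big]$. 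Matching the two limits gives the formula for all admissible $\mc V$. The main obstacle is exactly this second step: one must control the interchange of the truncation limit with the (operator) semigroup and with the (path) integral simultaneously, and this is where the Kato-type integrability of the additive functional $\int_0^t\mc V(B(s))\d s$ and the attendant resolvent convergence have to be used. An alternative avoiding Trotter altogether is to check directly, via It\^o's formula and the Markov property, that $u(t,x)=\mbf E_x\big[\mr e^{-\int_0^t\mc V(B(s))\d s}f(B(t))\big]$ solves $\partial_t u=-\mc Hu$ with $u(0,\cdot)=f$, and to conclude by uniqueness for this Cauchy problem; this replaces the product-formula analysis by a PDE-uniqueness input.
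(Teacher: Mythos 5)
The paper does not prove Theorem~\ref{Theorem: Scalar F-K on R}; it is a classical background result quoted with a pointer to \cite[Theorem A.2.7]{SimonSemigroup}, and your Trotter-plus-truncation route is exactly the argument carried out in that reference, so there is nothing to compare against within the paper itself. Your outline is correct, with one small caveat: for Kato-class potentials the two truncation limits are handled asymmetrically — $\mc V_+$ is only \emph{locally} Kato and is typically removed by monotone convergence (since $\mr e^{-\int_0^t(\mc V_+\wedge N)}$ decreases), while the dominated-convergence step you invoke is available only for the negative part $\mc V_-$, which is where the Kato condition buys $\sup_x\mbf E_x\big[\mr e^{\int_0^t\mc V_-(B(s))\dd s}\big]<\infty$; also, the strong resolvent convergence $\mc H_N\to\mc H$ needs the monotone-form-convergence/KLMN machinery rather than following automatically from pointwise convergence of $\mc V_N$. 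These are standard refinements covered in \cite{SimonSemigroup}, not gaps in your reasoning.
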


Consider now a generic vector-valued operator of the form
\begin{align}
\label{Equation: Vector-Valued Schrodinger on R}
\mc H f(i,x)=-\tfrac12f''(i,x)+\sum_{j=1}^r\mc V_{i,j}(x)f(j,x),\qquad f:\mbb R\to\mbb R^r,
\end{align}
where $\mc V:\mbb R\to\mbb R^{r\times r}$ takes values in self-adjoint $r\times r$ matrices.
In order to state the Feynman-Kac formula in this setting, we need to introduce the vector-valued
extension of the exponential integral $\mr e^{-\int_0^t\mc V(B(s))\d s}$, which leads to the consideration of ordered exponentials
(OEs):

\begin{definition}
Let $\big(F(t)\big)_{t\geq0}$ be a $r\times r$ matrix-valued function.
We define the forward OE of $F(t)$, denoted
$\left(\fo{\mc T}\left\{\mr e^{\int_0^tF(s)\d s}\right\}\right)_{t\geq0}$, as the $r\times r$
matrix-valued function that solves the differential equation
(assuming the solution exists and is unique)
\begin{align}
\label{Equation: Ordered Exponential Differential Equation}
\tfrac{\dd }{\dd t}\fo{\mc T}\left\{\mr e^{\int_0^tF(s)\d s}\right\}&=\fo{\mc T}\left\{\mr e^{\int_0^tF(s)\d s}\right\}F(t),
\qquad \fo{\mc T}\left\{\mr e^{\int_0^0F(s)\dd s}\right\}=\msf{Id}_r,
\end{align}
recalling that $\msf{Id}_r$ denotes the identity matrix.
\end{definition}

OEs are known in the literature under many different names
(e.g., path-ordered or time-ordered exponentials, product integrals, Volterra integrals),
which is a testament to their appearance in a wide variety of mathematical
problems. See \cite{DollardFriedman,GillJohansen,Slavik} for
a comprehensive survey of the theory of OEs and their applications.
Our interest in OEs lies in their appearance in the vector-valued extension of the classical Feynman-Kac
formula in Theorem \ref{Theorem: Scalar F-K on R}, as per the following result of G\"uneysu:

\begin{theorem}[{\cite[Theorem 1.8]{Guneysu}}]
\label{Theorem: Vector-Valued F-K on R}
Let $\mc H$ be as in \eqref{Equation: Vector-Valued Schrodinger on R}.
If $\mc V$ is sufficiently well-behaved (\cite[Definition 1.3]{Guneysu}), then for every $t>0$, $\mr e^{-t\mc H}$ has the ($r\times r$ matrix) kernel
\begin{align}
\label{Equation: Vector-Valued F-K on R}
\mr e^{-t\mc H}(x,y)=\Pi_B(t;x,y)\mbf E\left[\fo{\mc T}\left\{\mr e^{-\int_0^t\mc V(B^{x,y}_t(s))\d s}\right\}\right],\qquad t\geq0,~x,y\in\mbb R.
\end{align}
\end{theorem}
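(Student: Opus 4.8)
The plan is to derive \eqref{Equation: Vector-Valued F-K on R} from the Duhamel perturbation expansion of the semigroup, and to recognize the resulting series as the Dyson (Peano--Baker) series of a forward OE. Set $\mc H_0=-\tfrac12\partial_x^2$, so that $\mr e^{-t\mc H_0}$ is multiplication by the identity matrix against the scalar Brownian kernel $\Pi_B(t;\cdot,\cdot)$. Iterating the Duhamel identity $\mr e^{-t\mc H}=\mr e^{-t\mc H_0}-\int_0^t\mr e^{-(t-s)\mc H_0}\,\mc V\,\mr e^{-s\mc H}\d s$ yields
\begin{align*}
\mr e^{-t\mc H}=\sum_{k\geq0}(-1)^k\int_{0<s_1<\cdots<s_k<t}\mr e^{-(t-s_k)\mc H_0}\,\mc V\,\mr e^{-(s_k-s_{k-1})\mc H_0}\,\mc V\cdots\mc V\,\mr e^{-s_1\mc H_0}\,\d s_1\cdots\d s_k .
\end{align*}
The kernel of the summand of index $k$ at a point $(x,y)$ is an iterated spatial integral of a product of $k+1$ Brownian kernels interlaced with matrices $\mc V(z_k),\dots,\mc V(z_1)$; integrating out the intermediate points $z_1,\dots,z_k$ turns the product of Brownian kernels into $\Pi_B(t;x,y)$ times the joint density of the positions of a Brownian bridge at the relevant times. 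After relabeling the integration variables by $r_j=t-s_{k+1-j}$ and using time-reversal of the bridge (which also brings the matrix factors into the order matching \emph{increasing} time along $B^{x,y}_t$), this summand becomes
\begin{align*}
(-1)^k\,\Pi_B(t;x,y)\int_{0<r_1<\cdots<r_k<t}\mbf E\Big[\mc V\big(B^{x,y}_t(r_1)\big)\,\mc V\big(B^{x,y}_t(r_2)\big)\cdots\mc V\big(B^{x,y}_t(r_k)\big)\Big]\,\d r_1\cdots\d r_k .
\end{align*}

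I would then invoke the elementary fact that, for a locally integrable matrix-valued function $G$, the unique solution of $\tfrac{\dd}{\dd t}Y=YG(t)$ with $Y(0)=\msf{Id}_r$ is $Y(t)=\msf{Id}_r+\sum_{k\geq1}\int_{0<r_1<\cdots<r_k<t}G(r_1)\cdots G(r_k)\,\d r_1\cdots\d r_k$; taking $G=-\mc V\circ B^{x,y}_t$ identifies the summand above, before the expectation, as $\fo{\mc T}\big\{\mr e^{-\int_0^t\mc V(B^{x,y}_t(s))\d s}\big\}$. Resumming over $k$ and exchanging the series, the simplex integrals, and $\mbf E$ by Fubini--Tonelli then gives \eqref{Equation: Vector-Valued F-K on R}. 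The hypothesis on $\mc V$ (\cite[Definition 1.3]{Guneysu}) enters twice: it guarantees that $\mc H$ is self-adjoint and bounded below, so that $\mr e^{-t\mc H}$ is a genuine semigroup with an integral kernel; and it provides enough control on $\int_{0<r_1<\cdots<r_k<t}\mbf E\big[\|\mc V(B^{x,y}_t(r_1))\|\cdots\|\mc V(B^{x,y}_t(r_k))\|\big]\,\d r$ that the Dyson series converges absolutely (legitimizing the Fubini exchange) and the OE along the bridge is defined for a.e.\ path --- for bounded $\mc V$ this bound is simply $\|\mc V\|_\infty^k t^k/k!$, and in the Kato-type generality one runs a Khas'minskii-type argument along the bridge.

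The main obstacle is exactly this quantitative control, together with the attendant justification of the Duhamel expansion \emph{at the level of kernels} (not merely of operators tested against nice functions) in the full admissible class of potentials; once absolute summability is secured, matching the series to the OE is a bookkeeping matter, the only delicate point being that the relabeling reverses time, which is precisely what makes the formula involve the forward rather than the backward OE. An alternative route would start from the Trotter--Kato product formula $\mr e^{-t\mc H}=\lim_{n\to\infty}\big(\mr e^{-(t/n)\mc H_0}\mr e^{-(t/n)\mc V}\big)^n$ and identify the limit of the time-ordered products $\prod_{j=1}^{n}\mr e^{-(t/n)\mc V(B(jt/n))}$ with the OE via its characterization as a limit of Riemann-type products; there the obstacle shifts to exchanging that limit with the Brownian-bridge expectation, again resolved by a domination argument supplied by the hypothesis on $\mc V$.
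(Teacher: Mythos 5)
Theorem \ref{Theorem: Vector-Valued F-K on R} is imported verbatim from \cite[Theorem 1.8]{Guneysu}; the paper supplies no proof of it, so there is nothing in the text to compare your argument against line by line. Your Duhamel/Dyson sketch is nonetheless a legitimate and essentially correct route: the iterated Duhamel expansion against $\mr e^{-t\mc H_0}$, the contraction of the chain of free kernels $\Pi_B(t-s_k;x,z_k)\Pi_B(s_k-s_{k-1};z_k,z_{k-1})\cdots\Pi_B(s_1;z_1,y)$ into $\Pi_B(t;x,y)$ times the finite-dimensional marginal of the bridge, and the relabeling $r_j=t-s_{k+1-j}$ that places the matrix factors in increasing bridge-time order $\mc V(B^{x,y}_t(r_1))\cdots\mc V(B^{x,y}_t(r_k))$ with $r_1<\cdots<r_k$, which is exactly the Peano--Baker series $\sum_k\int_{0<r_1<\cdots<r_k<t}G(r_1)\cdots G(r_k)\,\d\bs r$ of the \emph{forward} OE with $G=-\mc V\circ B^{x,y}_t$. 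The obstacles you flag are the real ones: justifying the expansion at the kernel (not merely operator) level, and the Khas'minskii-type estimate that makes the resummation and the interchange with $\mbf E$ absolutely convergent in the Kato class rather than only for bounded $\mc V$.

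It is worth contrasting this with how the paper proves its \emph{own} analogue, Theorem \ref{Theorem: Regular FK}: rather than resumming a perturbative series, the paper writes the candidate kernel $\hat K(t)$ directly via the probabilistic OE representation of Proposition \ref{Proposition: Probabilistic OE} (an auxiliary random walk $U$ whose Poisson jumps index the matrix factors), checks in Proposition \ref{Proposition: Generic Kernel Properties} that $\hat K(t)$ is a strongly continuous trace-class symmetric semigroup, and identifies its generator with $\hat H$ in Proposition \ref{Proposition: Generator Calculation}. That choice is dictated by the setting the paper actually needs---half-line and bounded-interval domains with Robin/Dirichlet boundary conditions and quaternion-valued potentials---which falls outside the hypotheses of G\"uneysu's theorem; a Dyson-series argument there would have to rebuild the reflected free kernel, its boundary local time, and the non-commutative $\mbb H$-bookkeeping from scratch, whereas the generator approach localizes those difficulties to a short-time asymptotic computation. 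For the full-line complex-matrix statement as quoted, your plan is the standard one and the gaps are exactly where you say they are.
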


In this context, the problem that we contend with in this paper is as follows:

\begin{problem}
\label{Problem: Main}
Understand \eqref{Equation: Vector-Valued F-K on R}
when $\mc H=\hat H$ (i.e., $\mc V=V+\xi$).
\end{problem}

\subsubsection{Difficulty: Interpreting Feynman-Kac with a Matrix White Noise}

The main difficulty that we encounter in attacking Problem \ref{Problem: Main} is that $\xi$ is a Schwartz distribution,
which fails to satisfy the standard regularity assumptions required for the Feynman-Kac formulas stated in Theorems
\ref{Theorem: Scalar F-K on R} and \ref{Theorem: Vector-Valued F-K on R}. In particular,
since $\xi$ is not defined pointwise, it is not immediately obvious what the
expression $\fo{\mc T}\left\{\mr e^{-\int_0^t\xi(B(s))\d s}\right\}$ should mean from the point of view of the matrix ODE
in \eqref{Equation: Ordered Exponential Differential Equation}.

In the scalar case $r=1$, a rigorous interpretation of the exponential integral $\mr e^{-\int_0^t\xi(B(s))\d s}$ was provided
for the SAO in \cite{GaudreauLamarreShkolnikov,GorinShkolnikov}---with a later extension to more general operators in
\cite{GaudreauLamarreEJP}---as follows:
\begin{definition}
For any Borel set $\mc K\subset [0,\infty)$, we let $\big(L^x_{\mc K}(B)\big)_{x\in\mbb R}$ denote the continuous version
of the local time of $B$ on $\mc K$, that is,
\begin{align}
\label{Equation: Interior Local Time}
\int_{\mc K}f\big(B(s)\big)\d s=\int_{\mbb R} L^x_{\mc K}(B)f(x)\d x\qquad\text{for every measurable }f:\mbb R\to\mbb R.
\end{align}
In the case where $\mc K=[0,t)$, we use the convention $L^x_{[0,t)}(B)=L^x_t(B)$.
\end{definition}
If $W$ is the Brownian motion such that $\xi=W'$,
then a formal application of \eqref{Equation: Interior Local Time}
suggests that $\mr e^{-\int_0^t\xi(B(s))\d s}$ can be defined rigorously via
\begin{align}
\label{Equation: Difficulty when r=1}
\int_0^t\xi\big(B(s)\big)\d s=\int_0^tW'\big(B(s)\big)\d s=\int_{\mbb R} L_t^x(B)W'(x)\d x=\int_{\mbb R} L_t^x(B)\d W(x),
\end{align}
where the rightmost expression in \eqref{Equation: Difficulty when r=1} is interpreted as a stochastic integral.
However, \eqref{Equation: Difficulty when r=1} cannot work when $r>1$ due to the non-commutativity
of the matrix function $s\mapsto\xi\big(B(s)\big)$. To see this, we recall the series/limit representations
commonly used to calculate or approximate OEs
(see, e.g., \cite[Definition 2.4.1 and Theorems 2.4.3, 2.4.12, and 2.5.1]{Slavik}
for details on the assumptions on $F$ required for \eqref{Equation: Ordered Exponential Product}
and \eqref{Equation: Ordered Exponential Series} to hold):

\begin{theorem}[Informal]
Equivalent definitions of OEs include:
\begin{enumerate}[$\bullet$]
\item (Product Integral.)
For every $t>0$, one has
\begin{multline}
\label{Equation: Ordered Exponential Product}
\fo{\mc T}\left\{\mr e^{\int_0^tF(s)\d s}\right\}=\lim_{n\to\infty}\mr e^{\frac tnF(0)}\mr e^{\frac tnF(t/n)}\mr e^{\frac tnF(2t/n)}\cdots\mr e^{\frac tnF(t)}\\
=\lim_{n\to\infty}\big(\msf{Id}_r+\tfrac tnF(0)\big)
\big(\msf{Id}_r+\tfrac tnF(t/n)\big)
\big(\msf{Id}_r+\tfrac tnF(2t/n)\big)
\cdots
\big(\msf{Id}_r+\tfrac tnF(t)\big).
\end{multline}
\item (Dyson Series.)
For every $t>0$, one has
\begin{align}
\label{Equation: Ordered Exponential Series}
\fo{\mc T}\left\{\mr e^{\int_0^tF(s)\d s}\right\}&=\mr{Id}_r+\sum_{n=1}^\infty\int_0^t\cdots\int_0^{s_3}\int_0^{s_2} F(s_1)F(s_2)\cdots F(s_n)\d s_1\dd s_2\cdots\dd s_n.
\end{align}
\end{enumerate}
\end{theorem}

With this in hand, if we formally apply (e.g.) \eqref{Equation: Ordered Exponential Product} to write
\begin{align}
\label{Equation: Singular OE}
\fo{\mc T}\left\{\mr e^{-\int_0^t\xi(B(s))\dd s}\right\}=\lim_{n\to\infty}\mr e^{-\frac tn\xi(B(0))}\mr e^{-\frac tn\xi(B(t/n))}\mr e^{-\frac tn\xi(B(2t/n))}\cdots\mr e^{-\frac tn\xi(B(t))},
\end{align}
then it becomes clear that we cannot calculate this quantity if we only know
how often $B$'s path
visits each coordinate $x\in\mbb R$ (i.e., the local time $L^x_t(B)$):
The noncommutativity of $\xi\big(B(kt/n)\big)$ for different $k$'s means that we must know the order
in which each coordinate is visited by $B$ (i.e., the entire history of the path
$\big(B(s):0\leq s\leq t\big)$).

Finally, in the absence of an obvious candidate
for a pathwise interpretation of \eqref{Equation: Singular OE},
a natural strategy to define such an object is to use smooth approximations.
That is, introduce a sequence of smooth matrix noises $(\xi^\eps)_{\eps>0}$
such that $\xi^\eps\to\xi$ as $\eps\to0$ in the space of Schwartz distributions,
and then try to interpret the limit of the corresponding Feynman-Kac formulas.
However, the standard approximation theory of OEs
is not powerful enough to carry this out. 
To illustrate this,
one such standard estimate is as follows (see, e.g., \cite[Corollary 3.4.3]{Slavik}):
\begin{multline*}
\left\|\fo{\mc T}\left\{\mr e^{\int_0^t\mc V(B(s))\d s}\right\}-\fo{\mc T}\left\{\mr e^{\int_0^t\bar {\mc V}(B(s))\d s}\right\}\right\|_{\mr{op}}
\leq\mr e^{\int_0^t\|\mc V(B(s))\|_{\mr{op}}\d s}\left(\mr e^{\int_0^t\|\mc V(B(s))-\bar {\mc V}(B(s))\|_{\mr{op}}\d s}-1\right),
\end{multline*}
where we denote the operator norm by
\[\|F\|_{\mr{op}}=\sup_{v\in\mbb F^r}\frac{\|Fv\|_{\ell^2}}{\|v\|_{\ell^2}}.\]
For instance, a variation of this is used in \cite{Guneysu} to prove
Theorem \ref{Theorem: Vector-Valued F-K on R}
for certain locally-integrable potentials $\mc V$,
starting from the easier assumption that $\bar{\mc V}$
is continuous and bounded.
However, this strategy fails for white noise since $\|\xi(x)\|_{\mr{op}}=\infty$.

\subsubsection{Mixed Moments via a Probabilistic Representation of Ordered Exponentials}
\label{Section: Probabilistic OE}

The key to our ability to compute the joint moments \eqref{Equation: Mixed Moments} is to use
a method
to represent ordered exponentials that is probabilistic in nature. A simple
version of this representation is as follows:

\begin{definition}
\label{Definition: Uniform RW}
Let $\big(U(t)\big)_{t\geq0}$ be a uniform continuous-time random walk on
the set $\{1,\ldots,r\}$
with jump rate $(r-1)$. That is, if $N(t)$ is a Poisson process with rate $(r-1)$
and $M$ is a discrete Markov chain on $\{1,\ldots,r\}$ with transition matrix
\[P_{i,j}=\begin{cases}
0&\text{if }i=j,\\
\frac1{r-1}&\text{if }i\neq j,
\end{cases}\]
then we can construct $U$ as
\[U(t)=M\big(N(t)\big),\qquad t\geq0,\] assuming $M$ and $N$ are independent.
Next, we denote the jump times of the Poisson process $N$ as
$0=\tau_0\leq\tau_1\leq\tau_2\leq\cdots$, and the corresponding jumps in $U$ and $M$ as
\[J_k=\big(U(\tau_{k-1}),U(\tau_k)\big)=\big(M(k-1),M(k)\big)\qquad\text{for }k=1,2,3,\ldots.\]
Finally, we use $\Pi_U$ to denote $U$'s transition kernel,
and $U^{i,j}_t$ denotes the process $U$
conditioned on the event $\big\{U(0)=i\text{ and }U(t)=j\big\}$
(noting that this also induces a conditioning on $N,\tau_k$, and $J_k$).
\end{definition}

\begin{proposition}[Informal]
\label{Proposition: Probabilistic OE}
Let $\big(F(t)\big)_{t\geq0}$ be a continuous function that takes values in the space
of $r\times r$ self-adjoint matrices.
Define the process
\[\mf o_t(F,U)=
\begin{cases}
\mr e^{(r-1)t}&\text{if }N(t)=0\\
\displaystyle\mr e^{(r-1)t}\prod_{k=1}^{N(t)}F_{J_k}(\tau_k)&\text{otherwise}.
\end{cases}\]
For every $t\geq0$ and $1\leq i,j\leq r$, one has
\begin{align}
\label{Equation: Probabilistic OE}
\fo{\mc T}\left\{\mr e^{\int_0^tF(s)\d s}\right\}_{i,j}=\Pi_{U}(t;i,j)\mbf E\left[\mf o_t(F,U^{i,j}_t)\exp\left(\int_0^tF_{U^{i,j}_t(s),U^{i,j}_t(s)}(s)\d s\right)\right].
\end{align}
\end{proposition}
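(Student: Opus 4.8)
The plan is to verify that the right-hand side of \eqref{Equation: Probabilistic OE} satisfies the defining matrix ODE \eqref{Equation: Ordered Exponential Differential Equation}, using the generator of the uniform random walk $U$ together with the Feynman--Kac/Dynkin correspondence between continuous-time Markov chains and their kernels. First I would strip off the conditioning: for a function $\Phi$ of the path $(U(s):0\le s\le t)$, one has the elementary identity $\Pi_U(t;i,j)\,\mbf E[\Phi(U^{i,j}_t)]=\mbf E_i[\Phi(U)\mathbbm{1}_{\{U(t)=j\}}]$, where $\mbf E_i$ is expectation for $U$ started at $i$ and unconditioned. So it suffices to show that the matrix $G(t)$ with entries $G_{i,j}(t)=\mbf E_i\!\big[\mf o_t(F,U)\exp(\int_0^t F_{U(s),U(s)}(s)\d s)\,\mathbbm{1}_{\{U(t)=j\}}\big]$ equals $\fo{\mc T}\{\mr e^{\int_0^t F(s)\d s}\}$.

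Next I would compute $\tfrac{\dd}{\dd t}G(t)$ by first-step/infinitesimal analysis. Write $\Psi_t=\mf o_t(F,U)\exp(\int_0^t F_{U(s),U(s)}(s)\d s)$; note that $\Psi_t$ is multiplicative along the path, so over a small interval $[t,t+h]$, conditionally on $\mc F_t$ and on $U(t)=k$: with probability $1-(r-1)h+o(h)$ there is no jump, contributing a factor $\exp(\int_t^{t+h}F_{k,k})\,\mr e^{(r-1)h}=\msf{Id}+hF_{k,k}(t)+(r-1)h+o(h)$ on the $(k,k)$ coordinate; and with probability $\tfrac{1}{r-1}\cdot(r-1)h+o(h)=h+o(h)$ for each $\ell\ne k$ there is a jump $k\to\ell$ at a time $\approx t$, contributing the factor $\mr e^{(r-1)h}F_{(k,\ell)}(t)=F_{k,\ell}(t)+o(1)$ and moving the walker to $\ell$. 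The key cancellation is that the compensator term $\mr e^{(r-1)t}$ in the definition of $\mf o_t$ exactly offsets the "no jump" survival probability $\mr e^{-(r-1)t}$ coming from the Poisson clock, and the rate $\tfrac{1}{r-1}$ in each off-diagonal transition of $M$ cancels against the total rate $(r-1)$, leaving precisely one factor of $F_{k,\ell}(t)$ per off-diagonal direction. Assembling, $\mbf E[\Psi_{t+h}\mathbbm{1}_{\{U(t+h)=j\}}\mid \mc F_t, U(t)=k]=\Psi_t\big(\delta_{k,j}+h\sum_{\ell}F_{k,\ell}(t)\delta_{\ell,j}\big)+o(h)$, i.e.\ $G(t+h)=G(t)\big(\msf{Id}+hF(t)\big)+o(h)$, which yields $G'(t)=G(t)F(t)$. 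Since $G(0)=\msf{Id}_r$ (no jumps, empty integral, $\mf o_0=1$) and the ODE has a unique solution, $G(t)=\fo{\mc T}\{\mr e^{\int_0^t F(s)\d s}\}$.

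The main obstacle I anticipate is making the infinitesimal computation fully rigorous rather than heuristic: one must control the $o(h)$ terms uniformly, handle the event of two or more jumps in $[t,t+h]$ (probability $O(h^2)$, but the factor $\prod F_{J_k}$ is only bounded because $F$ is continuous and we work on compact time intervals, giving $\sup_{s\le T}\|F(s)\|<\infty$), and justify differentiating under the expectation. A clean way to sidestep these issues is to instead prove the identity by expanding both sides as series: conditioning on $N(t)=m$ and on the ordered jump times $\fo{s_1}\le\cdots\le\fo{s_m}$, the law of the jump times given $N(t)=m$ is uniform on the simplex, the $\mr e^{(r-1)t}$ factor cancels the Poisson probability $\mr e^{-(r-1)t}(r-1)^m/m!$ up to the $(r-1)^m$, the chain $M$ distributes the $(r-1)^{-m}$ over the $(r-1)^m$ choices of intermediate vertices, and summing over all vertex sequences reconstructs the matrix product $F(\fo{s_1})\cdots F(\fo{s_m})$ with the diagonal exponential $\exp(\int_0^t F_{U(s),U(s)})$ accounting for the "local" diagonal contributions between jumps; matching this termwise against the Dyson series \eqref{Equation: Ordered Exponential Series} (after expanding each $\mr e^{\int F_{k,k}}$ factor as well) gives the result. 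I would present the series argument as the main proof and relegate the ODE heuristic to an explanatory remark.
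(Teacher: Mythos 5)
Your proposal is correct, and it is worth noting that the paper deliberately leaves Proposition~\ref{Proposition: Probabilistic OE} unproved (it is flagged ``Informal'' and merely motivational); the rigorous version that the paper actually proves is Theorem~\ref{Theorem: Regular FK}, where the random walk is coupled to a Brownian motion and $F$ is a (random) matrix potential. The paper's proof of that theorem goes through Proposition~\ref{Proposition: Generator Calculation}, which is precisely the generator/first-step computation you sketch as your ``ODE heuristic'': $t^{-1}(f-\hat K(t)f)$ is decomposed according to $N(t)=0$, $N(t)=1$, and $N(t)\geq 2$, with the first giving the diagonal part of the generator, the second giving the off-diagonal part $Q$, and the third vanishing because $\mbf P[N(t)\geq 2]=O(t^2)$ --- the exact cancellation structure you identify between the compensator $\mr e^{(r-1)t}$, the Poisson survival probability, and the $\tfrac{1}{r-1}$ transition weights. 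So your first route is the paper's route, just in the simpler continuous-$F$ setting where the analytic machinery (Kato-class bounds, local time integrability, etc.) is unnecessary. Your second route --- conditioning on $N(t)=m$ and the ordered jump times, using that they are order statistics of i.i.d. uniforms on $[0,t]$, cancelling $(r-1)^m/m!$ against the $t^{-m}$ density and the $(r-1)^{-m}$ walk weights, and matching the resulting sum over vertex sequences against the Dyson series with diagonal parts resummed --- does not appear as a proof of the Feynman-Kac formula in the paper, but the identical conditioning/expansion manipulation is the engine of the moment computations in Section~\ref{Section: Kernel Moment Limits} (see the passages around \eqref{Equation: Trace Moments UI 4} and \eqref{Equation: Trace Moment Limit 3}). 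So both of your approaches are ones the author implicitly endorses, used for different purposes. For this particular informal proposition, your choice to lead with the series argument is the better pedagogical call: it avoids the $o(h)$ bookkeeping entirely and the termwise identification is elementary given that $F$ is deterministic, continuous, and bounded on $[0,t]$. One small cosmetic remark: in the no-jump branch of your infinitesimal computation you write $\msf{Id}+hF_{k,k}(t)+(r-1)h+o(h)$ for what is a scalar factor on the $(k,k)$ coordinate; $\msf{Id}$ should just be $1$, but the arithmetic and the subsequent cancellation against the survival probability $1-(r-1)h+o(h)$ are correct.
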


We postpone a discussion of Proposition \ref{Proposition: Probabilistic OE}'s appearance in past
results to Section \ref{Section: Past Results}, and focus for now on
explaining how it is used in the calculation of
the joint moments \eqref{Equation: Mixed Moments}:
If we combine \eqref{Equation: Probabilistic OE} with the
Feynman-Kac formula stated
in \eqref{Equation: Vector-Valued F-K on R} (without worrying about
whether the hypotheses of the two results match for now), then we obtain the
following informal restatement of Theorem \ref{Theorem: Vector-Valued F-K on R}:

\begin{theorem}[Informal]
\label{Theorem: Informal Vector-Valued Schrodinger on R}
If $\mc Hf(i,\cdot)=-\tfrac12f''(i,\cdot)+\sum_j\mc V_{i,j}f(j,\cdot)$ is as in \eqref{Equation: Vector-Valued Schrodinger on R},
then
\begin{multline}
\label{Equation: Informal Vector-Valued Kernel}
\mr e^{-t\mc H}(x,y)_{i,j}
=\Pi_B(t;x,y)\Pi_{U}(t;i,j)\\
\cdot\mbf E\left[\mf o_t(-\mc V\circ B^{x,y}_t,U^{i,j}_t)\exp\left(-\int_0^t\mc V_{U^{i,j}_t(s),U^{i,j}_t(s)}\big(B^{x,y}_t(s)\big)\d s\right)\right]
\end{multline}
for any $t\geq0$, $x,y\in\mbb R^d$, and $1\leq i,j\leq r$;
assuming that $B$ and $U$ are independent,
and where $f\circ g=f\big(g(t)\big)$ denotes function composition.
\end{theorem}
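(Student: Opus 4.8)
The plan is to obtain \eqref{Equation: Informal Vector-Valued Kernel} by composing the two ingredients recalled above: the vector-valued Feynman--Kac formula of G\"uneysu (Theorem \ref{Theorem: Vector-Valued F-K on R}) and the probabilistic representation of ordered exponentials (Proposition \ref{Proposition: Probabilistic OE}). First I would condition on the Brownian bridge $B^{x,y}_t$. For a fixed realization of $B^{x,y}_t$, the path $s\mapsto F(s):=-\mc V\big(B^{x,y}_t(s)\big)$ is a now-deterministic continuous function on $[0,t]$ taking values in the self-adjoint $r\times r$ matrices, so Proposition \ref{Proposition: Probabilistic OE} applies verbatim and yields
\begin{align*}
\fo{\mc T}\left\{\mr e^{-\int_0^t\mc V(B^{x,y}_t(s))\d s}\right\}_{i,j}
=\Pi_U(t;i,j)\,\mbf E_U\!\left[\mf o_t\big(-\mc V\circ B^{x,y}_t,U^{i,j}_t\big)\exp\!\left(-\int_0^t\mc V_{U^{i,j}_t(s),U^{i,j}_t(s)}\big(B^{x,y}_t(s)\big)\d s\right)\right],
\end{align*}
where $\mbf E_U$ is the expectation over $U$ only; this is legitimate because $B$ and $U$ are independent, so the event $\{U(0)=i,\,U(t)=j\}$ is independent of $B^{x,y}_t$ and conditioning on the bridge does not alter the law of $U^{i,j}_t$. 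Multiplying by $\Pi_B(t;x,y)$ and taking the expectation over $B^{x,y}_t$, Fubini's theorem turns the right-hand side into the claimed double expectation over the independent pair $(B^{x,y}_t,U^{i,j}_t)$, while the left-hand side becomes $\Pi_B(t;x,y)\,\mbf E\big[\fo{\mc T}\{\cdots\}_{i,j}\big]=\mr e^{-t\mc H}(x,y)_{i,j}$ by Theorem \ref{Theorem: Vector-Valued F-K on R}.

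The rest of the work is about matching hypotheses and licensing the interchanges of integral and ordered exponential. Proposition \ref{Proposition: Probabilistic OE} wants $F$ continuous, whereas Theorem \ref{Theorem: Vector-Valued F-K on R} only asks that $\mc V$ be ``sufficiently well-behaved'' in the locally integrable sense of \cite[Definition 1.3]{Guneysu}, so an approximation step is needed in general. I would pick continuous bounded $\mc V^\eps$ converging to $\mc V$ in the relevant local $L^1$ topology, apply the identity for each $\mc V^\eps$, and pass to the limit: on the operator side one uses the ordered-exponential stability estimate \cite[Corollary 3.4.3]{Slavik} recalled above together with convergence of $\int_0^t\|\mc V^\eps(B^{x,y}_t(s))-\mc V(B^{x,y}_t(s))\|\d s\to0$ along the bridge and dominated convergence --- the point being that $\|\mc V(\cdot)\|$ is a genuine locally integrable real-valued function, which is exactly what makes the estimate usable (and exactly what fails for white noise, where $\|\xi(x)\|=\infty$); on the probabilistic side one uses the explicit product form of $\mf o_t$ and the fact that $N(t)$ is Poisson, hence has all exponential moments, to dominate the $U$-expectation. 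A secondary bookkeeping point is to first upgrade Proposition \ref{Proposition: Probabilistic OE} from its informal statement to a rigorous one; this is short, e.g.\ by checking that the right-hand side of \eqref{Equation: Probabilistic OE} solves the defining ODE \eqref{Equation: Ordered Exponential Differential Equation} after conditioning on the jump times, or by matching it term by term with the Dyson series \eqref{Equation: Ordered Exponential Series}.

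The main obstacle is therefore not the algebraic identity --- which is immediate once $\mc V$ is continuous and bounded --- but the joint integrability and approximation needed to make the conditioning-and-Fubini step rigorous under minimal hypotheses on $\mc V$. I should stress that the genuinely hard problem, addressed elsewhere in the paper, is the case $\mc V=V+\xi$ with $\xi$ a matrix white noise, for which neither the finiteness of $\|\xi\|$ nor the continuity of $\xi\circ B$ is available and the present route breaks down; the informal statement above is only the ``smooth'' prototype that guides the general construction.
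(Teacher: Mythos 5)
Your proposal is correct and takes essentially the same route the paper itself sketches: the paper explicitly presents this Informal theorem as what one gets by conditioning on the Brownian bridge, applying Proposition \ref{Proposition: Probabilistic OE} to the resulting deterministic matrix path, and then integrating against the bridge law via Theorem \ref{Theorem: Vector-Valued F-K on R} ("without worrying about whether the hypotheses of the two results match"). You additionally fill in the approximation and dominated-convergence bookkeeping needed to reconcile G\"uneysu's locally integrable hypotheses with the continuity required by Proposition \ref{Proposition: Probabilistic OE}, and you correctly flag that this route is unavailable for $\mc V=V+\xi$ with $\xi$ white noise — which is exactly why the paper proves the rigorous Theorem \ref{Theorem: Regular FK} from scratch rather than by this composition (cf. Remark \ref{Remark: From Scratch}).
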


On the one hand, if $\xi$ is a matrix white noise, then the term
\[\int_0^t\xi_{U(s),U(s)}\big(B(s)\big)\d s\]
(which would be contained in the exponential in \eqref{Equation: Informal Vector-Valued Kernel} if
$\mc V$ contains $\xi$)
admits a straightforward pathwise interpretation similar to \eqref{Equation: Difficulty when r=1}:
Recalling the definition of local time in \eqref{Equation: Interior Local Time}
and letting $\xi_{i,i}=W_{i,i}'$ for some i.i.d. Brownian motions $(W_{i,i})_{1\leq i\leq r}$,
if we denote the sets $\mc K_t(i)=\{s\in[0,t):U(s)=i\}$ for all $t\geq0$ and $1\leq i\leq r$, then
\begin{align}
\label{Equation: Diagonal Noise Pathwise Interpretation}
\int_0^t\xi_{U(s),U(s)}\big(B(s)\big)\d s
=\sum_{i=1}^r\int_{\mc K_t(i)}\xi_{i,i}\big(B(s)\big)\d s
=\sum_{i=1}^r\int_{\mbb R}L_{\mc K_t(i)}^{x}(B)\d W_{i,i}(x).
\end{align}
Thus, the only difficulty in interpreting Theorem \ref{Theorem: Informal Vector-Valued Schrodinger on R}
when the potential $\mc V$ contains a matrix white noise comes from the product term
\begin{align}
\label{Equation: Difficulty when r>1}
\prod_{k=1}^{N(t)}\xi_{J_k}\big(B(\tau_k)\big),
\end{align}
which appears in the process $\mf o_t(-\mc V\circ B,U)$ if $\mc V=V+\xi$.
Indeed, since $\eqref{Equation: Difficulty when r>1}$
does not involve any kind of integral (either with respect to space or time), it is not clear that we can exploit
the interpretation of $\int f(x)\xi(x)\d x$ as a stochastic integral to make sense of
this product.

In this context, one of the main insights of this paper---which leads to our ability to calculate
the trace moments in \eqref{Equation: Mixed Moments}---is
that the expression \eqref{Equation: Difficulty when r>1}
admits a fairly tractable rigorous interpretation when we take an expectation
with respect to $\xi$. More specifically, if $\xi^\eps$ is
a smooth matrix noise that approximates $\xi$, then
we can use Isserlis' theorem (e.g., \cite[(1.7) and (1.8)]{MingoSpeicher}; see \cite{Isserlis} for Isserlis' original paper) to calculate
\begin{align}
\label{Equation: Difficulty when r>1 2}
\mbf E_{\xi^\eps}\left[\prod_{k=1}^{N(t)}\xi^\eps_{J_k}\big(B(\tau_k)\big)\right],
\end{align}
where $\mbf E_{\xi^\eps}$ denotes the expectation with respect to $\xi^\eps$ only,
conditional on $B$ and $U$ (which we assume are both independent of $\xi^\eps$):

\begin{definition}
\label{Definition: Pn}
Let $\mc P_0=\varnothing$ be the empty set, and
given an even integer $n\geq2$, let $\mc P_n$ denote the set of perfect pair matchings
of $\{1,\ldots,n\}$. That is, $\mc P_n$ is the set
\[\Big\{\big\{\{\ell_1,\ell_2\},\{\ell_3,\ell_4\},\ldots,\{\ell_{n-1},\ell_n\}\big\}:
\ell_i\in\{1,\ldots,n\}\text{ and }
\{\ell_i,\ell_{i+1}\}\cap\{\ell_j,\ell_{j+1}\}=\varnothing\text{ if }i\neq j\Big\}.\]
\end{definition}

Following the statement of Isserlis' theorem, \eqref{Equation: Difficulty when r>1 2}
can be simplified to
\begin{align}
\label{Equation: Difficulty when r>1 3}
\mbf 1_{\{N(t)\text{ is even}\}}
\sum_{p\in\mc P_{N(t)}}\prod_{\{\ell_1,\ell_2\}\in p}\mbf E\left[\xi^\eps_{J_{\ell_1}}\big(B(\tau_{\ell_1})\big)\xi^\eps_{J_{\ell_2}}\big(B(\tau_{\ell_2})\big)\right],
\end{align}
assuming that a sum over $p\in\mc P_0$ is equal to 1 by convention.
Informally, the correlation of the Gaussian white noise is given by a multiple of the
Dirac mass kernel; $\mbf E\big[\xi_{i,j}(x)\xi_{i,j}(y)^*\big]=c\de_0(x-y)$ for some $c>0$.
Thus, when $\eps\to0$, the covariance \eqref{Equation: Difficulty when r>1 3} will approximate
the expression
\begin{align}
\label{Equation: Difficulty when r>1 4}
\mbf 1_{\{N(t)\text{ is even}\}}
\sum_{p\in\mc P_{N(t)}}\prod_{\{\ell_1,\ell_2\}\in p}\mathbf 1_{\mf J(J_{\ell_1},J_{\ell_2})}\cdot c\de_0\big(B(\tau_{\ell_1})-B(\tau_{\ell_2})\big),
\end{align}
where the indicator $\mbf 1_{\mf J(J_{\ell_1},J_{\ell_2})}$
essentially ensures that the jumps $J_{\ell_1}$ and $J_{\ell_2}$ are between the same elements in $\{1,\ldots,r\}$
(the situation is a bit more complicated when $\mbb F=\mbb C,\mbb H$)---otherwise the covariance is zero by the independence
of the noises $(\xi_{i,j})_{1\leq i<j\leq r}$.
While \eqref{Equation: Difficulty when r>1 4} is ill-defined on its own,
we recall that in Theorem \ref{Theorem: Informal Vector-Valued Schrodinger on R},
the product \eqref{Equation: Difficulty when r>1} sits inside an integral/expectation with respect to the laws of $B$ and $U$.
In this context, we can interpret each nonvanishing summand in \eqref{Equation: Difficulty when r>1 4} as inducing a conditioning
on the probability-zero event
\begin{align}
\label{Equation: Probability Zero Events}
\big\{B(\tau_{\ell_1})=B(\tau_{\ell_2})\text{ for every }\{\ell_1,\ell_2\}\in p\big\}.
\end{align}
In rigorous terms, this can be defined by resampling the pairs of jump times $\{\tau_{\ell_1},\tau_{\ell_2}\}$
according to $B$'s self-intersection local time measure instead of the usual Poisson process measure;
see Definitions \ref{Definition: Regular Local Time} and \ref{Definition: Singular Process}
and Remark \ref{Remark: Self Intersection Uniform} for the
details.

In summary: While the approach developed in this paper does not
exhibit a pathwise interpretation of the Feynman-Kac
formula for vector-valued operators with matrix white noise
(see Section \ref{Section: Pathwise} for a discussion of this problem), Proposition
\ref{Proposition: Probabilistic OE}
nevertheless leads to a probabilistic interpretation
of any observable that relies
on the computation of the joint moments
\begin{align}
\label{Equation: Joint WN Kernel Moments}
\mbf E\big[\mr e^{-t_1\hat H}(x_1,y_1)_{i_1,j_1}\cdots \mr e^{-t_n\hat H}(x_n,y_n)_{i_n,j_n}\big].
\end{align}
The trace moments in \eqref{Equation: Mixed Moments} are one notable example of such observables,
since
\[\mr{Tr}\big[\mr e^{-t\hat H}\big]=\int_\mbb R\mr{Tr}\big[\mr e^{-t\hat H}(x,x)\big]\d x=\sum_{i=1}^r\int_\mbb R\mr e^{-t\hat H}(x,x)_{i,i}\d x.\]
Thus, the proof of our main result, Theorem \ref{Theorem: Trace Moment Formulas}, involves
the following two steps:
\begin{enumerate}[$\bullet$]
\item In {\bf Theorem \ref{Theorem: Regular FK}}, we prove a rigorous version of
Theorem \ref{Theorem: Informal Vector-Valued Schrodinger on R} that holds for
operators of the form $\hat H=-\frac12\De+V+\xi$, where
$\xi$ is a regular matrix noise (i.e., the components $\xi_{i,j}$ are Gaussian processes
with continuous sample paths).
\item Consider the operators $\hat H=-\frac12\De+V+\xi$ and $\hat H^\eps=-\frac12\De+V+\xi^\eps$
for $\eps>0$, where $\xi$ is a matrix white noise and $\xi^\eps$ are smooth approximations
such that $\xi^\eps\to\xi$ as $\eps\to0$. With Theorem \ref{Theorem: Regular FK},
we can compute the mixed moments
\begin{align}
\label{Equation: Approximate Mixed Moments}
\mbf E\left[\mr{Tr}\big[\mr e^{-t_1\hat H^\eps}\big]\mr{Tr}\big[\mr e^{-t_2\hat H^\eps}\big]\cdots\mr{Tr}\big[\mr e^{-t_n\hat H^\eps}\big]\right],\qquad
n\in\mbb N,~t_1,\ldots,t_n\geq0.
\end{align}
We then obtain the statement of {\bf Theorem \ref{Theorem: Trace Moment Formulas}}
by computing the $\eps\to0$ limit of \eqref{Equation: Approximate Mixed Moments}
using the process outlined in \eqref{Equation: Difficulty when r>1 2}--\eqref{Equation: Probability Zero Events},
and then showing that the expectations in \eqref{Equation: Approximate Mixed Moments}
also converge to the mixed moments in \eqref{Equation: Mixed Moments}.
\end{enumerate}

\begin{remark}
\label{Remark: From Scratch}
Contrary to what is suggested by the informal discussion in this section,
the proof of Theorem \ref{Theorem: Regular FK} is not simply
a matter of applying Proposition \ref{Proposition: Probabilistic OE} to already-known
Feynman-Kac formulas, such as Theorem
\ref{Theorem: Informal Vector-Valued Schrodinger on R}.
This is in part because (to the best of our knowledge) the standard semigroup theory
for vector-valued Schr\"odinger operators concerns domains with no boundary
and matrix potentials $\mc V$ that are complex-valued (e.g., \cite{Guneysu}, and the comprehensive survey
\cite{GuneysuBook}).
In contrast, a crucial
feature of Theorem \ref{Theorem: Regular FK} (which is explained by our specific interest in the multivariate SAO)
is that we allow operators that act on domains with
a boundary, and with potential functions that take values in matrices with quaternion entries.
Both of these requirements impose novel difficulties. Thus, some of the work
carried out in this paper---which partly explains its length---consists
of proving the Feynman-Kac formula in Theorem \ref{Theorem: Regular FK} "from scratch."
We point to 
Remark \ref{Remark: Boundary Terms Interpretation} for more details regarding the issue of boundaries, and to Remark \ref{Remark: Quaternion on R 1} for more details on the issue of quaternions.
\end{remark}

\subsubsection{Past Results}
\label{Section: Past Results}

In closing this subsection, we note that the idea of writing
OEs or similar objects using Poisson/Markov jump processes has appeared in various parts
of the mathematics and physics literatures. We begin with two very well-known
special cases of Proposition \ref{Proposition: Probabilistic OE}:

\begin{example}
Arguably the simplest example is
the special case $F(t)=\mc G+q(t)$, where $\mc G$ is the generator of the process $U$
in Definition \ref{Definition: Uniform RW} and $q(t)$ is diagonal
with entries denoted
\[q_{i,i}(t)=q(t,i),\qquad 1\leq i\leq r,~t\geq0.\]
In this case,
\eqref{Equation: Probabilistic OE}
simply reduces to the well-known discrete Feynman-Kac formula:
\[\fo{\mc T}\left\{\mr e^{\int_0^tF(s)\d s}\right\}_{i,j}=\Pi_U(t;i,j)\mbf E\left[\mr e^{\int_0^tq(s,U^{i,j}_t(s))\d s}\right].\]
\end{example}

\begin{example}
If $F(t)$ are the infinitesimal transition matrices
of a (possibly time inhomogeneous) Markov process $M$, then
\eqref{Equation: Ordered Exponential Differential Equation} is the
Kolmogorov forward equation for $M$'s transition semigroup, whereby
\begin{align}
\label{Equation: Change of Measure}
\fo{\mc T}\left\{\mr e^{\int_0^tF(s)\d s}\right\}_{i,j}=\mbf P\big[M(t)=j~\big|~M(0)=i\big]=\Pi_{M}(t;i,j).
\end{align}
In this case, \eqref{Equation: Probabilistic OE} is the known change of measure formula that
expresses $M$'s dynamics in terms of $U$; e.g., \cite[Appendix 1, Propositions 2.6 and 7.3]{KipnisLandim}.
\end{example}

More generally, the idea of using Proposition \ref{Proposition: Probabilistic OE}
in the context of vector-valued Schr\"odinger semigroups (thus producing
a vector-valued Feynman-Kac formula similar to
Theorem \ref{Theorem: Informal Vector-Valued Schrodinger on R}) is also not
new to this work.
For instance, \cite{OE1,OE2} introduced the same idea in the context of Pauli Hamiltonians,
which act on vector-valued functions due to the interaction of the particle's spin with an
external magnetic field. This idea has been extended in the mathematical physics literature
for various applications in the study of quantum systems subjected to magnetic fields, e.g.
\cite{Erdos2,Erdos1}.

Finally, we point out the work \cite{Dalangetal1}, which proves a probabilistic representation
for an impressive class of deterministic and stochastic PDEs (such as heat, wave, and telegraph)
wherein a Poisson process plays a fundamental role. In particular, the product of the
potential terms $V(t-\tau_i,X_{\tau_i})$ in \cite[(3.1)]{Dalangetal1} has striking similarities with
the term $\mf o_t(-\mc V\circ B^{x,y}_t,U^{i,j}_t)$ in \eqref{Equation: Informal Vector-Valued Kernel}.
The result in \cite{Dalangetal1} was later used in \cite{Dalangetal2} to study intermittency properties
of the stochastic wave equation.

\subsection{Pathwise Feynman-Kac Formula}
\label{Section: Pathwise}

As hinted at in the previous section, one notable problem left unsolved in this work is as follows:

\begin{problem}
\label{Problem: Pathwise}
Find a pathwise (with respect to $\xi$) probabilistic representation of
the random semigroup $(\mr e^{-t\hat H}:t>0)$.
\end{problem}

Given that $\xi=W'$ is most straightforward to understand once integrated against a
sufficiently regular test function, this would presumably rely on interpreting
the expectation of \eqref{Equation: Difficulty when r>1} with respect to $N$ and $B$
as an iterated stochastic integral against the joint densities of the points $B(\tau_k)$.
That said, this would also require understanding the distribution of the exponential
\[\exp\left(-\int_0^t\mc V_{U^{i,j}_t(s),U^{i,j}_t(s)}\big(B^{x,y}_t(s)\big)\d s\right)\]
in \eqref{Equation: Informal Vector-Valued Kernel} once we condition on $B(\tau_k)$
for $1\leq k\leq N(t)$.
We thus leave Problem \ref{Problem: Pathwise} to future investigations.

\subsection{Organization}
\label{Section: Organization}

The remainder of this paper is organized as follows:
In Sections \ref{Section: Main Results 1}--\ref{Section: Main Results 3}, we provide the statements of our main results.
More specifically:
\begin{enumerate}[$\bullet$]
\item Section \ref{Section: Main Results 1} contains results related to
the construction of $\hat H$ and its smooth approximations
as self-adjoint operators with compact resolvents, as well as the resulting construction of $\mr e^{-t\hat H}$ by means of a spectral expansion
(i.e., {\bf Proposition \ref{Proposition: Operator Definition}}).
\item In Section \ref{Section: Main Results 2}, we state our results regarding
pathwise Feynman-Kac formulas for $\hat H$'s smooth approximations
(i.e., {\bf Theorem \ref{Theorem: Regular FK}}).
\item In Section \ref{Section: Main Results 3}, we state our two main
results, namely the trace moment formula for \eqref{Equation: Mixed Moments} (i.e., {\bf Theorem \ref{Theorem: Trace Moment Formulas}})
and the number rigidity result (i.e., \bf{Theorem \ref{Theorem: Rigidity}}).
\end{enumerate}
Then, in Sections \ref{Section: Construction of Operators Proof}--\ref{Section: Covariance Estimates}
we prove these four results in the order they are stated.

\subsection{Acknowledgements}

The author gratefully acknowledges L\'aszl\'o Erd\H{o}s for
his generous comments on the content and presentation of a previous iteration
of some of the work contained in this paper (\href{https://arxiv.org/abs/2311.08564}{arXiv:2311.08564}), including pointing out the appearance of a version of
Proposition \ref{Proposition: Probabilistic OE} in \cite{OE1,OE2,Erdos2,Erdos1}.
The author also thanks Le Chen for a very insightful discussion on the connections between the present work and SPDEs,
in particular pointing out the works \cite{Dalangetal2,Dalangetal1}.

The author thanks the anonymous referees for their careful reading of the manuscript (in
particular the errors and typos they pointed out) and
their recommendations to improve the presentation.

\section{Main Results Part 1. Construction of Operators}
\label{Section: Main Results 1}

We now begin the process of providing precise statements for our results.
In this section, we describe the construction of the operator $\hat H$
and its smooth approximations
using quadratic forms. Among other things, this culminates in the ability to define $\mr e^{-t\hat H}$
via a spectral expansion; see \eqref{Equation: Semigroup via Spectral Expansion}.

\subsection{Vector-Valued Function Spaces}

Following-up on Section \ref{Section: Probabilistic OE}
(more specifically Theorem \ref{Theorem: Informal Vector-Valued Schrodinger on R}), we infer that the dynamics in the Feynman-Kac
formula for $\mr e^{-t\hat H}$
are a combination of a Brownian motion and a random walk on $\{1,\ldots,r\}$.
In order to provide a framework that is adapted to this, we henceforth reformulate vector-valued
functions $f:I\to\mbb F^r$ as $f:\{1,\ldots,r\}\times I\to\mbb F$.
This difference is only cosmetic, as both options can be viewed as a direct sum of the one-dimensional functions
$f(i,\cdot):I\to\mbb F$ over $1\leq i\leq r$. However, we settle on the latter to ensure that
the domain of the functions matches the state space of the combined random motions;
this makes for a much neater statement of our Feynman-Kac formulas
than Theorem \ref{Theorem: Informal Vector-Valued Schrodinger on R}, thus
leading us to the following definitions:

\begin{definition}
\label{Definition: Hilbert Spaces}
Let $I\subset\mbb R$ and $f,g:I\to\mbb F$.
We use $\langle\cdot,\cdot\rangle$ and $\|\cdot\|_2$ to denote
\[\langle f,g\rangle=
\displaystyle\int_I f(x)^* g(x)\d x\qquad\text{and}\qquad\|f\|_2=\sqrt{\langle f,f\rangle}.\]
We let $L^2(I,\mbb F)$ denote the associated Hilbert space.

Next, we denote $\mc A=\{1,\ldots,r\}\times I$, and we let $\mu$ denote
the measure on $\mc A$ obtained by taking the product of the counting measure
on $\{1,\ldots, r\}$ and the Lebesgue measure on $I$.
Given $f,g:\mc A\to\mbb F$, we denote
\[\langle f,g\rangle_\mu=\int_{\mc A}f(a)^*g(a)\d\mu(a)=\big\langle f(1,\cdot),g(1,\cdot)\big\rangle+\cdots+\big\langle f(r,\cdot),g(r,\cdot)\big\rangle;
\qquad
\textstyle\|f\|_\mu=\sqrt{\langle f,f\rangle_\mu},\]
and we let $L^2(\mc A,\mbb F)$ denote the associated Hilbert space; i.e.,
$L^2(\mc A,\mbb F)=L^2(I,\mbb F)^{\oplus r}$.

Finally, Let $C_0^\infty(I,\mbb F)$ be the set of $\mbb F$-valued functions that are smooth and
compactly supported on $I$'s closure.
Let $C_0^\infty(\mc A,\mbb F)=C_0^\infty(I,\mbb F)^{\oplus r}$.
\end{definition}

\begin{remark}
\label{Remark: Quaternion on R 1}
When $\mbb F=\mbb R$ or $\mbb C$,
$L^2(I,\mbb F)$ and $L^2(\mc A,\mbb F)$ are bona fide Hilbert spaces on the field $\mbb F$.
However, when $\mbb F=\mbb H$, calling these Hilbert spaces is an abuse of notation:
These spaces are instead
{\it quaternionic Hilbert spaces}, which are actually modules over the ring $\mbb H$;
see,
e.g., \cite[Section 4]{FarenickPidkowich}.
That said, we note that $L^2(I,\mbb H)$ and $L^2(\mc A,\mbb H)$ can be made into
standard Hilbert spaces if we view $\mbb H\cong\mbb R^4$ as a four-dimensional vector space on the field $\mbb R$,
and we replace $\langle\cdot,\cdot\rangle$ and $\langle\cdot,\cdot\rangle_\mu$ by the real inner products
\begin{align}
\label{Equation: Inner Product for H}
\prec f,g\succ=\Re\big(\langle f,g\rangle\big)\qquad\text{and}\qquad\prec f,g\succ_\mu=\Re\big(\langle f,g\rangle_\mu\big),
\end{align}
where $\Re$ denotes the real part.
At first glance, this might appear to be an inconsequential distinction, but this subtlety does
induce nontrivial concerns; most notably, two functions that are orthogonal with respect to
\eqref{Equation: Inner Product for H} need not be orthogonal with respect to $\langle\cdot,\cdot\rangle$ or $\langle\cdot,\cdot\rangle_\mu$.
While this does not change which numbers in $\mbb R$ are eigenvalues of $\hat H$, it does change the multiplicity of these eigenvalues,
which in turn changes the value of the trace $\mr{Tr}\big[\mr e^{-t\hat H}\big]$ by a constant (see \eqref{Equation: Trace Multiple of 4 1} and \eqref{Equation: Trace Multiple of 4 2}
for more details).

With this said, given that \cite{BloemendalVirag2} formulates its results using $\langle\cdot,\cdot\rangle_\mu$
(and that $\mr e^{-t\hat H}$'s kernel/semigroup is arguably easier to write and manipulate using quaternion multiplication rather than
$4\times 4$ matrix products over $\mbb R^4$), in this paper we choose to state (most of) our results using $\langle\cdot,\cdot\rangle_\mu$
as well. Nevertheless, we still need \eqref{Equation: Inner Product for H} in several places, 
since most of the standard functional-analytic
results that we use to construct $\hat H$ and $\mr e^{-t\hat H}$'s kernel
are stated under the assumption that the operator and kernel act on real/complex Hilbert spaces.
We refer to Sections \ref{Section: Quaternion-Operator Theory 1} and \ref{Section: Quaternion-Operator Theory 2} for
the full details of how this subtlety influences our constructions.
\end{remark}

\subsection{Domains and Boundary Conditions}

$\hat H$ is assumed to act on functions on the following domains:

\begin{assumption}\label{Assumption: Domain}
$I\subset\mbb R$ is one of the following three options: The full space $I=\mbb R$,
which we call {\bf Case 1};
the positive half line $I=(0,\infty)$, which we call {\bf Case 2}; or the bounded interval $I=(0,\vartheta)$ for some $\vartheta>0$,
which we call {\bf Case 3}.
\end{assumption}

In order to ensure that the operators that we consider are self-adjoint,
we must impose some appropriate boundary conditions when $I$
has a boundary:

\begin{assumption}\label{Assumption: Boundary}
In Case 2, we consider
functions $f:\mc A\to\mbb F$ that satisfy
\begin{align}
\label{Equation: Case 2 Boundary Conditions}
f'(i,0)+\bar\al_if(i,0)=0,\qquad1\leq i\leq r,
\end{align}
for a given vector $\bar\al=(\bar\al_1,\ldots,\bar\al_r)\in[-\infty,\infty)^r$,
with the convention that $\bar\al_i=-\infty$ corresponds to the Dirichlet
boundary condition $f(i,0)=0$.
In Case 3, we consider functions that satisfy
\[f'(i,0)+\bar\al_if(i,0)=0\qquad\text{and}\qquad-f'(i,\vartheta)+\bar\be_if(i,\vartheta)=0,\qquad1\leq i\leq r\]
for given vectors $\bar\al,\bar\be,\in[-\infty,\infty)^r$,
once again with the convention that $\bar\al_i=-\infty$ and $\bar\be_i=-\infty$
respectively correspond to $f(i,0)=0$ and $f(i,\vartheta)=0$.
\end{assumption}

\subsection{Deterministic and Random Potentials}

Next, we describe the potential functions that we consider in $\hat H$.
Most important for this are the following two definitions, which introduce the white
noise and its regular approximations:

\begin{notation}
When $\mbb F=\mbb C$, we use $\msf i$ to denote the imaginary unit,
and if $\mbb F=\mbb H$, then we use $\msf i$, $\msf j$, and $\msf k$
to denote the quaternion imaginary units.
\end{notation}

\begin{definition}
\label{Definition: Regular Noise}
Let $\rho:\mbb R\to\mbb R$ be a positive semidefinite function.
We say that the random function
$\mc N:I\to\mbb F$ is a regular noise in $\mbb F$ with covariance $\rho:\mbb R\to\mbb R$
if $\mc N$ is continuous with probability one, and we can write
\begin{align}
\label{Equation: x,y,z,w processes}
\mc N(x)=\begin{cases}
\mc X(x)&\text{if }\mbb F=\mbb R,\\
\frac1{\sqrt{2}}\big(\mc X(x)+\mc Y(x)\msf i\big)&\text{if }\mbb F=\mbb C,\\
\frac12\big(\mc X(x)+\mc Y(x)\msf i+\mc Z(x)\msf j+\mc W(x)\msf k\big)&\text{if }\mbb F=\mbb H,
\end{cases}
\end{align}
where $\mc X,\mc Y,\mc Z,\mc W:I\to\mbb R$ are i.i.d. continuous Gaussian processes
with mean zero and $\mbf E\big[\mc X(x)\mc X(y)\big]=\rho(x-y)$.
We define the quadratic form induced by $\mc N$ as
\[\mc N(f,g)=\langle f,\mc Ng\rangle,\qquad f,g\in C_0^\infty(I,\mbb F).\]
\end{definition}

\begin{definition}
\label{Definition: White Noise}
Let $W$ be a standard Brownian motion on $\mbb R$.
For every $f,g\in C_0^\infty(I,\mbb F)$, we define
$W'(f,g)\,``="\,\langle f,W'g\rangle$ using the formal integration by parts
\begin{align}
\label{Equation: White Noise 0}
W'(f,g)=\begin{cases}
-\langle f',Wg\rangle-\langle f,Wg'\rangle&\text{Cases 1 and 2},\\
f(\vartheta)^*W(\vartheta)g(\vartheta)-\langle f',Wg\rangle-\langle f,Wg'\rangle&\text{Case 3}.
\end{cases}
\end{align}
Then, we define the white noise in $\mbb F$ with variance $\si^2>0$ as the bilinear form
\begin{align}
\label{Equation: White Noise}
\mc N(f,g)=\begin{cases}
\si\,W_{1}'(f,g)&\text{if }\mbb F=\mbb R,\\
\frac\si{\sqrt{2}}\big(W_{1}'(f,g)+W_{\msf i}'(f,\msf ig)\big)&\text{if }\mbb F=\mbb C,\\
\frac\si2\big(W_{1}'(f,g)+W_{\msf i}'(f,\msf ig)+W_{\msf j}'(f,\msf jg)+W_{\msf k}'(f,\msf kg)\big)&\text{if }\mbb F=\mbb H,
\end{cases}
\end{align}
where $W_{1},W_{\msf i},W_{\msf j}$, and $W_{\msf k}$ are i.i.d. standard Brownian motions.
\end{definition}

With these definitions in hand, we now provide a precise description of $V+\xi$:

\begin{assumption}
\label{Assumption: Potential}
For every $1\leq i\leq r$, the function $V(i,\cdot):I\to\mbb R$ is bounded below and
locally integrable on $I$'s closure. Moreover, in Cases 1 and 2, there exists a constant
$\mf a>0$ such that
\[\lim_{|x|\to\infty}\frac{V(i,x)}{|x|^{\mf a}}=\infty\qquad\text{for all }1\leq i\leq r.\]
Let $\xi$ be the bilinear form on $C_0^\infty(\mc A,\mbb F)^2$ defined as
\begin{align}
\label{Equation: Xi quadratic form}
\xi(f,g)=\sum_{i,j=1}^r\xi_{i,j}\big(f(i,\cdot),g(j,\cdot)\big),
\end{align}
where
\begin{enumerate}
\item $\big(\xi_{i,i}:1\leq i\leq r\big)$ are i.i.d. copies of a regular or white noise in $\mbb R$.
\item $\big(\xi_{i,j}:1\leq i<j\leq r\big)$ are i.i.d. copies of a regular or white noise in $\mbb F$.
\item For $j<i$, we let $\xi_{j,i}=\xi_{i,j}^*$.
\item $\big(\xi_{i,i}:1\leq i\leq r\big)$ and $\big(\xi_{i,j}:1\leq i<j\leq r\big)$ are independent of each other.
\end{enumerate}
\end{assumption}

\begin{remark}
We note that, in Assumption \ref{Assumption: Potential}, we allow
the situation where the diagonal entries $\xi_{i,i}$ are white noises
and the off-diagonal entries $\xi_{i,j}$ are regular noises, and vice-versa.
This detail will become important once we outline the proof of our main
theorem in Section \ref{Section: Proof of Main Result Roadmap}.
\end{remark}

\begin{remark}
\label{Remark: GOE/GUE/GSE Case}
As advertised in the introduction, Assumptions \ref{Assumption: Domain},
\ref{Assumption: Boundary}, and \ref{Assumption: Potential} are partly designed
to include the multivariate SAO as a special case. That is, the multivariate SAO corresponds to
Case 2 with $V(i,\cdot)=\frac{rx}{2}$ and $\xi$ defined as follows:
\[\xi_{i,i}\text{ are white noises in $\mbb R$ with variance}=\begin{cases}
1&\text{if }\mbb F=\mbb R,\\
\frac12&\text{if }\mbb F=\mbb C,\\
\frac14&\text{if }\mbb F=\mbb H;
\end{cases}\]
and $\xi_{i,j}$ are white noises in $\mbb F$ with variance $\frac12$ when $i\neq j$.
\end{remark}

\subsection{Operators via Quadratic Forms}

We now provide a rigorous construction of $\hat H$ using quadratic forms.
We begin with the quadratic forms for $H=-\frac12\De+V$:

\begin{definition}
\label{Definition: Deterministic Quadratic Forms}
Let $\mr{AC}(I,\mbb F)$ denote the set of functions $f:I\to\mbb F$ that are locally absolutely
continuous on $I$'s closure. Given a function $h:I\to\mbb R$ that is bounded below and locally integrable,
let
\[\mr H^1_h(I,\mbb F)=\big\{f\in\mr{AC}(I,\mbb F):\|f\|_2,\|f'\|_2,\|h_+^{1/2}f\|_2<\infty\big\},\]
where $h_+(x)=\max\{0,h(x)\}$.
For each $1\leq i\leq r$, we define the bilinear form
$\mc E_i$ and the
form domain $D(\mc E_i)\subset\mr H^1_{V(i,\cdot)}(I,\mbb F)$ of the scalar-valued operator
\begin{align}
\label{Equation: One-Dimensional Operators}
H_if(x)=-\tfrac12f''(x)+V(i,x)f(x),\qquad f:I\to\mbb F
\end{align}
(with boundary $f'(0)-\bar\al_if(0)=0$ and $f'(\vartheta)-\bar\be_if(\vartheta)$ in Cases 2 and 3) as follows:
\begin{enumerate}
\item In Case 1, we let
\[\begin{cases}
D(\mc E_i)=\mr H^1_{V(i,\cdot)}(I,\mbb F),\\
\mc E_i(f,g)=\tfrac12\langle f',g'\rangle+\langle f,V(i,\cdot)g\rangle.
\end{cases}\]
\item In Case 2, if $\bar\al_i=-\infty$, we let
\[\begin{cases}
D(\mc E_i)=\left\{f\in\mr H^1_{V(i,\cdot)}(I,\mbb F):f(0)=0\right\},\\
\mc E_i(f,g)=\tfrac12\langle f',g'\rangle+\langle f,V(i,\cdot)g\rangle;
\end{cases}\]
and if $\bar\al_i\in\mbb R$, we let
\[\begin{cases}
D(\mc E_i)=\mr H^1_{V(i,\cdot)}(I,\mbb F),\\
\mc E_i(f,g)=\tfrac12\langle f',g'\rangle-\tfrac{\bar\al_i}2 f(0)^*g(0)+\langle f,V(i,\cdot)g\rangle.
\end{cases}\]
\item In Case 3, if $\bar\al_i=\bar\be_i=-\infty$, then
\[\begin{cases}
D(\mc E_i)=\left\{f\in\mr H^1_{V(i,\cdot)}(I,\mbb F):f(0)=f(\vartheta)=0\right\},\\
\mc E_i(f,g)=\tfrac12\langle f',g'\rangle+\langle f,V(i,\cdot)g\rangle;
\end{cases}\]
if $\bar\al_i,\bar\be_i\in\mbb R$, then
\[\begin{cases}
D(\mc E_i)=\mr H^1_{V(i,\cdot)}(I,\mbb F),\\
\mc E_i(f,g)=\tfrac12\langle f',g'\rangle-\tfrac{\bar\al_i}2 f(0)^*g(0)-\tfrac{\bar\be_i}2 f(\vartheta)^*g(\vartheta)+\langle f,V(i,\cdot)g\rangle;
\end{cases}\]
and if $\bar\al_i\in\mbb R$ and $\bar\be_i=-\infty$, then
\[\begin{cases}
D(\mc E_i)=\left\{f\in\mr H^1_{V(i,\cdot)}(I,\mbb F):f(\vartheta)=0\right\},\\
\mc E_i(f,g)=\tfrac12\langle f',g'\rangle-\tfrac{\bar\al_i}2 f(0)^*g(0)+\langle f,V(i,\cdot)g\rangle
\end{cases}\]
(if $\bar\al_i=-\infty$ and $\bar\be_i\in\mbb R$, then we define
$D(\mc E_i)$ and $\mc E_i$ in the same way as above, except that
we interchange the roles of $\bar\al_i$ and $\bar\be_i$, and $\th$ and $0$).
\end{enumerate}
Then, we define the bilinear form $\mc E$ and form domain $D(\mc E)=D(\mc E_1)\oplus\cdots\oplus D(\mc E_r)$
of the operator $H=H_1\oplus\cdots\oplus H_r$
as
\[\mc E(f,g)=\mc E_1\big(f(1,\cdot),g(1,\cdot)\big)+\cdots+\mc E_r\big(f(r,\cdot),g(r,\cdot)\big),\qquad f,g\in D(\mc E).\]
Finally, we define the following norm on $D(\mc E)$:
\[\|f\|^2_\star=\|f'\|_\mu^2+\|f\|_\mu^2+\|V_+^{1/2}f\|_\mu^2.\]
\end{definition}

The quadratic form associated with the noise $\xi$ is defined in \eqref{Equation: Xi quadratic form}.
That said, this form is a-priori only defined on smooth and compactly supported functions. If we want to define
the operator $\hat H=H+\xi$ via quadratic forms, then we have to ensure that the associated forms are well-defined
on the same domain. To this effect:

\begin{proposition}
\label{Proposition: Operator Form-Bound}
Let Assumptions \ref{Assumption: Domain}, \ref{Assumption: Boundary}, and \ref{Assumption: Potential} hold.
There exists a large enough $C>0$ so that
\begin{align}
\label{Equation: +1 Norm}
\|f\|_{+1}=\sqrt{\mc E(f,f)+(C+1)\|f\|_\mu}
\end{align}
is a norm on $D(\mc E)$, which is
equivalent to $\|f\|_\star$.
Moreover, almost surely,
for every $\ka\in(0,1)$ there exists a finite $\nu>0$ such that
\begin{align}
\label{Equation: Noise Form Bound}
|\xi(f,f)|\leq\ka\mc E(f,f)+\nu\|f\|_\mu^2\qquad\text{for every }f\in C_0^\infty(\mc A,\mbb F).
\end{align}
\end{proposition}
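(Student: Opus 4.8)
The plan is to prove the two assertions separately: the norm equivalence first (routine), then the form bound on $\xi$, whose white-noise case on an unbounded domain is the real content.

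For the equivalence of norms, write $V(i,\cdot)=V_+(i,\cdot)-V_-(i,\cdot)$ with $0\le V_-(i,\cdot)\le C_0$ by Assumption \ref{Assumption: Potential}, and recall the one-dimensional trace inequality: at an endpoint $c$ of $I$ one has $|h(c)|^2\le\eps\|h'\|_2^2+C_\eps\|h\|_2^2$ for every $\eps>0$. Using this to absorb the boundary terms $-\tfrac{\bar\al_i}2|f(i,0)|^2$ and $-\tfrac{\bar\be_i}2|f(i,\vartheta)|^2$ of Definition \ref{Definition: Deterministic Quadratic Forms} (with $\eps$ small, depending on $\max_i(|\bar\al_i|,|\bar\be_i|)$), together with $\langle f,V_-f\rangle_\mu\le C_0\|f\|_\mu^2$ and $\langle f,V_+f\rangle_\mu=\|V_+^{1/2}f\|_\mu^2$, I obtain a constant $C_1>0$ such that
\[\tfrac14\|f'\|_\mu^2+\|V_+^{1/2}f\|_\mu^2-C_1\|f\|_\mu^2\ \le\ \mc E(f,f)\ \le\ 2\|f'\|_\mu^2+\|V_+^{1/2}f\|_\mu^2+C_1\|f\|_\mu^2\]
for all $f\in D(\mc E)$. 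Taking $C\ge C_1$ in \eqref{Equation: +1 Norm}, the quantity $\mc E(f,f)+(C+1)\|f\|_\mu^2$ is then sandwiched between $\tfrac14\|f\|_\star^2$ and a multiple of $\|f\|_\star^2$; in particular it is a positive-definite quadratic form on $D(\mc E)$, so $\|\cdot\|_{+1}$ is a genuine norm, equivalent to $\|\cdot\|_\star$. (For $\mbb F=\mbb H$ the same computation is run with the real inner product of Remark \ref{Remark: Quaternion on R 1}.)

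For the form bound, the equivalence just proved and the left-hand inequality above reduce matters to showing that, almost surely, for every $\ka\in(0,1)$ there is a finite $\nu$ with $|\xi(f,f)|\le\ka\big(\|f'\|_\mu^2+\|V_+^{1/2}f\|_\mu^2\big)+\nu\|f\|_\mu^2$ for all $f\in C_0^\infty(\mc A,\mbb F)$. Since $\xi(f,f)=\sum_{i,j}\xi_{i,j}(f(i,\cdot),f(j,\cdot))$ is a sum of at most $r^2$ terms and Cauchy--Schwarz bounds each summand by quantities depending separately on $f(i,\cdot)$ and $f(j,\cdot)$, it suffices to produce, for each scalar component noise $\mc N$ on $I$ and each $h\in C_0^\infty(I,\mbb F)$, a bound $|\mc N(h,h)|\le\ka'\big(\|h'\|_2^2+\|V_+(i,\cdot)^{1/2}h\|_2^2\big)+\nu'\|h\|_2^2$ with $\ka'=\ka/r^2$. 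When $\mc N$ is a regular noise this is immediate: $\mc N$ is a stationary continuous Gaussian field, so by the Borell--TIS inequality $\mbf P\big(\sup_{[n,n+1]}|\mc N|>t\big)\le e^{-ct^2}$ uniformly in $n$ for large $t$, and Borel--Cantelli gives $|\mc N(x)|\le C_\om\sqrt{\log(2+|x|)}$ for $|x|$ large, a.s.; since Assumption \ref{Assumption: Potential} forces $V_+(i,x)$ eventually to exceed any multiple of $|x|^{\mf a}$, hence $\sqrt{\log|x|}$, one gets the pointwise bound $|\mc N(x)|\le\ka' V_+(i,x)+\nu'$, and integrating against $|h|^2$ closes this case (in Case 3, $I$ is bounded and $\mc N$ is a.s.\ bounded on $\bar I$, which is easier still).

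The remaining and main case is a white noise $\mc N$ on an unbounded $I$. Since $\mc N$ has no pointwise values, use the integration by parts built into Definition \ref{Definition: White Noise}: up to a constant and the $(\mbb C,\mbb H)$ imaginary-unit decomposition, $\mc N(h,h)$ is a finite sum of terms $-2\Re\int_I h'(x)^*W(x)h(x)\,\dd x$ with $W$ a standard Brownian motion, plus in Case 3 a boundary term $h(\vartheta)^*W(\vartheta)h(\vartheta)$ handled by the trace inequality since $W(\vartheta)<\infty$ a.s. Partition $I$ into unit intervals $I_n=[n,n+1)$ and split $W(x)=W(\lfloor x\rfloor)+(W(x)-W(\lfloor x\rfloor))$. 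The step part, after telescoping over the compactly supported $h$, contributes $\sum_n\eta_n|h(n)|^2$ with $\eta_n=W(n)-W(n-1)$ i.i.d.\ standard Gaussian, and $|h(n)|^2\le\mu_n\int_{I_{n-1}\cup I_n}|h'|^2+(\tfrac12+\mu_n^{-1})\int_{I_{n-1}\cup I_n}|h|^2$ by a trace inequality on an interval of length $2$; the fluctuation part is at most $\sum_n\om_n\big(\la_n\int_{I_n}|h'|^2+\la_n^{-1}\int_{I_n}|h|^2\big)$, where $\om_n=\sup_{I_n}|W-W(n)|$ are i.i.d.\ copies of $\sup_{[0,1]}|W|$. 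Choosing $\la_n\asymp\ka'/(1+\om_n)$ and $\mu_n\asymp\ka'/(1+|\eta_n|)$ makes the total coefficient of each $\int_{I_n}|h'|^2$ at most $\ka'$ uniformly in $n$, while the coefficient of $\int_{I_n}|h|^2$ becomes $\lesssim(1+\om_n^2+\eta_{n-1}^2+\eta_n^2)/\ka'$. By Borel--Cantelli, $\om_n^2,\eta_n^2\le C_\om\log(2+|n|)$ for all large $|n|$, a.s., so this coefficient is $\le\ka' V_+(i,x)+\nu'$ on $I_n$ for all large $|n|$ (using again $\log|x|=o(|x|^{\mf a})$ and the growth of $V$), the finitely many remaining intervals being absorbed into $\nu'$; summing over $n$, over the $\le r^2$ components, and over the imaginary-unit pieces yields the claim. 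The main obstacle is precisely this step --- getting the relative bound $\ka$ \emph{uniformly over the unbounded support of $f$}: across infinitely many unit intervals the local fluctuations $\om_n$ and increments $\eta_n$ are only $O(\sqrt{\log n})$, not $O(1)$, and it is the super-logarithmic growth of $V$ that lets one trade these logarithmic factors against $\|V_+^{1/2}f\|_\mu^2$ inside $\mc E(f,f)$.
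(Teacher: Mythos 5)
Your overall strategy matches the paper's: reduce to scalar Brownian noise, split $W$ into a tame part plus a locally small fluctuation, use Borel--Cantelli to get $O(\sqrt{\log|x|})$ control on both pieces across the unbounded domain, and then trade these logarithmic factors against $\|V_+^{1/2}f\|_\mu^2$ via Assumption~\ref{Assumption: Potential}'s super-logarithmic growth of $V$. The one genuine difference in route is the decomposition: the paper writes $W=\tilde W+(W-\tilde W)$ with the running average $\tilde W(x)=\int_x^{x+1}W$, so that $\tilde W'(x)=W(x+1)-W(x)$ is an honest function of $\sqrt{\log}$ growth (by \cite[(4.2)]{GaudreauLamarreEJP}), while you use the piecewise-constant interpolant $W(\lfloor x\rfloor)$ so the ``derivative'' becomes the discrete increments $\eta_n=W(n)-W(n-1)$ hitting $|h(n)|^2$ after telescoping, handled by a two-interval trace inequality with randomly chosen $\mu_n$. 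These are essentially isomorphic arguments; the paper's moving-average version avoids having to track boundary terms at the integers, and it also quotes the norm equivalence and the $\sqrt{\log}$ growth from \cite{GaudreauLamarreEJP} rather than re-deriving them, whereas your version is self-contained. Two small things to tighten: (i) your sentence asserting that Cauchy--Schwarz reduces the off-diagonal terms $\xi_{i,j}(f(i,\cdot),f(j,\cdot))$ ($i\ne j$) to a bound on diagonal terms $\mc N(h,h)$ is not a clean reduction as stated --- a bilinear form with no sign is not controlled by its diagonal --- but this is harmless, since your $I_n$-partition estimates apply verbatim to the bilinear form, with the AM--GM step $2|g'||h|\le|g'|^2+|h|^2$ splitting the cross term at the end, which is exactly what the paper does in \eqref{Equation: White Noise Form Bound in fi and fj 3}--\eqref{Equation: White Noise Form Bound in fi and fj 5}; (ii) the coefficient of $\int_{I_n}|h'|^2$ after your choices of $\la_n,\mu_n$ is bounded by $3\ka'$ rather than $\ka'$ (fluctuation term plus the two neighboring trace contributions), so take $\ka'$ a fixed fraction of $\ka/r^2$ at the outset.
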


Thanks to Proposition \ref{Proposition: Operator Form-Bound},
almost surely,
there exists a unique continuous (with respect to $\|\cdot\|_\star$) extension of $\xi$ to a quadratic form on
$D(\mc E)$, which satisfies \eqref{Equation: Noise Form Bound} on all of $D(\mc E)$. Thus:

\begin{definition}
We define the quadratic form of $\hat H$ on the form domain $D(\mc E)$ as
\[\hat{\mc E}(f,f)=\mc E(f,f)+\xi(f,f),\qquad f\in D(\mc E).\]
\end{definition}

We now finally arrive at the main result in this section:

\begin{proposition}
\label{Proposition: Operator Definition}
Let Assumptions \ref{Assumption: Domain}, \ref{Assumption: Boundary}, and \ref{Assumption: Potential} hold. 
Almost surely, there exists a unique self-adjoint operator $\hat H$
with dense domain $D(\hat H)\subset L^2(\mc A,\mbb F)$ such that
\begin{enumerate}
\item $D(\hat H)\subset D(\mc E)$,
\item $\langle f,\hat Hf\rangle_\mu=\hat{\mc E}(f,f)$ for all $f\in D(\hat H)$, and
\item $\hat H$ has compact resolvent.
\end{enumerate}
In particular, we can define the eigenvalues $\la_1(\hat H)\leq\la_2(\hat H)\leq\cdots$
and corresponding eigenfunctions $\psi_k(\hat H)$ using the variational theorem:
For $k\in\mbb N$, one has
\begin{align}
\label{Equation: Variational}
\la_k(\hat H)=\inf_{f\in D(\mc E),~f\perp\psi_1(\hat H),\ldots,\psi_{k-1}(\hat H)}\frac{\hat{\mc E}(f,f)}{\|f\|_\mu^2},
\end{align}
and $f=\psi_k(\hat H)$ achieves the infimum.
The set $\big(\la_k(\hat H)\big)_{n\in\mbb N}$ is bounded below and has no accumulation point, and
the set $\big(\psi_k(\hat H)\big)_{n\in\mbb N}$ forms an orthonormal basis of $L^2(\mc A,\mbb F)$.
Thus, for every $t>0$,
\begin{align}
\label{Equation: Semigroup via Spectral Expansion}
\mr e^{-t\hat H}f=\sum_{k=1}^\infty\mr e^{-t\la_k(\hat H)}\psi_k(\hat H)\big\langle\psi_k(\hat H),f\big\rangle_\mu.
\end{align}
\end{proposition}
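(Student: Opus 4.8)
The plan is to apply the standard KLMN-type machinery for semibounded quadratic forms, treating the noise form $\xi$ as a small form-perturbation of $\mc E$. The three assertions—existence/uniqueness of $\hat H$, the form identity, and compactness of the resolvent—will follow from three classical facts once the right properties of $\hat{\mc E}$ are established: (i) a closed, densely defined, semibounded quadratic form corresponds to a unique self-adjoint operator bounded below (the first representation theorem); (ii) if additionally the form domain embeds compactly into $L^2$, the operator has compact resolvent; and (iii) the spectral theorem then yields the eigenvalue expansion and the variational characterization.

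**Step 1: semiboundedness and closedness of $\hat{\mc E}$.** First I would verify that $\mc E$ itself is closed and bounded below on $D(\mc E)$ with respect to $\|\cdot\|_\mu$. Closedness amounts to showing that $D(\mc E)$ is complete under $\|\cdot\|_\star$ (equivalently $\|\cdot\|_{+1}$, by Proposition \ref{Proposition: Operator Form-Bound}); this is a routine weighted-Sobolev-space argument on each $D(\mc E_i)$, with the boundary terms $-\tfrac{\bar\al_i}{2}|f(0)|^2$ etc.\ controlled by the standard interpolation inequality $|f(0)|^2 \le \eps\|f'\|_2^2 + C_\eps\|f\|_2^2$ valid for $f\in\mr H^1$. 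Semiboundedness of $\mc E$ is then immediate since $V$ is bounded below. Next, the form bound \eqref{Equation: Noise Form Bound} with $\ka\in(0,1)$ says exactly that $\xi$ is $\mc E$-form-bounded with relative bound strictly less than $1$; the KLMN theorem then gives that $\hat{\mc E}=\mc E+\xi$ is closed on the same domain $D(\mc E)$ and bounded below (by something like $-\nu$ minus the lower bound of $\mc E$), and that $\|\cdot\|_{\hat{\mc E}}$ is equivalent to $\|\cdot\|_{+1}$, hence to $\|\cdot\|_\star$.

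**Step 2: the self-adjoint operator and its resolvent.** With $\hat{\mc E}$ closed, densely defined (it contains $C_0^\infty(\mc A,\mbb F)$, which is dense in $L^2(\mc A,\mbb F)$), and semibounded, the first representation theorem produces the unique self-adjoint $\hat H$ with $D(\hat H)\subset D(\mc E)$ and $\langle f,\hat Hf\rangle_\mu=\hat{\mc E}(f,f)$ for $f\in D(\hat H)$, establishing (1) and (2). For (3), compact resolvent, it suffices to show the embedding $(D(\mc E),\|\cdot\|_\star)\hookrightarrow L^2(\mc A,\mbb F)$ is compact. In Case 3 ($I$ bounded) this is Rellich–Kondrachov. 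In Cases 1 and 2, the growth hypothesis $V(i,x)/|x|^{\mf a}\to\infty$ provides the confining potential needed: a ball in $\|\cdot\|_\star$ is bounded in $\mr H^1_{\mathrm{loc}}$, hence precompact in $L^2$ on compacts by Rellich, while the tails $\int_{|x|>R}|f|^2 \le (\inf_{|x|>R}V)^{-1}\|V_+^{1/2}f\|_\mu^2 \to 0$ uniformly over the ball—a diagonal/tightness argument then gives norm-precompactness in $L^2(\mc A,\mbb F)$.

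**Step 3: spectral consequences.** Given a self-adjoint operator bounded below with compact resolvent, the spectral theorem yields a discrete spectrum $\la_1(\hat H)\le\la_2(\hat H)\le\cdots$ with no finite accumulation point, eigenfunctions $\psi_k(\hat H)$ forming an orthonormal basis, the min–max formula \eqref{Equation: Variational}, and the convergent spectral expansion \eqref{Equation: Semigroup via Spectral Expansion} for $\mr e^{-t\hat H}$ (each $\la_k$ bounded below makes $\mr e^{-t\la_k}$ bounded, and compactness of the resolvent makes $\mr e^{-t\hat H}$ trace class for $t>0$ once the growth of $\la_k$ is quantified, though only the basic expansion is claimed here). \textbf{The main obstacle} I anticipate is the quaternionic case $\mbb F=\mbb H$: the cited functional-analytic theorems (KLMN, representation theorem, spectral theorem) are stated for real or complex Hilbert spaces, so one must first pass to the real Hilbert space $(L^2(\mc A,\mbb H),\prec\cdot,\cdot\succ_\mu)$ via \eqref{Equation: Inner Product for H}, check that $\hat{\mc E}$ and the form bounds transfer verbatim (they do, being real-bilinear and $\Re$-compatible), run the entire argument there, and then transport the resulting operator and spectral data back—keeping track of the multiplicity bookkeeping flagged in Remark \ref{Remark: Quaternion on R 1}. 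The interval with boundary also requires care that the boundary forms in Definition \ref{Definition: Deterministic Quadratic Forms} are the ones genuinely encoding the Robin/Dirichlet conditions of Assumption \ref{Assumption: Boundary} under the representation theorem, i.e.\ that the operator domain recovers $f'(i,0)+\bar\al_i f(i,0)=0$; this is a standard integration-by-parts identification but must be done case by case.
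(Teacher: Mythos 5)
Your proposal follows essentially the same route as the paper's proof: infinitesimal form-boundedness of $\xi$ plus KLMN, then compact resolvent, then the spectral theorem. The main structural difference is how each argument arrives at compact resolvent: you prove it directly via a compact embedding of $(D(\mc E),\|\cdot\|_\star)$ into $L^2(\mc A,\mbb F)$ (Rellich on compacts plus a tightness/tail estimate from $V_+$), whereas the paper first records that each scalar block $H_i$ has compact resolvent (citing \cite{GaudreauLamarreEJP}), deduces the same for $H=H_1\oplus\cdots\oplus H_r$, and then appeals to stability of compact resolvent under relatively form-bounded perturbations (\cite[Theorem XIII.68]{ReedSimon4}). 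Both are standard and correct; your version is more self-contained, the paper's is shorter by outsourcing. You also correctly identify the quaternionic case as the genuine subtlety, but leave it at the level of a flag. The paper carries this through in detail: it works entirely in the real Hilbert space $(L^2(\mc A,\mbb H),\prec\cdot,\cdot\succ_\mu)$, observes that every eigenvalue there has multiplicity divisible by four because the eigenspaces are invariant under right multiplication by $1,\msf i,\msf j,\msf k$, chooses the $\Psi_\ell$ in blocks of four of the form $\Psi,\Psi\msf i,\Psi\msf j,\Psi\msf k$, and then uses the identity $\langle f,g\rangle_\mu=\sum_{\ell\in\{1,\msf i,\msf j,\msf k\}}\prec f\ell,g\succ_\mu\ell$ to translate the real-Hilbert-space spectral expansion into the quaternionic one in \eqref{Equation: Semigroup via Spectral Expansion}, with $\la_k(\hat H)=\La_{4(k-1)+1}(\hat H)$ and $\psi_k(\hat H)=\Psi_{4(k-1)+1}(\hat H)$. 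If you want your sketch to fully match the claimed statement (in particular the orthonormal basis assertion with respect to $\langle\cdot,\cdot\rangle_\mu$ and the right-multiplication convention in the expansion), that translation step has to be made explicit rather than just asserted to transfer verbatim.
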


\begin{remark}
\label{Remark: Quaternion on R 2}
If $\mbb F=\mbb H$ in Proposition \ref{Proposition: Operator Definition},
then when we say that $\hat H$ is self-adjoint with compact resolvent, we mean so with $\hat H$
viewed as a operator on the bona fide Hilbert space over $\mbb R$ endowed with
the real-inner product in \eqref{Equation: Inner Product for H}. However, every
other claim uses $\langle\cdot,\cdot\rangle_\mu$. This includes the
orthogonality relation $\perp$ in the variational formula \eqref{Equation: Variational},
and the claim that the eigenfunctions form an orthonormal basis of $L^2(\mc A,\mbb H)$,
in the sense that each $f\in L^2(\mc A,\mbb H)$ has the unique series representation
\[f=\sum_{k=1}^\infty\psi_k(\hat H)\big\langle\psi_k(\hat H),f\big\rangle_\mu.\]
We further note that the order in the product $\psi_k(\hat H)\big\langle\psi_k(\hat H),f\big\rangle_\mu$ matters in $\mbb H$
due to the lack of commutativity, which explains our peculiar way of writing \eqref{Equation: Semigroup via Spectral Expansion}.
\end{remark}

\section{Main Results Part 2. Feynman-Kac Formula for Regular Noise}
\label{Section: Main Results 2}

In this section, we discuss our Feynman-Kac formula for the semigroup
\eqref{Equation: Semigroup via Spectral Expansion} in the case where $\xi$'s
components are regular noises.

\subsection{Stochastic Processes}

We begin by defining the stochastic processes that arise in
the scalar Feynman-Kac formula.

\begin{definition}
\label{Definition: 1D Stochastic Processes}
We use $B$ to denote a standard Brownian motion on $\mbb R$,
$X$ to denote a reflected standard Brownian motion on $(0,\infty)$,
and $Y$ to denote a reflected standard Brownian motion on $(0,\vartheta)$.
Then, we use $Z$ to denote the generic "reflected" Brownian motion on $I$,
namely,
\[Z=\begin{cases}
B&\text{Case 1,}\\
X&\text{Case 2,}\\
Y&\text{Case 3.}
\end{cases}\]

In Cases 2 and 3, 
given a boundary point $c\in\partial I$,
we let $\big(\mf L^c_{[s,t)}(Z):0\leq s<t\big)$
denote the continuous version (jointly in $s$ and $t$) of $Z$'s boundary local time. That is,
for every $0\leq s<t$, we have the almost-sure equalities 
\begin{align}
\label{Equation: Regular Boundary Local Time}
\mf L^0_{[s,t)}(Z)=\lim_{\eps\to0}\frac1{2\eps}\int_s^t\mbf 1_{\{0\leq Z(u)<\eps\}}\d u
\qquad\text{and}\qquad
\mf L^{\vartheta}_{[s,t)}(Y)=\lim_{\eps\to0}\frac1{2\eps}\int_s^t\mbf 1_{\{\vartheta-\eps<Y(u)\leq \vartheta\}}\d u
\end{align}
($\mf L^0_{[s,t)}(Z)$ is used in both Cases 2 and 3,
whereas $\mf L^{\vartheta}_{[s,t)}(Y)$ is only used in Case 3).
See, e.g., \cite[Chapter VI, Theorem 1.7 and Corollary 1.9]{RevuzYor}
for a proof that a continuous version of this process exists.
Finally, for any $t>0$, we denote $\mf L_t^c(Z)=\mf L_{[0,t)}^c(Z)$.
\end{definition}

Next, we introduce the processes in the vector-valued formula:

\begin{definition}
\label{Definition: Combined Stochastic Processes}
Let $U(t)$ and $\tau_k$ be as in Definition \ref{Definition: Uniform RW} (i.e., uniform continuous-time random walk on
the complete graph with vertices $\{1,\ldots,r\}$ with jump rate $(r-1)$).
We define the Markov process $A(t)=\big(U(t),Z(t)\big)$, assuming that $U$ and $Z$
are independent.
See Figure \ref{Figure: Generic Path of A} in Appendix \ref{Appendix: Illustrations} for an illustration
of $A$'s path, which takes values in $\mc A$.

In Cases 2 and 3,
for every $1\leq i\leq r$, $c\in\partial I$, and $0\leq s\leq t$, we define
\begin{align}
\label{Equation: Multivariate Boundary Local Time 1}
\mf L^{(i,c)}_{[s,t)}(A)=\sum_{k\geq1:~U(\tau_{k-1})=i}\mf L_{[\tau_{k-1},\tau_k)\cap[s,t)}^c(Z);
\end{align}
i.e., $Z$'s boundary local time at $c$ restricted to
the subset $\{u\in[s,t):U(u)=i\}$. We note
\[\mf L^c_{[s,t)}(Z)=\sum_{i=1}^r\mf L^{(i,c)}_{[s,t)}(A).\]
However, we are more interested in the
combination of these boundary local times weighted by the constants $\bar\al$ and $\bar\be$
that determine the boundary conditions
in Assumption
\ref{Assumption: Boundary},
thus leading us to define the boundary term
\begin{align}
\label{Equation: Multivariate Boundary Local Time 2}
\mf B_t(A)=\begin{cases}
0&\text{Case 1,}
\vspace{5pt}\\
\displaystyle\sum_{i=1}^r\bar\al_i\mf L^{(i,0)}_t(A)&\text{Case 2,}
\vspace{5pt}\\
\displaystyle\sum_{i=1}^r\bar\al_i\mf L^{(i,0)}_t(A)+\sum_{i=1}^r\bar\be_i\mf L^{(i,\vartheta)}_t(A)&\text{Case 3.}
\end{cases}
\end{align}
\end{definition}

Finally, we state a general notation for random paths conditioned on their start and end points,
which in the sequel will be applied to $U,Z,A,$ and other processes, as well as some notation
for conditional expectations:

\begin{definition}
\label{Definition: Combined Stochastic Processes 2}
Let $\mf X=\big(\mf X(t)\big)_{t\geq0}$ be any continuous-time random process
with c\`{a}dl\`{a}g sample paths.
We denote $\mf X$'s transition kernel by
\[\Pi_\mf X(t;x,y)=\begin{cases}
\mbf P[\mf X(t)=y|\mf X(0)=x]&\text{if $\mf X$ is discrete,}\\
\mbf P[\mf X(t)\in\dd y|\mf X(0)=x]&\text{if $\mf X$ is continuous.}
\end{cases}\]
For any states $x,y$ and any time $t\geq0$, we denote
the conditionings
\[\mf X^x=\big(\mf X\big|\mf X(0)=x\big)\qquad\text{and}\qquad \mf X^{x,y}_t=\big(\mf X\big|\mf X(0)=x\text{ and }\mf X(t)=y\big).\]
\end{definition}

\begin{definition}
Given a random variable/process $\mf X$ and a functional $F(\mf X)$
that may depend on other sources of randomness, we let
$\mbf E{_\mf X}\big[F(\mf X)\big]$ denote the expectation with respect to $\mf X$
only, conditional on any other source of randomness.
\end{definition}

\subsection{Regular Feynman-Kac Formula}

\begin{theorem}
\label{Theorem: Regular FK}
Let Assumptions \ref{Assumption: Domain}, \ref{Assumption: Boundary}, and \ref{Assumption: Potential} hold,
assuming further that each entry $\xi_{i,j}$ is a {\bf regular} noise
in the sense of Definition \ref{Definition: Regular Noise}.
To simplify our notation, let us write $V+\xi$'s diagonal part as
\[S(i,x)=V(i,x)+\xi_{i,i}(x),\qquad 1\leq i\leq r,~x\in I,\]
and the non-diagonal part as
\[Q_{i,j}(x)=\xi_{i,j}(x),\qquad 1\leq i\neq j\leq r,~x\in I.\]
Denote the functional
\begin{align}
\label{Equation: nt}
\mf n_t(Q,A)=\begin{cases}
\mr e^{(r-1)t}&\text{if }N(t)=0,\\
\displaystyle
\mr e^{(r-1)t}(-1)^{N(t)}\left(\prod_{k=1}^{N(t)}\xi_{J_k}\big(Z(\tau_k)\big)\right)
&\text{if }N(t)>0,
\end{cases}
\end{align}
where we recall that $N(t)$, $\tau_k$, and $J_k$ were introduced in
Definition \ref{Definition: Uniform RW}.
Given $t>0$, let $\hat K(t)$ be the random integral operator on $L^2(\mc A,\mbb F)$ with kernel
\begin{align}
\label{Equation: Regular FK Kernel}
\hat K(t;a,b)=\Pi_A(t;a,b)\mbf E_A\left[\mf n_t(Q,A^{a,b}_t)\,\mr e^{-\int_0^tS(A^{a,b}_t(s))\d s+\mf B_t(A^{a,b}_t)}\right],\qquad a,b\in\mc A,
\end{align}
where for $a=(i,x)$ and $b=(j,y)$, one has $\Pi_A(t;a,b)=\Pi_Z(t;x,y)\Pi_U(t;i,j)$
($\hat K(t)$ is random because it depends on the realization of $\xi$ via $S$ and $Q$).
In other words,
\[\hat K(t)f(a)=\int_{\mc A}\hat K(t;a,b)f(b)\d\mu(b),\qquad f\in L^2(\mc A,\mbb F),\]
where we recall the definition of $\mu$ in Definition \ref{Definition: Hilbert Spaces}.
Almost surely, for every $t>0$,
\begin{align}
\label{Equation: Regular FK}
\mr e^{-t\hat H}=\hat K(t)\qquad\text{and}\qquad
\mr{Tr}\big[\mr e^{-t\hat H}\big]=\int_\mc A\hat K(t;a,a)\d\mu(a)<\infty.
\end{align}
\end{theorem}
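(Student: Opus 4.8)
Since each $\xi_{i,j}$ has continuous sample paths, on the almost-sure event where this holds and where the form bound \eqref{Equation: Noise Form Bound} is valid, the matrix potential $x\mapsto V(x)+\xi(x)$ is locally bounded and everything below is deterministic; moreover, by Proposition \ref{Proposition: Operator Definition}, $\hat H$ is the \emph{unique} self-adjoint operator whose quadratic form is $\hat{\mc E}$ on $D(\mc E)$. The plan is therefore to prove that the integral operators $\hat K(t)$, $t>0$, form a strongly continuous, self-adjoint semigroup on $L^2(\mc A,\mbb F)$ with $\|\hat K(t)\|\le\mr e^{Ct}$, and that the quadratic form of its generator coincides with $\hat{\mc E}$ on a form core; uniqueness of the operator associated with a form then forces $\hat K(t)=\mr e^{-t\hat H}$. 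One reduces to the case of a \emph{bounded} interval $I$---hence of a bounded, continuous potential $V+\xi$---by a Dirichlet-truncation and monotone-limit argument, in which the confining growth $V(i,x)\ge\ka|x|^{\mf a}$ and the form bound \eqref{Equation: Noise Form Bound} control the convergence of both the operators and their kernels; this is the same kind of approximation that underlies Proposition \ref{Proposition: Operator Definition}, so from now on $V+\xi$ may be taken bounded and continuous.

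\textbf{Step 1 ($\hat K(t)$ is a self-adjoint semigroup).}
Boundedness of $\hat K(t)$ on $L^2$ follows from a crude estimate: conditionally on $A$, $|\mf n_t(Q,A^{a,b}_t)|\le\mr e^{(r-1)t}\bigl(\sup_x\|\xi(x)\|\bigr)^{N(t)}$, which is integrable because $N(t)$ is Poisson; together with Gaussian bounds on $\Pi_A$ and boundedness of $S$ and $\mf B_t$, this gives $\|\hat K(t)\|\le\mr e^{Ct}$. The semigroup identity $\hat K(s)\hat K(t)=\hat K(s+t)$ follows by splitting a bridge of $A=(U,Z)$ of length $s+t$ at its position at time $s$ (the Markov property and Chapman--Kolmogorov for $A$, using that $Z$ is reversible for Lebesgue measure and $U$ for the counting measure), together with the multiplicativity $\mf n_{s+t}(Q,A)=\mf n_s(Q,A)\,\mf n_t(Q,\theta_sA)$ of the jump functional (with $\theta_s$ the time shift) and the additivity of $\int_0^{\cdot}S$ and $\mf B_\cdot$; one must track the order of the matrix factors so that those contributed by $[0,s]$ multiply on the correct side of those contributed by $[s,s+t]$. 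Self-adjointness, in the form $\hat K(t;a,b)=\hat K(t;b,a)^*$, comes from reversing the bridge $A^{a,b}_t$ in time (which swaps its endpoints): under this reversal $(-1)^{N(t)}$, $\int_0^tS$ and $\mf B_t$ are invariant, while the ordered product $\prod_{k=1}^{N(t)}\xi_{J_k}(Z(\tau_k))$ becomes the product in reversed order of the conjugates $\overline{\xi_{J_k}(Z(\tau_k))}$ (using $\xi_{j,i}=\xi_{i,j}^*$), i.e. the conjugate of the original product. Finally $\hat K(t)f\to f$ in $L^2$ as $t\to0$ because $\mbf P(N(t)=0)\to1$ and $Z$, $S$, $\mf B_t$ are continuous at $t=0$. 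By the spectral theorem for self-adjoint semigroups, $\hat K(t)=\mr e^{-t\hat A}$ for a unique self-adjoint $\hat A$ bounded below by $-C$.

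\textbf{Step 2 ($\hat A=\hat H$ and the trace formula).}
It remains to show $\lim_{t\to0}t^{-1}\langle f,(\mr{Id}-\hat K(t))g\rangle_\mu=\hat{\mc E}(f,g)$ for $f,g$ in a form core (smooth functions compatible with the boundary conditions). Expanding by conditioning on the value of $N(t)$, the terms with $N(t)\ge2$ contribute nothing in the limit; from $N(t)=0$---a scalar Feynman--Kac semigroup for reflected Brownian motion with potential $S$ and boundary weight $\mf B_t$---one obtains, after an integration by parts, $\tfrac12\langle f',g'\rangle_\mu+\langle f,Sg\rangle_\mu$ together with the Robin boundary terms $-\sum_i\tfrac{\bar\al_i}2f(i,0)^*g(i,0)$ (and the analogous term at $\vartheta$ in Case 3); from $N(t)=1$---a single jump of $U$ across an edge $\{i,j\}$, occurring at rate $1$ and, conditionally, at a time $\approx t/2$---one obtains $\sum_{i\ne j}\langle f(i,\cdot),\xi_{i,j}g(j,\cdot)\rangle$. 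The sum of these is $\mc E(f,g)+\xi(f,g)=\hat{\mc E}(f,g)$, so $\hat A=\hat H$ by uniqueness and $\hat K(t)=\mr e^{-t\hat H}$. (The same identity, and the combinatorial meaning of $\mf n_t$, can alternatively be read off from the Dyson series of $\mr e^{-t\hat H}$ in the off-diagonal part of $\xi$ around the diagonal operator $H+\mr{diag}(\xi_{i,i})$: applying the scalar Feynman--Kac formula to each diagonal block and using that the jumps of a rate-$(r-1)$ Poisson process, conditioned to number $n$, are ordered uniforms, one recovers $\mbf E_A\bigl[\mf n_t(Q,A^{a,b}_t)\,\mr e^{-\int_0^tS(A^{a,b}_t(s))\d s+\mf B_t(A^{a,b}_t)}\bigr]$, with the prefactor $\mr e^{(r-1)t}$ exactly cancelling the Poisson weights.) For the trace identity, write $\mr e^{-t\hat H}=\mr e^{-\frac t2\hat H}\,\mr e^{-\frac t2\hat H}$; the kernel $\hat K(\tfrac t2;\cdot,\cdot)$ lies in $L^2(\mc A\times\mc A)$ by the bounds of Step 1 and the Gaussian decay of $\Pi_A$ (and, on unbounded $I$, the growth of $V$, which via a min-max comparison with $-\tfrac12\De+\ka|x|^{\mf a}$ also yields $\sum_k\mr e^{-t\la_k(\hat H)}<\infty$), so $\mr e^{-t\hat H}$ is trace class; since its kernel $\hat K(t;\cdot,\cdot)$ is continuous, Mercer's theorem for self-adjoint trace-class integral operators gives $\mr{Tr}\bigl[\mr e^{-t\hat H}\bigr]=\int_{\mc A}\hat K(t;a,a)\d\mu(a)$, and $\hat K(t;a,a)=\hat K(t;a,a)^*\in\mbb R$ by the symmetry of Step 1.

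\textbf{Main obstacle.}
Two of these steps conceal the real work, as anticipated in Remark \ref{Remark: From Scratch}. The first is the boundary: proving that the exponential of the weighted boundary local time $\mf B_t(A)$ produces precisely the Robin terms $-\tfrac{\bar\al_i}2f(i,0)^*g(i,0)$ and the correct self-adjoint realization requires a dedicated analysis of reflected Brownian motion's boundary local time (the occupation formula \eqref{Equation: Regular Boundary Local Time}, the It\^o--Tanaka formula, and the attendant Feynman--Kac formula with a boundary local-time weight), which is not covered by off-the-shelf vector-valued results such as Theorem \ref{Theorem: Vector-Valued F-K on R}. The second is the ring $\mbb F=\mbb H$: because of non-commutativity the order of the factors in $\mf n_t$ cannot be permuted, so the composition and time-reversal computations of Step 1 must be carried out carefully, and the whole semigroup argument has to be run on the genuine real Hilbert space $\bigl(L^2(\mc A,\mbb H),\prec\cdot,\cdot\succ_\mu\bigr)$ of Remark \ref{Remark: Quaternion on R 1} before translating back---in particular the trace in Step 2 is the quaternionic trace $\sum_k\langle\psi_k(\hat H),\mr e^{-t\hat H}\psi_k(\hat H)\rangle_\mu$, which equals one quarter of the corresponding real trace since each eigenvalue has real multiplicity four.
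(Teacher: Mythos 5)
Your overall strategy matches the paper's: show that $\hat K(t)$ is a strongly continuous, symmetric, trace-class semigroup (the content of Proposition \ref{Proposition: Generic Kernel Properties}), then compute its generator by splitting on $N(t)\in\{0\},\{1\},\{\ge2\}$ (the content of Proposition \ref{Proposition: Generator Calculation}), and finally identify it with $\hat H$ by uniqueness. The time-reversal argument for symmetry, the Chapman--Kolmogorov/concatenation argument for the semigroup law, the cancellation of the $\mr e^{(r-1)t}$ prefactor against the Poisson weights, the remark that $N(t)=1$ yields $\sum_{i\neq j}\langle f(i,\cdot),\xi_{i,j}g(j,\cdot)\rangle$, the quaternionic real-Hilbert-space bookkeeping and the factor-of-four trace relation are all exactly the paper's moves.

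Where you diverge, however, there are real gaps. First, the opening "Dirichlet-truncation and monotone-limit argument" reducing to a bounded $I$ is not in the paper and is not merely bookkeeping: you would still have to establish exactly the growth-in-$|x|$ moment estimates (the analogue of Proposition \ref{Proposition: F-K Integrability Bound} and Lemma \ref{Lemma: FK Integrability Bounds}) in order to pass the kernel and the trace to the $R\to\infty$ limit, so the truncation saves nothing and instead adds a nontrivial strong-resolvent-convergence step. Your appeal to "the same kind of approximation that underlies Proposition \ref{Proposition: Operator Definition}" does not help, since that proposition is proved via the KLMN theorem without any truncation. Second, the claim that after truncation "$V+\xi$ may be taken bounded and continuous" is false: Assumption \ref{Assumption: Potential} requires only that $V(i,\cdot)$ be bounded below and locally integrable, so $V$ can be unbounded above and discontinuous on a bounded $I$. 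Consequently your "crude estimate" in Step 1, which uses $\sup_x\|\xi(x)\|$ together with an implicit bound on $S$, does not directly apply; the paper handles locally-integrable diagonal potentials through the Kato-class criterion (Section \ref{Section: Generator Part 1}), and you would need the same. Third, identifying the generator by showing $\lim_{t\to0}t^{-1}\langle f,(\mr{Id}-\hat K(t))g\rangle_\mu=\hat{\mc E}(f,g)$ on the form core $\mr{FC}$ of $\hat{\mc E}$ is not by itself conclusive: one must also argue that $\mr{FC}$ is a form core for the generator $\hat A$ of $\hat K(t)$, otherwise you only learn that $\hat{\mc E}$ is a restriction of $\hat A$'s form. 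The paper avoids this by instead proving the operator limit $t^{-1}(f-\hat K(t)f)\to\hat Hf$ in $L^2$ for $f\in D(\hat H)$, so that $G_K\supset\hat H$; since $G_K$ is symmetric and $\hat H$ is self-adjoint (hence maximally symmetric), equality follows with no form-core considerations. Finally, your use of Mercer's theorem for the trace formula requires a version on noncompact $\mc A$ in Cases 1 and 2; the paper's Hilbert--Schmidt argument ($\mr{Tr}[\mr e^{-t\hat H}]=\|\hat K(t/2)\|_{\mr{HS}}^2=\int_{\mc A}\hat K(t;a,a)\dd\mu(a)$, using symmetry and the semigroup law) needs no such extension. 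Your identification of the two principal technical obstacles (the Robin boundary local time and the noncommutativity of $\mbb H$) matches Remark \ref{Remark: From Scratch}.
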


Before moving on, a few remarks:

\begin{remark}
In \eqref{Equation: Regular FK Kernel},
we use the conventions that $x-\infty=-\infty$ for all $x\in[-\infty,\infty)$, and that $\mr e^{-\infty}=0$.
Thus, when one of the boundary conditions is Dirichlet (in the convention of
Assumption \ref{Assumption: Boundary}, $\bar\al_i=-\infty$
and/or $\bar\be_i=-\infty$ for some $1\leq i\leq r$), its contribution to the
boundary term $\mf B_t(A)$ defined in \eqref{Equation: Multivariate Boundary Local Time 2} is interpreted as
\[\mr e^{-\infty\mf L^{(i,c)}_t(Z)}=\mbf 1_{\{\mf L^{(i,c)}_t(Z)=0\}}=\mbf 1_{\{Z(s)\neq c\text{ for every }s\in[0,t)\text{ such that }U(s)=i\}}.\]
\end{remark}

\begin{remark}
When $r=1$, the process $U(t)$ is just constant on $1$,
and $\mf n_t(Q,A)=1$. In this case, we can simply identify $\mc A=\{1\}\times I=I$,
and Theorem \ref{Theorem: Regular FK} reduces to
the one-dimensional Feynman-Kac formula for $Hf=-\frac12f''+Sf$,
with $S(x)=V(x)+\xi(x)$:
\begin{align}
\label{Equation: Classical Feynman-Kac}
\hat K(t;x,y)=
\begin{cases}
\Pi_B(t;x,y)\mbf E_A\left[\mr e^{-\int_0^tS(B^{x,y}_t(s))\d s}\right]&\text{Case 1,}
\vspace{5pt}\\
\Pi_X(t;x,y)\mbf E_A\left[\mr e^{-\int_0^tS(X^{x,y}_t(s))\d s+\bar\al\mf L^0_t(X^{x,y}_t)}\right]&\text{Case 2,}
\vspace{5pt}\\
\Pi_Y(t;x,y)\mbf E_A\left[\mr e^{-\int_0^tS(Y^{x,y}_t(s))\d s+\bar\al\mf L^0_t(Y^{x,y}_t)+\bar\be\mf L^{\vartheta}_t(Y^{x,y}_t)}\right]&\text{Case 3.}
\end{cases}
\end{align}
See, e.g., \cite{ChungZhao,Papanicolaou,Sznitman}
(more precise references can be found in Section \ref{Section: Generator Part 1}).
\end{remark}

\begin{remark}
\label{Remark: Boundary Terms Interpretation}
Looking back at \eqref{Equation: Regular Boundary Local Time},
we can informally write
\begin{align}
\label{Equation: Boundary Local Time as an Integral}
\mf L^c_{[s,t)}(Z)=\frac12\int_s^t\de_c\big(Z(u)\big)\d u,
\end{align}
where $\de_c$ denotes the delta Dirac mass at $c\in\{0,\vartheta\}$.
Thus, the
presence of the boundary terms (i.e., the multiples of $\mf L_t^c(Z)$)
in the scalar Feynman-Kac formula
\eqref{Equation: Classical Feynman-Kac} in Cases 2 and 3
can be explained by the following heuristic:
The process of going from the scalar full-space Feynman-Kac formula to the half-line or interval
involves
\begin{enumerate}
\item replacing $B$ by the appropriate reflected version $X$ or $Y$, and
\item replacing $V(x)$ by $V(x)-\frac{\bar\al}2\de_0(x)$ in Case 2
or $V(x)-\frac{\bar\al}2\de_0(x)-\frac{\bar\be}2\de_\th(x)$ in Case 3.
\end{enumerate}
(This heuristic can also be justified by examining the quadratic forms of the operators involved;
see Definition \ref{Definition: Deterministic Quadratic Forms}.) In the vector-valued case, we can
explain the presence of the boundary term $\mf B_t(Z)$ in \eqref{Equation: Regular FK Kernel}
by performing the same heuristic; the only difference is that we replace the entries of the matrix
potential $V+\xi=S+Q$ by
\begin{align}
\label{Equation: Vector-Valued Boundary Heuristic}
\begin{cases}
S(i,x)-\frac{\bar\al_i}2\de_0(x)&\text{if }i=j\\
Q_{i,j}(x)&\text{if }i\neq j
\end{cases}
\qquad\text{or}\qquad\begin{cases}
S(i,x)-\frac{\bar\al_i}2\de_0(x)-\frac{\bar\be_i}2\de_\th(x)&\text{if }i=j\\
Q_{i,j}(x)&\text{if }i\neq j
\end{cases}
\end{align}
in Cases 2 or 3 respectively. By Proposition \ref{Proposition: Probabilistic OE}, we understand that
the contribution of $V+\xi$'s diagonal to $\fo{\mc T}\left\{\mr e^{-\int_0^tV(Z(s))+\xi(Z(s))\d s}\right\}$ is in the exponential $\mr e^{-\int_0^tS(A(s))\d s}$; hence
\eqref{Equation: Vector-Valued Boundary Heuristic} very naturally leads to
$\mf B_t(Z)$.
In contrast, the contributions of the Dirac masses in \eqref{Equation: Vector-Valued Boundary Heuristic}
are arguably more difficult to parse/calculate if we use either
\eqref{Equation: Ordered Exponential Differential Equation},
\eqref{Equation: Ordered Exponential Product}, or \eqref{Equation: Ordered Exponential Series}
to write the operator exponentials.
\end{remark}

\section{Main Results Part 3. Trace Moments and Rigidity}
\label{Section: Main Results 3}

We now finish the statements of our main results with the
Feynman-Kac formulas for the trace moments \eqref{Equation: Mixed Moments},
as well as our rigidity result.

\subsection{Combinatorial and Stochastic Process Preliminaries}

We begin by introducing the combinatorial objects that
are used to characterize which jump sequences $J_1,J_2,J_3,\ldots$ in the process
$U$ contribute to the sum \eqref{Equation: Difficulty when r>1 4} (the latter of which
we recall arises from an application of Isserlis' theorem):

\begin{definition}
\label{Definition: Bn}
Given $n\in\mbb N$, let
$\mc B_n$ denote the set of binary sequences with $n$ steps,
namely, sequences of the form $m=(m_0,m_1,\ldots,m_n)\in\{0,1\}^{n+1}$.
Given $h,l\in\{0,1\}$, we let $\mc B^{h,l}_n$ denote the set of
sequences $m\in\mc B_n$ such that $m_0=h$ and $m_n=l$.
\end{definition}

\begin{definition}
\label{Definition: Combinatorial Constant}
Suppose we are given a realization of the process $U$
in Definition \ref{Definition: Combined Stochastic Processes}.
Let $t\geq0$ be fixed, and suppose that $N(t)\geq2$ is even.
Given any jump $J_k=(i,j)$,
we let $J_k^*=(j,i)$ denote that same jump in reverse order.
Given any matching $p\in\mc P_{N(t)}$
(recall Definition \ref{Definition: Pn}), we define the constant $\mf C_t(p,U)$ as follows:
\begin{enumerate}
\item If $\mbb F=\mbb R$, then
\[\mf C_t(p,U)
=\begin{cases}
1&\text{if either $J_{\ell_1}=J_{\ell_2}$ or $J_{\ell_1}=J_{\ell_2}^*$ for all $\{\ell_1,\ell_2\}\in p$},\\
0&\text{otherwise}.
\end{cases}\]
\item If $\mbb F=\mbb C$, then
\[\mf C_t(p,U)
=\begin{cases}
1&\text{if $J_{\ell_1}=J_{\ell_2}^*$ for all $\{\ell_1,\ell_2\}\in p$},\\
0&\text{otherwise}.
\end{cases}\]
\item If $\mbb F=\mbb H$, then
\[\mf C_t(p,U)
=\begin{cases}
\mf D_t(p,U)&\text{if either $J_{\ell_1}=J_{\ell_2}$ or $J_{\ell_1}=J_{\ell_2}^*$ for all $\{\ell_1,\ell_2\}\in p$},\\
0&\text{otherwise}.
\end{cases}\]
Here, we define $\mf D_t(p,U)$ as follows: A binary sequence $m\in\mc B^{0,0}_{N(t)}$
is said to respect $(p,J)$ if for every
pair $\{\ell_1,\ell_2\}\in p$, the following holds:
\begin{enumerate}[(3.1)]
\item if $J_{\ell_1}=J_{\ell_2}$, then $\big((m_{\ell_1-1},m_{\ell_1}),(m_{\ell_2-1},m_{\ell_2})\big)$
are equal to one of the following pairs: $\big((0,0),(1,1)\big)$, $\big((1,1),(0,0)\big)$,
$\big((0,1),(1,0)\big)$, or $\big((1,0),(0,1)\big)$.
\item if $J_{\ell_1}=J_{\ell_2}^*$, then $\big((m_{\ell_1-1},m_{\ell_1}),(m_{\ell_2-1},m_{\ell_2})\big)$
are equal to one of the following pairs: $\big((0,0),(0,0)\big)$, $\big((1,1),(1,1)\big)$,
$\big((0,1),(1,0)\big)$, or $\big((1,0),(0,1)\big)$.
\end{enumerate}
Lastly, we call any pair $\{\ell_1,\ell_2\}\in p$ such that $J_{\ell_1}=J_{\ell_2}$
and
\[\big((m_{\ell_1-1},m_{\ell_1}),(m_{\ell_2-1},m_{\ell_2})\big)=\big((0,1),(1,0)\big)\text{ or }\big((1,0),(0,1)\big)\]
a flip, and we let $f(m,p,J)$ denote the total number of flips that occur
in the combination of $m,p,$ and $J$.
With all this in hand, we finally define
\begin{align}
\label{Equation: D constant for H}
\mf D_t(p,U)=2^{-N(t)/2}\sum_{m\in\mc B^{0,0}_{N(t)},~m\text{ respects }(p,J)}(-1)^{f(m,p,J)}.
\end{align}
\end{enumerate}
\end{definition}

\begin{remark}
We refer to Figures \ref{Figure: Walk 1 and 4}, \ref{Figure: Walk 2}, \ref{Figure: Binary non-example},
\ref{Figure: Binary 1}, and \ref{Figure: Binary 2} in Appendix \ref{Appendix: Illustrations} for illustrations
of the objects related to the combinatorial constants in Definition \ref{Definition: Combinatorial Constant}.
Moreover, we note that for any $p\in\mc P_{N(t)}$, the number of binary paths that respect
$(p,J)$ is at most $2^{N(t)/2}$ (since every step $(m_{\ell_1-1},m_{\ell_1})$ determines
the other step $(m_{\ell_2-1},m_{\ell_2})$ that is matched to it via $\{\ell_1,\ell_2\}\in p$). Consequently, in all cases (i.e., for $\mbb F=\mbb R,\mbb C,\mbb H$), one has
\begin{align}
\label{Equation: Combinatorial constant bound}
|\mf C_t(p,U)|\leq1.
\end{align}
\end{remark}

Next, in order to formalize the singular conditioning on the event
\eqref{Equation: Probability Zero Events}, we introduce self-intersection
local time measures induced by a permutation, the latter of which require
a precise definition of local time processes:

\begin{definition}
\label{Definition: Regular Local Time}
Let $\big(L^x_{[s,t)}(Z):0\leq s<t,~x\in I\big)$ denote
the continuous (jointly in $s$, $t$, and $x$) version of $Z$'s local time process on the time intervals
$[s,t)$, that is,
\begin{align}
\label{Equation: Local Time Integral Definition}
\int_s^tf\big(Z(u)\big)\d u=\int_I L^x_{[s,t)}(Z)f(x)\d x\qquad\text{for all measurable }f:I\to\mbb R.
\end{align}
(see, e.g., \cite[5, Chapter VI, Corollary 1.6 and Theorem 1.7]{RevuzYor} for a proof of the existence of this continuous version). We use the convention $L_t^x(Z)=L_{[0,t)}^x(Z)$.

Next, we define the local time process $\big(L_{[s,t)}^a(A):0\leq s<t,~a\in\mc A\big)$ of
the multivariate process $A=(U,Z)$ as follows:
\begin{align}
\label{Equation: Multivariate Local Time}
L^{(i,x)}_{[s,t)}(A)=\sum_{k\geq1:~U(\tau_{k-1})=i}L_{[\tau_{k-1},\tau_k)\cap[s,t)}^x(Z);
\end{align}
\[\int_s^tf\big(A(u)\big)\d u=\sum_{k\geq1}\int_{[\tau_{k-1},\tau_k)\cap[s,t)}f\big(U(\tau_{k-1}),Z(u)\big)\d u=\int_{\mc A}L^a_{[s,t)}(A)f(a)\d\mu(a).\]
In particular, for every $0\leq s<t$ and $x\in I$, one has
\begin{align}
\label{Equation: Local Time Sum}
\sum_{i=1}^rL^{(i,x)}_{[s,t)}(A)=L^x_{[s,t)}(Z).
\end{align}

Finally, let $t>0$, let $n\geq2$ be an even integer, and let $q\in\mc P_n$.
Given a realization of $Z$, we let $\mr{si}_{t,n,q,Z}$ denote
the unique random Borel probability measure on $[0,t)^n$ such that for every
$s_1,t_1,\ldots,s_n,t_n\in[0,t)$ with $s_k<t_k$ for $k=1,\ldots,n$, one has
\begin{multline}
\label{Equation: SI Measures}
\mr{si}_{t,n,q,Z}\big([s_1,t_1)\times[s_2,t_2)\times\cdots\times[s_n,t_n)\big)\\
=\frac{1}{\|L_t(Z)\|_2^{2n}}\prod_{\{\ell_1,\ell_2\}\in q}\left\langle L_{[s_{\ell_1},t_{\ell_1})}(Z),L_{[s_{\ell_2},t_{\ell_2})}(Z)\right\rangle
\end{multline}
(this uniquely determines the measure since product rectangles are a determining class
for the Borel $\si$-algebra).
\end{definition}

\begin{remark}
\label{Remark: Self Intersection Uniform}
Informally, one can think of $\mr{si}_{t,n,q,Z}$ as the uniform probability measure
on the set of $n$-tuples of time coordinates in $[0,t)^n$ such that each pair of times matched
by $q$ coincide with a self-intersection of $Z$; namely:
\[\big\{(t_1,\ldots,t_n)\in[0,t)^n:Z(t_{\ell_1})=Z(t_{\ell_2})\text{ for all }\{\ell_1,\ell_2\}\in q\big\}.\]
(We point to \cite[Theorem 2.3.2 and Proposition 2.3.5]{ChenBook} for a proof that
$\mr{si}_{t,n,q,Z}$ is supported on that set.)
Recall that, conditional on $N(t)$, the jump times $\tau_1,\ldots,\tau_{N(t)}$
are the order statistics of i.i.d. uniform random variables in $[0,t)$. Thus,
$\mr{si}_{t,n,q,Z}$ serves as a natural means of formalizing the conditioning on the event \eqref{Equation: Probability Zero Events}
(assuming $q$ and the matching $p$ in \eqref{Equation: Probability Zero Events} are related to each other
in such a way that $p$ matches the same pairs of times $t_k$ as $q$ once they are arranged in nondecreasing order).
\end{remark}

Finally, given that products of Feynman-Kac kernels involve multiple independent copies
of the processes $U$ and $X$, we introduce some notation to help write such expressions
tidily:

\begin{definition}
Let $\mf X$ be a stochastic process with c\`{a}dl\`{a}g sample paths.
Given $n$-tuples of times $\bs t=(t_1,\ldots,t_n)$ and states $\bs x=(x_1,\ldots,x_n)$, we let
\begin{align}
\label{Equation: Concatenated Path}
\mf X^{\bs x}_{\bs t}
=\mf X^{(x_1,\ldots,x_n)}_{(t_1,\ldots,t_n)}
\end{align}
denote a c\`{a}dl\`{a}g concatenation of independent paths
\[\big(\mf X^{x_1}(s):s\in[0,t_1)\big),\big(\mf X^{x_2}(s):s\in[0,t_2)\big),\ldots,\big(\mf X^{x_n}(s):s\in[0,t_n)\big)\]
on the time-interval $[0,t_1+\cdots+t_k)$.
That is, to generate $\mf X^{\bs x}_{\bs t}(s)$,
we follow the steps:
\begin{enumerate}
\item firstly, we run a path of $\mf X$ started at $x_1$
on the time interval $[0,t_1)$; 
\item secondly, we run an independent path of $\mf X$ started at $x_2$
on the time interval $[t_1,t_1+t_2)$;
\item $\cdots$
\item lastly, we run an independent path of $\mf X$ started at $x_n$
on the time interval $[t_1+\cdots+t_{n-1},t_1+\cdots+t_n)$.
\end{enumerate}
Given $n$-tuples of times $\bs t=(t_1,\ldots,t_n)$
and states $\bs x=(x_1,\ldots,x_n)$, $\bs y=(y_1,\ldots,y_n)$, let
\begin{align}
\label{Equation: Concatenated Path Bridge}
\mf X^{\bs x,\bs y}_{\bs t}=\mf X^{(x_1,\ldots,x_n),(y_1,\ldots,y_n)}_{(t_1,\ldots,t_n)}
\end{align}
be the c\`{a}dl\`{a}g concatenation $\mf X^{\bs x}_{\bs t}$ with the additional endpoint conditioning
\[\mf X^{\bs x,\bs y}_{\bs t}=\left(\mf X^{\bs x}_{\bs t}~\bigg|~\lim_{s\to(t_1+\cdots+t_k)^-}\mf X^{\bs x}_{\bs t}(s)=y_k\text{ for all }1\leq k\leq n\right).\]
See Figure \ref{Figure: Concatenated Path of A} in Appendix \ref{Appendix: Illustrations} for an illustration
of one such c\`{a}dl\`{a}g concatenation in the case $\mf X=A$.
Finally, we use the following shorthand for a product of transition kernels:
\[\Pi_{\mf X}(\bs t;\bs x,\bs y)=\prod_{k=1}^n\Pi_{\mf X}(t_k;x_k,y_k).\]
\end{definition}

\subsection{Trace Moment Formulas}

We are now finally in a position to state our trace moment formulas
in the case where $\xi$ is a white noise. First,
we provide a precise statement of how the conditioning on the singular event \eqref{Equation: Probability Zero Events}
is achieved:

\begin{notation}
\label{Notation: Arrow means Ordered}
Let $t>0$ and $n\in\mbb N$ be fixed.
Given $\bs s=(s_1,\ldots,s_n)\in[0,t]^n$, let $(\fo{s_1},\ldots,\fo{s_n})$
denote $\bs s$'s coordinates
in increasing order, i.e., $\fo{s_1}\leq \fo{s_2}\leq\cdots\leq \fo{s_n}$.
\end{notation}

\begin{definition}
\label{Definition: Singular Process}
Suppose that we are given a realization of $Z$
on some time interval $[0,t)$. Given this, we generate the process $\hat U$ on $[0,t)$ as follows:
\begin{enumerate}[$\bullet$]
\item Given $Z$, sample $\hat N(t)$ according to the distribution
\[\mbf P\big[\hat N(t)=2m\big]=\left(\frac{(r-1)^2\|L_{t}(Z)\|^2_2}{2}\right)^m\frac{1}{m!}\mr e^{-\frac{(r-1)^2}2\|L_{t}(Z)\|_2^2},\qquad m=0,1,2,\ldots,\]
i.e., $\hat N(t)/2$ is Poisson with parameter $\frac{(r-1)^2}2\|L_{t}(Z)\|_2^2$.
\item Given $\hat N(t)$, sample $\hat q$ uniformly at random in $\mc P_{\hat N(t)}$.
\item Given $Z$, $\hat N(t)$, and $\hat q$,
draw $\bs T=(T_1,\ldots,T_{\hat N(t)})\in[0,t]^{\hat N(t)}$
according to the self-intersection measure $\mr{si}_{t,\hat N(t),\hat q,Z}$.
Then, using Notation \ref{Notation: Arrow means Ordered}, we define $\hat{\bs\tau}=(\hat\tau_1,\ldots,\hat\tau_{\hat N(t)})$ to be such that
 $\hat \tau_k=\fo{T_k}$ for all $k\geq1$.
In words, $\hat{\bs\tau}$ is the tuple $\bs T$ rearranged in nondecreasing order.
\item Given $\hat N(t)$, $\bs T$, and $\hat{\bs\tau}$, we let $\hat\pi$ be the random permutation
of $\{1,\ldots,\hat N(t)\}$
such that $\hat\tau_{\hat\pi(\ell)}=T_\ell$ for all $\ell\leq\hat N(t)$, and we let $\hat p\in\mc P_{\hat N(t)}$
be the random perfect matching such that $\{\ell_1,\ell_2\}\in\hat q$ if and only if
$\{\hat\pi(\ell_1),\hat\pi(\ell_2)\}\in\hat p$. In words, $\hat p$ ensures that each pair of times $T_k$
that were matched by $\hat q$ are still matched once arranged in nondecreasing order $\hat \tau_k$.
\item Sample the jumps $J_k$ in the same way as for $U$, i.e., uniform random walk on
the complete graph (without self-edges) on $\{1,\ldots,r\}$.
\item Combining all of the above, we then let
$\big(\hat U(s):1\leq s\leq t\big)$ be the c\`{a}dl\`{a}g
path with jump times $\hat{\bs\tau}$
and jumps $J$ (in particular, if $\hat N(t)=0$, then $\hat U(t)$ is constant).
\end{enumerate}
See Figure \ref{Figure: Condition on Jump Times and Values of X} in Appendix \ref{Appendix: Illustrations} for an illustration
of $(\hat U,Z)$
\end{definition}

Our trace moment formulas are as follows:

\begin{theorem}
\label{Theorem: Trace Moment Formulas}
Let Assumptions \ref{Assumption: Domain},
\ref{Assumption: Boundary}, and \ref{Assumption: Potential} hold,
assuming further that $\xi_{i,i}$ are white noises in $\mbb R$ with variance $\si^2>0$,
and that $\xi_{i,j}$ ($i\neq j$) are white noises in $\mbb F$ with variance $\upsilon^2>0$.
Define the functionals
\[\mf M_t(\hat p,\hat U)=
\begin{cases}
1&\text{if }\hat N(t)=0\\
\upsilon ^{\hat N(t)}\mf C_t(\hat p,\hat U)&\text{otherwise},
\end{cases}\]
and
\begin{align}
\label{Equation: Trace Moment Exponential Term}
\mf H_t(\hat U,Z)=
\frac{(r-1)^2}2\|L_t(Z)\|_2^2+\frac{\si^2}2\|L_t(\hat U,Z)\|_\mu^2-\int_0^{t}V\big(\hat U(s),Z(s)\big)\d s+\mf B_{t}(\hat U,Z).
\end{align}
For every $\bs t=(t_1,\ldots,t_n)\in(0,\infty)^n$, one has
\begin{multline}
\label{Equation: Trace Moment Formula}
\mbf E\left[\mr{Tr}\big[\mr e^{-t_1\hat H}\big]\cdots\mr{Tr}\big[\mr e^{-t_n\hat H}\big]\right]\\
=\int_{\mc A^n}
\Pi_Z(\bs t;\bs x,\bs x)\mbf E\left[\mf M_{|\bs t|}(\hat p,\hat U^{\bs i}_{\bs t})\mr e^{\mf H_{|\bs t|}(\hat U^{\bs i}_{\bs t},Z^{\bs x,\bs x}_{\bs t})}
\mbf 1_{\{\forall k:~\hat U^{\bs i}_{\bs t}(t_1+\cdots+t_{k-1}+s)\to i_k\text{ as }s\to t_k^-\}}\right]\d\mu^n(\bs i,\bs x),
\end{multline}
where we denote $\bs i=(i_1,\ldots,i_n)\in\{1,\ldots,r\}^n$ and $\bs x=(x_1,\ldots,x_n)\in I^n$.
\end{theorem}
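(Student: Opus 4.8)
The plan is to deduce \eqref{Equation: Trace Moment Formula} by passing to the limit in the analogous identity for the smoothed operators $\hat H^\eps=-\tfrac12\De+V+\xi^\eps$, where $\xi^\eps$ is a matrix \emph{regular} noise obtained by mollifying $\xi$ at scale $\eps>0$; one mollifies each $\xi_{i,i}$ and each $\xi_{i,j}$ ($i<j$) separately, so that the independence structure of Assumption \ref{Assumption: Potential} is preserved and the variances converge to $\si^2$ and $\upsilon^2$. For fixed $\eps$, Theorem \ref{Theorem: Regular FK} gives $\mr{Tr}[\mr e^{-t_k\hat H^\eps}]=\int_{\mc A}\hat K^\eps(t_k;a,a)\d\mu(a)$, so $\prod_{k=1}^n\mr{Tr}[\mr e^{-t_k\hat H^\eps}]$ is an $n$-fold integral over $\mc A^n$ whose integrand is a product of $n$ Feynman--Kac kernels \eqref{Equation: Regular FK Kernel} built from $n$ independent bridge copies of $(U,Z)$, i.e.\ from the concatenated path $Z^{\bs x,\bs x}_{\bs t}$ together with independent walks on $\{1,\ldots,r\}$. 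First I would take the expectation over $\xi^\eps$ inside this integral (legitimate by Tonelli once the absolute moments are controlled). Since the diagonal noises $(\xi^\eps_{i,i})$ are independent of the off-diagonal noises $(\xi^\eps_{i,j})_{i<j}$, this $\xi^\eps$-expectation splits into two factors.

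The diagonal part $V+\xi^\eps_{i,i}$ enters only through the exponential $\exp(-\int_0^{|\bs t|}(V+\xi^\eps)(\,\cdot\,)\,\d s)$ in \eqref{Equation: Regular FK Kernel}, which is log-linear Gaussian in $(\xi^\eps_{i,i})$; its expectation is $\exp(-\int V+\tfrac12\mbf{Var})$, and by the local-time identity \eqref{Equation: Local Time Integral Definition} the variance converges as $\eps\to0$ to $\si^2\|L_{|\bs t|}(\hat U,Z)\|_\mu^2$, which together with the boundary term $\mf B$ produces the exponent $\mf H_{|\bs t|}$ of \eqref{Equation: Trace Moment Exponential Term}. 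The off-diagonal part enters through the product $\prod\xi^\eps_{J_\ell}(Z(\tau_\ell))$ over all jumps of all $n$ walks in the functionals $\mf n^\eps$ of \eqref{Equation: nt}; its $\xi^\eps$-expectation is computed by Isserlis' theorem \cite{Isserlis} (in its complex, resp.\ quaternionic, form when $\mbb F=\mbb C,\mbb H$), producing a sum over perfect matchings $p$ of the jumps weighted by products of pairwise covariances $\mbf E[\xi^\eps_{J_{\ell_1}}(Z(\tau_{\ell_1}))\xi^\eps_{J_{\ell_2}}(Z(\tau_{\ell_2}))]$, exactly as sketched in \eqref{Equation: Difficulty when r>1 2}--\eqref{Equation: Difficulty when r>1 4}. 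Each covariance is a multiple of $\rho^\eps(Z(\tau_{\ell_1})-Z(\tau_{\ell_2}))$ times a constant that vanishes unless $J_{\ell_1}$ and $J_{\ell_2}$ are jumps between the same pair of vertices; tracking the surviving pairings and their signs --- for $\mbb F=\mbb H$ this is the bookkeeping of which orientations of the imaginary units cancel, encoded by the binary sequences and ``flips'' of Definition \ref{Definition: Combinatorial Constant} --- yields precisely the constant $\mf C$, hence the functional $\mf M$.

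Next I would take $\eps\to0$ in the resulting expression. Conditionally on the Poisson counts, the jump times are order statistics of i.i.d.\ uniforms, so integrating them out turns each surviving summand into $\int\prod_{\{\ell_1,\ell_2\}\in p}\rho^\eps(Z(s_{\ell_1})-Z(s_{\ell_2}))\,\d\bs s$; since $\rho^\eps\to\de_0$, this converges (by the $L^2$ theory of Brownian self-intersection local time; see \cite{ChenBook}) to the self-intersection local time quantities of \eqref{Equation: SI Measures}, i.e.\ the resampled jump times converge in law to the measures $\mr{si}_{|\bs t|,\,\cdot\,,q,Z}$. Reorganizing the Poisson weights, the cardinality of $\mc P_{2m}$, the cancellation of the prefactors $\mr e^{(r-1)t_k}$ against the Poisson normalizations, and the count of admissible jump sequences $J$, the surviving terms reassemble into exactly the law of $(\hat U,Z)$ from Definition \ref{Definition: Singular Process} --- in particular $\hat N(t)/2$ becomes Poisson with parameter $\tfrac{(r-1)^2}{2}\|L_{|\bs t|}(Z)\|_2^2$ --- with integrand $\mf M_{|\bs t|}(\hat p,\hat U^{\bs i}_{\bs t})\,\mr e^{\mf H_{|\bs t|}(\hat U^{\bs i}_{\bs t},Z^{\bs x,\bs x}_{\bs t})}$ restricted to the event that the $k$-th block of the walk terminates at $i_k$, which is \eqref{Equation: Trace Moment Formula}. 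Finally, I would show that the left-hand sides converge, $\mbf E[\prod_k\mr{Tr}[\mr e^{-t_k\hat H^\eps}]]\to\mbf E[\prod_k\mr{Tr}[\mr e^{-t_k\hat H}]]$: this follows from quadratic-form convergence $\hat{\mc E}^\eps\to\hat{\mc E}$ (uniformly controlled by Proposition \ref{Proposition: Operator Form-Bound}), which yields almost-sure convergence of each eigenvalue and hence of $\mr{Tr}[\mr e^{-t\hat H^\eps}]$, upgraded to convergence of the joint moments by a uniform-in-$\eps$ bound on a $(1+\de)$-moment of \eqref{Equation: Approximate Mixed Moments}.

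The hard part will be this double limit. On one hand one needs moment bounds on the Brownian self-intersection local times appearing in \eqref{Equation: SI Measures} (and on $\hat N$, which has all moments given $Z$) strong enough to interchange $\lim_{\eps\to0}$ with the outer expectation over $Z$ and the integral over $\mc A^n$, while also absorbing the unbounded potential term $\int V$ and the boundary term $\mf B$; on the other hand one needs uniform-in-$\eps$ control of the trace moments \eqref{Equation: Approximate Mixed Moments}. By contrast, the combinatorial identification of the limiting weights with Definition \ref{Definition: Singular Process} --- especially the sign bookkeeping when $\mbb F=\mbb H$ --- is intricate but is a finite deterministic computation once the analytic convergence is in place.
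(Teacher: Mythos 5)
Your proposal captures the paper's high-level strategy correctly: mollify $\xi$, apply the regular Feynman--Kac formula (Theorem \ref{Theorem: Regular FK}), interchange with $\mbf E_\xi$, split into a diagonal Gaussian-MGF factor and an off-diagonal Isserlis factor, and identify the $\eps\to0$ limit with the self-intersection measures of Definition \ref{Definition: Regular Local Time} and the singular process $\hat U$ of Definition \ref{Definition: Singular Process}, while simultaneously matching the operator-side limit via eigenvalue/trace convergence. That is precisely the structure of Sections \ref{Section: Proof of Main Result Roadmap}, \ref{Section: Kernel Moment Limits}, and \ref{Section: Approximation of Operators}.

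There is, however, one genuine structural difference. You mollify \emph{all} entries of $\xi$ at the same scale $\eps$ and propose a single limit $\eps\to0$, whereas the paper deliberately uses two parameters $\eps$ (diagonal) and $\ze$ (off-diagonal) and takes the limits sequentially: first $\eps\to0$ with $\ze>0$ fixed, establishing the intermediate identity $\mr{Tr}[\mr e^{-t\hat H^{0,\ze}}]=\int_{\mc A}\hat K^{0,\ze}(t;a,a)\,\d\mu(a)$, and only then $\ze\to0$. The point of this device, explained in the paper right after \eqref{Equation: Double Approximation Operator}, is that the kernel $\hat K^{0,\ze}$ admits a \emph{pathwise} Feynman--Kac representation \eqref{Equation: Kernel at epsilon zero} even though $\xi_{i,i}$ is a white noise, because the diagonal noise only enters through the local-time integral \eqref{Equation: Diagonal Noise Pathwise Interpretation} and not through the noncommutative product $\mf n_t$. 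This lets the paper isolate the two sources of difficulty: Proposition \ref{Proposition: Trace Moments Limit 1} handles the $\eps\to0$ limit (only the Gaussian MGF factor changes, so the argument is a clean dominated-convergence step), and Proposition \ref{Proposition: Trace Moments Limit} handles the genuinely hard $\ze\to0$ limit (where the Isserlis combinatorics and $\mr{si}_{t,n,q,Z}$ appear). In your single-parameter version you would have to show that the $L^2$-Cauchy property \eqref{Equation: Trace Moments Limit 2.2} holds for the joint limit, with both the MGF and Isserlis factors moving at once; this is not obviously impossible, but it is not a simplification either, and you would lose the intermediate trace identity that anchors Step 3.

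Two smaller points. First, your claim that quadratic-form convergence ``yields almost-sure convergence of each eigenvalue'' is slightly too strong: Proposition \ref{Proposition: Eigenvalue Convergence} (and Lemma \ref{Lemma: Generic Convergence}, on which it rests) only give \emph{subsequential} almost-sure eigenvalue convergence. This is still enough --- once the right-hand sides converge in $L^2$ one can pass to an a.s.\ subsequence and identify the limit --- but your write-up would need the extra diagonal argument the paper uses in \eqref{Equation: Trace Formula for 0,zeta 1}--\eqref{Equation: Trace Formula for 0,zeta 3}. Second, passing from pointwise eigenvalue convergence to convergence of $\mr{Tr}[\mr e^{-t\hat H^\eps}]$ requires the uniform lower/upper bounds of Proposition \ref{Proposition: Uniform Eigenvalue Bounds} together with the trace-class property \eqref{Equation: H is Trace Class} to justify dominated convergence in the eigenvalue series; you would need to make that explicit rather than fold it into ``hence of $\mr{Tr}$.''
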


\subsection{Rigidity}

We now state our result regarding the number rigidity of $\hat H$. For this purpose,
we recall the general formal definition of point processes on the real line:

\begin{definition}
Let $\mc K\subset\mbb R$ be a Borel set
and $\mc B_\mc K$ be the $\si$-algebra of all Borel subsets of $\mc K$.
Let $\mc N^\#_\mc K$ denote the set
of integer-valued measures $\mc M$ on $(\mc K,\mc B_\mc K)$ such that $\mc M(\bar{\mc K})<\infty$ for every bounded set $\bar{\mc K}\in\mc B_\mc K$.
We equip $\mc N^\#_\mc K$ with the topology generated by the maps
\[\mc M\mapsto\int f(x)\d\mc M(x),\qquad f:\mc K\to\mbb R\text{ is continuous and compactly supported,}\]
as well as the associated Borel $\si$-algebra.
We note that for every $\mc M\in\mc N^\#_\mc K$, there exists a sequence
$(x_k)_{k\in\mbb N}\subset \mc K$ without accumulation point such that
$\mc M=\sum_{k=1}^\infty\de_{x_k}$;
i.e., $\mc M(\bar{\mc K})$ counts the number of points in the sequence $x_k$
that are within the set $\bar{\mc K}$. Finally,
a point process on $\mc K$ is a random element
that takes values in $\mc N^\#_\mc K$.
\end{definition}

In this paper, we are only interested in the eigenvalue point process of $\hat H$:

\begin{definition}
Let Assumptions \ref{Assumption: Domain},
\ref{Assumption: Boundary}, and \ref{Assumption: Potential} hold.
We define
$\hat H$'s eigenvalue point process as
\[\hat{\mc M}=\sum_{k=1}^\infty\de_{\la_k(\hat H)}\in\mc N^\#_\mbb R.\]
For every Borel set $\mc K\subset\mbb R$, we denote the restriction
\[\hat{\mc M}|_{\mc K^c}:=\sum_{k=1}^\infty\mbf 1_{\{\la_k(\hat H)\in \mc K^c\}}\,\de_{\la_k(\hat H)}\in\mc N^\#_{\mc K^c}.\]
That is, the same random measure as $\hat{\mc M}$, but excluding the masses in $\mc K$.
\end{definition}

The definition of number rigidity used in this paper is as follows:

\begin{definition}
\label{Definition: Number Rigidity}
We say that a Borel set $\mc K\subset\mbb R$ is bounded above if
there exists some $c\in\mbb R$ such that $\mc K\subset(-\infty,c]$.
(Note that by Proposition \ref{Proposition: Operator Definition},
$\hat{\mc M}$ is such that, almost surely, $\mc{\hat M}(\mc K)<\infty$ for every
Borel set $\mc K\subset\mbb R$ that is bounded above.)
We say that $\hat{\mc M}$ is number rigid if
for every bounded above Borel set $\mc K\subset\mbb R$, there exists
a deterministic measurable function $F_\mc K:\mc N^\#_{\mc K^c}\to\mbb R$ such that
\begin{align}
\hat{\mc M}(\mc K)=F_{\mc K}\big(\hat{\mc M}|_{\mc K^c}\big)\qquad\text{almost surely};
\end{align}
in words, the number of eigenvalues in $\mc K$ is a deterministic function of the
configuration of eigenvalues outside $\mc K$.
\end{definition}

Our result regarding number rigidity is as follows:

\begin{theorem}
\label{Theorem: Rigidity}
Let Assumptions \ref{Assumption: Domain},
\ref{Assumption: Boundary}, and \ref{Assumption: Potential} hold,
assuming further that each entry of $\xi$ is a white noise.
\begin{enumerate}[$\bullet$]
\item In Cases 1 and 2, if there exists $\ka,\nu>0$ such that $V(i,x)\geq\ka|x|-\nu$ for all $1\leq i\leq r$ and $x\in I$,
then $\hat{\mc M}$ is number rigid. In particular, the eigenvalue point process of any
multivariate SAO (as defined in Remark \ref{Remark: GOE/GUE/GSE Case}) is number rigid.
\item In Case 3, $\hat{\mc M}$ is number rigid with no further condition on $V$.
\end{enumerate}
\end{theorem}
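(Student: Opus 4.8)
The plan is to verify the two hypotheses \eqref{Equation: Rigidity Sufficient Condition 1}--\eqref{Equation: Rigidity Sufficient Condition 2} of Proposition \ref{Proposition: Rigidity Sufficient Condition} by a direct computation with the trace moment formula \eqref{Equation: Trace Moment Formula} of Theorem \ref{Theorem: Trace Moment Formulas}, applied with $n=1$ and $n=2$. Write $T(t)=\mr{Tr}\big[\mr e^{-t\hat H}\big]$, so that the two quantities to control are $\mbf{Cov}[T(t_1),T(t_2)]=\mbf E[T(t_1)T(t_2)]-\mbf E[T(t_1)]\,\mbf E[T(t_2)]$ (for fixed $t_1$, as $t_2\to0$) and $\mbf{Var}[T(t)]=\mbf E[T(t)^2]-\mbf E[T(t)]^2$ (as $t\to0$). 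The key structural input is that, in the $n=2$ formula, the diffusion $Z^{\bs x,\bs x}_{\bs t}$ is a concatenation of two \emph{independent} bridges $Z^{(1)}$ (of duration $t_1$, from $x_1$ to $x_1$) and $Z^{(2)}$ (of duration $t_2$, from $x_2$ to $x_2$); hence its occupation density splits, $L^x_{|\bs t|}(Z)=L^x_{t_1}(Z^{(1)})+L^x_{t_2}(Z^{(2)})$, and
\[\|L_{|\bs t|}(Z)\|_2^2=\|L_{t_1}(Z^{(1)})\|_2^2+\|L_{t_2}(Z^{(2)})\|_2^2+2\big\langle L_{t_1}(Z^{(1)}),L_{t_2}(Z^{(2)})\big\rangle,\]
and similarly for $\|L_{|\bs t|}(\hat U,Z)\|_\mu^2$.

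By the superposition property of Poisson random variables, the number $\hat N(|\bs t|)/2$ of self-intersection points driving $\hat U$ splits into independent intra-block-$1$, intra-block-$2$, and inter-block contributions; and by the product structure of the self-intersection measure \eqref{Equation: SI Measures} over the pairs of $\hat q$, the integrand of \eqref{Equation: Trace Moment Formula} very nearly factorizes over the two time blocks, the only couplings being (i) the inter-block self-intersection pairs of $Z$ used by the matching $\hat q$ (which enter $\hat N$, $\mf M$, and the $\|L\|$-terms of $\mf H$) and (ii) the single stretch of the walk $\hat U$ straddling the block boundary. Subtracting $\mbf E[T(t_1)]\,\mbf E[T(t_2)]$, which is recovered from the fully factorized piece, the covariance (and the variance, when $t_1=t_2=t$) equals the contribution of the event $\mc S$ that at least one of the couplings (i)--(ii) is present.

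The next step is to bound this remainder. Conditionally on $Z$, the number of inter-block self-intersections is Poisson with mean $(r-1)^2\upsilon^2\langle L_{t_1}(Z^{(1)}),L_{t_2}(Z^{(2)})\rangle$; using $|\mf C|\le1$ from \eqref{Equation: Combinatorial constant bound} on the combinatorial weights, expanding $\mr e^{\mf H}$, and invoking the lower bound $V(i,x)\ge\ka|x|-\nu$ (which is automatic in Case 3, where $I$ is bounded so no hypothesis on $V$ is needed), the remainder is dominated by a constant multiple of
\[\int_{\mc A^2}\Pi_Z(t_1;x_1,x_1)\,\Pi_Z(t_2;x_2,x_2)\;\mbf E\!\left[\big(1+\big\langle L_{t_1}(Z^{(1)}),L_{t_2}(Z^{(2)})\big\rangle\big)\,\mbf 1_{\mc S}\;\mr e^{-\ka\int_0^{t_1}|Z^{(1)}(s)|\d s-\ka\int_0^{t_2}|Z^{(2)}(s)|\d s}\right]\d\mu^2(\bs i,\bs x),\]
plus an analogous, lower-order term from the straddling excursion (ii). The decisive point is that the event $\mc S$ forces the ranges of $Z^{(1)}$ and $Z^{(2)}$ to overlap, so that on $\mc S$ one has $|x_1-x_2|=O(\sqrt{t_1}+\sqrt{t_2})$ with overwhelming probability; this couples the two spatial integrations, and combined with the linear growth of $V$ it furnishes the decay in $(x_1,x_2)$ needed to make the integral converge and to see that the remainder stays bounded as $t\to0$, giving \eqref{Equation: Rigidity Sufficient Condition 1}. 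For fixed $t_1$, the bridge $Z^{(2)}$ of duration $t_2$ has range shrinking to $0$ as $t_2\to0$, so $\mbf P[\mc S]\to0$ and the whole remainder tends to $0$, giving \eqref{Equation: Rigidity Sufficient Condition 2}. Proposition \ref{Proposition: Rigidity Sufficient Condition} then yields number rigidity of $\hat{\mc M}$, and specializing to Case 2 with $V(i,x)=\tfrac{rx}{2}$ (so $\ka=\tfrac r2$, $\nu$ arbitrary) gives the statement for the multivariate SAO.

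The step I expect to be the main obstacle is the quantitative control of the inter-block self-intersection local time $\langle L_{t_1}(Z^{(1)}),L_{t_2}(Z^{(2)})\rangle$ \emph{inside} the exponential $\mr e^{\mf H}$: one needs moment and exponential-moment bounds for this random variable that are uniform in the endpoints $x_1,x_2$ and carry the correct $t_2\to0$ rate, and one must check that they are not swamped by the spatially unbounded boundary term $\mf B_{|\bs t|}(\hat U,Z)$ or by the diagonal-noise term $\tfrac{\si^2}{2}\|L_{|\bs t|}(\hat U,Z)\|_\mu^2$ --- which is exactly where the interplay with the $\ka|x|$ growth of $V$ enters, and where the estimates refine the scalar computations of \cite{RigiditySAO} to the vector-valued setting.
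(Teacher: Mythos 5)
Your high-level strategy is sound: you identify Proposition \ref{Proposition: Rigidity Sufficient Condition} as the engine, correctly see that the $n=2$ moment is driven by a concatenation of two independent bridges so that the covariance reduces to the contribution of the ``coupled'' configurations after subtracting $\mbf E[T(t_1)]\mbf E[T(t_2)]$, and you name the two correct decay mechanisms (the overlap probability of the two bridge ranges, and the linear growth of $V$). These are exactly the ingredients the paper uses. Where you diverge is in the object you compute with. You work directly with Theorem \ref{Theorem: Trace Moment Formulas}, i.e.\ the $\ze=0$ formula involving the conditioned process $\hat U$ and the self-intersection sampling of Definition \ref{Definition: Singular Process}. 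The paper deliberately avoids this: it proves the $\ze$-uniform estimate of Proposition \ref{Proposition: Covariance Estimate} using the \emph{smoothed} formula of Proposition \ref{Proposition: Smooth Trace Moments} (which involves the ordinary rate-$(r-1)$ walk $U$, not $\hat U$) and only then passes to the $\ze\to0$ limit via the convergence facts already established for Theorem \ref{Theorem: Trace Moment Formulas}. The remark following Proposition \ref{Proposition: Covariance Estimate} explains precisely why: manipulating $\hat U$ is harder than manipulating $U$. Working with $U$ keeps the Poisson/jump structure explicit and lets the paper split $\mf m^{\bs\ze}_{\bs t}(A^{\bs a,\bs a}_{\bs t})-\mf m^{\ze}_{t_1}\mf m^{\ze}_{t_2}$ via the exact combinatorial cancellation \eqref{Equation: Covariance Formula Piece 2 - 2}, isolate the inter-block pair through a sum of indicators $\mc I^{t_1,t_2}$, and close the estimate with Young/Cauchy--Schwarz. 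Your route would need to reproduce the same cancellation inside the $\hat U$-measure, where the random Poisson parameter $\tfrac{(r-1)^2\upsilon^2}{2}\|L_{|\bs t|}(Z)\|_2^2$ must itself be decomposed, the random matching $\hat q$ does not respect the block structure, and the ordering permutation $\hat\pi$ couples the two blocks; that is substantially more delicate than you suggest.

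Two smaller points. First, your coupling source (ii), a ``stretch of $\hat U$ straddling the block boundary,'' is not actually present: the concatenated process $\hat U^{\bs i}_{\bs t}$ restarts independently at each $i_k$ by construction, so the walk itself does not cross block boundaries---all coupling enters through the $Z$ self-intersections that drive the jump times. Second, you flag but do not close what you rightly call the main obstacle: quantitative exponential-moment control of $\langle L_{t_1}(Z^{(1)}),L_{t_2}(Z^{(2)})\rangle$ with the correct $t_2$-rate, uniform in $x_1,x_2$, while simultaneously dominating the boundary term $\mf B$ and the diagonal-noise term $\tfrac{\si^2}{2}\|L\|_\mu^2$. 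In the paper this is resolved concretely through Lemma \ref{Lemma: SI Local Time Moment Scaling} (the $t^{3/4}$ scaling of $\|L_t\|_2$), the Gaussian overlap estimate for $\mbf P[\mc S]$ imported from \cite{RigiditySAO}, and the uniform exponential-moment bound \eqref{Equation: SI Local Time Exponential Moment Bound}. Your sketch points in the right direction, but you should reroute through $\hat H^{0,\ze}$ and $U$ rather than $\hat H$ and $\hat U$: this is not merely a convenience, it is what makes the inter-block cancellation and the exponential-moment controls tractable.
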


\section{Proof of Proposition \ref{Proposition: Operator Definition}: Construction of Operators}
\label{Section: Construction of Operators Proof}

Suppose that Assumptions \ref{Assumption: Domain}, \ref{Assumption: Boundary}, and \ref{Assumption: Potential} hold. 
In the argument that follows, we record (as remarks) a few properties of the operators $H$ and $\hat H$ that are not
part of the statement of Proposition \ref{Proposition: Operator Definition}, but which are nevertheless important
in the proofs of other results. Moreover, following-up on the Hilbert-space subtlety mentioned in Remark
\ref{Remark: Quaternion on R 1}, we deal with the real/complex and quaternion cases separately
throughout this section.

\subsection{The Cases $\mbb F=\mbb R,\mbb C$}
\label{Section: Operator Construction for R and C}

Assume first that $\mbb F=\mbb R,\mbb C$.
We begin by constructing the deterministic operator $H$.
For this purpose, for each $1\leq i\leq r$, recall the definition of
$\mc E_i$ and $D(\mc E_i)$ in Definition \ref{Definition: Deterministic Quadratic Forms}.
By \cite[Proposition 3.2]{GaudreauLamarreEJP}, the operators $H_i$ defined in \eqref{Equation: One-Dimensional Operators}
are the unique self-adjoint operators with dense domains $D(H_i)\subset D(\mc E_i)\subset L^2(I,\mbb F)$
such that $\langle f,H_ig\rangle=\mc E_i(f,g)$ for all $f,g\in D(H_i)$
(in \cite{GaudreauLamarreEJP}, the result is stated for operators acting on real-valued functions,
but the latter can be trivially extended to $\mbb C$ or $\mbb F$ by linearity).
Moreover, each $H_i$ has compact resolvent.

\begin{remark}
\label{Remark: Form Core for Hi}
For $1\leq i\leq r$, let $\mr{FC}_i$ denote the set of functions
$\phi\in C_0^\infty(I,\mbb F)$ that satisfy the following:
\begin{enumerate}
\item $\phi$ is supported inside $I$ in Case 1, as well as Cases 2 and 3
with Dirichlet boundary conditions (i.e., $\bar\al_i=-\infty$ in Case 2 and $\bar\al_i=\bar\be_i=-\infty$ in Case 3).
\item $\phi$ is supported inside $I$'s closure in Cases 2 and 3 with Robin boundary conditions (i.e., $\bar\al_i,\bar\be_i\in\mbb R$).
\item In Case 3, $\phi$ is supported in $[0,b)$ if $\bar\al_i\in\mbb R$ and $\bar\be_i=-\infty$,
and $\phi$ is supported in $(0,b]$ if $\bar\al_i=-\infty$ and $\bar\be_i\in\mbb R$.
\end{enumerate}
In \cite[Proposition 3.2]{GaudreauLamarreEJP}, it is also proved that $\mr{FC}_i\subset D(\mc E_i)$ is a form core for $\mc E_i$.
\end{remark}

\begin{remark}
\label{Remark: Hi is Trace Class}
In \cite[Theorem 5.4 and Lemma 5.13]{GaudreauLamarreEJP}, it is also proved that
\[\mr{Tr}\big[\mr e^{-tH_i}\big]=\sum_{k=1}^\infty\mr e^{-t\la_k(H_i)}<\infty.\]
\end{remark}

With this in hand, we can now define the operator
\begin{align}
\label{Equation: Diagonal Operator}
Hf(i,x)=-\tfrac12f''(i,x)+V(i,x)f(i,x),\qquad (i,x)\in\mc A
\end{align}
as the direct sum $H=H_1\oplus\cdots\oplus H_r$.
By \cite[Theorem XIII.85]{ReedSimon4}, $H$ is self-adjoint
on the domain $D(H)=D(H_1)\oplus\cdots\oplus D(H_r)$, which is dense in $L^2(\mc A,\mbb F)$.
Recalling the definition of $\mc E$ and $D(\mc E)$ at the end of Definition \ref{Definition: Deterministic Quadratic Forms},
this means that $D(H)\subset D(\mc E)$ and $\langle f,Hg\rangle_\mu=\mc E(f,g)$ for all $f,g\in D(H)$.
Finally, we note that since each $H_i$ has compact resolvent, the same is true for $H$:
Given that $H=H_1\oplus\cdots\oplus H_r$,
the eigenvalue-eigenfunction pairs $\big(\la_k(H),\psi_k(H)\big)_{k\in\mbb N}$ of $H$ are of the form
\begin{align}
\label{Equation: Oplus Eigenvalues}
\la_k(H)=\la_\ell(H_i)\qquad\text{and}\qquad \psi_k(H)=0\oplus\cdots\oplus0\oplus\psi_\ell(H_i)\oplus0\oplus\cdots\oplus 0,
\end{align}
for some $\ell\in\mbb N$.

\begin{remark}
\label{Remark: H is Trace Class}
If we combine Remark \ref{Remark: Hi is Trace Class} with \eqref{Equation: Oplus Eigenvalues}, then
we get that
\begin{align}
\label{Equation: H is Trace Class}
\mr{Tr}\big[\mr e^{-tH}\big]=\sum_{k=1}^\infty\mr e^{-t\la_k(H)}=\sum_{i=1}^r\sum_{k=1}^\infty\mr e^{-t\la_k(H_i)}<\infty.
\end{align}
\end{remark}

With the construction of $H$ completed, we are now ready to wrap up the proof of
Proposition \ref{Proposition: Operator Definition}. By Proposition \ref{Proposition: Operator Form-Bound},
almost surely, $\xi$ is an infinitesimally form-bounded perturbation of $\mc E$.
Thus, Proposition \ref{Proposition: Operator Definition}-(1) and -(2) follow directly from the
KLMN theorem (e.g., \cite[Theorem VIII.15]{ReedSimon1} and \cite[Theorem X.17]{ReedSimon2}), and the fact that $\hat H$ has compact
resolvent follows from the fact that $H$ has compact resolvent and standard variational estimates
(e.g., \cite[Theorem XIII.68]{ReedSimon4}).

\begin{remark}
\label{Remark: Form Core}
Another conclusion of the KLMN theorem is that any form core for $\mc E$
is also a form core for $\hat{\mc E}$ (e.g., \cite[Theorem 7.5.7]{SimonBook}). Thus,
following-up on Remark \ref{Remark: Form Core for Hi}, $\mr{FC}=\mr{FC}_1\oplus\cdots\oplus\mr{FC}_r$
is a form core for both $\mc E$ and $\hat{\mc E}$ (the latter almost surely).
\end{remark}

\subsection{The Case $\mbb F=\mbb H$}
\label{Section: Quaternion-Operator Theory 1}

Consider now the case $\mbb F=\mbb H$. The results in
\cite{ReedSimon4,ReedSimon2,ReedSimon1,SimonBook} that were cited in
Section \ref{Section: Operator Construction for R and C} require that
$H_i$, $H$, and $\hat H$ be defined on a standard real or complex
Hilbert space. For this purpose, we first assume
that $L^2(I,\mbb H)$ and $L^2(\mc A,\mbb H)$ are equipped with the
real inner products \eqref{Equation: Inner Product for H},
and that the bilinear forms for $H_i$ and $H$ are replaced by
\[\mc Q_i(f,g)=\Re\big(\mc E_i(f,g)\big)\qquad\text{and}\qquad\mc Q(f,g)=\Re\big(\mc E(f,g)\big);\]
the quadratic form
\[\hat{\mc E}(f,f)=\mc Q(f,f)+\Re\big(\xi(f,f)\big)=\mc E(f,f)+\xi(f,f)\]
does not need to be changed, as $\mc E(f,f)$ and $\xi(f,f)$ are always real.
The argument in Section \ref{Section: Operator Construction for R and C} then yields the conclusion of Proposition \ref{Proposition: Operator Definition},
the only difference being that, in the variational spectrum
\begin{align}
\label{Equation: Quaternion Variational}
\La_\ell(\hat H)=\inf_{f\in D(\mc E),~f\perp\Psi_1(\hat H),\ldots,\Psi_{\ell-1}(\hat H)}\frac{\hat{\mc E}(f,f)}{\|f\|_\mu^2},\qquad\ell=1,2,3,\ldots,
\end{align}
the eigenfunctions $\Psi_\ell(\hat H)$ form an orthonormal basis with respect to the standard
inner product $\prec\cdot,\cdot\succ_\mu$ instead of $\langle\cdot,\cdot\rangle_\mu$,
and we thus have
\begin{align}
\label{Equation: Semigroup via Spectral Expansion H}
\mr e^{-t\hat H}f=\sum_{\ell=1}^\infty\mr e^{-t\La_\ell(\hat H)}\prec\Psi_\ell(\hat H),f\succ_\mu\Psi_\ell(\hat H).
\end{align}

In order to recover the actual statement of Proposition \ref{Proposition: Operator Definition}
from this, we first note that each eigenvalue $\La\in\big(\La_\ell(\hat H)\big)_{\ell\in\mbb N}$ defined via \eqref{Equation: Quaternion Variational} has a multiplicity that is a multiple of $4$. This is because the corresponding eigenfunction $\Psi$
is part of a four-dimensional subspace spanned by the orthonormal (with respect to $\prec\cdot,\cdot\succ_\mu$; see \eqref{Equation: Orthogonality of l Psi}) basis $\{\Psi,\Psi\msf i,\Psi\msf j,\Psi\msf k\}$ (however, these basis elements
are not orthonormal with respect to $\langle\cdot,\cdot\rangle_\mu$). Up to reordering the
eigenfunctions $\big(\Psi_k(\hat H)\big)_{k\geq1}$ for repeated eigenvalues if necessary, we can assume that
for every $k\geq1$, one has
\begin{align*}
\Psi_{4(k-1)+2}(\hat H)=\Psi_{4(k-1)+1}(\hat H)\msf i,\\
\Psi_{4(k-1)+3}(\hat H)=\Psi_{4(k-1)+1}(\hat H)\msf j,\\
\Psi_{4(k-1)+4}(\hat H)=\Psi_{4(k-1)+1}(\hat H)\msf k.
\end{align*}
That is, the eigenfunctions are chosen in increments of  four consecutive
eigenfunctions within the same space of the form $\Psi,\Psi\msf i,\Psi\msf j,\Psi\msf k$,
in that particular order.

With this in hand,
we claim that the statement of Proposition \ref{Proposition: Operator Definition}
holds with the eigenvalues and eigenfunctions
\[\la_k(\hat H)=\La_{4(k-1)+1}(\hat H)\text{ and }\psi_k(\hat H)=\Psi_{4(k-1)+1}(\hat H)\qquad k=1,2,3,\ldots.\]
To see this, notice that for any $f,g\in L^2(\mc A,\mbb F)$, we can write
\[\langle f,g\rangle_\mu=\prec f,g\succ_\mu+\prec f\msf i,g\succ_\mu\msf i+\prec f\msf j,g\succ_\mu\msf j+\prec f\msf k,g\succ_\mu\msf k.\]
Indeed, for any $\ell\in\{1,\msf i,\msf j,\msf k\}$, one has
\begin{align}
\label{Equation: Orthogonality of l Psi}
\prec f\ell,g\succ_\mu=\Re\big(\langle f\ell,g\rangle_\mu\big)=\Re\big(\ell^*\langle f\msf ,g\rangle_\mu\big),
\end{align}
and by the properties of quaternion multiplication, the real part of
$\ell^*\langle f\msf ,g\rangle_\mu$ coincides with the scalar multiplying $\ell$
in $\langle f\msf ,g\rangle_\mu$. Given how we have ordered the $\Psi_{\ell}(\hat H)$
in the previous paragraph, this readily implies that the $\psi_k(\hat H)$ are an orthonormal
basis with respect to $\langle\cdot,\cdot\rangle_\mu$.
Moreover, rewriting \eqref{Equation: Semigroup via Spectral Expansion H}
using $\la_k(\hat H)$ and $\psi_k(\hat H)$ yields
\begin{multline*}
\mr e^{-t\hat H}f=\sum_{k=1}^\infty\mr e^{-t\la_k(\hat H)}
\left(\sum_{\ell\in\{1,\msf i,\msf j,\msf k\}}\prec\psi_k(\hat H)\ell,f\succ_\mu\psi_k(\hat H)\ell\right)\\
=\sum_{k=1}^\infty\mr e^{-t\la_k(\hat H)}
\psi_k(\hat H)\left(\sum_{\ell\in\{1,\msf i,\msf j,\msf k\}}\prec\psi_k(\hat H)\ell,f\succ_\mu\ell\right)
=\sum_{k=1}^\infty\mr e^{-t\la_k(\hat H)}\psi_k(\hat H)\big\langle\psi_k(\hat H),f\big\rangle_\mu
\end{multline*}
as desired.

\begin{remark}
Following-up on Remark \ref{Remark: Quaternion on R 1}, we see from
the analysis in this section that
the choice of inner product influences the trace of $\hat H$'s semigroup as follows:
\begin{align}
\label{Equation: Trace Multiple of 4 1}
\sum_{k=1}^\infty\mr e^{-t\La_k(\hat H)}=4\sum_{k=1}^\infty\mr e^{-t\la_k(\hat H)}=4\mr{Tr}\big[\mr e^{-t\hat H}\big].
\end{align}\end{remark}

\subsection{Proposition \ref{Proposition: Operator Form-Bound}}

At this point, the only part of Proposition \ref{Proposition: Operator Definition} that remains to
be proved is the technical estimate in Proposition \ref{Proposition: Operator Form-Bound}.
Given that the ideas used to prove the latter are also needed in several other results in the paper,
we postpone the proof of Proposition \ref{Proposition: Operator Form-Bound} to Section \ref{Section: Main Proposition 6}
below to make for a more streamlined treatment.

\section{Proof of Theorem \ref{Theorem: Regular FK}: Regular Feynman-Kac Formula}
\label{Section: Regular FK}

Throughout Section \ref{Section: Regular FK}, we assume without further mention that
Assumptions \ref{Assumption: Domain}, \ref{Assumption: Boundary}, and \ref{Assumption: Potential} hold,
and moreover that each entry $\xi_{i,j}$ is a regular noise.
This section is organized as follows: In Section \ref{Section: Roadmap for Regular FK and Smooth Trace Moments},
we provide an overview of the proof of Theorem \ref{Theorem: Regular FK}, wherein we state a number of
technical results without proof. Then, we provide the proofs of the technical
results in question in Sections \ref{Section: A Technical Lemma},
\ref{Section: Basic Properties of the Feynman-Kac Kernel}, and \ref{Section: Proof of Generator}.

\subsection{Proof Outline}
\label{Section: Roadmap for Regular FK and Smooth Trace Moments}

Recall the definition of $\hat K(t)$ in \eqref{Equation: Regular FK Kernel}.
The proof of Theorem \ref{Theorem: Regular FK} relies on the following two
technical results:

\begin{proposition}
\label{Proposition: Generic Kernel Properties}
Under the assumptions of Theorem \ref{Theorem: Regular FK},
almost surely,
$\big(\hat K(t)\big)_{t\geq0}$ is a strongly-continuous trace-class symmetric
semigroup on $L^2(\mc A,\mbb F)$; that is:
\begin{enumerate}
\item $\hat K(t;a,b)=\hat K(t;b,a)^*$ for every $t>0$ and $a,b\in\mc A$.
\item For every $s,t>0$, and $a,b\in\mc A$,
\[\int_{\mc A} \hat K(s;a,c)\hat K(t;c,b)\d\mu(c)=\hat K(s+t;a,b).\]
\item Letting $\|\cdot\|_{\mr{HS}}$ denote the Hilbert-Schmidt norm, for every $t>0$,
\[\|\hat K(t)\|_{\mr{HS}}^2=\int_{\mc A^2}|\hat K(t;a,b)|^2\d\mu(a)\dd\mu(b)=\int_{\mc A}\hat K(2t;a,a)\d\mu(a)<\infty.\]
\item For every $f\in L^2(\mc A,\mbb F)$, one has $\displaystyle\lim_{t\to0}\|\hat K(t)f-f\|_2=0.$
\end{enumerate}
\end{proposition}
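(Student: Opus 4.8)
The plan is to check the four properties directly from the Feynman--Kac representation \eqref{Equation: Regular FK Kernel}, in the order (1), (2), (3), (4), the last of which will follow almost formally from the first three. For the Hermitian symmetry (1), I would argue by time reversal of the bridge $A^{a,b}_t$. Since the random walk $U$ is reversible for the uniform measure on $\{1,\ldots,r\}$ and the (reflected) Brownian motion $Z$ is reversible for Lebesgue measure on $I$, the time-reversed bridge $s\mapsto A^{a,b}_t(t-s)$ has the law of $A^{b,a}_t$; in particular $\Pi_A(t;a,b)=\Pi_A(t;b,a)$. The functionals $\int_0^t S(A(s))\,\d s$ and $\mf B_t(A)$ are invariant under this reversal, while in $\mf n_t$ the jump factors get listed in the opposite order with each jump $J_k$ replaced by its reverse $J_k^*$; using $\xi_{j,i}=\xi_{i,j}^*$ and $(uv)^*=v^*u^*$ in $\mbb F$, this means reversal turns $\mf n_t(Q,A)$ into $\mf n_t(Q,A)^*$. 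Taking the $\mbb F$-conjugate and using that $\mbf E$ commutes with conjugation then gives $\hat K(t;a,b)=\hat K(t;b,a)^*$. (When $\mbb F=\mbb H$ one must track the non-commutative product order; this is where the quaternionic bookkeeping enters, but no new idea is needed.)

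For the Chapman--Kolmogorov identity (2), I would condition the bridge $A^{a,b}_{s+t}$ on its value $c=A^{a,b}_{s+t}(s)$: by the Markov property the two restrictions to $[0,s]$ and $[s,s+t]$ are then conditionally independent copies of $A^{a,c}_s$ and $A^{c,b}_t$, and $c$ has $\mu$-density $\Pi_A(s;a,c)\Pi_A(t;c,b)/\Pi_A(s+t;a,b)$. The exponent $-\int_0^{s+t}S+\mf B_{s+t}$ is additive over the two time intervals, and --- writing $\mf n$ with an empty product equal to $1$ and using $\mr e^{(r-1)s}\mr e^{(r-1)t}=\mr e^{(r-1)(s+t)}$ --- the functional $\mf n_{s+t}$ factors as the $[0,s]$-contribution times the $[s,s+t]$-contribution, in that (left-to-right) order. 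Since independent $\mbb F$-valued factors satisfy $\mbf E[XY]=\mbf E[X]\,\mbf E[Y]$, integrating in $c$ and applying Fubini yields $\int_{\mc A}\hat K(s;a,c)\hat K(t;c,b)\,\d\mu(c)=\hat K(s+t;a,b)$, with the order of the $\mbb F$-factors matching that of $\hat K(s;a,c)$ and $\hat K(t;c,b)$ --- this last point being essential when $\mbb F=\mbb H$.

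Given (1) and (2), the Hilbert--Schmidt identity in (3) is immediate: $|\hat K(t;a,b)|^2=\hat K(t;a,b)\hat K(t;a,b)^*=\hat K(t;a,b)\hat K(t;b,a)$ by (1), and integrating in $b$ and then $a$ and applying (2) with $s=t$ gives $\|\hat K(t)\|_{\mr{HS}}^2=\int_{\mc A}\hat K(2t;a,a)\,\d\mu(a)$. The substance --- and the step I expect to be the main obstacle --- is the finiteness of that integral in Cases 1 and 2, where the domain is infinite. Here I would establish an almost-sure pointwise estimate on $|\hat K(2t;a,a)|$: $\mf B_{2t}$ is bounded above by a multiple of $Z$'s boundary local time, $-\int_0^{2t}V(A(s))\,\d s$ is bounded above because $V$ is bounded below, and the factors $|\mf n_{2t}|$ and $\mr e^{-\int_0^{2t}\xi_{i,i}(Z(s))\,\d s}$ are controlled through the supremum of the (continuous, hence locally bounded) noise over the range of $Z^{a,a}_{2t}$, using exponential moments of the Poisson jump count $N$, exponential moments of local times, and Gaussian tail bounds for the range of a Brownian bridge (to absorb the rare large excursions on which the noise is large). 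The outcome is a bound of the form $|\hat K(2t;a,a)|\le C_t(\omega,a)\,\Pi_Z(2t;a,a)\,\mbf E\big[\mr e^{-p\int_0^{2t}\ubar V(Z^{a,a}_{2t}(s))\,\d s}\big]^{1/p}$ for some $p>1$, where $\ubar V=\min_i V(i,\cdot)$ still satisfies Assumption \ref{Assumption: Potential} and $C_t(\omega,a)$ grows sub-exponentially (in fact sub-polynomially) in $a$. The last factor, integrated against $\Pi_Z(2t;a,a)$ over $\mc A$, is (up to constants) the trace of a Schr\"odinger semigroup with confining potential, hence finite by the reasoning behind Remark \ref{Remark: H is Trace Class}, and its decay in $a$ absorbs $C_t(\omega,a)$. (In Case 3 the domain is compact and the noise bounded, so this step is routine.)

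Finally, for (4): properties (1)--(2) make $\hat K(t)=\hat K(t/2)^2$ a self-adjoint positive operator, bounded by (3), and a soft argument using the semigroup relation together with $\|\hat K(t)\|_{\mr{op}}=\|\hat K(t/2)\|_{\mr{op}}^2$ (and continuity of the kernel in $t$) gives $\sup_{t\in(0,1]}\|\hat K(t)\|_{\mr{op}}<\infty$. For $f\in C_0^\infty(\mc A,\mbb F)$ I would then show $\hat K(t)f\to f$ in $L^2$ by dominated convergence in the representation $\hat K(t)f(a)=\mbf E_A\big[\mf n_t(Q,A^a)\,\mr e^{-\int_0^t S(A^a(s))\,\d s+\mf B_t(A^a)}f(A^a(t))\big]$: as $t\to0$ the bridge degenerates ($A^a(t)\to a$, no jumps occur, $\int_0^tS\to0$, $\mf B_t\to0$), so the integrand tends pointwise to $f(a)$, while for $t\le1$ it is dominated by an $a$-integrable quantity (via the compact support of $f$ and a Gaussian displacement bound, as in the proof of (3)). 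The uniform operator bound then extends this to all $f\in L^2(\mc A,\mbb F)$ by density.
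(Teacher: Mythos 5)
Your proposal follows the same overall architecture as the paper: time reversal for (1), the intermediate-time conditioning/Markov decomposition for (2), the Hilbert--Schmidt identity via (1)--(2) plus an integrability estimate for (3), and pointwise convergence plus a uniform operator bound for (4). Parts (1) and (2) are essentially identical to the paper's argument (including the observation that the quaternion ordering must be tracked carefully, which the paper handles by keeping the left-to-right factorization in $\mf n_{s+t}$). For (3), your high-level description of the bound is the right idea; the paper packages it precisely as Proposition \ref{Proposition: F-K Integrability Bound}, which first takes $\mbf E_N$ conditionally on $Z$ (turning the Poisson product $\prod_{k\leq N(t)}|\xi_{J_k}(Z(\tau_k))|$ into an exponential of a path integral against $\eta\circ Z$ via the order-statistics representation), then uses the reverse triangle inequality to extract the confining term $\mr e^{-\kappa t|Z(0)|^{\bar{\mf a}}}$. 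A detail your sketch glosses over and the paper explicitly resolves: the endpoint conditioning $U^{i,i}_{2t}$ couples the Poisson process $N$ to the random-walk endpoint, which breaks the direct application of the $N$-only estimate; the paper drops the $\{U(2t)=i\}$ conditioning (losing only a nonnegative indicator) precisely to decouple these.

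The one genuine gap is in your treatment of (4). You propose deducing $\sup_{t\in(0,1]}\|\hat K(t)\|_{\mr{op}}<\infty$ ``softly'' from the semigroup relation, the identity $\|\hat K(t)\|_{\mr{op}}=\|\hat K(t/2)\|_{\mr{op}}^2$, and continuity of the kernel. This argument, pushed to its conclusion, requires knowing that each spectral eigenvalue $\mu_k(t)$ of the commuting positive family $\{\hat K(t)\}_{t>0}$ is strictly positive (so that $\mu_k(t)=\mr e^{-c_kt}$ and (3) forces $\inf_k c_k>-\infty$). But ruling out $\mu_k\equiv0$---i.e.\ injectivity of $\hat K(t)$---is most naturally a consequence of $\hat K(t)f\to f$, which is the statement you are trying to prove. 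Without an independent positivity/injectivity input, the soft route is circular. The paper sidesteps this entirely: it derives the uniform bound $\sup_{t\in(0,1)}\|\hat K(t)f\|_2\leq C\|f\|_2$ (its display \eqref{Equation: Strong Continuity 0}) directly from the Feynman--Kac representation and the same integrability estimate used in (3), with the additional technical device of replacing the fixed starting point $U(0)=i$ by the stationary (uniform) distribution so that $U(t)$ decouples from $N$, and then restricting to smooth compactly supported $f$ and applying Vitali's convergence theorem. Your dominated-convergence alternative for compactly supported $f$ is fine in spirit, but the uniform operator bound is the load-bearing step, and the ``soft'' version of it needs to be replaced by the direct estimate.
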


\begin{proposition}
\label{Proposition: Generator Calculation}
Under the assumptions of Theorem \ref{Theorem: Regular FK},
almost surely,
\begin{align}
\label{Equation: Infinitesimal Generator Calculation}
\lim_{t\to0}\left\|t^{-1}\big(f-\hat K(t)f\big)-\hat Hf\right\|_2=0\qquad\text{for every $f\in D(\hat H)$}.
\end{align}
\end{proposition}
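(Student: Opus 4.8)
The plan is to identify the generator of the semigroup $\big(\hat K(t)\big)_{t\ge 0}$ with $\hat H$, using that Proposition~\ref{Proposition: Generic Kernel Properties} already provides a strongly continuous symmetric semigroup. Write $\mc L$ for the (self-adjoint, bounded below, closed) operator with $\hat K(t)=\mr e^{-t\mc L}$. Since $\hat H$ is self-adjoint, hence maximal among symmetric operators, it suffices to establish the graph inclusion $\hat H\subseteq\mc L$; and because $\mc L$ is closed, it is in turn enough to verify \eqref{Equation: Infinitesimal Generator Calculation} for $f$ ranging over an operator core of $\hat H$. The natural candidate is the form core $\mr{FC}=\mr{FC}_1\oplus\cdots\oplus\mr{FC}_r$ of Remark~\ref{Remark: Form Core for Hi}—smooth functions, compactly supported on $I$'s closure, with the Robin/Dirichlet behaviour of Assumption~\ref{Assumption: Boundary} at $\partial I$—which for regular noise also serves as an operator core, since $V+\xi$ is then locally bounded and the one-dimensional theory of \cite{GaudreauLamarreEJP} applies to each $H_i$ componentwise, after which the direct-sum and KLMN structure of Section~\ref{Section: Operator Construction for R and C} handles the rest.

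The core of the argument is the small-$t$ expansion of $\hat K(t)f(i,x)$ for such an $f$, obtained by conditioning on the number $N(t)$ of jumps of $U$ in $[0,t)$. On $\{N(t)=0\}$, of probability $\mr e^{-(r-1)t}$, the factor $\mr e^{(r-1)t}$ hidden in $\mf n_t$ cancels it and one is left with the scalar Feynman--Kac expression for the one-dimensional operator $H_i$ with its boundary condition and the diagonal noise $\xi_{i,i}$; the classical generator computation for the reflected Brownian motion $Z$—Itô's formula together with the boundary local time, the boundary condition on $f$ making the $\mf L^c_t(Z)$ contributions vanish to first order—shows this term equals $f(i,x)-t\big(-\tfrac12 f''(i,x)+V(i,x)f(i,x)+\xi_{i,i}(x)f(i,x)\big)+o(t)$. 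On $\{N(t)=1\}$, of probability $\sim(r-1)t$, the factor $(-1)^{N(t)}$ and the single term $\xi_{J_1}\big(Z(\tau_1)\big)$ in $\mf n_t$, combined with the continuity of $Z$, $f$ and the $\xi_{i,j}$, produce the off-diagonal contribution $-t\sum_{j\neq i}\xi_{i,j}(x)f(j,x)+o(t)$. On $\{N(t)\ge 2\}$, of probability $O(t^2)$, one shows the contribution is $o(t)$ using $L^2$ moment bounds for $\prod_k\xi_{J_k}\big(Z(\tau_k)\big)$ over the fixed horizon $[0,t]$, valid because $\xi$ is a regular noise. Summing the three pieces gives $\hat K(t)f(i,x)=f(i,x)-t\,\hat Hf(i,x)+o(t)$ pointwise, and one upgrades to convergence in $\|\cdot\|_2$ using the compact support of $f$, the local boundedness of $V+\xi$, and dominated convergence.

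I expect the main obstacle to be making the scalar generator step fully rigorous in the presence of the multiplicative weight $\mr e^{\mf B_t(A)}$—that is, the interplay of reflected Brownian motion, its boundary local time, and the Robin/Dirichlet conditions encoded in $D(\mc E_i)$—together with producing a uniform-in-$t$ domination for the remainder that survives the fact that $\mf n_t$ is an unbounded product of Gaussian field values. The quaternion case adds bookkeeping (non-commutativity of the products in $\mf n_t$, and the need to read everything through the real inner product $\prec\cdot,\cdot\succ_\mu$ of Remark~\ref{Remark: Quaternion on R 1}) but no new analytic difficulty once the real and complex cases are settled. A secondary point requiring care is the claim that $\mr{FC}$ is large enough to pin down the self-adjoint operator $\hat H$, which is where the local boundedness of the regular noise and the componentwise results of \cite{GaudreauLamarreEJP} are invoked.
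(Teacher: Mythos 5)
The three-piece decomposition by conditioning on $N(t)=0$, $N(t)=1$, and $N(t)\geq 2$ is exactly the paper's strategy (cf.\ \eqref{Equation: Generator 3.0} and the limits \eqref{Equation: Generator 1}--\eqref{Equation: Generator 3}), and your pointwise identification of each piece with the diagonal part $H+\mr{diag}(\xi)$, the off-diagonal part $Q$, and a negligible remainder is the right picture. However, your preliminary reduction rests on the claim that the form core $\mr{FC}$ of Remark~\ref{Remark: Form Core} is also an \emph{operator} core for $\hat H$. That is not established in the paper (Remark~\ref{Remark: Form Core} only gives a form core), and it is not a triviality: a form core need not be an operator core, and here $\hat H$ is defined abstractly via the KLMN theorem, so you would have to prove essential self-adjointness of $\hat H|_{\mr{FC}}$ by hand. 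This requires controlling $Q$ (and the diagonal noise) as operator-bounded, not merely form-bounded, perturbations of $H$, together with boundary bookkeeping in Cases 2 and 3 — none of which is carried out. Without that step, showing $\mc L f=\hat H f$ for $f\in\mr{FC}$ gives you $\mc L\supseteq\overline{\hat H|_{\mr{FC}}}$, but you cannot conclude $\mc L\supseteq\hat H$, and hence you cannot conclude the displayed convergence for all $f\in D(\hat H)$, which is what the proposition asserts.

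The paper avoids this issue by proving \eqref{Equation: Infinitesimal Generator Calculation} \emph{directly} for every $f\in D(\hat H)$: in Remark~\ref{Remark: Generator 1} the $N(t)=0$ piece is reduced to the scalar $r=1$ Feynman--Kac generator result (using only $f\in D(\hat H)\subset D(\mc E)$, which gives local absolute continuity and $\|f'\|_\mu,\|V_+^{1/2}f\|_\mu<\infty$ but not compact support), and in Section~\ref{Section: Generator Part 2} the $L^2$ upgrade of the $N(t)=1$ piece uses the Vitali convergence theorem with estimates exploiting only $\|f\|_\star<\infty$ (for example, the bound \eqref{Equation: Vitali Simplified Log Split 2} on $\int\log(1+|y|)^{\mf c}|f(i,y)|^2\,\dd y$). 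The cost is a heavier uniform-integrability argument than your ``compact support $+$ dominated convergence'' sketch; the benefit is that one never needs to decide whether $\mr{FC}$ is an operator core. If you want to keep the core reduction, you should either prove essential self-adjointness of $\hat H|_{\mr{FC}}$ (say via a Kato--Rellich argument built on operator boundedness of the matrix noise and the boundary-value structure of $D(\mc E_i)$), or argue at the level of quadratic forms that the form of $\mc L$ agrees with $\hat{\mc E}$ on a common form core and both forms are determined by that core — either route needs to be made explicit.
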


Before discussing these propositions, we use them to prove Theorem \ref{Theorem: Regular FK}:

\subsubsection{Cases $\mbb F=\mbb R,\mbb C$}
\label{Section: Generator for R C}

Suppose first that $\mbb F=\mbb R$ or $\mbb C$.
Consider an outcome in the probability-one event
where the conclusions of Propositions \ref{Proposition: Generic Kernel Properties}
and \ref{Proposition: Generator Calculation} hold.
By Proposition \ref{Proposition: Generic Kernel Properties}-(4),
there exists
a closed operator $G_K$ (called $\hat K(t)$'s generator) with dense domain
\begin{align}
\label{Equation: Generic Generator Domain}
D(G_K)=\left\{f\in L^2(\mc A,\mbb F):\lim_{t\to0}t^{-1}\big(f(a)-\hat K(t)f(a)\big)\text{ exists in }L^2(\mc A,\mbb F)\right\}
\end{align}
such that
\begin{align}
\label{Equation: Generic Generator}
\lim_{t\to0}\big\|t^{-1}\big(f-\hat K(t)f\big)-G_Kf\big\|_2=0\qquad\text{for every }f\in D(G_K)
\end{align}
(e.g., \cite[Chapter II, Theorem 1.4]{EngelNagel}). In other words, $\mr e^{-tG_K}=\hat K(t)$.
Our goal is to prove that $G_K=\hat H$.
This can be done with Proposition \ref{Proposition: Generator Calculation}:
First, we note that Proposition \ref{Proposition: Generic Kernel Properties}-(1) implies that $G_K$
is symmetric.
Then, given \eqref{Equation: Generic Generator Domain}
and \eqref{Equation: Generic Generator}, the limit \eqref{Equation: Infinitesimal Generator Calculation} implies that
$D(\hat H)\subset D(G_K)$ and that $G_K$ restricted to $D(\hat H)$ is equal to $\hat H$.
Since $\hat H$ is self-adjoint and $G_K$ is symmetric, $D(G_K)\subset D(G_K^*)\subset D(\hat H^*)=D(\hat H)$;
hence $\hat H=G_K$. This proves that $\mr e^{-t \hat H}=\hat K(t)$.

It now remains to prove the trace formula
$\mr{Tr}\big[\mr e^{-t \hat H}\big]=\int_{\mc A}\hat K(t;a,a)\d\mu(a)<\infty$.
For this purpose,
we note that
\[\mr{Tr}\big[\mr e^{-t\hat H}\big]=\sum_{k=1}^\infty\mr e^{-t\la_k(\hat H)}=\sum_{k=1}^\infty\big(\mr e^{-(t/2)\la_k(\hat H)}\big)^2
=\|\mr e^{-(t/2)\hat H}\|_{\mr{HS}}^2
=\|\hat K(t/2)\|_{\mr{HS}}^2,\]
where the last equality follows from $\mr e^{-t \hat H}=\hat K(t)$.
Then,
Proposition \ref{Proposition: Generic Kernel Properties}-(3) gives
\[\mr{Tr}\big[\mr e^{-t\hat H}\big]=\|\hat K(t/2)\|_{\mr{HS}}^2=\int_{\mc A^2}|K_{t/2}(a,b)|^2\d\mu(a)\dd\mu(b)
=\int_{\mc A} \hat K(t;a,a)\d\mu(a),\]
as desired.

\subsubsection{The Case $\mbb F=\mbb H$}
\label{Section: Quaternion-Operator Theory 2}

In similar fashion to the construction of $\hat H$,
several elements of the proof in the previous section require
the operators under consideration to be self-adjoint on a standard
Hilbert space on $\mbb R$ or $\mbb C$. The same argument can
apply to the case where $\mbb F=\mbb H$, provided we reframe
$\hat K(t)$ as an integral operator with respect to $\prec\cdot,\cdot\succ_\mu$, which acts on functions taking values in $\mbb R^4$.
For this purpose, let us expand the components of $\hat K(t)$ and any $f\in L^2(\mc A,\mbb H)$ in $\mbb R^4$ as follows:
\[\hat K(t;a,b)=\hat K_{1}(t;a,b)+\hat K_{\msf i}(t;a,b)\msf i+\hat K_{\msf j}(t;a,b)\msf j+\hat K_{\msf k}(t;a,b)\msf k,\qquad a,b\in\mc A\]
\[f(a)=f_{1}(a)+f_{2}(a)\msf i+f_{3}(a)\msf j+f_{4}(a)\msf k,\qquad a\in\mc A\]
where $f_{\ell}$ and $\hat K_{\msf l}(t)$ are real-valued for all $\ell\in\{1,2,3,4\}$ and $\msf l\in\{1,\msf i,\msf j,\msf k\}$.
Consider the $4\times 4$ $\mbb R$-matrix-valued kernel $J(t)$ defined as follows: 
\[J(t;a,b)=\left[\begin{array}{cccc}
\hat K_{1}(t;a,b)&-\hat K_{\msf i}(t;a,b)&-\hat K_{\msf j}(t;a,b)&-\hat K_{\msf k}(t;a,b)\\
\hat K_{\msf i}(t;a,b)&\hat K_{1}(t;a,b)&-\hat K_{\msf k}(t;a,b)&\hat K_{\msf j}(t;a,b)\\
\hat K_{\msf j}(t;a,b)&\hat K_{\msf k}(t;a,b)&\hat K_{1}(t;a,b)&-\hat K_{\msf i}(t;a,b)\\
\hat K_{\msf k}(t;a,b)&-\hat K_{\msf j}(t;a,b)&\hat K_{\msf i}(t;a,b)&\hat K_{1}(t;a,b)
\end{array}\right].\]
If we denote the entries of $g(a)=\hat K(t)f(a)=\int \hat K(t;a,b)f(b)\d\mu(b)$
in $\mbb R^4$ as $g_\ell(a)$, i.e.,
\[g(a)=g_{1}(a)+g_{2}(a)\msf i+g_{3}(a)\msf j+g_{4}(a)\msf k,\qquad a\in\mc A,\]
then by definition of quaternion product, we have that
\[g(a)_\ell=\int_{\mc A}\sum_{m=1}^4J(t;a,b)_{\ell,m}f(b)_m\d\mu(b)
=\prec J(t;a,\cdot)_{\ell,\cdot},f\succ_\mu\]
for $1\leq\ell\leq 4$, where we view $J(t;a,b)_{\ell,\cdot}$ as the
quaternion/vector in $\mbb R^4$ whose components are given by the $\ell^{\mr{th}}$ row of the
matrix $J(t;a,b)$.

With this in hand, we can now apply the same argument in Section
\ref{Section: Generator for R C} to the $4\times 4$ matrix kernel $J(t;a,b)_{\ell,m}$,
which yields that $J(t)$ coincides with the operator $\mr e^{-t\hat H}$ defined
via \eqref{Equation: Semigroup via Spectral Expansion H},
as well as the trace formula
\begin{align}
\label{Equation: Trace Formula in H-R4}
\sum_{k=1}^\infty\mr e^{-t\La_k(\hat H)}=\int_{\mc A}\mr{Tr}\big[J(t;a,a)\big]\d\mu(a)
\end{align}
(note that $\hat K(t;a,b)=\hat K(t;b,a)^*$ means that $J(t;a,b)_{\ell,m}=J(t;b,a)_{m,\ell}$).
If we pull this back to $L^2(\mc A,\mbb H)$ with $\langle\cdot,\cdot\rangle_\mu$,
this implies that $\hat K(t)=\mr e^{-t\hat H}$, with the semigroup now defined via \eqref{Equation: Semigroup via Spectral Expansion}.
The trace formula can then be obtained by combining \eqref{Equation: Trace Multiple of 4 1} and \eqref{Equation: Trace Formula in H-R4} with
the equalities
\begin{align}
\label{Equation: Trace Multiple of 4 2}
\int_{\mc A}\mr{Tr}\big[J(t;a,a)\big]\d\mu(a)=4\int_{\mc A}\hat K_{1}(t;a,a)\d\mu(a)=4\int_{\mc A}\hat K(t;a,a)\d\mu(a),
\end{align}
 where the last equality comes from $\hat K(t;a,a)=\hat K(t;a,a)^*$, i.e., Proposition \ref{Proposition: Generic Kernel Properties}-(1).

\subsubsection{Propositions \ref{Proposition: Generic Kernel Properties}
and \ref{Proposition: Generator Calculation}}
\label{Section: Generator Calculation Roadmap}

At this point, the only elements in the proof of Theorem \ref{Theorem: Regular FK}
that remain to be established are Propositions \ref{Proposition: Generic Kernel Properties}
and \ref{Proposition: Generator Calculation}.
The nontrivial elements of the proof of Proposition \ref{Proposition: Generic Kernel Properties} are mostly technical
and can be found in Section
\ref{Section: Basic Properties of the Feynman-Kac Kernel}.
The proof of Proposition \ref{Proposition: Generator Calculation},
which we provide in full in Section \ref{Section: Proof of Generator}, has the following structure:
Let $f\in D(\hat H)$.
For any $a\in\mc A$, we can write
\[t^{-1}\big(f(a)-\hat K(t)f(a)\big)=\mf G_0(t;a)+\mf G_1(t;a)+\mf G_2(t;a),\]
where we define the $\mf G_k$ by adding an indicator fixing the value of $N(t)$:
\begin{align}
\nonumber
\mf G_0(t;a)&=t^{-1}\left(f(a)-\mbf E_A\left[\mf n_t(Q,A^a)\mr e^{-\int_0^t S(A^a(s))\d s+\mf B_t(A^a)}f\big(A^a(t)\big)\mbf 1_{\{N(t)=0\}}\right]\right),\\
\nonumber
\mf G_1(t;a)&=t^{-1}\left(-\mbf E_A\left[\mf n_t(Q,A^a)\mr e^{-\int_0^t S(A^a(s))\d s+\mf B_t(A^a)}f\big(A^a(t)\big)\mbf 1_{\{N(t)=1\}}\right]\right),\\
\label{Equation: Generator 3.0}
\mf G_2(t;a)&=t^{-1}\left(-\mbf E_A\left[\mf n_t(Q,A^a)\mr e^{-\int_0^t S(A^a(s))\d s+\mf B_t(A^a)}f\big(A^a(t)\big)\mbf 1_{\{N(t)\geq2\}}\right]\right).
\end{align}
With this in hand, the proof of Proposition \ref{Proposition: Generator Calculation} is split into three steps:
\begin{align}
\label{Equation: Generator 1}
&\lim_{t\to0}\|\mf G_0(t;\cdot)-(H+\mr{diag}(\xi))f\|_2=0,\\
\label{Equation: Generator 2}
&\lim_{t\to0}\|\mf G_1(t;\cdot)-Qf\|_2=0,\\
\label{Equation: Generator 3}
&\lim_{t\to0}\|\mf G_2(t;\cdot)\|_2=0,
\end{align}
where we recall that $H=-\frac12\De+V$
is defined in \eqref{Equation: Diagonal Operator},
$\mr{diag}(\xi)$ denotes the direct sum $\xi_{1,1}\oplus\cdots\oplus\xi_{r,r}$,
and $Qf$ is defined as
\[Qf(i,x)=\sum_{j\neq i}\xi_{i,j}(x)f(j,x)=\sum_{j\neq i}\xi_{i,j}(x)f(j,x).\]
The rationale behind these three steps can be explained by the following remarks:

\begin{remark}
\label{Remark: Generator 1}
Regarding \eqref{Equation: Generator 1}, write
$a=(i,x)$. If $N(t)=0$, then the process $U^i$ is equal to $i$
in the time interval $[0,t]$. On this event, we notice that
\begin{align*}
&\bullet f\big(A^a(t)\big)=f\big(i,Z^x(t)\big);&&
\bullet\mf n_t(Q,A^a)=\mr e^{(r-1)t};\\
&\bullet\mf B_t(A^a)=\mf B^i_t(Z^x);&&
\bullet\int_0^tS\big(A^a(s)\big)\d s=\int_0^tS\big(i,Z^x(s)\big)\d s;
\end{align*}
where we define
\[\mf B^i_t(Z^x)=\begin{cases}
0&\text{Case 1,}
\vspace{5pt}\\
\displaystyle\bar\al_i\mf L^{0}_t(X^x)&\text{Case 2,}
\vspace{5pt}\\
\displaystyle\bar\al_i\mf L^{0}_t(Y^x)+\bar\be_i\mf L^{\vartheta}_t(Y^x)&\text{Case 3.}
\end{cases}\]
Therefore, we can write
\begin{multline*}
\mf G_0\big(t;(i,x)\big)=t^{-1}\left(f(i,x)-\mbf E_Z\left[\mr e^{(r-1)t}\mr e^{-\int_0^t S(i,Z^x(s))\d s
-\mf B^i_t(Z^x)}f\big(i,Z^x(t)\big)\right]\mbf P[N(t)=0]\right)\\
=t^{-1}\left(f(i,x)-\mbf E_Z\left[\mr e^{-\int_0^t S(i,Z^x(s))\d s
-\mf B^i_t(Z^x)}f\big(i,Z^x(t)\big)\right]\right),
\end{multline*}
where the last equality follows from the fact that $N(t)$ is Poisson with rate $(r-1)t$.
Recalling the definition of the one-dimensional operators $H_i$ in
\eqref{Equation: One-Dimensional Operators},
as well as the Feynman-Kac formula for $H_i+\xi_{i,i}$ in \eqref{Equation: Classical Feynman-Kac},
this can be further simplified to
\[\mf G_0\big(t;(i,x)\big)=t^{-1}\big(f(i,x)-\mr e^{-t(H_i+\xi_{i,i})}f(i,x)\big).\]
In addition to that, given that $H+\mr{diag}(\xi)=(H_1+\xi_{1,1})\oplus\cdots\oplus (H_r+\xi_{r,r})$, we note that
for every $1\leq i\leq r$,
\[\big(H+\mr{diag}(\xi)\big)f(i,x)=\big(H_i+\xi_{i,i}\big)f(i,x).\]
Therefore,
\begin{multline}
\label{Equation: Generator 1.1}
\|\mf G_0(t;\cdot)-(H+\mr{diag}(\xi))f\|_2^2=\sum_{i=1}^r\int_I\left|\mf G_0\big(t;(i,x)\big)-(H+\mr{diag}(\xi))f(i,x)\right|^2\d x
\\=\sum_{i=1}^r\int_I\left|t^{-1}\big(f(i,x)-\mr e^{-t(H_i+\xi_{i,i})}f(i,x)\big)-(H_i+\xi_{i,i})f(i,x)\right|^2\d x.
\end{multline}
The right-hand side of the above converges to zero thanks to the
one-dimensional Feynman-Kac formula \eqref{Equation: Classical Feynman-Kac};
see Section \ref{Section: Generator Part 1} for detailed references.
\end{remark}

\begin{remark}
\label{Remark: Generator 2}
Regarding \eqref{Equation: Generator 2},
denote the events
\[\mc J_j(t)=\{N(t)=1\}\cap\{U(\tau_1)=j\},\qquad t>0,~1\leq j\leq r.\]
With this in hand, we write $\mbf 1_{\{N(t)=1\}}=\mbf 1_{\mc J_1(t)}+\cdots+\mbf 1_{\mc J_r(t)}$,
noting that the $\mc J_j(t)$ are disjoint and $N(t)$ and $U(\tau_1)$ are independent.
On the event $\mc J_j(t)$, we
have the simplification
$\mf n_t(Q,A^{(i,x)})=-\mr e^{(r-1)t}\xi_{i,j}\big(Z^x(\tau_1)\big).$
Therefore, we can write
\[\mf G_2\big(t;(i,x)\big)=\sum_{j\neq i}\frac{\mr e^{(r-1)t}}t\mbf E_A\left[\xi_{i,j}\big(Z^x(\tau_1)\big)\mr e^{-\int_0^t S(A^{(i,x)}(s))\dd s+\mf B_t(A^{(i,x)})}f\big(j,Z^x(t)\big)\mbf 1_{\mc J_j(t)}\right].\]
Moreover, given that
\[\mbf P[\mc J_j(t)]=\mbf P[N(t)=1]\cdot\mbf P[U(\tau_1)=j]=(r-1)t\mr e^{-(r-1)t}\cdot\frac{1}{r-1}=\left(\frac{\mr e^{(r-1)t}}t\right)^{-1},\]
we can further simplify this expression to
\begin{align}
\label{Equation: Generator 2.1}
\mf G_1\big(t;(i,x)\big)=\sum_{j\neq i}\mbf E_A\left[\xi_{i,j}\big(Z(\tau_1)\big)\mr e^{-\int_0^t S(A^{(i,x)}(s))\dd s+\mf B_t(A^{(i,x)})}f\big(j,Z^x(t)\big)~\bigg|~\mc J_j(t)\right].
\end{align}
By continuity of $\xi_{i,j}$ and $Z$ and $0\leq\tau_1\leq t$, we expect that \eqref{Equation: Generator 2.1} should converge to
\[\sum_{j\neq i}\xi_{i,j}\big(Z^x(0)\big)f\big(j,Z^x(0)\big)=\sum_{j\neq i}\xi_{i,j}(x)f(j,x)=Qf(i,x)\]
as $t\to0$;
see Section \ref{Section: Generator Part 2} for the technical details.
\end{remark}

\begin{remark}
\label{Remark: Generator 3}
The proof of \eqref{Equation: Generator 3}
relies on the fact that $\mbf P[N(t)\geq2]=O(t^2)$ as $t\to0$,
which makes $\mf G_2$ vanish despite the $t^{-1}$ term
in \eqref{Equation: Generator 3.0}.
See Section \ref{Section: Generator Part 3} for details.
\end{remark}

\subsection{Growth Estimates of $S$ and $Q$}
\label{Section: A Technical Lemma}

We begin with a preliminary estimate on the growth
of $S$ and $Q$:

\begin{lemma}
\label{Lemma: Growth}
Let $\mf a>0$ be as in Assumption \ref{Assumption: Potential},
and let $\mf b>1/2$ be arbitrary.
The following two conditions hold almost surely:
\begin{enumerate}
\item For all $1\leq i\leq r$, $S(i,\cdot)$ is bounded below, locally integrable on $I$'s closure, and
in Cases 1 and 2,
\begin{align}
\label{Equation: S Growth Lower Bound}
\lim_{|x|\to\infty}\frac{S(i,x)}{|x|^{\mf a}}=\infty.
\end{align}
\item In Cases 1 and 2, for all $1\leq i<j\leq r$, 
\begin{align}
\label{Equation: Q Growth Upper Bound}
\lim_{|x|\to\infty}\frac{|\xi_{i,j}(x)|}{(\log|x|)^\mf b}=0.
\end{align}
\end{enumerate}
\end{lemma}
\begin{proof}
In Case 3, we need only prove that $S(i,x)=V(i,x)+\xi_{i,i}(x)$ are bounded below and locally integrable.
This follows from the fact that $V$ already satisfies these conditions (Assumption \ref{Assumption: Potential}),
and that $\xi_{i,i}$ are continuous real-valued functions on a bounded interval.

Consider then Cases 1 and 2. First, we note that by a Borel-Cantelli argument
(e.g., \cite[Corollary B.2]{GaudreauLamarreEJP}),
the fact that $\xi_{i,i}$ and $\xi_{i,j}=\xi_{j,i}^*$ are continuous and stationary
implies that there exists a finite random $\nu>0$ such that, almost surely,
\[|\xi_{i,i}(x)|,|\xi_{i,j}(x)|\leq\nu\sqrt{\log(2+|x|)},\qquad x\in I,\qquad 1\leq i\neq j\leq r.\]
This immediately implies \eqref{Equation: Q Growth Upper Bound} since $\mf b>1/2$.
To prove that
$S(i,\cdot)=V(i,\cdot)+\xi_{i,i}$ are bounded below and locally integrable and
\eqref{Equation: S Growth Lower Bound},
we simply notice that $V(i,\cdot)$ is already assumed to grow faster than $|x|^\mf a$ by Assumption \ref{Assumption: Potential}
(which itself grows faster than $\sqrt{\log|x|}$),
and that $\xi_{i,i}$ is continuous.
\end{proof}

\subsection{Proof of Proposition \ref{Proposition: Generic Kernel Properties}}
\label{Section: Basic Properties of the Feynman-Kac Kernel}

We split the proof into four steps, corresponding to Proposition \ref{Proposition: Generic Kernel Properties}-(1) to -(4).
Throughout the proof of Proposition \ref{Proposition: Generic Kernel Properties},
we assume without further mention that we are working on the probability-one event (with respect to
the randomness in $\xi$) where the conclusions of Lemma \ref{Lemma: Growth} hold.

\subsubsection{Proof of Proposition \ref{Proposition: Generic Kernel Properties}-(1)}

The symmetry property follows from applying a time reversal to
the path $A$, in which case a bridge from $a$ to $b$ becomes a bridge from
$b$ to $a$: On the one hand, the boundary local time term $\mf B_t(A)$ and the area $\int_0^t S\big(A(s)\big)\d s$
are invariant with respect to time reversal, hence these terms remain the same under
this operation. On the other hand, since a time reversal inverts the order of the jump times $\tau_k$
and the individual jumps $J_k$,
under this operation the product term $\mf n_t(Q,A)$ is simply conjugated (since $Q$ is
self-adjoint).

\subsubsection{Proof of Proposition \ref{Proposition: Generic Kernel Properties}-(2)}

Given $a,b,c\in\mc A$, if we condition the process $A^{a,b}_{s+t}$ on
the event $\{A^{a,b}_{s+t}(s)=c\}$,
then the path segments
\[\big(A^{a,b}_{s+t}(u):0\leq u\leq s\big)\qquad\text{and}\qquad\big(A^{a,b}_{s+t}(u):s\leq u\leq s+t\big)\]
are independent and have respective distributions $A^{a,c}_s$ and $A^{c,b}_{t}$.
Next, recalling that $A^{(a,c),(c,b)}_{(s,t)}$ denotes the path on the time interval $[0,s+t]$ obtained by concatenating
the independent paths of $A^{a,c}_s$ and $A^{c,b}_{t}$, we notice that
\[\int_0^s S\big(A^{a,c}_s(	u)\big)\d u+\int_0^t S\big(A^{c,b}_t(s)\big)\d u=\int_0^{s+t} S\big(A^{(a,c),(c,b)}_{(s,t)}(u)\big)\d u\]
and
\[\mf B_s(A^{a,c}_s)+\mf B_t(A^{c,b}_t)=\mf B_{s+t}(A^{(a,c),(c,b)}_{(s,t)}),\]
as well as
\[\mf n_s(Q,A^{a,c}_s)\mf n_t(Q,A^{c,b}_t)=\mf n_{s+t}(Q,A^{(a,c),(c,b)}_{(s,t)}).\]
With this, Proposition \ref{Proposition: Generic Kernel Properties}-(2) is an immediate consequence of the
law of total expectation, i.e., for any functional $F$, one has
\begin{align*}
&\int_\mc A\Pi_A(s;a,c)\Pi_A(t;c,b)\mbf E_A\left[F\left(A^{(a,c),(c,b)}_{(s,t)}\right)\right]\d\mu(c)\\
&=\Pi_A(s+t;a,b)\int_\mc A\frac{\Pi_A(s;a,c)\Pi_A(t;c,b)}{\Pi_A(s+t;a,b)}\mbf E_A\left[F\left(A^{a,b}_{s+t}\right)\Big|A^{a,b}_{s+t}(s)=c\right]\d\mu(c)\\
&=\Pi_A(s+t;a,b)\mbf E_A\Big[F\big(A^{a,b}_{s+t}\big)\Big].
\end{align*}

\subsubsection{Proof of Proposition \ref{Proposition: Generic Kernel Properties}-(3)}

We first prove that $\|\hat K(t)\|_{\mr{HS}}^2<\infty$.
We can write
\[\|\hat K(t)\|_{\mr{HS}}^2=\int_{\mc A^2}\Pi_A(t;a,b)^2\left|\mbf E_A\left[\mf n_t(Q,A^{a,b}_t)\mr e^{-\int_0^t S(A^{a,b}_t(s))\d s+\mf B_t(A^{a,b}_t)}\right]\right|^2\dd \mu(a)\dd\mu(b).\]
By Jensen's inequality,
\begin{align}
\label{Equation: HS 1}
\|\hat K(t)\|_{\mr{HS}}^2\leq\int_{\mc A^2}\Pi_A(t;a,b)^2\mbf E_A\left[|\mf n_t(Q,A^{a,b}_t)|\mr e^{-\int_0^t S(A^{a,b}_t(s))\d s+\mf B_t(A^{a,b}_t)}\right]^2\dd \mu(a)\dd\mu(b).
\end{align}
By arguing in the same way as in part (1) of this
lemma, but replacing $\mf n_t(Q,A)$ by its modulus, we
note that
\begin{multline*}
\Pi_A(t;a,b)\mbf E_A\left[|\mf n_t(Q,A^{a,b}_t)|\mr e^{-\int_0^t S(A^{a,b}_t(s))\d s+\mf B_t(A^{a,b}_t)}\right]\\
=\Pi_A(t;b,a)\mbf E_A\left[|\mf n_t(Q,A^{b,a}_t)|\mr e^{-\int_0^t S(A^{b,a}_t(s))\d s+\mf B_t(A^{b,a}_t)}\right].
\end{multline*}
If we combine this with a use of Tonelli's theorem in the $\dd\mu(b)$ integral in \eqref{Equation: HS 1},
the same argument as in part (2) of this lemma then implies that
\begin{align}
\label{Equation: HS 1.5}
\|\hat K(t)\|_{\mr{HS}}^2\leq\int_{\mc A}\Pi_A(2t;a,a)\mbf E_A\left[|\mf n_{2t}(Q,A^{a,a}_{2t})|\mr e^{-\int_0^{2t} S(A^{a,a}_{2t}(s))\d s+\mf B_{2t}(A^{a,a}_{2t})}\right]\d \mu(a).
\end{align}
In order to control the terms inside the expectation in \eqref{Equation: HS 1.5},
we use the following estimates, which we will use several more times in this section:

\begin{proposition}
\label{Proposition: F-K Integrability Bound}
In Cases 1 or 2
(i.e., $Z=B,X$), 
we let $W^0$ be a standard Brownian motion started at zero coupled
with $Z$ in such a way that
\begin{align}
\label{Equation: Brownian Initial Point Coupling}
Z(t)=\begin{cases}
B(0)+W^0(t)&\text{Case 1,}\\
|X(0)+W^0(t)|&\text{Case 2.}
\end{cases}
\end{align}
For any $t>0$ and $p\geq1$, there exists
constants $\ka,\nu>0$ and $0<\bar{\mf a}\leq 1$ such that
\begin{multline}
\label{Equation: F-K Integrability Bound}
\mbf E_{N}\left[|\mf n_t(Q,A)|^p\mr e^{-p\int_0^t S(A(s))\d s+p\mf B_t(A)}\right]\\
\leq
\begin{cases}
\displaystyle
\exp\left(-\ka t|B(0)|^{\bar{\mf a}}+\ka t\sup_{0\leq s\leq t}|W^0(s)|^{\bar{\mf a}}+\nu t\right)&\text{Case 1,}
\vspace{5pt}\\
\displaystyle
\exp\left(-\ka t|X(0)|^{\bar{\mf a}}+\ka t\sup_{0\leq s\leq t}|W^0(s)|^{\bar{\mf a}}+\nu t+p\bar\al_\mr m\mf L^0_t(X)\right)&\text{Case 2,}	
\vspace{5pt}\\
\displaystyle
\exp\left(\nu t+p\bar\al_\mr m\mf L^0_t(Y)+p\bar\be_\mr m\mf L^{\vartheta}_t(Y)\right)&\text{Case 3.}
\end{cases}
\end{multline}
where $\mbf E_{N}[\cdot]$ denotes the expectation with respect to
the unconditioned Poisson process $N=\big(N(t):t\geq0\big)$ only
(conditional on all other sources of randomness),
and
\[\bar\al_\mr m=\max_{1\leq i\leq r}\bar\al_i\qquad\text{and}\qquad\bar\be_\mr m=\max_{1\leq i\leq r}\bar\be_i.\]
\end{proposition}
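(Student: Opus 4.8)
The plan is to bound the integrand inside $\mbf E_N[\cdot]$ by the product of an $N$-independent factor and a product over the Poisson jump times, and then evaluate the latter exactly using the exponential formula. Throughout, I work on the probability-one event of Lemma~\ref{Lemma: Growth}, so that $\Xi(x):=\max_{1\leq i\neq j\leq r}|\xi_{i,j}(x)|$ satisfies $\Xi(x)^p=o(|x|^{\bar{\mf a}})$ as $|x|\to\infty$ (uniformly in $\mbb F$, as only the modulus bound in Lemma~\ref{Lemma: Growth}(2) is used), and, in Cases 1 and 2, there is a random $\nu_0>0$ with $\ubar S(x):=\min_{1\leq i\leq r}S(i,x)\geq|x|^{\bar{\mf a}}-\nu_0$ for all $x\in I$; here $\bar{\mf a}:=\min\{1,\mf a\}$ in Cases 1 and 2, while $\bar{\mf a}:=1$ in Case 3. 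The first step is to decouple the $N$-dependence. Since every jump $J_k$ is between two distinct vertices, $|\xi_{J_k}(Z(\tau_k))|\leq\Xi(Z(\tau_k))$, so by definition of $\mf n_t$, $|\mf n_t(Q,A)|^p\leq\mr e^{p(r-1)t}\prod_{k=1}^{N(t)}\Xi(Z(\tau_k))^p$. Next, $\int_0^tS(A(s))\d s=\int_0^tS(U(s),Z(s))\d s\geq\int_0^t\ubar S(Z(s))\d s$, which no longer involves $N$. Finally, since each boundary local time $\mf L^{(i,c)}_t(A)$ is nonnegative, $\sum_{i=1}^r\mf L^{(i,c)}_t(A)=\mf L^c_t(Z)$, and $\bar\al_i\leq\bar\al_\mr m$ (respectively $\bar\be_i\leq\bar\be_\mr m$), one has $\bar\al_i\mf L^{(i,0)}_t(A)\leq\bar\al_\mr m\mf L^{(i,0)}_t(A)$ for every sign pattern of the $\bar\al_i$ (and likewise for $\bar\be$), hence $\mf B_t(A)\leq\bar\al_\mr m\mf L^0_t(Z)+\bar\be_\mr m\mf L^{\vartheta}_t(Z)$ (the $\vartheta$-term absent except in Case 3, both terms absent in Case 1), again free of $N$ --- with the conventions $-\infty\cdot0=0$ and $\mr e^{-\infty}=0$ when some $\bar\al_i$ or $\bar\be_i$ equals $-\infty$.

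Combining these three bounds pulls everything out of $\mbf E_N[\cdot]$ except the product over jump times, leaving
\[\mbf E_N\!\left[|\mf n_t(Q,A)|^p\mr e^{-p\int_0^tS(A(s))\d s+p\mf B_t(A)}\right]\leq\mr e^{p(r-1)t}\,\mr e^{-p\int_0^t\ubar S(Z(s))\d s+p\bar\al_\mr m\mf L^0_t(Z)+p\bar\be_\mr m\mf L^{\vartheta}_t(Z)}\;\mbf E_N\!\left[\prod_{k=1}^{N(t)}\Xi(Z(\tau_k))^p\right].\]
The key computation is the last expectation: $N(t)$ is Poisson with mean $(r-1)t$ and, conditionally on $N(t)=n$, the jump times $\tau_1,\dots,\tau_n$ are the order statistics of $n$ i.i.d.\ uniform variables on $[0,t]$, so the exponential formula gives
\[\mbf E_N\!\left[\prod_{k=1}^{N(t)}\Xi(Z(\tau_k))^p\right]=\sum_{n\geq0}\frac{((r-1)t)^n}{n!}\mr e^{-(r-1)t}\left(\tfrac1t\int_0^t\Xi(Z(s))^p\d s\right)^{\!n}=\exp\!\left((r-1)\int_0^t\big(\Xi(Z(s))^p-1\big)\d s\right).\]

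Substituting, the exponent of the resulting upper bound equals $(p-1)(r-1)t+\int_0^t\big[(r-1)\Xi(Z(s))^p-p\,\ubar S(Z(s))\big]\d s+p\bar\al_\mr m\mf L^0_t(Z)+p\bar\be_\mr m\mf L^{\vartheta}_t(Z)$, and it only remains to absorb the slowly growing $\Xi^p$ into the power growth of $\ubar S$. In Cases 1 and 2, since $\Xi(x)^p=o(|x|^{\bar{\mf a}})$ and $\ubar S(x)\geq|x|^{\bar{\mf a}}-\nu_0$, there are random $\ka,C'>0$ with $(r-1)\Xi(x)^p-p\,\ubar S(x)\leq-\ka|x|^{\bar{\mf a}}+C'$; using $Z(s)=B(0)+W^0(s)$ in Case 1 and $Z(s)=|X(0)+W^0(s)|$ in Case 2, together with the subadditivity $a^{\bar{\mf a}}\leq b^{\bar{\mf a}}+c^{\bar{\mf a}}$ (valid since $0<\bar{\mf a}\leq1$ and $0\leq a\leq b+c$), yields $|Z(s)|^{\bar{\mf a}}\geq|Z(0)|^{\bar{\mf a}}-\sup_{0\leq u\leq t}|W^0(u)|^{\bar{\mf a}}$, hence $\int_0^t|Z(s)|^{\bar{\mf a}}\d s\geq t|Z(0)|^{\bar{\mf a}}-t\sup_{0\leq u\leq t}|W^0(u)|^{\bar{\mf a}}$. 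Collecting the remaining $O(t)$ contributions into a single random constant $\nu t$ then gives exactly \eqref{Equation: F-K Integrability Bound} in Cases 1 and 2, with the boundary term $p\bar\al_\mr m\mf L^0_t(X)$ surviving in Case 2. In Case 3 the interval is bounded, so $\Xi$ and $\ubar S$ are bounded, the entire prefactor is of the form $\mr e^{\nu t}$, and only the boundary terms $p\bar\al_\mr m\mf L^0_t(Y)+p\bar\be_\mr m\mf L^{\vartheta}_t(Y)$ remain.

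I expect the main obstacle to be bookkeeping rather than conceptual: one must cleanly separate the quantities that depend on $N$ (so the Poisson expectation collapses via the exponential formula) from those that do not, and check the boundary inequality under the $-\infty$ conventions. The one genuinely substantive point is that the restriction $\bar{\mf a}\leq\min\{1,\mf a\}$ is forced precisely here --- the off-diagonal noise bound from Lemma~\ref{Lemma: Growth} is only a power of a logarithm, and for it to be absorbed into the diagonal lower bound $\ubar S(x)\gtrsim|x|^{\bar{\mf a}}$ we need $|x|^{\bar{\mf a}}$ to dominate that logarithmic power, which is why both the argument and the conclusion are stated in terms of such an exponent $\bar{\mf a}$.
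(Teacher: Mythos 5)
Your proposal is correct and follows essentially the same route as the paper: both pull the boundary term and the diagonal $S$-integral out of $\mbf E_N$ using the $U$-uniform lower bound on $S$ and the nonnegativity of the sectorial boundary local times, both evaluate the Poisson expectation of the product over jump times via the exponential formula, and both then absorb the logarithmic growth of the off-diagonal noise into the power growth of $S$ and finish with the $\bar{\mathfrak a}$-subadditivity (reverse triangle) estimate. The only difference is bookkeeping: the paper chooses $\ka$ first and splits the bound into two pieces with complementary $\ka$-factors, whereas you push everything into a single exponent before absorbing, but the underlying estimates and the choice $\bar{\mathfrak a}=\min\{1,\mathfrak a\}$ are identical.
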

\begin{proof}
First, we note that in Cases 2 and 3, it follows from
\eqref{Equation: Regular Boundary Local Time},
\eqref{Equation: Multivariate Boundary Local Time 1},
and \eqref{Equation: Multivariate Boundary Local Time 2}
that
\begin{align}
\label{Equation: F-K Integrability Bound 1}
p\mf B_t(A)\leq
\begin{cases}
p\bar\al_\mr m\mf L^0_t(X)&\text{Case 2},\\
p\bar\al_\mr m\mf L^0_t(Y)+p\bar\be_\mr m\mf L^{\vartheta}_t(Y)&\text{Case 3}.
\end{cases}
\end{align}
Since the terms on the right-hand side of \eqref{Equation: F-K Integrability Bound 1}
do not depend on $N$, they can be pulled out of the expectation $\mbf E_N[\cdot]$;
this explains the presence of the local time terms on the right-hand side of \eqref{Equation: F-K Integrability Bound}
in Cases 2 and 3.
Thus, it only remains to control the expression $\mbf E_{N}\left[|\mf n_t(Q,A)|^p\mr e^{-p\int_0^t S(A(s))\d s}\right]$.

Consider first Cases 1 and 2.
Let $\bar{\mf a}=\min\{1,\mf a\}$.
By \eqref{Equation: S Growth Lower Bound}, we get
that for every $\ka>0$, there exists some $\nu_0\in\mbb R$ such that
\[\min_{1\leq i\leq r}S(i,x)\geq\tfrac{2\ka}{p}|x|^{\bar{\mf a}}-\tfrac{\nu_0}{p}.\]
Thus,
\begin{align}
\label{Equation: F-K Integrability Bound 2}
\mbf E_{N}\left[|\mf n_t(Q,A)|^p\mr e^{-p\int_0^t S(A(s))\d s}\right]
\leq
\mbf E_{N}\big[|\mf n_t(Q,A)|^p\big]\mr e^{-2\ka\int_0^t|Z(s)|^{\bar{\mf a}}\d s+\nu_0t},
\end{align}
where only the term $|\mf n_t(Q,A)|^p$ remains in the expectation because
the other term no longer depends on $N$.
Next,
by \eqref{Equation: Q Growth Upper Bound},
we can find $\nu_1>0$ such that
\[\max_{1\leq i<j\leq r}|\xi_{i,j}(x)|^p\leq\frac{\ka}{r-1}|x|^{\bar{\mf a}}+\nu_1.\]
Denoting $\eta(x)=\frac{\ka}{r-1}|x|^{\bar{\mf a}}+\nu_1$ for simplicity, we then get
\begin{align}
\label{Equation: F-K Integrability Bound 3}
\mbf E_{N}\big[|\mf n_t(Q,A)|^p\big]\leq\mr e^{p(r-1)t}\mbf E_{N}\left[\prod_{k=1}^{N(t)}\eta\big(Z(\tau_k)\big)\right].
\end{align}
Conditional on $N(t)=n$, the points $(\tau_1,\ldots,\tau_n)$
are the order statistics of $n$ i.i.d. uniform points on $[0,t]$. Therefore,
since $\eta$ is real-valued and commutes,
we can rewrite \eqref{Equation: F-K Integrability Bound 3} as follows by conditioning
on $N(t)$:
\begin{align}
\nonumber
&\mbf E_{N}\big[|\mf n_t(Q,A)|^p\big]\\
\nonumber
&\leq\mr e^{p(r-1)t}\sum_{n=0}^\infty
\mbf E_{N}\left[\prod_{k=1}^{N(t)}\eta\big(Z(\tau_k)\big)\Bigg|N(t)=n\right]\mbf P[N(t)=n]\\
\nonumber
&=\mr e^{p(r-1)t}\sum_{n=0}^\infty\left(\frac1{t^n}\int_{[0,t]^n}\eta\big(Z(s_1)\big)\cdots\eta\big(Z(s_n)\big)\d\bs s\right)\frac{(r-1)^nt^n}{n!}\mr e^{-(r-1)t}\\
\nonumber
&=\mr e^{(p-1)(r-1)t}\sum_{n=0}^\infty\frac{(r-1)^n}{n!}\left(\int_0^t\eta\big(Z(s)\big)\d s\right)^n\\
&=\mr e^{(r-1)\int_0^t\eta(Z(s))\d s+(p-1)(r-1)t}=\mr e^{\ka\int_0^t|Z(s)|^{\bar{\mf a}}\d s+(p-1+\nu_1)(r-1)t}
\label{Equation: F-K Integrability Bound 4}
\end{align}
Thus,
if we apply \eqref{Equation: F-K Integrability Bound 4} to
\eqref{Equation: F-K Integrability Bound 2}, with $\nu=(p-1+\nu_1)(r-1)+\nu_0$, we get
\begin{align}
\label{Equation: F-K Integrability Bound 5}
\mbf E_{N}\left[|\mf n_t(Q,A)|^p\mr e^{-p\int_0^t S(A(s))\d s}\right]
\leq\mr e^{-\ka\int_0^t|Z(s)|^{\bar{\mf a}}\d s+\nu t}.
\end{align}
By \eqref{Equation: Brownian Initial Point Coupling} and the reverse triangle inequality (since $\bar{\mf a}\leq 1$),
\begin{align}
\label{Equation: Potential Power Growth Triangle Inequality}
\int_0^t|Z(s)|^{\bar{\mf a}}\d s\geq\int_0^t|Z(0)|^{\bar{\mf a}}-|W^0(s)|^{\bar{\mf a}}\d s\geq t|Z(0)|^{\bar{\mf a}}-t\sup_{0\leq s\leq t}|W^0(s)|^{\bar{\mf a}}.
\end{align}
Together with \eqref{Equation: F-K Integrability Bound 5},
this concludes the proof of \eqref{Equation: F-K Integrability Bound}
in Cases 1 and 2.

It now only remains to show that there exists some $\nu>0$
such that
\begin{align}
\label{Equation: F-K Integrability Bound 8}
\mbf E_{N}\left[|\mf n_t(Q,A)|^p\mr e^{-p\int_0^t S(A(s))\d s}\right]\leq\mr e^{\nu t}
\end{align}
in Case 3. On the one hand, the fact that $S$ is bounded below
(Lemma \ref{Lemma: Growth})
means that $\mr e^{-p\int_0^t S(A(s))\d s}\leq\mr e^{\nu_1t}$
for some $\nu_1>0$. Next, since $Q$ is continuous and $I$
is bounded in Case 3, there exists some constant $\nu_2>0$
such that
\[\mbf E_{N}\big[|\mf n_t(Q,A)|^p\big]
\leq\mbf E\left[\mr e^{p(r-1)t}\nu_2^{pN(t)}\right]
=\mr e^{(r-1)(\nu_2^p+p-1)t}.\] 
We then get \eqref{Equation: F-K Integrability Bound 8}
by taking $\nu=\nu_1+(r-1)(\nu_2^p+p-1)$.
\end{proof}

We now return to the task at hand, which is to
prove that \eqref{Equation: HS 1.5} is finite
using the bound in \eqref{Equation: F-K Integrability Bound}.
For this purpose, however, the conditioning on the process $A^{a,a}_{2t}$
is problematic, since the endpoint $U(2t)$
is not independent of $N$'s distribution. In order to get around this
we denote $a=(i,x)\in\mc A$, and note that for any nonnegative functional $F$,
we can write
\begin{multline*}
\Pi_A(2t;a,a)\mbf E_A\Big[F\big(A^{a,a}_{2t}\big)\Big]
=\Pi_Z(2t;x,x)\mbf E_A\Big[F\big(U^i,Z^{x,x}_{2t}\big)\mbf 1_{\{U(2t)=i\}}\Big]\\
\leq\Pi_Z(2t;x,x)\mbf E_A\Big[\big[F\big(U^i,Z^{x,x}_{2t}\big)\Big].
\end{multline*}
If we apply this to \eqref{Equation: HS 1.5},
together with the inequality
\begin{align}
\label{Equation: Transition Kernel Bound}
\sup_{x\in I}\Pi_Z(2t;x,x)<\infty
\end{align}
(e.g., \cite[(5.16)]{GaudreauLamarreEJP}),
then we get
\begin{align}
\label{Equation: HS 2}
\|\hat K(t)\|_{\mr{HS}}^2\leq C\sum_{i=1}^r\int_0^\infty
\mbf E_A\left[\big|\mf n_{2t}\big(Q,(U^i,Z^{x,x}_{2t})\big)\big|\mr e^{-\int_0^{2t} S(U^i(s),Z^{x,x}_{2t}(s))\d s+\mf B_{2t}(U^i,Z^{x,x}_{2t})}\right]\d x
\end{align}
for some finite $C>0$ that depends only on $t$.

In Cases 1 and 2,
if we let $W^0$ be defined as in the statement of Proposition
\ref{Proposition: F-K Integrability Bound}, then
under the conditioning $Z^{x,x}_{2t}$,
the process $W^0$ has law $W^{0,0}_{2t}$; hence
\[Z^{x,x}_{2t}(s)=
\begin{cases}
x+W^{0,0}_{2t}(s)&\text{Case 1},\\
\left|x+W^{0,0}_{2t}(s)\right|&\text{Case 2},
\end{cases}
\qquad\text{for }0\leq s\leq t\]
Therefore,
if we apply Proposition \ref{Proposition: F-K Integrability Bound} with $p=1$
to the expectation in \eqref{Equation: HS 2}
(more precisely, we first write $\mbf E_A[\cdot]=\mbf E_A\big[\mbf E_N[\cdot]\big]$
by the tower property---noting that $N$ is independent of the event $\{U(0)=i\}$---and then notice that the upper bound
in Proposition \ref{Proposition: F-K Integrability Bound}
does not depend on the process $U$),
then for every $\ka>0$, there is $\nu>0$ such that
\begin{align}
\label{Equation: HS 3}
\|\hat K(t)\|_{\mr{HS}}^2\leq\begin{cases}
\displaystyle Cr\mr e^{2\nu t}\int_{\mbb R}\mr e^{-2\ka t|x|^{\bar{\mf{a}}}}
\mbf E\left[\mr e^{2\ka t\sup_{0\leq s\leq 2t}|W^{0,0}_{2t}(s)|^{\bar{\mf{a}}}}\right]\d x&\text{Case 1,}
\vspace{5pt}\\
\displaystyle Cr\mr e^{2\nu t}\int_0^\infty\mr e^{-2\ka t|x|^{\bar{\mf{a}}}}
\mbf E\left[\mr e^{2\ka t\sup_{0\leq s\leq 2t}|W^{0,0}_{2t}(s)|^{\bar{\mf{a}}}+\bar\al_\mr m\mf L^0_{2t}(X^{x,x}_{2t})}\right]\d x&\text{Case 2,}
\vspace{5pt}\\
\displaystyle Cr\mr e^{2\nu t}\int_0^{\vartheta}\mbf E\left[\mr e^{\bar\al_\mr m\mf L^0_{2t}(Y^{x,x}_{2t})
+\bar\be_\mr m\mf L^{\vartheta}_{2t}(Y^{x,x}_{2t})}\right]\d x&\text{Case 3.}
\end{cases}
\end{align}
It now only remains to prove that \eqref{Equation: HS 3} is finite for all $t>0$.
By applying H\"older's inequality to the expectation in \eqref{Equation: HS 3},
the finiteness of $\|\hat K(t)\|_{\mr{HS}}$ is a consequence of the following statements:

\begin{lemma}
\label{Lemma: FK Integrability Bounds}
For every $\ka,t>0$ and $\bar{\mf{a}}>0$,
\begin{align}
\label{Equation: Log Integral Finite}
\int_{\mbb R}\mr e^{-2\ka t|x|^{\bar{\mf{a}}}}\d x<\infty.
\end{align}For every $t,\ka>0$ and $\bar{\mf{a}}\leq1$, one has
\begin{align}
\label{Equation: Exponential Moments of Brownian Maxima}
\mbf E\left[\mr e^{\ka\sup_{0\leq s\leq t}|W^0(s)|^{\bar{\mf{a}}}}\right]<\infty
\qquad\text{and}\qquad
\mbf E\left[\mr e^{\ka\sup_{0\leq s\leq t}|W^{0,0}_{t}(s)|^{\bar{\mf{a}}}}\right]<\infty.
\end{align}
In Cases 2 and 3,
for every $t,\nu>0$, one has
\begin{align}
\label{Equation: Exponential Moments of Boundary Local Time}
\sup_{x\in I}\mbf E\left[\mr e^{\nu\mf L^c_t(Z^x)}\right]<\infty
\qquad\text{and}\qquad
\sup_{x\in I}\mbf E\left[\mr e^{\nu\mf L^c_t(Z^{x,x}_t)}\right]<\infty,
\end{align}
where $c=0$ in Case 2 and $c\in\{0,\vartheta\}$ in Case 3.
\end{lemma}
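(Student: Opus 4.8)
The plan is to dispatch the three estimates in the order listed, since each is a separate and fairly standard computation. For \eqref{Equation: Log Integral Finite}: this is elementary. Because $\bar{\mf a}>0$ and $\ka t>0$, we have $2\ka t|x|^{\bar{\mf a}}\geq 2\log|x|$ for all $|x|$ large enough, so outside a bounded interval the integrand is dominated by $|x|^{-2}$; splitting the integral into this bounded piece and the tail gives finiteness (equivalently, the substitution $u=x^{\bar{\mf a}}$ turns it into a convergent Gamma integral).

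For \eqref{Equation: Exponential Moments of Brownian Maxima}: since $\bar{\mf a}\leq1$ we have $y^{\bar{\mf a}}\leq 1+y$ for all $y\geq0$, hence $\sup_{0\leq s\leq t}|W^0(s)|^{\bar{\mf a}}\leq 1+\sup_{0\leq s\leq t}|W^0(s)|$, and it suffices to bound $\mbf E\big[\mr e^{\ka\sup_{0\leq s\leq t}|W^0(s)|}\big]$. By the reflection principle, $\mbf P\big[\sup_{0\leq s\leq t}|W^0(s)|\geq a\big]\leq 4\mr e^{-a^2/(2t)}$, and integrating $\mr e^{\ka a}$ against this Gaussian tail is finite. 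For the bridge I would use the representation $W^{0,0}_t(s)=W^0(s)-\tfrac st W^0(t)$, which gives $\sup_{0\leq s\leq t}|W^{0,0}_t(s)|\leq 2\sup_{0\leq s\leq t}|W^0(s)|$ and so reduces the bridge bound to the one just established.

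For \eqref{Equation: Exponential Moments of Boundary Local Time} I would first treat the unconditioned processes $Z^x$. The key input is that $t\mapsto\mf L^c_{[0,t)}(Z)$ is a nonnegative additive functional of the Markov process $Z$ with $\sup_{x\in I}\mbf E_x\big[\mf L^c_{[0,t)}(Z)\big]=O(\sqrt t)$ as $t\to0$; for the half-line this is $\mbf E_x[\mf L^0_t(X)]=\int_0^t p_B(s;x,0)\,\d s\leq\sqrt{2t/\pi}$ uniformly in $x$ from the explicit reflected heat kernel, and in Case 3 the same type of bound holds by comparing $Y$ near the boundary with reflected Brownian motion (or directly from heat-kernel bounds on the interval). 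Choosing $\de$ small enough that $\nu\sup_{x}\mbf E_x[\mf L^c_\de(Z)]<1$, Khasminskii's lemma together with the Markov property yields $\sup_{x\in I}\mbf E_x\big[\mr e^{\nu\mf L^c_t(Z)}\big]<\infty$ for every $t>0$. For the bridge $Z^{x,x}_t$ I would split $\mf L^c_t(Z^{x,x}_t)=\mf L^c_{[0,t/2)}(Z^{x,x}_t)+\mf L^c_{[t/2,t)}(Z^{x,x}_t)$: the law of $\big(Z^{x,x}_t(s):0\leq s\leq t/2\big)$ is absolutely continuous with respect to that of $\big(Z^x(s):0\leq s\leq t/2\big)$ with Radon--Nikodym density $p_Z(t/2;Z(t/2),x)/p_Z(t;x,x)$, which is bounded uniformly in $x\in I$ because $\sup_y p_Z(t/2;y,x)<\infty$ and $\inf_{x'\in I}p_Z(t;x',x')>0$ (for the half-line $p_X(t;x,x)=(2\pi t)^{-1/2}(1+\mr e^{-2x^2/t})\geq(2\pi t)^{-1/2}$; on the interval the Neumann eigenfunction expansion gives $p_Y(t;x,x)\geq 1/\vartheta$). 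Since the bridge is reversible, the same bound holds for the $[t/2,t)$ piece, and a Cauchy--Schwarz split of $\mr e^{\nu(\cdot+\cdot)}$ combined with the unconditioned estimate finishes the proof, uniformly in $x$.

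The main obstacle is \eqref{Equation: Exponential Moments of Boundary Local Time}, and within it the requirement that the bounds be \emph{uniform in the starting point} $x$. The unconditioned estimate is clean once Khasminskii's lemma is invoked, but transferring it to the bridge is where the work is: one needs the uniform upper bound on $\sup_y p_Z(t;y,x)$ and, more delicately, the uniform lower bound on $p_Z(t;x,x)$, together with reversibility of the bridge, precisely so that the absolute-continuity argument does not degenerate as $x$ approaches a boundary point.
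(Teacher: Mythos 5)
Your proof is correct, and it takes a more self-contained route than the paper, which disposes of all three estimates in two lines by citation: \eqref{Equation: Log Integral Finite} is dismissed as trivial, \eqref{Equation: Exponential Moments of Brownian Maxima} is attributed to the reflection principle plus a reference to Gruet--Shi for the bridge, and \eqref{Equation: Exponential Moments of Boundary Local Time} is cited from \cite[Lemmas 5.6 and 5.8]{GaudreauLamarreEJP}. You instead reconstruct the arguments. For the bridge maximum, your representation $W^{0,0}_t(s)=W^0(s)-\tfrac st W^0(t)$ and the crude bound $\sup_{[0,t]}|W^{0,0}_t|\leq2\sup_{[0,t]}|W^0|$ replaces the external citation at essentially no cost and is elementary; this is a perfectly good alternative. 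For the boundary local time, your Khasminskii-plus-Markov argument for the unconditioned process, followed by the bridge-to-free-process change of measure on $[0,t/2]$ (with the uniform upper bound on $p_Z(t/2;\cdot,x)$ and the uniform lower bound on $p_Z(t;x,x)$ that you correctly identify as the crux), plus reversibility and Cauchy--Schwarz, is a complete and correct proof, and the explicit kernel computations you give for the half-line and the Neumann interval ($p_X(t;x,x)\geq(2\pi t)^{-1/2}$, $p_Y(t;x,x)\geq1/\vartheta$) are exactly what is needed to make the uniformity in $x$ go through. What the paper's approach buys is brevity and reuse of machinery already established in \cite{GaudreauLamarreEJP}; what your approach buys is a self-contained argument that also makes visible precisely where the uniformity in the starting point comes from, which is otherwise hidden behind the citation.
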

\begin{proof}
\eqref{Equation: Log Integral Finite} is a trivial calculation.
Next, \eqref{Equation: Exponential Moments of Brownian Maxima}
follows from the fact that Brownian motion/bridge maxima have
Gaussian tails; see the reflection principle for $W^0$
and, e.g., \cite[Remark 3.1]{GruetShi} for $W^{0,0}_t$.
Finally, \eqref{Equation: Exponential Moments of Boundary Local Time}
is \cite[Lemmas 5.6 and 5.8]{GaudreauLamarreEJP}.
\end{proof}

We therefore conclude that $\|\hat K(t)\|_{\mr{HS}}<\infty$.
With this, the only claim in part (3) of this lemma that remains to be established is
\begin{align}
\label{Equation: HS 7}
\int_{\mc A^2}|\hat K(t;a,b)|^2\d\mu(a)\dd\mu(b)=\int_{\mc A}\hat K(2t;a,a)\d\mu(a).
\end{align}
To this effect, property (1) of this lemma implies that
\[\int_{\mc A^2}|\hat K(t;a,b)|^2\d\mu(a)\dd\mu(b)=\int_{\mc A^2}\hat K(t;b,a)\hat K(t;a,b)\d\mu(a)\dd\mu(b).\]
At this point, we obtain \eqref{Equation: HS 7} by part (2) of this lemma, so long as we
can apply Fubini's theorem to integrate out $\dd\mu(a)$ in the above expression.
We can do this thanks to our knowledge that $\|\hat K(t)\|_{\mr{HS}}^2<\infty$.

\subsubsection{Proof of Proposition \ref{Proposition: Generic Kernel Properties}-(4) Part 1: Reduction to Smooth and Compactly Supported Functions}

We now begin our proof of Proposition \ref{Proposition: Generic Kernel Properties}-(4).
We first argue that it suffices to prove the result
under the assumption that every $f(i,\cdot)$ is smooth and compactly supported
on $I$'s closure. 
For this, it is enough to show that there exists a constant $C>0$
such that
\begin{align}
\label{Equation: Strong Continuity 0}
\sup_{t\in(0,1)}\|\hat K(t)f\|_2\leq C\|f\|_2
\end{align}
for every $f\in L^2(\mc A,\mbb F)$. Indeed, with this we can write
\[\|\hat K(t)f-f\|_2\leq\|\hat K(t)\tilde f-\tilde f\|_2+(C+1)\|f-\tilde f\|_2\qquad\text{for all }t\in(0,1),\]
where $\tilde f$ is a smooth and compactly supported approximation of $f$.

We now establish \eqref{Equation: Strong Continuity 0}. Write
\begin{align*}
\|\hat K(t)f\|_2^2=\int_\mc A\left|\mbf E_A\left[\mf n_t(Q,A^a)\mr e^{-\int_0^t S(A^a(s))\d s+\mf B_t(A^a)}f\big(A^a(t)\big)\right]\right|^2\dd \mu(a).
\end{align*}
Taking the absolute value inside the expectation by Jensen's inequality, we get
\begin{align}
\label{Equation: Strong Continuity 1}
\|\hat K(t)f\|_2^2\leq\int_\mc A\mbf E_A\left[|\mf n_t(Q,A^a)|\mr e^{-\int_0^t S(A^a(s))\d s+\mf B_t(A^a)}\big|f\big(A^a(t)\big)\big|\right]^2\dd \mu(a).
\end{align}

We aim to apply Proposition \ref{Proposition: F-K Integrability Bound} to
the expectation in \eqref{Equation: Strong Continuity 1}. In this view, the
presence of $\big|f\big(A(t)\big)\big|$ in \eqref{Equation: Strong Continuity 1}
is problematic, since again the enpoint $U(t)$ depends on the point process $N$.
In order to get around this, we change the initial point $U(0)$ of
the process $U$ to its stationary measure: For any point $x\in I$
and nonnegative functional $F$, we note that we can write
\begin{multline*}
\mbf E_A\Big[F(A^{(i,x)})\,\big|f\big(A^{(i,x)}(t)\big)\big|\Big]
\leq\sum_{j=1}^r\mbf E_A\Big[F(A^{(j,x)})\,\big|f\big(A^{(j,x)}(t)\big)\big|\Big]\\
=r\,\mbf E_A\Big[F(A^{(u,x)})\,\big|f\big(A^{(u,x)}(t)\big)\big|\Big],
\end{multline*}
where $u$ is a uniform random variable on $\{1,\ldots,r\}$
that is independent of $Z$, $N$, and the jumps $J_k$;
and $A^{(u,x)}$ denotes $A=(U,Z)$
conditioned on $\{Z(0)=x\}$ and $\{U(0)=u\}$. Since the
uniform distribution is the
stationary measure of $U$, under the conditioning $\{U(0)=u\}$,
the endpoint $U(t)$ is now independent of the Poisson process $N$.
Thus, we can now apply the tower property, whereby
\begin{align}
\label{Equation: Strong Continuity 2}
\mbf E_A\Big[F(A^{(i,x)})\,\big|f\big(A^{(i,x)}(t)\big)\big|\Big]\leq
r\,\mbf E_A\Big[\mbf E_N\big[F(A^{(u,x)})\big]\,\big|f\big(A^{(u,x)}(t)\big)\big|\Big].
\end{align}
Finally, in the context of this type of expectation sitting inside a $\dd\mu(a)$ integral
with a square,
\eqref{Equation: Strong Continuity 2} yields
\begin{multline}
\label{Equation: Strong Continuity 3}
\int_{\mc A}\mbf E_A\Big[F(A^a)\,\big|f\big(A^a(t)\big)\big|\Big]^2\d\mu(a)
=\sum_{i=1}^r\int_0^\infty\mbf E_A\Big[F(A^{(i,x)})\,\big|f\big(A^{(i,x)}(t)\big)\big|\Big]^2\d x\\
\leq r^3\int_0^\infty\mbf E_A\Big[\mbf E_N\big[F(A^{(u,x)})\big]\,\big|f\big(A^{(u,x)}(t)\big)\big|\Big]^2\d x.
\end{multline}

If we use \eqref{Equation: Strong Continuity 3} to apply 
Proposition \ref{Proposition: F-K Integrability Bound} (with $p=1$) to
\eqref{Equation: Strong Continuity 1}, then we
get
\begin{multline}
\label{Equation: Strong Continuity 4}
\|\hat K(t)f\|_2^2\leq
r^3\mr e^{2\nu t}\\
\cdot
\begin{cases}
\displaystyle
\int_{\mbb R}\mr e^{-2\ka t|x|^{\bar{\mf{a}}}}\mbf E\left[\mr e^{\ka t\sup_{0\leq s\leq t}\left|W^0(s)\right|^{\bar{\mf{a}}}}\big|f\big(u,B^x(t)\big)\big|\right]^2\dd x&\text{Case 1},
\vspace{5pt}\\
\displaystyle
\int_0^\infty\mr e^{-2\ka t|x|^{\bar{\mf{a}}}}\mbf E\left[\mr e^{\ka t\sup_{0\leq s\leq t}\left|W^0(s)\right|^{\bar{\mf{a}}}+\bar\al_\mr m\mf L^0_t(X^x)}\big|f\big(u,X^x(t)\big)\big|\right]^2\dd x&\text{Case 2},
\vspace{5pt}\\
\displaystyle
\int_0^{\vartheta}\mbf E\left[\mr e^{\bar\al_\mr m\mf L^0_t(Y^x)+\bar\be_\mr m\mf L^{\vartheta}_t(Y^x)}\big|f\big(u,Y^x(t)\big)\big|\right]^2\dd x&\text{Case 3,}
\end{cases}
\end{multline}
where $u$ is uniform on $\{1,\ldots r\}$ and independent of $Z^x$ and $W^0$.
Note that for any $t>0$ and $x\in\mbb R$, one has $\mr e^{-2\ka t|x|^{\bar{\mf{a}}}}\leq 1$.
Thus, by an application of H\"older's inequality
(i.e., $\mbf E\left[\mf X\,\big|f\big(u,Z^x(t)\big)\big|\right]^2\leq\mbf E[\mf X^2]\mbf E\big[\big|f\big(u,Z^x(t)\big)\big|^2\big]$
for any random variable $\mf X$) as well as \eqref{Equation: Exponential Moments of Brownian Maxima}
and \eqref{Equation: Exponential Moments of Boundary Local Time}
(for the latter, notice that $0\leq\mf L^c_{t}(Z)\leq\mf L^c_{1}(Z)$
for all $t\in(0,1)$ by \eqref{Equation: Regular Boundary Local Time}), it suffices to prove
that there exists some constant $C>0$ such that
\begin{align}
\label{Equation: Strong Continuity 5}
\sup_{t\in(0,1)}\int_I\mbf E\left[\big|f\big(u,Z^x(t)\big)\big|^2\right]\d x\leq C\|f\|_2^2.
\end{align}
Toward this end, the fact that $u$ is uniform and independent of $Z$ implies that
\begin{align}
\label{Equation: L2 Integral with uniform starting point in I}
\int_I\mbf E\left[\big|f\big(u,Z^x(t)\big)\big|^2\right]\d x
=\frac1r\sum_{i=1}^r\int_{I^2}\Pi_Z(t;x,y)|f(i,y)|^2\d x\dd y=\frac{\|f\|_2^2}r,
\end{align}
where the last equality follows from the symmetry of $Z$'s transition kernel and Tonelli's theorem
(i.e., integrating out the $x$ variable first).
Hence \eqref{Equation: Strong Continuity 0} holds.

\subsubsection{Proof of Proposition \ref{Proposition: Generic Kernel Properties}-(4) Part 2: Pointwise Convergence}

Let us then assume that each $f(i,\cdot)$ is continuous and compactly supported.
We begin by proving the pointwise convergence
\begin{align}
\label{Equation: Strong Continuity 7}
\lim_{t\to0}\hat K(t)f(a)=f(a)\qquad\text{for every }a\in\mc A.
\end{align}
For this, by an application of the tower property, we can write
\begin{align}
\label{Equation: Strong Continuity 8}
\hat K(t)f(a)=\mbf E_A\left[\mbf E_N\left[\mf n_t(Q,A^a)\mr e^{-\int_0^t S(A^a(s))\d s+\mf B_t(A^a)}f\big(A^a(t)\big)\right]\right].
\end{align}
Since $f$ and $Z$ are continuous and $U$ is piecewise constant, for every $a\in\mc A$,
\begin{align}
\label{Equation: Strong Continuity 9}
\lim_{t\to0}\mf n_t(Q,A^a)\mr e^{-\int_0^t S(A^a(s))\d s+\mf B_t(A^a)}f\big(A^a(t)\big)=f(a)
\qquad\text{almost surely.}
\end{align}

With
\eqref{Equation: Strong Continuity 9} in hand, we first argue that the same limit holds
when we take the expectation with respect to $N$. For this purpose, we combine
\eqref{Equation: F-K Integrability Bound 1} with the facts that
that $f$ is bounded, that there exists some constant $\nu>0$ such that $S(i,x)\geq-\nu$
(by Lemma \ref{Lemma: Growth}), and some $\ka,\nu_1>0$ such that
\[\max_{1\leq i<j\leq r}|\xi_{i,j}(x)|\leq\eta(x)=\ka\big(\log(1+|x|\big)^{\mf b}\big)+\nu_1\]
(by \eqref{Equation: Q Growth Upper Bound}) to bound
\begin{multline*}
\sup_{t\in(0,1)}\left|\mf n_t(Q,A^a)\mr e^{-\int_0^t S(A^a(s))\d s+\mf B_t(A^a)}f\big(A^a(t)\big)\right|\\
\leq
\begin{cases}
\displaystyle\max\left\{1,\eta\left(\sup_{0\leq s\leq 1}|B^x(s)|\right)^{N(t)}\right\}\mr e^{\nu t}\|f\|_\infty&\text{Case 1,}
\vspace{5pt}\\
\displaystyle\max\left\{1,\eta\left(\sup_{0\leq s\leq 1}X^x(s)\right)^{N(t)}\right\}\mr e^{\nu t+\bar\al_\mr m\mf L_t^0(X^x)}\|f\|_\infty&\text{Case 2,}
\vspace{5pt}\\
\displaystyle\max\left\{1,\left(\sup_{1\leq i,j\leq r,~y\in(0,\vartheta)}|\xi_{i,j}(y)|\right)^{N(t)}\right\}\mr e^{\nu t+\bar\al_\mr m\mf L_t^0(Y^x)+\bar\be_\mr m\mf L_t^{\vartheta}(Y^x)}\|f\|_\infty&\text{Case 3,}
\end{cases}
\end{multline*}
The expectation of this upper bound in $N$ is finite almost surely (conditional on $Z$)
for every $a\in\mc A$,
and thus by the dominated convergence theorem we have that
\begin{align}
\label{Equation: Strong Continuity 10}
\lim_{t\to0}\mbf E_N\left[\mf n_t(Q,A^a)\mr e^{-\int_0^t S(A^a(s))\d s+\mf B_t(A^a)}f\big(A^a(t)\big)\right]=f(a)
\qquad\text{almost surely.}
\end{align}

Finally, in order to conclude \eqref{Equation: Strong Continuity 7},
it only remains to prove that the limit \eqref{Equation: Strong Continuity 10}
persists once we take the expectation $\mbf E_A$ in
\eqref{Equation: Strong Continuity 8}. Toward this end,
we note that, by Jensen's inequality and the boundedness of $f$, one has
\begin{multline}
\label{Equation: Strong Continuity 11}
\left|\mbf E_N\left[\mf n_t(Q,A^a)\mr e^{-\int_0^t S(A^a(s))\d s+\mf B_t(A^a)}f\big(A^a(t)\big)\right]\right|\\
\leq\mbf E_N\left[|\mf n_t(Q,A^a)|\mr e^{-\int_0^t S(A^a(s))\d s+\mf B_t(A^a)}\right]\|f\|_\infty.
\end{multline}
An application of Proposition \ref{Proposition: F-K Integrability Bound} with $p=1$ to \eqref{Equation: Strong Continuity 11} then yields
\begin{multline}
\label{Equation: Strong Continuity 12}
\sup_{t\in(0,1)}\left|\mbf E_N\left[\mf n_t(Q,A^a)\mr e^{-\int_0^t S(A^a(s))\d s+\mf B_t(A^a)}f\big(A^a(t)\big)\right]\right|\\
\leq
\begin{cases}
\mr e^{\ka\sup_{0\leq s\leq 1}|W^0(s)|^{\bar{\mf{a}}}+\nu}\|f\|_\infty&\text{Case 1,}\\
\mr e^{\ka\sup_{0\leq s\leq 1}|W^0(s)|^{\bar{\mf{a}}}+\nu+\bar\al_\mr m\mf L^0_1(X^x)}\|f\|_\infty&\text{Case 2,}\\
\mr e^{\nu+\bar\al_\mr m\mf L^0_1(Y^x)+\bar\be_\mr m\mf L^{\vartheta}_1(Y^x)}\|f\|_\infty&\text{Case 3.}
\end{cases}
\end{multline}
By combining \eqref{Equation: Exponential Moments of Brownian Maxima} and
\eqref{Equation: Exponential Moments of Boundary Local Time} with an application of
H\"older's inequality, the expectation $\mbf E_A$ of the dominating function
on the right-hand side of \eqref{Equation: Strong Continuity 12}
is finite for any choice of $a\in\mc A$. This proves \eqref{Equation: Strong Continuity 7} by dominated convergence.

\subsubsection{Proof of Proposition \ref{Proposition: Generic Kernel Properties}-(4) Part 3: Uniform Integrability}
\label{Section: Vitali Convergence}

In order to conclude that $\|\hat K(t)f-f\|_2\to0$ as $t\to0$
from the pointwise convergence \eqref{Equation: Strong Continuity 7},
we use the Vitali convergence theorem (e.g., \cite[Theorem 2.24]{FonsecaGiovanni}):
Convergence in $L^2(\mc A,\mbb F)$ follows from
pointwise convergence if
for every $\eps>0$, there exists a $\de>0$ such that
\begin{align}
\label{Equation: Vitali 1}
\sup_{t\in(0,1)}\int_{\mc K}|\hat K(t)f(a)|^2\d\mu(a)<\eps\qquad\text{for all }\mc K\subset\mc A\text{ with }\mu(\mc K)<\de,
\end{align}
and there also exists a $\mc K\subset\mc A$ with $\mu(\mc K)<\infty$ and
\begin{align}
\label{Equation: Vitali 2}
\sup_{t\in(0,1)}\int_{\mc A\setminus\mc K}|\hat K(t)f(a)|^2\d\mu(a)<\eps.
\end{align}
Toward this end, if we reapply the bounds in
\eqref{Equation: Strong Continuity 3}--\eqref{Equation: Strong Continuity 5}
to $|\hat K(t)f(a)|^2$,
then we see that the proof of \eqref{Equation: Vitali 1}
and \eqref{Equation: Vitali 2} can be reduced to showing that
for every fixed $1\leq i\leq r$, the following holds:
For every $\eps>0$, there exists a $\de>0$ such that
\begin{align}
\label{Equation: Vitali 1 simplified}
\sup_{t\in(0,1)}\int_{\mc K}\mbf E\left[\big|f\big(i,Z^x(t)\big)\big|^2\right]\d x<\eps\qquad\text{for all }\mc K\subset I\text{ with }|\mc K|<\de,
\end{align}
and there also exists a $\mc K\subset I$ with $|\mc K|<\infty$ and
\begin{align}
\label{Equation: Vitali 2 simplified}
\sup_{t\in(0,1)}\int_{I\setminus\mc K}\mbf E\left[\big|f\big(i,Z^x(t)\big)\big|^2\right]\d x<\eps
\end{align}
(here, $|\mc K|$ denotes the Lebesgue measure).

Toward this end, we note that \eqref{Equation: Vitali 1 simplified} is trivial in all cases since $f$ is bounded:
\begin{align}
\label{Equation: Vitali Simplified Proof 1}
\sup_{t\in(0,1)}\int_{\mc K}\mbf E\left[\big|f\big(i,Z^x(t)\big)\big|^2\right]\d x\leq\|f\|_\infty^2\,|\mc K|;
\end{align}
we can then simply choose $\de=\eps/\|f\|_\infty^2$.
Similarly, \eqref{Equation: Vitali 2 simplified} is trivial in Case 3 since in that situation
$I$ is bounded. Thus, it only remains to prove \eqref{Equation: Vitali 2 simplified}
in Cases 1 and 2:
If we let $\mc K_\ka=I\cap[-\ka,\ka]$ for $\ka>0$, then
\begin{multline}
\label{Equation: Vitali Simplified Proof 2}
\int_{I\setminus\mc K_\ka}\mbf E\left[\big|f\big(i,Z^x(t)\big)\big|^2\right]\d x
=\int_{(I\setminus\mc K_\ka)\times I}\Pi_Z(t;x,y)|f(i,y)|^2\d x\dd y\\
=\int_0^\infty\mbf P\big[|Z^y(t)|>\ka\big]|f(i,y)|^2\d y.
\end{multline}
For every fixed $y>0$, in Cases 1 and 2 we have that
\begin{align}
\label{Equation: Vitali Simplified Proof 3}
\lim_{\ka\to\infty}\sup_{t\in(0,1)}\mbf P\big[|Z^y(t)|>\ka\big]=0.
\end{align}
Since $f\in L^2(\mc A,\mbb F)$, \eqref{Equation: Vitali 2 simplified}
then follows from dominated convergence.
This then concludes the proof of Proposition \ref{Proposition: Generic Kernel Properties}-(4),
and thus also of Proposition \ref{Proposition: Generic Kernel Properties}.

\subsection{Proof of Proposition \ref{Proposition: Generator Calculation}}
\label{Section: Proof of Generator}

Following-up on the roadmap provided in Section \ref{Section: Generator Calculation Roadmap},
the proof of Proposition \ref{Proposition: Generator Calculation} relies on three distinct
steps, namely:  \eqref{Equation: Generator 1}, \eqref{Equation: Generator 2}, and \eqref{Equation: Generator 3}.
The proofs of these steps were outlined in Remarks \ref{Remark: Generator 1}, \ref{Remark: Generator 2} and \ref{Remark: Generator 3}.
We now carry out the details of these calculations.
Similarly to the proof of Proposition \ref{Proposition: Generic Kernel Properties},
we assume throughout Section \ref{Section: Proof of Generator} that
we are restricted to a probability-one event wherein Lemma \ref{Lemma: Growth} holds.

\subsubsection{Proof of Proposition \ref{Proposition: Generator Calculation} Part 1}
\label{Section: Generator Part 1}

We begin with \eqref{Equation: Generator 1}.
Given \eqref{Equation: Generator 1.1}, it only remains to verify
that the operators $H_i+S(i,0)$ satisfy the hypotheses of the
one-dimensional Feynman-Kac formula \eqref{Equation: Classical Feynman-Kac} for each $1\leq i\leq r$.
For this purpose, we note that
a formal version of \eqref{Equation: Classical Feynman-Kac}
can be obtained by combining \cite[Theorem 4.9]{Sznitman}
with
a straightforward modification of \cite[(3.3'), (3.4), Theorem 3.4 (b), and Lemmas 4.6 and 4.7]{Papanicolaou} for
the Robin/mixed boundary condition, and \cite[Chapter 3-(34) and Theorem 3.27]{ChungZhao} for the Dirichlet
boundary condition. For convenience, we point to \cite[Theorem 5.4]{GaudreauLamarreEJP} for a unified statement that
specifically contains all cases considered in this paper: According to that result,
in order for the Feynman-Kac formula
for $H_i+\xi_{i,i}$ to hold,
it suffices to check that $S(i,\cdot)=V(i,\cdot)+\xi_{i,i}$
can be written as a difference $S(i,\cdot)=S(i,\cdot)_+-S(i,\cdot)_-$,
where $S(i,\cdot)_+$ and $S(i,\cdot)_-$ are nonnegative,
$S(i,\cdot)_-$ is in the Kato class (e.g., \cite[(5.2)]{GaudreauLamarreEJP}),
and $S(i,\cdot)_+$ restricted to any compact set is in the Kato class.
This follows from Lemma \ref{Lemma: Growth},
since $S(i,\cdot)_-$ is bounded and any one-dimensional locally integrable function
(in particular, $S(i,\cdot)_+$) is in the Kato class
when restricted to a compact set.

\subsubsection{Proof of Proposition \ref{Proposition: Generator Calculation} Part 2}
\label{Section: Generator Part 2}

Next, we prove \eqref{Equation: Generator 2}.
Our first step is to establish the pointwise convergence
\begin{align}
\label{Equation: Generator 2.2}
\lim_{t\to0}\big|\mf G_1\big(t;(i,x)\big)-Qf(i,x)\big|=0\qquad\text{for every }(i,x)\in\mc A.
\end{align}
Thanks to \eqref{Equation: Generator 2.1},
by the triangle and Jensen's inequalities,
\begin{multline}
\label{Equation: Generator 2.3}
\big|\mf G_1\big(t;(i,x)\big)-Qf(i,x)\big|\\
\leq
\sum_{j\neq i}\mbf E_A\left[\left|\xi_{i,j}\big(Z^x(\tau_1)\big)\mr e^{-\int_0^t S(A^{(i,x)}(s))\dd s+\mf B_t(A^{(i,x)})}f\big(j,Z^x(t)\big)-\xi_{i,j}(x)f(j,x)\right|~\bigg|~\mc J_j(t)\right].
\end{multline}
Conditional on $\mc J_j(t)$, $\tau_1$ is uniform on the interval $[0,t]$. Thus,
if we let $u_t$ denote a uniform random variable on $[0,t]$ independent of $Z$,
and we denote
\[\mf Y_{i,j}(t;x)=\xi_{i,j}\big(Z^x(u_t)\big)\mr e^{-\int_0^{u_t} S(i,Z^x(s))\d s
-\int_{u_t}^t S(j,Z^x(s))\d s\\
+\mf Z_{i,j}(u_t,Z^x)}f\big(j,Z^x(t)\big),\]
where
\[\mf Z_{i,j}(u_t,Z^x)=\begin{cases}
0&\text{Case 1,}\\
\bar\al_i\mf L^0_{[0,u_t)}(X^x)+\bar\al_j\mf L^0_{[u_t,t]}(X^x)&\text{Case 2,}\\
\bar\al_i\mf L^0_{[0,u_t)}(Y^x)+\bar\be_i\mf L^{\vartheta}_{[0,u_t)}(Y^x)
+\bar\al_j\mf L^0_{[u_t,t]}(Y^x)+\bar\be_j\mf L^{\vartheta}_{[u_t,t]}(Y^x)&\text{Case 3,}\\
\end{cases}\]
then we obtain from \eqref{Equation: Generator 2.3} that
\begin{align}
\label{Equation: Generator 2.4}
\big|\mf G_1\big(t;(i,x)\big)-Qf(i,x)\big|\leq\sum_{j\neq i}
\mbf E_A\big[\big|\mf Y_{i,j}(t;x)-\xi_{i,j}(x)f(j,x)\big|\big].
\end{align}
Since we only consider regular noises
(in the sense of Definition \ref{Definition: Regular Noise}) in Theorem \ref{Theorem: Regular FK}, the $\xi_{i,j}$
are continuous. Moreover, since $f\in D(\hat H)\subset D(\mc E)$
(by Proposition \ref{Proposition: Operator Definition})
and $D(\mc E)$
only contains direct sums of locally absolutely continuous functions (see Definition \ref{Definition: Deterministic Quadratic Forms}),
each $f(j,\cdot)$ is continuous.
Thus, $\mf Y_{i,j}(t;x)\to \xi_{i,j}(x)f(j,x)$ as $t\to0$ almost surely
for any fixed $i,j$, and $x$. At this point, \eqref{Equation: Generator 2.2}
follows if we show that we can apply the dominated convergence theorem
to the expectation $\mbf E_A$ in \eqref{Equation: Generator 2.4}.
For this purpose, we remark that since $f\in D(H)$, both $\|f\|_\mu$ and $\|f'\|_\mu$ are finite;
hence $f$ is bounded (see, e.g., \cite[Fact 3.1]{BloemendalVirag2}
and \cite[Lemma 3.1]{GaudreauLamarreEJP}).
If we combine this with the facts that $\xi_{i,j}(x)\leq\eta(x)$ with $\eta(x)=\ka\big(\log(1+|x|)\big)^{\mf b}+\nu$
for some $\ka,\nu>0$
in Cases 1 and 2 (by \eqref{Equation: Q Growth Upper Bound}),
that there exists some $c>0$ such that $S(i,x)\geq-c$
for all $1\leq i\leq r$ (by Lemma \ref{Lemma: Growth}),
then we can bound
\begin{multline}
\label{Equation: Generator 2.5}
\sup_{t\in(0,1)}\big|\mf Y_{i,j}(t;x)\big|\\
\leq
\begin{cases}
\eta\left(\sup_{t\in(0,1)}|B^x(t)|\right)
\mr e^{c}\|f\|_\infty&\text{Case 1,}
\vspace{5pt}\\
\eta\left(\sup_{t\in(0,1)}X^x(t)\right)
\mr e^{c+\bar\al_\mr m\mf L^0_{1}(X^x)}\|f\|_\infty&\text{Case 2,}
\vspace{5pt}\\
\left(\sup_{1\leq i,j\leq r,~y\in(0,\vartheta)}|\xi_{i,j}(y)|\right)
\mr e^{c+\bar\al_\mr m\mf L^0_{1}(Y^x)+\bar\be_\mr m\mf L^{\vartheta}_{1}(Y^x)}\|f\|_\infty&\text{Case 3.}
\end{cases}
\end{multline}
By \eqref{Equation: Exponential Moments of Boundary Local Time} and the fact that Brownian motion suprema
have Gaussian tails, this
has a finite expectation for any $x$; thus \eqref{Equation: Generator 2.2}
follows by dominated convergence.

We now conclude the proof of \eqref{Equation: Generator 2}
by showing that \eqref{Equation: Generator 2.2} can be improved
to convergence in $L^2(\mc A,\mbb F)$. For this purpose, we once
again use the Vitali convergence theorem. That is, we must prove
for every $\eps>0$, there exists a $\de>0$ such that
\begin{align}
\label{Equation: Vitali Generator 1}
\sup_{t\in(0,1)}\int_{\mc K}\big|\mf G_1\big(t;(i,x)\big)\big|^2\d\mu(a)<\eps\qquad\text{for all }\mc K\subset\mc A\text{ with }\mu(\mc K)<\de,
\end{align}
and there also exists a $\mc K\subset\mc A$ with $\mu(\mc K)<\infty$ and
\begin{align}
\label{Equation: Vitali Generator 2}
\sup_{t\in(0,1)}\int_{\mc A\setminus\mc K}\big|\mf G_1\big(t;(i,x)\big)\big|^2\d\mu(a)<\eps.
\end{align}
For this purpose, we note that by Jensen's inequality,
\[\big|\mf G_1\big(t;(i,x)\big)\big|^2
\leq
(r-1)\sum_{j\neq i}\mbf E\big[\big|\mf Y_{i,j}(t;x)\big|\big]^2;\]
then the same argument used to obtain \eqref{Equation: Generator 2.5}
(except that we do not bound $f$ by its infinity norm)
yields that for every $t\in(0,1)$,
\begin{multline}
\label{Equation: Generator 2.6}
\big|\mf G_1\big(t;(i,x)\big)\big|^2
\leq(r-1)\mr e^{2c}\sum_{j\neq i}\cdots\\
\begin{cases}
\mbf E\left[\eta\left(\sup_{t\in(0,1)}|B^x(t)|\right)\big|f\big(j,B^x(t)\big)\big|\right]^2&\text{Case 1,}
\vspace{5pt}\\
\mbf E\left[\eta\left(\sup_{t\in(0,1)}X^x(t)\right)
\mr e^{\bar\al_\mr m\mf L^0_{1}(X^x)}\big|f\big(j,X^x(t)\big)\big|\right]^2&\text{Case 2,}
\vspace{5pt}\\
\left(\sup_{1\leq i,j\leq r,~y\in(0,\vartheta)}|\xi_{i,j}(y)|\right)^2
\mbf E\left[\mr e^{\bar\al_\mr m\mf L^0_{1}(Y^x)+\bar\be_\mr m\mf L^{\vartheta}_{1}(Y^x)}\big|f\big(j,Z^x(t)\big)\big|\right]^2&\text{Case 3.}
\end{cases}
\end{multline}
By
H\"older's inequality, the second line of \eqref{Equation: Generator 2.6} is bounded above by
\[\begin{cases}
\mbf E\left[\eta\left(\sup_{t\in(0,1)}|B^x(t)|\right)^2\right]\mbf E\big[\big|f\big(j,B^x(t)\big)\big|^2\big]&\text{Case 1,}
\vspace{5pt}\\
\mbf E\left[\eta\left(\sup_{t\in(0,1)}X^x(t)\right)^{2p}\right]^{1/p}
\mbf E\left[\mr e^{2q\bar\al_\mr m\mf L^0_{1}(X^x)}\right]^{1/q}\mbf E\big[\big|f\big(j,X^x(t)\big)\big|^2\big]&\text{Case 2,}
\vspace{5pt}\\
C\mbf E\left[\mr e^{2\bar\al_\mr m\mf L^0_{1}(Y^x)+2\bar\be_\mr m\mf L^{\vartheta}_{1}(Y^x)}\right]\mbf E\big[\big|f\big(j,Z^x(t)\big)\big|^2\big]&\text{Case 3,}
\end{cases}\]
for any $p,q>1$ such that $1/p+1/q=1$, where $C$ is a constant independent of $t$.

By \eqref{Equation: Exponential Moments of Boundary Local Time},
this means that in Case 3 the uniform integrability follows from
 \eqref{Equation: Vitali 1 simplified} and \eqref{Equation: Vitali 2 simplified}.
We now deal with Cases 1 and 2:
Recall that $\eta(x)=\ka\big(\log(1+|x|)\big)^{\mf b}+\nu$. If we combine the coupling
\eqref{Equation: Brownian Initial Point Coupling} with the triangle and Jensen's inequalities,
and $\log(1+|x|+|y|)^{\mf b}\leq C\log(1+|x|)^\mf b+C|y|^\mf b$ for some constant $C>0$,
we can find some $\bar\ka,\bar\nu>0$ large enough so that
\[\textstyle\mbf E\left[\eta\left(\sup_{t\in(0,1)}|B^x(t)|\right)^2\right],\mbf E\left[\eta\left(\sup_{t\in(0,1)}X^x(t)\right)^{2p}\right]^{1/p}
\leq\bar\ka\big(\log(1+|x|)\big)^{2\mf b}+\bar\nu.\]
Here, we used the fact that
\[\textstyle\mbf E\left[\sup_{t\in(0,1)}|W^0(s)|^{\mf c}\right]<\infty\qquad\text{for all }\mf c>0,\]
and thus this expectation can be absorbed in the constant $\bar \nu$.
By \eqref{Equation: Exponential Moments of Boundary Local Time},
\[\mbf E\left[\mr e^{2q\bar\al_\mr m\mf L^0_{1}(X^x)}\right]^{1/q}<\infty.\]
Thus, by \eqref{Equation: Vitali 1 simplified} and \eqref{Equation: Vitali 2 simplified},
the uniform integrability in \eqref{Equation: Vitali Generator 1} and \eqref{Equation: Vitali Generator 2}
in Cases 1 and 2 follows from this:
For every $\eps>0$, there is some $\de>0$ such that
\begin{align}
\label{Equation: Vitali 1 simplified log}
\sup_{t\in(0,1)}\int_{\mc K}\big(\log(1+|x|)\big)^{2\mf b}\mbf E\left[\big|f\big(i,Z^x(t)\big)\big|^2\right]\d x<\eps\qquad\text{for all }\mc K\subset I\text{ with }|\mc K|<\de,
\end{align}
and there also exists a $\mc K\subset I$ with $|\mc K|<\infty$ and
\begin{align}
\label{Equation: Vitali 2 simplified log}
\sup_{t\in(0,1)}\int_{I\setminus\mc K}\big(\log(1+|x|)\big)^{2\mf b}\mbf E\left[\big|f\big(i,Z^x(t)\big)\big|^2\right]\d x<\eps.
\end{align}

Toward this end, we note that for any set $\mc K\subset I$,
and functions $g,h$, we can rewrite
\begin{align}
\nonumber
\int_{\mc K}g(x)\mbf E\left[h\big(Z^x(t)\big)\right]\d x
&=\int_{I\times I}g(x)\mbf 1_{\{x\in\mc K\}}\Pi_Z(t;x,y)h(y)\d x\dd y\\
\label{Equation: 2 BM Change Direction Trick}
&=\int_{I}\mbf E\Big[g\big(y+Z^0(t)\big)\mbf 1_{\{y+Z^0(t)\in\mc K\}}\Big]h(y)\d y.
\end{align}
Applying this to $g(x)=\big(\log(1+|x|)\big)^{2\mf b}$ and $h(y)=|f(i,y)|^2$, we then get
\begin{align*}
&\int_{\mc K}\big(\log(1+|x|)\big)^{2\mf b}\mbf E\left[\big|f\big(i,Z^x(t)\big)\big|^2\right]\d x\\
&=\int_{I}\mbf E\left[\big(\log(1+|y+Z^0(t)|)\big)^{2\mf b}\mbf 1_{\{y+Z^0(t)\in\mc K\}}\right]|f(i,y)|^2\dd y\\
&\leq C\int_{I}\mbf E\left[\big(\log(1+|y|\big)^{2\mf b}+|Z^0(t)|^{2\mf b}\big)\mbf 1_{\{y+Z^0(t)\in\mc K\}}\right]|f(i,y)|^2\dd y
\end{align*}
for some $C>0$ independent of $t$.
Then, an application of H\"older's inequality yields
\begin{multline}
\label{Equation: Vitali Simplified Log Split}
\int_{\mc K}\big(\log(1+|x|)\big)^{2\mf b}\mbf E\left[\big|f\big(i,Z^x(t)\big)\big|^2\right]\d x
\leq C\int_{I}\mbf P\big[Z^y(t)\in\mc K\big]\log(1+|y|\big)^{2\mf b}|f(i,y)|^2\dd y\\
+C\int_{I}\mbf E\left[|Z^0(t)|^{4\mf b}\right]^{1/2}\mbf P\big[Z^y(t)\in\mc K\big]^{1/2}|f(i,y)|^2\dd y
\end{multline}
for some $C>0$ independent of $t$.
Regarding the second line in \eqref{Equation: Vitali Simplified Log Split},
we note that
\[\sup_{t\in(0,1)}\mbf E\left[|Z^0(t)|^{4\mf b}\right]^{1/2}<\infty,\]
and then, by Jensen's inequality and the fact that $\int|f(i,y)|^2\d y<\infty$,
there exists some $C>0$ independent of $t$
such that
\[\int_{I}\mbf P\big[Z^y(t)\in\mc K\big]^{1/2}|f(i,y)|^2\dd y
\leq C\left(\int_{I}\mbf P\big[Z^y(t)\in\mc K\big]|f(i,y)|^2\dd y\right)^{1/2}.\]
Thus, \eqref{Equation: Vitali Simplified Log Split} becomes
\begin{align*}
&\int_{\mc K}\big(\log(1+|x|)\big)^{2\mf b}\mbf E\left[\big|f\big(i,Z^x(t)\big)\big|^2\right]\d x\\
&\leq C\int_{I}\mbf P\big[Z^y(t)\in\mc K\big]\log(1+|y|\big)^{2\mf b}|f(i,y)|^2\dd y
+C\left(\int_{I}\mbf P\big[Z^y(t)\in\mc K\big]|f(i,y)|^2\dd y\right)^{1/2}\\
&=C\int_{\mc K}\mbf E\left[\log(1+|Z^x(t)|\big)^{2\mf b}\big|f\big(i,Z^x(t)\big)\big|^2\right]\d x
+C\left(\int_{\mc K}\mbf E\left[\big|f\big(i,Z^x(t)\big)\big|^2\right]\d x\right)^{1/2}
\end{align*}
for some $C>0$ independent of $t$,
where we have used \eqref{Equation: 2 BM Change Direction Trick}
in reverse (with $g=1$) in the third line.
At this point, if we show that $\log(1+|y|\big)^{2\mf b}|f(i,y)|^2$
is both integrable and bounded, then \eqref{Equation: Vitali 1 simplified log}
and \eqref{Equation: Vitali 2 simplified log}
follows from the same arguments used in \eqref{Equation: Vitali Simplified Proof 1}--\eqref{Equation: Vitali Simplified Proof 3}
(replace $|f(i,y)|^2$ therein by $\log(1+|y|\big)^{2\mf b}|f(i,y)|^2$).

Toward this end, since $f\in D(\hat H)$ we know that
$\|S_+^{1/2}f\|_2<\infty$ (see Proposition \ref{Proposition: Operator Definition}
and the definition of $D(\mc E)$ therein). Thus,
by \eqref{Equation: S Growth Lower Bound} and \eqref{Equation: Q Growth Upper Bound},
this implies that
\begin{align}
\label{Equation: Vitali Simplified Log Split 2}
\int_{I}\log(1+|y|\big)^{\mf c}|f(i,y)|^2\d y<\infty\qquad\text{for every }\mf c>0
\end{align}
(in particular for $\mf c=2\mf b$).
Regarding boundedness,
up to expanding $f(i,y)$ into a sum of its real and imaginary parts, we may
assume without loss of generality that $f(i,y)$ is real-valued.
Under this assumption, by the fundamental theorem of calculus,
\begin{align*}
\log(1+|x|\big)^{2\mf b}f(i,x)^2\leq\left|\int_x^{\infty}\tfrac{\dd}{\dd y}\big(\log(1+|y|\big)^{2\mf b}f(i,y)^2\big)\d y\right|,\qquad x\in I.
\end{align*}
Using the product rule and Jensen's inequality, this implies
\[\log(1+|x|\big)^{2\mf b}f(i,x)^2\leq2\int_I\big(\log (1+|y|)\big)^{2\mf b}\big|f(i,y)f'(i,y)\big|+\frac{\mf b\log(1+|y|)^{2\mf b-1}f(i,y)^2}{1+|y|}\d y;\]
thus it only remains to check that the above integral is finite. On the one hand, since $\frac1{1+|y|}\leq 1$,
we get that
\[\int_I\frac{\mf b\log(1+|y|)^{2\mf b-1}f(i,y)^2}{1+|y|}\d y<\infty\]
by \eqref{Equation: Vitali Simplified Log Split 2}. On the other hand, if we use the inequality $|xy|\leq x^2+y^2$,
we get
\[
\int_I\big(\log (1+|y|)\big)^{2\mf b}\big|f(i,y)f'(i,y)\big|\d y
\leq
\int_I\big(\log (1+|y|)\big)^{4\mf b}f(i,y)^2+f'(i,y)^2\d y;\]
this is finite by a combination of \eqref{Equation: Vitali Simplified Log Split 2}
and $\|f'\|_\mu<\infty$. With this in hand, the proof of \eqref{Equation: Generator 2} is now complete.

\subsubsection{Proof of Proposition \ref{Proposition: Generator Calculation} Part 3}
\label{Section: Generator Part 3}

We now finish the proof of Proposition \ref{Proposition: Generator Calculation}---and therefore of Theorem \ref{Theorem: Regular FK}---by
establishing \eqref{Equation: Generator 3}.
Recalling the definition of $\mf G_2$ in \eqref{Equation: Generator 3.0},
by Jensen's and H\"older's inequalities,
\begin{multline}
\label{Equation: Generator 3.1}
\big|\mf G_2\big(t;(i,x)\big)\big|^2
\leq t^{-2}\mbf E_A\left[|\mf n_t(Q,A^{(i,x)})|^4\mr e^{-4\int_0^t S(A^{(i,x)}(s))\d s-4\mf B_t(A^{(i,x)})}\right]^{1/2}\\
\cdot\mbf P\big[N(t)\geq2\big]^{1/2}\mbf E\Big[\big|f\big(A^{(i,x)}(t)\big)\big|^2\mbf 1_{\{N(t)\geq2\}}\Big].
\end{multline}
Then, by \eqref{Equation: Strong Continuity 2} (but replacing $f$ by $|f|^2$), we get that
\begin{align}
\label{Equation: Generator 3.2}
\mbf E\Big[\big|f\big(A^{(i,x)}(t)\big)\big|^2\mbf 1_{\{N(t)\geq2\}}\Big]\leq r\,\mbf P\big[N(t)\geq2\big]\mbf E\Big[\big|f\big(u,Z^x(t)\big)\big|^2\Big],
\end{align}
where $u$ is uniform on $\{1,\ldots,r\}$ and independent of $Z$.
If we now combine Proposition \ref{Proposition: F-K Integrability Bound} (in the case $p=4$)
with \eqref{Equation: Exponential Moments of Brownian Maxima}, \eqref{Equation: Exponential Moments of Boundary Local Time},
and the fact that $\mr e^{-t\ka|x|^{\bar{\mf a}}}\leq 1$, then we get
\[\sup_{t\in(0,1),~(i,x)\in\mc A}\mbf E_A\left[|\mf n_t(Q,A^{(i,x)})|^4\mr e^{-4\int_0^t S(A^{(i,x)}(s))\d s-4\mf B_t(A^{(i,x)})}\right]^{1/2}<\infty.\]
If we combine this with \eqref{Equation: Generator 3.1} and \eqref{Equation: Generator 3.2},
given that $\mbf P[N(t)\geq2]=O(t^2)$ as $t\to0$, then we conclude that there exists a constant $C>0$ independent of $t$ such that
\begin{align}
\label{Equation: Generator 3.3}
\|\mf G_2(t;\cdot)\|_2^2\leq Ct
\int_{\mc A}\mbf E\Big[\big|f\big(u,Z^x(t)\big)\big|^2\Big]\d\mu(i,x).
\end{align}
We then obtain \eqref{Equation: Generator 3} from \eqref{Equation: L2 Integral with uniform starting point in I}.

\section{Proof of Theorem \ref{Theorem: Trace Moment Formulas}: Trace Moment Formulas}
\label{Section: Kernel Moment Limits}

We assume throughout Section \ref{Section: Kernel Moment Limits} that
Assumptions \ref{Assumption: Domain},
\ref{Assumption: Boundary}, and \ref{Assumption: Potential} hold,
that $\xi_{i,i}$ are white noises with variance $\si^2>0$,
and that $\xi_{i,j}$ ($i\neq j$) are white noises with variance $\upsilon^2>0$.
This section is organized as follows: We begin with an outline in Section \ref{Section: Proof of Main Result Roadmap};
then the technical results appearing in that outline are proved in Sections \ref{Section: Main Proposition 1}--\ref{Section: Main Proposition 7}.

\subsection{Proof Outline}
\label{Section: Proof of Main Result Roadmap}

\subsubsection{Step 0. Smooth Approximations}

Following Section \ref{Section: Probabilistic OE}, we aim to approximate $\hat H$
with a sequence of operators whose noises are regular, and then apply the Feynman-Kac
formula in Theorem \ref{Theorem: Regular FK}
to compute trace moments of these approximations. For this purpose, we introduce the following smooth approximations of
$\xi$'s entries:

\begin{notation}
Given $\phi,\psi:I\to\mbb R$, we let $\phi\star\psi$ denote their convolution.
That is,
\[\phi\star\psi(x)=\int_I \phi(y)\psi(x-y)\d y,\qquad x\in I.\]
More generally, if $f:\mc A\to\mbb F$ and $\psi:I\to\mbb R$,
then we let $f\star\psi:\mc A\to\mbb F$ denote the convolution of $\psi$ with $f$'s components.
That is, for every $a=(i,x)\in\mc A$, one has
\[f\star \psi(a)=f\star \psi(i,x)=\int_If(i,y)\psi(y-x)\d y.\]
\end{notation}

\begin{definition}
\label{Definition: Noise Coupling}
Suppose that $(W_{i,j})_{1\leq i\leq j\leq r}$ denote the independent Brownian motions
in $\mbb R$ or $\mbb F$ that are used to define the white noises $\xi_{i,j}$ as
\[\xi_{i,j}(f,f)=\begin{cases}
\si W_{i,i}'(f,f)&\text{if }i=j,\\
\upsilon W_{i,j}'(f,f)&\text{if }i<j,
\end{cases}\]
in the sense of \eqref{Equation: White Noise}. More specifically, $W_{i,i}$ are just
standard Brownian motions in $\mbb R$, and for $i<j$,
we can decompose $W_{i,j}$ as the sum
\begin{align}
\label{Equation: Noise Coupling Component Decomposition}
W_{i,j}(x)=\begin{cases}
W_{i,j;1}(x)&\text{if }\mbb F=\mbb R,\\
\frac1{\sqrt{2}}\big(W_{i,j;1}(x)+W_{i,j;\msf i}(x)\msf i\big)&\text{if }\mbb F=\mbb C,\\
\frac12\big(W_{i,j;1}(x)+W_{i,j;\msf i}(x)\msf i+W_{i,j;\msf j}(x)\msf j+W_{i,j;\msf k}(x)\msf k\big)&\text{if }\mbb F=\mbb H,
\end{cases}
\end{align}
where $W_{i,j;1},W_{i,j;\msf i},W_{i,j;\msf j},W_{i,j;\msf k}$ are i.i.d. standard Brownian motions in $\mbb R$.

Let $\bar\rho:\mbb R\to\mbb R$ be a nonnegative, smooth, compactly supported,
and even probability density function (i.e., $\bar\rho(-x)=\bar\rho(x)$ for all $x\in\mbb R$ and $\int_{\mbb R}\bar\rho(x)\d x=1$). Let $\rho=\bar\rho\star\bar\rho$. For every $\eps>0$,
define the rescaled function $\bar\rho_\eps(x)=\eps^{-1}\bar\rho(x/\eps)$, and for $\ze,\eta>0$, let
\begin{align}
\label{Equation: Twofold Convolution}
\rho_{\ze}=\bar\rho_\ze\star\bar\rho_\ze\qquad\text{and}\qquad\rho_{\ze,\eta}=\bar\rho_\ze\star\bar\rho_\eta.
\end{align}
For $\eps=0$, we use the convention that $\bar\rho_0=\rho_{0,0}=\de_0$
is the Dirac delta distribution. 

For every $1\leq i\leq j\leq r$ and $\eps\geq0$, we define the quadratic forms
\[\xi^\eps_{i,j}(f,g)=\begin{cases}
\si (W_{i,j}\star\bar\rho_\eps)'(f,g)&\text{if }i=j\\
\upsilon (W_{i,j}\star\bar\rho_\eps)'(f,g)&\text{if }i< j
\end{cases}.\]
In particular, $\xi^0_{i,j}=\xi_{i,j}$, and for $\eps>0$, the process $\xi^\eps_{i,j}$ is a regular noise
with covariance $\si^2\rho_\eps$ in $\mbb R$ or covariance $\upsilon^2\rho_\eps$ in $\mbb F$
in the sense of Definition \ref{Definition: Regular Noise},
depending on whether $i=j$
 (see, e.g., \cite[Remark 3.7]{GaudreauLamarreEJP}).

Finally, for any $\eps,\ze\geq0$, we define the matrix noise
\[\xi^{\eps,\ze}(f,g)=\sum_{i=1}^r\xi^{\eps}_{i,i}\big(f(i,\cdot),g(i,\cdot)\big)+\sum_{i\neq j}\xi^{\ze}_{i,j}\big(f(i,\cdot),g(j,\cdot)\big),\qquad\eps,\ze\geq0;\]
in particular, when $\eps$ and $\ze$ are positive, the diagonal terms $\xi^\eps_{i,i}$ are i.i.d. regular noises in $\mbb R$ with covariance $\si^2\rho_{\eps}$,
and the off-diagonal terms $\xi^\ze_{i,j}$ are i.i.d. regular noises in $\mbb F$ with covariance $\upsilon^2\rho_{\ze}$. In keeping with some of the notation introduced in Theorem \ref{Theorem: Regular FK},
for $\ze>0$ we denote the off-diagonal part of $\xi^{\eps,\ze}$ as
\begin{align}
\label{Equation: Q zeta}
Q^\ze=\sum_{i\neq j}\xi^\ze_{i,j}.
\end{align}
\end{definition}

We can now define the smooth approximations of $\hat H$, which are all on the same probability
space by virtue of $\xi^{\eps,\ze}$ all being defined from the same Brownian motions $W_{i,j}$:
For any $\eps,\ze\geq0$, let
\begin{align}
\label{Equation: Double Approximation Operator}
\hat H^{\eps,\ze}=H+\xi^{\eps,\ze}
\end{align}
be the operator constructed in Proposition \ref{Proposition: Operator Definition}; since the noises $\xi_{i,i}^\eps$ and $\xi_{i,j}^\ze$ are all regular or white---depending on whether or not $\eps,\ze=0$---the
assumptions of Proposition \ref{Proposition: Operator Definition} are satisfied for any choice of $\eps,\ze\geq0$. In particular, we note
that $\hat H=\hat H^{0,0}.$

In the remainder of Section \ref{Section: Proof of Main Result Roadmap}
(namely, Sections \ref{Section: Main Result Roadmap Step 1} and \ref{Section: Main Result Roadmap Step 2}), we state a number of technical
results related to the limits of $\hat H^{\eps,\ze}$ as $\eps\to0$ and
$\ze\to0$, and then use the latter to prove Theorem \ref{Theorem: Trace Moment Formulas}.
Before getting on with this, however, we make a remark on the specific form
of our approximation scheme:
At first glance, it might appear more natural to simply consider the sequence of
operators $\hat H^\eps=\hat H^{\eps,\eps}$ and then take a single limit of the
latter as $\eps\to0$---as suggested by the informal outline of proof in Section \ref{Section: Probabilistic OE}.
The reason why we consider the two separate parameters $\eps$ and $\ze$ is entirely technical, and has to do with the convergence
of the kernels
\[\hat K^{\eps,\ze}(t;a,b)=\mr e^{-t\hat H^{\eps,\ze}}(a,b),\qquad a,b\in\mc A.\]
Namely, it is easier to
analyze the limits of this object if we first send $\eps\to0$ and then after that send $\ze\to0$
(specifically, the sequence of limits \eqref{Equation: Trace Moments Limit 1}--\eqref{Equation: Trace Moments Limit 2.2} below). 
A key factor in this is that, as mentoned in
\eqref{Equation: Diagonal Noise Pathwise Interpretation}, it is possible to provide a pathwise interpretation of 
the kernel of $\mr e^{-t\hat H^{0,\ze}}$ when $\ze>0$:

\begin{definition}
Recall the Brownian motions $W_{i,i}$ introduced in Definition \ref{Definition: Noise Coupling};
i.e., $\xi_{i,i}=\si W_{i,i}'$.
Given $a=(i,x)\in\mc A$, we denote
\begin{align}
\label{Equation: Diagonal Noise Alternate Notation}
W^0_{\mr d}(a)=W^0_{\mr d}(i,x)=\si W_{i,i}(x)
\end{align}
and for any
continuous $f:\mc A\to\mbb R$ with compact support, we let
\[\int_{\mc A}f(a)\d W^0_{\mr d}(a)=\si \sum_{i=1}^r\int_{I}f(i,x)\d W_{i,i}(x),\]
where the summands on the right-hand side are interpreted as pathwise stochastic integrals
(see Remark \ref{Remark: Pathwise} for more details).
Then, let $S^0$ denote the functional
\begin{align}
\label{Equation: Diagonal Potential Alternate Notation}
S^0(f)=\int_{\mc A} V(a)f(a)\d\mu(a)+\int_{\mc A} f(a) \d W^0_{\mr d}(a),
\end{align}
and let $Q^\ze$ be as in \eqref{Equation: Q zeta}.
For any $\ze>0$, we define the kernel
\begin{align}
\label{Equation: Kernel at epsilon zero}
\hat{K}^{0,\ze}(t;a,b)=\Pi_A(t;a,b)\mbf E_{A}\left[\mf n_t(Q^\ze,A^{a,b}_t)\,\mr e^{-S^0(L_t(A^{a,b}_t))+\mf B_t(A^{a,b}_t)}\right],\qquad a,b\in\mc A,
\end{align}
where we recall that the local time $L_t(A)$ is defined in Definition \ref{Definition: Regular Local Time}.
\end{definition}

\begin{remark}
\label{Remark: Pathwise}
If $f:I\to\mbb R$ is continuously differentiable, then a pathwise stochastic integral
can be defined by means of the integration by parts formula
\[\int_{I}f(x)\d W_{i,i}(x)=-\int_{I}f'(x)W_{i,i}(x)\d x+\text{boundary terms}.\]
However, in \eqref{Equation: Kernel at epsilon zero} we evaluate these stochastic integrals
in the local time process of $A^{a,b}_t$, which is only locally H\"older continuous
with exponent strictly less than $1/2$. Thus, we need a slightly more sophisticated notion of pathwise stochastic integration.
For the purposes of this paper, the only two properties that we need this stochastic integral to satisfy
are as follows:
\begin{enumerate}[(1)]
\item If we let $(\Om_1,\mc F_1,\mbf P_1)$ denote the probability space on which
$W_{1,1},\ldots,W_{r,r}$ are defined, and $(\Om_2,\mc F_2,\mbf P_2)$ be the probability space on which $(A^{a,b}_t:a,b\in\mc A)$ (and its local times) are defined, then for every fixed $1\leq i\leq r$ and $t>0$ the map
\[\big((a,b),\om_1,\om_2\big)\mapsto\int_{I}L^{(i,y)}\big(A^{a,b}_t(\om_2)\big)\d W_{i,i}(\om_1;y)\in\mbb R\]
is measurable with respect to the product $\mu\times\mbf P_1\times\mbf P_2$ and the Borel $\si$-algebra on $\mbb R$,
where $W_{i,i}(\om_1;\cdot)$ denotes the realization of the Brownian motion $W_{i,i}$ for the sample $\om_1\in\Om_1$,
and likewise for $A^{a,b}_t(\om_2)$.
Among other things, this ensures that the map
\[\big((a,b),\om_1,\om_2\big)\mapsto\Pi_A(t;a,b)\mf n_t(Q^\ze,A^{a,b}_t)\,\mr e^{-S^0_{\om_1}(L_t(A^{a,b}_t(\om_2)))+\mf B_t(A^{a,b}_t(\om_2))}\in\mbb R\]
is measurable, where $S^0_{\om_1}(f)$ denotes the realization of \eqref{Equation: Diagonal Potential Alternate Notation}
associated to $W_{i,i}(\om_1;\cdot)$.
Therefore, the random kernel in \eqref{Equation: Kernel at epsilon zero} is a well-defined
measurable function with respect to $\mu\times\mbf P_1$
by Fubini's theorem.
\item For any continuous and compactly supported functions $f_1,\ldots,f_n:I\to\mbb R$
and indices $i_1,\ldots,i_n\in\{1,\ldots,r\}$, the map
\[\om_1\mapsto\left(\int_{I}f_1(x)\d W_{i_1,i_1}(\om_1;x),\ldots,\int_{I}f_n(x)\d W_{i_n,i_n}(\om_1;x)\right)\in\mbb R^n\]
is a multivariate Gaussian random vector with mean zero and covariance
\[\mbf E\left[\int_{I}f_k(x)\d W_{i_k,i_k}(x)\int_{I}f_\ell(x)\d W_{i_\ell,i_\ell}(x)\right]=\mbf 1_{\{i_k=i_\ell\}}\langle f_k,f_\ell\rangle.\]
Thus, the contribution of the $W_{i,i}$'s to any joint moments $\mbf E\big[\prod_{k=1}^n\mr{Tr}\big[\mr e^{-t_k\hat H^{0,\ze}}\big]\big]$
can be calculated from \eqref{Equation: Kernel at epsilon zero} using Fubini's theorem: For any fixed continuous
and compactly supported functions $f_1,\ldots,f_n:\mc A\to\mbb R$
(such as $A^{a_k,a_k}_{t_k}(\om_2)$'s local times for any fixed $\om_2\in\Om_2$
and $a_k\in\mc A$), the expectation
\[\mbf E\left[\exp\left(-\si\sum_{k=1}^n\sum_{i=1}^r\int_{I}f_k(i,x)\d W_{i,i}(x)\right)\right]=\exp\left(\frac{\si^2}2\sum_{k=1}^n\|f_k\|_\mu^2\right)\]
amounts to a straightforward Gaussian moment generating function calculation.
\end{enumerate}
For this, any measurable pathwise stochastic integral that is almost-surely equal to the
stochastic integral defined as an $L^2(\Om_1)$-limit (using the It\^{o} isometry) for
any fixed continuous deterministic integrand suffices.
For instance, we can use Karandikar's simple dyadic construction in \cite{Karandikar}
(see \cite[Section 3.2.1 and Appendix A]{GaudreauLamarreEJP} for the details of how
Karandikar's pathwise integral satisfies the two conditions above), instead of more sophisticated
constructs such as Terry Lyons' rough paths \cite{Lyons}.
\end{remark}

We now carry on with the proof of Theorem \ref{Theorem: Trace Moment Formulas}.

\subsubsection{Step 1. Kernel Moment Limits}
\label{Section: Main Result Roadmap Step 1}

Thanks to Theorem \ref{Theorem: Regular FK} (as well as \eqref{Equation: Kernel at epsilon zero}),
the trace moments of $\hat H^{\eps,\ze}$ for $\eps\geq0$ and $\ze>0$ can be computed explicitly
as follows:

\begin{proposition}
\label{Proposition: Smooth Trace Moments}
Let $n\in\mbb N$,
$\bs t=(t_1,\ldots,t_n)\in(0,\infty)^n$, $\bs\eps=(\eps_1,\ldots,\eps_n)\in[0,\infty)^n,$ and $\bs\ze=(\ze_1,\ldots,\ze_n)\in(0,\infty)^n$
be fixed. Let $|\bs t|=t_1+\cdots+t_n$ denote the $\ell_1$ norm of $\bs t$,
and for every $0\leq s\leq|\bs t|$, let
\begin{align}
\label{Equation: zeta s}
\ze(s)=\sum_{k=1}^n\ze_k\mbf 1_{\{s\in[t_0+\cdots+t_{k-1},t_0+\cdots+t_k)\}},
\end{align}
with the usual convention that $t_0=0$. In words, $\ze(s)=\ze_k$ if $s$ lies in the $k^{\mr{th}}$
sub-interval (of length $t_k$) in $[0,|\bs t|)$.
Define the functionals
\begin{align}
\label{Equation: Vector Valued Self-Intersection 1}
\mf m^{\bs\ze}_{\bs t}(A)=\mr e^{(r-1)|\bs t|}\mbf 1_{\{N(|\bs t|)\text{ is even}\}}\sum_{p\in\mc P_{N(|\bs t|)}}
\mf C_{|\bs t|}(p,U)\prod_{\{\ell_1,\ell_2\}\in p}\upsilon^2\rho_{\ze(\tau_{\ell_1}),\ze(\tau_{\ell_2})}\big(Z(\tau_{\ell_1})-Z(\tau_{\ell_2})\big),
\end{align}
(with the convention that the sum over $p$ is equal to one if $N(|\bs t|)=0$)
where we recall that $\mc P_n$ and $\mf C_t(p,U)$ are defined in Definitions \ref{Definition: Pn} and \ref{Definition: Combinatorial Constant}
and that $\rho_{\ze,\eta}$ is defined in \eqref{Equation: Twofold Convolution};
and
\begin{align}
\label{Equation: Vector Valued Self-Intersection 2}
\mf s^{\bs\eps}_{\bs t}(A)=\frac{\si^2}{2}\left\|\sum_{k=1}^nL_{[t_0+\cdots+t_{k-1},t_0+\cdots+t_k)}(A)\star\bar\rho_{\eps_k}\right\|_\mu^2,
\end{align}
where we recall that $L^a_{[s,t)}(A)$ is defined in Definition \ref{Definition: Regular Local Time}.
It holds that
\begin{multline}
\label{Equation: Smooth Mixed Moment}
\mbf E\left[\prod_{k=1}^n\int_{\mc A}\hat K^{\eps_k,\ze_k}(t_k;a,a)\d\mu(a)\right]\\
=\int_{\mc A^n}\Pi_A(\bs t;\bs a,\bs a)\mbf E\left[\mf m^{\bs\ze}_{\bs t}(A^{\bs a,\bs a}_{\bs t})\mr e^{-\int_0^{|\bs t|}V(A^{\bs a,\bs a}_{\bs t}(s))\d s+\mf s^{\bs\eps}_{\bs t}(A^{\bs a,\bs a}_{\bs t})+\mf B_{|\bs t|}(A^{\bs a,\bs a}_{\bs t})}\right]\d\mu^n(\bs a).
\end{multline}
\end{proposition}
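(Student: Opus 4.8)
The plan is to obtain \eqref{Equation: Smooth Mixed Moment} by starting from the Feynman--Kac representations of the individual kernels $\hat K^{\eps_k,\ze_k}(t_k;\cdot,\cdot)$, assembling the $n$ traces into a single integral over $\mc A^n$ driven by $n$ independent bridges of the combined process $A=(U,Z)$, and then integrating out the Gaussian noise conditionally on $A$ (legitimate since $A$ is independent of $\xi$). The diagonal part of the noise will be removed by a log-normal computation, and the off-diagonal part by a Wick/Isserlis expansion whose combinatorics is exactly what the constant $\mf C_t(p,U)$ of Definition \ref{Definition: Combinatorial Constant} records.

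First I would assemble the integrand. For $\eps_k>0$, Theorem \ref{Theorem: Regular FK} gives $\hat K^{\eps_k,\ze_k}(t_k;a,b)=\Pi_A(t_k;a,b)\,\mbf E_A\big[\mf n_{t_k}(Q^{\ze_k},A^{a,b}_{t_k})\,\mr e^{-\int_0^{t_k}S(A^{a,b}_{t_k}(s))\d s+\mf B_{t_k}(A^{a,b}_{t_k})}\big]$, while for $\eps_k=0$ the same formula holds with the diagonal term replaced by $\mr e^{-S^0(L_{t_k}(A^{a,b}_{t_k}))}$ by \eqref{Equation: Kernel at epsilon zero}. The reduction that unifies the two cases is that, by the defining identity \eqref{Equation: Local Time Integral Definition} for local time together with an integration by parts moving the mollifier onto the local time (legitimate because $\bar\rho_{\eps}$ is even), the diagonal contribution to the exponent can always be written as $-\int_{\mc A}V(a)\,L_{t_k}^a(A)\,\d\mu(a)-\int_{\mc A}\big(L_{t_k}(A)\star\bar\rho_{\eps_k}\big)(a)\,\d W^0_{\mr d}(a)$, the convolution being trivial when $\eps_k=0$. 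Since $\hat K^{\eps_k,\ze_k}(t_k;a,a)=\Pi_A(t_k;a,a)\,\mbf E_A[\cdots]$ and the $n$ bridges are independent, multiplying over $k$ and concatenating gives $\Pi_A(\bs t;\bs a,\bs a)$ times $\mbf E_A$ of a single product over $A^{\bs a,\bs a}_{\bs t}$: the prefactors $\mr e^{(r-1)t_k}$ combine to $\mr e^{(r-1)|\bs t|}$, the signs $(-1)^{N(t_k)}$ to $(-1)^{N(|\bs t|)}$, the diagonal noise integrals sum to $-\int_{\mc A}\big(\sum_kL_{[\cdots)}(A)\star\bar\rho_{\eps_k}\big)\,\d W^0_{\mr d}$, and the jump products from the $\mf n$-factors to $(-1)^{N(|\bs t|)}\prod_{\ell=1}^{N(|\bs t|)}\xi^{\ze(\tau_\ell)}_{J_\ell}(Z(\tau_\ell))$, where $\ze(\cdot)$ is as in \eqref{Equation: zeta s}.

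Next I would integrate out the noise conditionally on $A$, using that $(W_{i,i})_i$ and $(W_{i,j})_{i<j}$ are independent. For the diagonal part, conditionally on $A$ the exponent $-\int_{\mc A}(\sum_kL_{[\cdots)}(A)\star\bar\rho_{\eps_k})\,\d W^0_{\mr d}$ is a centered Gaussian with variance $\si^2\|\sum_kL_{[\cdots)}(A)\star\bar\rho_{\eps_k}\|_\mu^2$, so its exponential moment is exactly $\mr e^{\mf s^{\bs\eps}_{\bs t}(A)}$ and the deterministic remainder gives $-\int_0^{|\bs t|}V(A(s))\d s$. For the off-diagonal part, I would apply Isserlis' theorem to $\mbf E_\xi\big[\prod_{\ell=1}^{N(|\bs t|)}\xi^{\ze(\tau_\ell)}_{J_\ell}(Z(\tau_\ell))\big]$ (in the cases $\mbb F=\mbb C,\mbb H$ after first expanding each $\xi^\ze_{i,j}$ into its real components via \eqref{Equation: Noise Coupling Component Decomposition} and keeping the product ordered by $\tau_\ell$): odd $N(|\bs t|)$ contributes $0$, which produces the indicator $\mbf 1_{\{N(|\bs t|)\text{ even}\}}$ and, on that event, turns $(-1)^{N(|\bs t|)}$ into $1$; the even case becomes $\sum_{p\in\mc P_{N(|\bs t|)}}\prod_{\{\ell_1,\ell_2\}\in p}\mbf E_\xi\big[\xi^{\ze(\tau_{\ell_1})}_{J_{\ell_1}}(Z(\tau_{\ell_1}))\,\xi^{\ze(\tau_{\ell_2})}_{J_{\ell_2}}(Z(\tau_{\ell_2}))\big]$. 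Each two-point function vanishes unless $J_{\ell_1}$ and $J_{\ell_2}$ traverse the same edge of the complete graph, and when nonzero equals $\upsilon^2\rho_{\ze(\tau_{\ell_1}),\ze(\tau_{\ell_2})}(Z(\tau_{\ell_1})-Z(\tau_{\ell_2}))$ (with $\rho_{\cdot,\cdot}$ as in \eqref{Equation: Twofold Convolution}) times an algebraic factor depending on $\mbb F$, on whether $J_{\ell_1}=J_{\ell_2}$ or $J_{\ell_1}=J_{\ell_2}^*$, and (for $\mbb F=\mbb C,\mbb H$) on the order of the two jumps; collecting these algebraic factors over all pairs of $p$ is by definition $\mf C_{|\bs t|}(p,U)$. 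Substituting back, pulling the $\mc A^n$ integral outside, and matching with \eqref{Equation: Vector Valued Self-Intersection 1}--\eqref{Equation: Vector Valued Self-Intersection 2} yields \eqref{Equation: Smooth Mixed Moment}.

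I expect the main obstacle to be the identification of the per-pair algebraic factor with $\mf C_t(p,U)$, and especially with the quaternion constant $\mf D_t(p,U)$. This is a non-commutative bookkeeping problem: each $\xi^\ze_{i,j}$ (and $\xi^\ze_{j,i}=(\xi^\ze_{i,j})^*$) expands into four real Brownian components carrying the units $1,\msf i,\msf j,\msf k$, so the ordered product over the $N(|\bs t|)$ jumps expands into $4^{N(|\bs t|)}$ terms; the binary sequence $m\in\mc B^{0,0}_{N(|\bs t|)}$ of Definition \ref{Definition: Combinatorial Constant} records which complex ``slot'' is carried between consecutive jumps, the matching constraints (3.1)--(3.2) there record which component-pairings survive Wick contraction (depending on $J_{\ell_1}=J_{\ell_2}$ versus $J_{\ell_1}=J_{\ell_2}^*$), the sign $(-1)^{f(m,p,J)}$ collects the signs from $\msf i\msf j=\msf k=-\msf j\msf i$ and $\msf i^2=\msf j^2=\msf k^2=-1$, and the prefactor $2^{-N(|\bs t|)/2}$ absorbs the $\tfrac12$'s in $\xi^\ze_{i,j}=\tfrac\upsilon2(\cdots)$ once the $\upsilon^2$'s have been extracted; the cases $\mbb F=\mbb R,\mbb C$ are the one- and two-component degenerations. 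The remaining points---Fubini for interchanging $\mbf E_\xi$, $\mbf E_A$ and the $\mc A^n$ integral, absolute convergence of the Wick sum (bounded using $|\mf C_t(p,U)|\le1$ from \eqref{Equation: Combinatorial constant bound} and the integrability already available from the regular Feynman--Kac kernel), and joint measurability---are routine.
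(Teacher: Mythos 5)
Your proposal is correct and tracks the paper's proof essentially step by step: apply the regular Feynman--Kac formula (Theorem \ref{Theorem: Regular FK} for $\eps_k>0$, \eqref{Equation: Kernel at epsilon zero} for $\eps_k=0$) to each factor, merge the $n$ independent bridges via Fubini into a single concatenated process $A^{\bs a,\bs a}_{\bs t}$, compute the diagonal noise by a Gaussian moment-generating-function calculation to obtain $\mr e^{-\int V+\mf s^{\bs\eps}_{\bs t}}$, and compute the off-diagonal noise by Isserlis to obtain $\mf m^{\bs\ze}_{\bs t}$, with the interchange justified by the same a priori integrability estimates used for the regular kernel. The one place you diverge, and where your account is slightly imprecise, is the quaternion Isserlis step: you propose a direct four-real-component Wick expansion of the ordered product $\prod_\ell\xi^{\ze(\tau_\ell)}_{J_\ell}(Z(\tau_\ell))$, whereas the paper passes to the injective $2\times2$ complex-matrix representation \eqref{Equation: Injective Homomorphism}, in which the binary sequences $m\in\mc B^{0,0}_{N}$ are literally paths through the $2\times2$ matrix product and Isserlis applies entry-wise to commutative complex scalars. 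Your direct expansion would also reach $\mf D_t(p,U)$, but it produces $4^N$ unit-products whose ordered quaternion signs you would have to resolve by hand, and it is this bookkeeping that makes your prefactor accounting slip: extracting $\upsilon^2$ per pair from $\xi^\ze_{i,j}=\frac{\upsilon}{2}(\cdots)$ naively leaves $2^{-N}$, not the $2^{-N/2}$ in $\mf D_t(p,U)$. The resolution, visible in Lemma \ref{Lemma: Quaternion Isserlis}, is that each complex matrix entry carries two of the four real components, so the per-pair covariance of matrix entries is $\pm\frac{\upsilon^2}{2}\rho_{\ze,\eta}$ rather than $\pm\frac{\upsilon^2}{4}\rho_{\ze,\eta}$, which yields $2^{-N/2}$ across the $N/2$ pairs; the matrix picture produces this for free.
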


With this in hand, we can prove that approximate trace moments are finite (in fact, uniformly integrable)
and converge to the expression stated in Theorem \ref{Theorem: Trace Moment Formulas}:

\begin{proposition}
\label{Proposition: Uniform Integrability}
For every $\bs t=(t_1,\ldots,t_n)\in(0,\infty)^n$,
\[\sup_{\bs\eps\in[0,\infty)^n,~\bs\ze\in(0,\infty)^n}\mbf E\left[\left|\prod_{k=1}^n\int_{\mc A}\hat K^{\eps_k,\ze_k}(t_k;a,a)\d\mu(a)\right|^2\right]<\infty.\]
\end{proposition}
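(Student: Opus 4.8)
\emph{Proof proposal.} The plan is to reduce to the explicit formula of Proposition~\ref{Proposition: Smooth Trace Moments} and to bound the resulting expression by quantities that do not depend on $\bs\eps$ or $\bs\ze$.

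\textbf{Reduction.} Since each $\hat H^{\eps_k,\ze_k}$ is self-adjoint, $\mr{Tr}\big[\mr e^{-t_k\hat H^{\eps_k,\ze_k}}\big]=\int_{\mc A}\hat K^{\eps_k,\ze_k}(t_k;a,a)\d\mu(a)$ is real, so the quantity in the statement equals $\mbf E\big[\prod_{k=1}^{2n}\int_{\mc A}\hat K^{\eps'_k,\ze'_k}(t'_k;a,a)\d\mu(a)\big]$, a mixed moment of order $2n$ with time vector $\bs t'=(t_1,\dots,t_n,t_1,\dots,t_n)$ and the correspondingly doubled parameters $\bs\eps'\in[0,\infty)^{2n}$, $\bs\ze'\in(0,\infty)^{2n}$. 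As $\bs\ze'$ is strictly positive, Proposition~\ref{Proposition: Smooth Trace Moments} applies and rewrites this moment as the integral on the right-hand side of~\eqref{Equation: Smooth Mixed Moment} (with $2n$ in place of $n$); it therefore suffices to bound that integral uniformly over $\bs\eps'$ and $\bs\ze'$. In the integrand I first discard the parameter dependence crudely: $|\mf C_{|\bs t'|}(p,U)|\le1$ by~\eqref{Equation: Combinatorial constant bound}; $\mf B_{|\bs t'|}(A)\le\bar\al_+\,\mf L^0_{|\bs t'|}(Z)$ (plus the analogous $\mf L^{\vartheta}$-term in Case~3), where $\bar\al_+=\max_i\max\{\bar\al_i,0\}$ and a Dirichlet direction contributes an indicator factor $\le1$; $-\int_0^{|\bs t'|}V(A(s))\d s\le-\int_0^{|\bs t'|}\bar V(Z(s))\d s$ with $\bar V(x):=\min_{1\le i\le r}V(i,x)$; and, crucially, by Young's inequality ($\|g\star\bar\rho_{\eps}\|_\mu\le\|g\|_\mu$) together with subadditivity and $\sum_iL^{(i,\cdot)}_{[s,t)}(A)=L^{\cdot}_{[s,t)}(Z)$, one gets $\mf s^{\bs\eps'}_{\bs t'}(A)\le\si^2 n\,\|L_{|\bs t'|}(Z)\|_2^2$ uniformly in $\bs\eps'$, while each $\rho_{\ze(\tau_{\ell_1}),\ze(\tau_{\ell_2})}$ stays a probability density on $I$.

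\textbf{Combinatorial resummation.} The only remaining $\bs\ze'$-dependence sits in $\prod_{\{\ell_1,\ell_2\}\in p}\upsilon^2\rho_{\ze(\tau_{\ell_1}),\ze(\tau_{\ell_2})}\big(Z(\tau_{\ell_1})-Z(\tau_{\ell_2})\big)$, which I integrate out. After unfolding the bridge conditioning on the walk $U$ in~\eqref{Equation: Smooth Mixed Moment} — which costs only multiplicative constants $\le1$, as the relevant loop-return probabilities are at most $1$ — and using $|\mf C|\le1$, one is left, conditionally on the path $Z$ and the total jump count $N=N(|\bs t'|)$, with $\sum_{p\in\mc P_N}\prod_{\{\ell_1,\ell_2\}\in p}\upsilon^2\rho_{\cdots}\big(Z(\tau_{\ell_1})-Z(\tau_{\ell_2})\big)$ and jump times $\tau_1<\cdots<\tau_N$ that are order statistics of i.i.d.\ uniforms on $[0,|\bs t'|)$. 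Re-indexing matchings by the sorting permutation and using that distinct pairs of a fixed matching involve disjoint time variables, the conditional expectation over the jump times factorizes pairwise and equals $(N-1)!!\,(\upsilon^2\mf I(Z)/|\bs t'|^2)^{N/2}$ on $\{N\text{ even}\}$, where $\mf I(Z):=\int_0^{|\bs t'|}\!\int_0^{|\bs t'|}\rho_{\ze(s),\ze(s')}\big(Z(s)-Z(s')\big)\d s\,\d s'\le 2n\,\|L_{|\bs t'|}(Z)\|_2^2$ again by Young's inequality, uniformly in $\bs\ze'$. Since $N$ is Poisson of mean $\lambda:=(r-1)|\bs t'|$, the identity $\tfrac{\lambda^N}{N!}(N-1)!!=\tfrac{(\lambda^2/2)^{N/2}}{(N/2)!}$ resums the series to $\exp\!\big(c_{n,r}\upsilon^2\|L_{|\bs t'|}(Z)\|_2^2\big)$, with the prefactor $\mr e^{(r-1)|\bs t'|}$ in $\mf m^{\bs\ze'}_{\bs t'}$ (see~\eqref{Equation: Vector Valued Self-Intersection 1}) absorbing the Poisson normalization $\mr e^{-\lambda}$. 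Collecting the bounds, the inner expectation in~\eqref{Equation: Smooth Mixed Moment} is at most
\[
C\,\mbf E_Z\!\left[\exp\!\Big(c_1\,\|L_{|\bs t'|}(Z^{\bs x,\bs x}_{\bs t'})\|_2^2+c_2\,\mf L^0_{|\bs t'|}(Z^{\bs x,\bs x}_{\bs t'})+\cdots\Big)\,\mr e^{-\int_0^{|\bs t'|}\bar V(Z^{\bs x,\bs x}_{\bs t'}(s))\d s}\right],
\]
with constants $C,c_1,c_2,\dots$ depending on $n,r,\si,\upsilon,\bar\al,\bar\be,\bs t$ but \emph{not} on $\bs\eps'$ or $\bs\ze'$.

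\textbf{Finiteness, and the main obstacle.} The concatenated bridge $Z^{\bs x,\bs x}_{\bs t'}$ consists of $2n$ independent Brownian bridges, and since $\|L_{|\bs t'|}(\cdot)\|_2^2\le 2n\sum_k t_k\sup_y L^y_{t_k}(\cdot)$ while the boundary–local–time and potential terms split over the $2n$ pieces, the displayed quantity factorizes across pieces; together with $\sum_{i_k}\Pi_U(t_k;i_k,i_k)\le r$ this reduces everything to showing, for each $k$ and uniformly in the endpoint $x_k\in I$, that $\int_I\Pi_Z(t_k;x_k,x_k)\,\mbf E_Z\big[(\text{Gaussian}_k)(\text{potential}_k)\big]\d x_k<\infty$, where $\text{potential}_k=\mr e^{-\int_0^{t_k}\bar V(Z^{x_k,x_k}_{t_k})}$ and $\text{Gaussian}_k$ is an exponential, with large but $\bs\eps',\bs\ze'$-free constants, of $\sup_y L^y_{t_k}(Z^{x_k,x_k}_{t_k})$ and $\mf L^0_{t_k}(Z^{x_k,x_k}_{t_k})$. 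Here the key points are: (i) the functionals $\sup_y L^y_{t}$ and $\mf L^0_{t}$ of a Brownian bridge have sub-Gaussian tails (Ray--Knight/Barlow--Yor type estimates; cf.\ \cite{RevuzYor,ChenBook}), so $\mbf E_Z[\text{Gaussian}_k^2]^{1/2}$ is finite \emph{regardless of the size of $c_1,c_2,\dots$}, uniformly in $x_k$ — this is what keeps the bound from deteriorating as $n,\si,\upsilon$ grow; and (ii) after a Cauchy--Schwarz split, the potential factor becomes $\Pi_Z(t_k;x_k,x_k)^{1/2}\big(\mr e^{-t_k\mc H_k}(x_k,x_k)\big)^{1/2}$ with $\mc H_k=-\tfrac12\De+2\bar V$ (reflecting boundary in Cases~2--3), and $\Pi_Z(t_k;x,x)\le Ct_k^{-1/2}$ uniformly in $x$ while the pointwise bound $\mr e^{-t_k\mc H_k}(x,x)\le C_{t_k}\mr e^{-c_{t_k}|x|^{\mf a}}$ holds by a standard Brownian-bridge confinement estimate (under Assumption~\ref{Assumption: Potential}, $V(i,x)/|x|^{\mf a}\to\infty$ forces $\bar V(x)\ge|x|^{\mf a}$ for all large $|x|$; in Case~3, $I$ is bounded and no decay is needed), so the super-polynomial decay survives the square root and makes the $x_k$-integral converge. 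The resulting bound depends only on $n,r,\si,\upsilon,\bar\al,\bar\be,V,\bs t$, uniformly in $\bs\eps',\bs\ze'$, which proves the claim; the bulk of the work is carrying out the resummation and the local-time estimates of (i)--(ii) in the presence of the bridge conditioning.
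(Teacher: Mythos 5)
Your argument is correct and mirrors the paper's own proof in every essential respect: drop the absolute value by positivity of the trace and reduce to a mixed moment of order $2n$; apply Proposition~\ref{Proposition: Smooth Trace Moments}; discard $\bs\eps$- and $\bs\ze$-dependence using $|\mf C_t(p,U)|\le1$, Young's inequality for $\mf s^{\bs\eps}_{\bs t}$ and the $\rho$-convolutions (as in \eqref{Equation: s_t bound 3} and \eqref{Equation: Young's for zeta}), and the boundary/potential bounds \eqref{Equation: UI Boundary Bound}, \eqref{Equation: Fubini V bound}; drop the $U$-bridge conditioning at the cost of dropping an indicator (cf.\ \eqref{Equation: Trace Moments UI 3}); resum the Poisson/matching series into a Gaussian-type exponential (cf.\ \eqref{Equation: Trace Moments UI 4}--\eqref{Equation: Trace Moments UI 6}); and conclude with exponential-moment bounds for local-time functionals of Brownian bridges and power-law decay from the confining potential. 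The only deviations are cosmetic (you route through $\sup_y L^y_t$ rather than directly invoking \eqref{Equation: Finiteness of Bridge SILT Exponential Moments}, and you phrase the potential decay as a heat-kernel confinement estimate rather than the reverse-triangle-inequality device of \eqref{Equation: Potential Power Growth Triangle Inequality}), so this is the same proof.
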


\begin{proposition}
\label{Proposition: Trace Moments Limit 1}
For every $t,\ze>0$,
\begin{align}
\label{Equation: Trace Moments Limit 1}
\lim_{\eps\to0}\mbf E\left[\left(\int_{\mc A} \hat K^{\eps,\ze}(t;a,a)-\hat K^{0,\ze}(t;a,a)\d\mu(a)\right)^2\right]=0.
\end{align}
\end{proposition}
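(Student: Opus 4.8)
The plan is to turn $\mbf E_\xi\big[(T^\eps-T^0)^2\big]$, where $T^\eps:=\int_{\mc A}\hat K^{\eps,\ze}(t;a,a)\d\mu(a)$, into a single explicit integral via Proposition \ref{Proposition: Smooth Trace Moments}, and then to pass to the limit $\eps\to0$ by dominated convergence, with the uniform bound of Proposition \ref{Proposition: Uniform Integrability} supplying the dominating function. Since $\sup_{\eps\ge0}\mbf E_\xi[(T^\eps)^2]<\infty$ by Proposition \ref{Proposition: Uniform Integrability} (whose hypotheses permit $\eps=0$, with $\hat K^{0,\ze}$ as in \eqref{Equation: Kernel at epsilon zero}), the expansion $\mbf E_\xi[(T^\eps-T^0)^2]=\mbf E_\xi[(T^\eps)^2]-2\,\mbf E_\xi[T^\eps T^0]+\mbf E_\xi[(T^0)^2]$ is legitimate, and each summand is a mixed trace moment of two of the regularized operators. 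Applying Proposition \ref{Proposition: Smooth Trace Moments} with $n=2$, $\bs t=(t,t)$, $\bs\ze=(\ze,\ze)$, and $\bs\eps=(\eps,\eps),(\eps,0),(0,0)$ respectively, and observing that the factors $\mf m^{(\ze,\ze)}_{\bs t}$ (which depends only on the fixed $\ze$), $\int_0^{2t}V$, and $\mf B_{2t}$ in \eqref{Equation: Smooth Mixed Moment} are the same for all three while only the self-intersection weight $\mf s^{\bs\eps}_{\bs t}$ of \eqref{Equation: Vector Valued Self-Intersection 2} changes, one obtains (writing $\bs a=(a_1,a_2)$ and $A^{\bs a,\bs a}$ for $A^{\bs a,\bs a}_{\bs t}$ with $\bs t=(t,t)$)
\begin{multline*}
\mbf E_\xi\big[(T^\eps-T^0)^2\big]=\int_{\mc A^2}\Pi_A(\bs t;\bs a,\bs a)\,\mbf E_A\Big[\mf m^{(\ze,\ze)}_{\bs t}(A^{\bs a,\bs a})\,\mr e^{-\int_0^{2t}V(A^{\bs a,\bs a}(s))\d s+\mf B_{2t}(A^{\bs a,\bs a})}\\
\times\Big(\mr e^{\mf s^{(\eps,\eps)}_{\bs t}(A^{\bs a,\bs a})}-2\,\mr e^{\mf s^{(\eps,0)}_{\bs t}(A^{\bs a,\bs a})}+\mr e^{\mf s^{(0,0)}_{\bs t}(A^{\bs a,\bs a})}\Big)\Big]\d\mu^2(\bs a).
\end{multline*}

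Next I would show that the $\bs\eps$-dependent bracket vanishes under the integral as $\eps\to0$. For the pointwise limit, note that for a.e.\ realization of $A^{\bs a,\bs a}$ the block local times $L_{[0,t)}(A^{\bs a,\bs a})$ and $L_{[t,2t)}(A^{\bs a,\bs a})$ are jointly continuous, compactly supported functions of the space variable, hence lie in $L^2(\mc A,\mu)$; the standard mollification estimate then gives $L_{[0,t)}(A^{\bs a,\bs a})\star\bar\rho_\eps\to L_{[0,t)}(A^{\bs a,\bs a})$ in $L^2$, and similarly for the second block, so---since $\bar\rho_0=\de_0$---both $\mf s^{(\eps,\eps)}_{\bs t}(A^{\bs a,\bs a})$ and $\mf s^{(\eps,0)}_{\bs t}(A^{\bs a,\bs a})$ converge to $\mf s^{(0,0)}_{\bs t}(A^{\bs a,\bs a})$ a.s., whence the bracket tends to $0$ a.s. For the domination, Young's inequality gives $\|f\star\bar\rho_\eps\|_\mu\le\|f\|_\mu$, so $\mf s^{\bs\eps}_{\bs t}(A^{\bs a,\bs a})\le\tfrac{\si^2}{2}\big(\|L_{[0,t)}(A^{\bs a,\bs a})\|_\mu+\|L_{[t,2t)}(A^{\bs a,\bs a})\|_\mu\big)^2\le 2\si^2\|L_{2t}(A^{\bs a,\bs a})\|_\mu^2$ for every $\bs\eps$, and therefore the bracket is bounded in absolute value, uniformly in $\eps$, by $4\,\mr e^{2\si^2\|L_{2t}(A^{\bs a,\bs a})\|_\mu^2}$. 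Using in addition $|\mf C_t(p,U)|\le 1$ (see \eqref{Equation: Combinatorial constant bound}) to replace $\mf m^{(\ze,\ze)}_{\bs t}$ by its nonnegative counterpart $\big|\mf m^{(\ze,\ze)}_{\bs t}\big|$, the integrand above is dominated, uniformly in $\eps$, by
\[
4\,\Pi_A(\bs t;\bs a,\bs a)\,\mbf E_A\Big[\big|\mf m^{(\ze,\ze)}_{\bs t}(A^{\bs a,\bs a})\big|\,\mr e^{-\int_0^{2t}V(A^{\bs a,\bs a}(s))\d s+\mf B_{2t}(A^{\bs a,\bs a})+2\si^2\|L_{2t}(A^{\bs a,\bs a})\|_\mu^2}\Big].
\]
If the integral of this over $\bs a\in\mc A^2$ is finite, then two applications of the dominated convergence theorem---first inside $\mbf E_A$ (for a.e.\ $\bs a$) and then in $\int\d\mu^2(\bs a)$---give $\mbf E_\xi[(T^\eps-T^0)^2]\to0$, which is the assertion.

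The crux is the finiteness of this last integral. It does not follow formally from the \emph{statement} of Proposition \ref{Proposition: Uniform Integrability}---both because $\mf m^{(\ze,\ze)}_{\bs t}$ need not be sign-definite when $\mbb F=\mbb H$ and because $2\si^2\|L_{2t}\|_\mu^2$ is only a crude $\eps$-free majorant of the genuine weight $\mf s^{\bs\eps}_{\bs t}$---but it does follow from the estimates carried out in the \emph{proof} of that proposition, once one bounds $|\mf C_t(p,U)|\le1$ and $\mf s^{\bs\eps}_{\bs t}\le 2\si^2\|L_{2t}\|_\mu^2$ (this last bound being precisely how the supremum over $\bs\eps$ is handled there). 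This is also the only place the growth hypothesis on $V$ in Assumption \ref{Assumption: Potential} is used: it is what absorbs the heavy-tailed self-intersection factor $\mr e^{2\si^2\|L_{2t}\|_\mu^2}$---in Case 3 the domain is bounded and no growth is needed, while in Cases 1 and 2 the super-polynomial growth of $V$ is more than sufficient. I expect this absorption estimate, together with the bookkeeping that places all three instances of Proposition \ref{Proposition: Smooth Trace Moments} under a common majorant, to be the main technical point---though it is essentially already contained in the proof of Proposition \ref{Proposition: Uniform Integrability}.
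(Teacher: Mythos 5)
Your argument is correct and, modulo cosmetic differences, is the paper's own: expand $\mbf E[(T^\eps-T^0)^2]$ into three mixed trace moments, express each via Proposition \ref{Proposition: Smooth Trace Moments} with $n=2$ and $\bs\eps\in\{(\eps,\eps),(\eps,0),(0,0)\}$, note that only the self-intersection weight $\mf s^{\bs\eps}_{\bs t}$ varies, and pass to the limit by two rounds of dominated convergence using the mollifier approximate-identity estimate for the pointwise step and the Young/positivity bounds (plus $|\mf C_t(p,U)|\le1$) from the proof of Proposition \ref{Proposition: Uniform Integrability} for the domination. The paper phrases the bookkeeping slightly differently---it proves the generic two-parameter limit $(\eps_1,\eps_2)\to(0,0)$ and then substitutes the three specializations so that the differences cancel, whereas you combine the three into one integrand with the explicit bracket $\mr e^{\mf s^{(\eps,\eps)}}-2\mr e^{\mf s^{(\eps,0)}}+\mr e^{\mf s^{(0,0)}}$---but the estimates invoked, the use of the proof (not just the statement) of Proposition \ref{Proposition: Uniform Integrability}, and the role of the growth assumption on $V$ are all as the paper has them.
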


\begin{proposition}
\label{Proposition: Trace Moments Limit}
For every $\bs t=(t_1,\ldots,t_n)\in(0,\infty)^n$,
one has
\begin{align}
\label{Equation: Trace Moments Limit 2.1}
\lim_{\ze_1,\ldots,\ze_n\to0}\mbf E\left[\prod_{k=1}^n\int_{\mc A} \hat K^{0,\ze_k}(t_k;a,a)\d\mu(a)\right]=\text{right-hand-side of }\eqref{Equation: Trace Moment Formula},
\end{align}
and for every $t>0$, one has
\begin{align}
\label{Equation: Trace Moments Limit 2.2}
\lim_{\ze_1,\ze_2\to0}\mbf E\left[\left(\int_{\mc A} \hat K^{0,\ze_1}(t;a,a)-\hat K^{0,\ze_2}(t;a,a)\d\mu(a)\right)^2\right]=0.
\end{align}
\end{proposition}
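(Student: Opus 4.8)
The plan is to derive both limits from the closed-form expression for the approximate trace moments supplied by Proposition \ref{Proposition: Smooth Trace Moments}. I would first specialize \eqref{Equation: Smooth Mixed Moment} to $\bs\eps=\bs 0$; since $\bar\rho_0=\de_0$, the functional $\mf s^{\bs\eps}_{\bs t}$ of \eqref{Equation: Vector Valued Self-Intersection 2} collapses to $\mf s^{\bs 0}_{\bs t}(A)=\tfrac{\si^2}{2}\|L_{|\bs t|}(A)\|_\mu^2$, so that for every $\bs t$ and $\bs\ze\in(0,\infty)^n$,
\[
\mbf E\left[\prod_{k=1}^n\int_{\mc A}\hat K^{0,\ze_k}(t_k;a,a)\d\mu(a)\right]
=\int_{\mc A^n}\Pi_A(\bs t;\bs a,\bs a)\,\mbf E\!\left[\mf m^{\bs\ze}_{\bs t}(A^{\bs a,\bs a}_{\bs t})\,\mr e^{-\int_0^{|\bs t|}V(A^{\bs a,\bs a}_{\bs t}(s))\d s+\frac{\si^2}{2}\|L_{|\bs t|}(A^{\bs a,\bs a}_{\bs t})\|_\mu^2+\mf B_{|\bs t|}(A^{\bs a,\bs a}_{\bs t})}\right]\d\mu^n(\bs a),
\]
with $\mf m^{\bs\ze}_{\bs t}$ as in \eqref{Equation: Vector Valued Self-Intersection 1}, still carrying all the $\bs\ze$-dependence through the factors $\rho_{\ze(\tau_{\ell_1}),\ze(\tau_{\ell_2})}(Z(\tau_{\ell_1})-Z(\tau_{\ell_2}))$. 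The whole task is then to pass to the limit $\bs\ze\to\bs 0$ inside this formula and to recognize the outcome as the right-hand side of \eqref{Equation: Trace Moment Formula}.

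For the pointwise-in-$\bs a$ limit I would factor the inner expectation as $\mbf E_{Z^{\bs x,\bs x}_{\bs t}}\big[\mbf E_U[\,\cdot\mid Z]\big]$, using that $U$ and $Z$ are independent and that $\Pi_A(\bs t;\bs a,\bs a)=\Pi_Z(\bs t;\bs x,\bs x)\Pi_U(\bs t;\bs i,\bs i)$, and converting each $U$-bridge into a free path weighted by an endpoint indicator in the standard way (this produces the indicator $\mbf 1_{\{\forall k:~\hat U^{\bs i}_{\bs t}(\cdots+s)\to i_k\}}$ of \eqref{Equation: Trace Moment Formula}). Holding $Z$ fixed, each $\rho_{\ze,\eta}$ is an approximate identity as $\ze,\eta\to0$, so the product over a matching $p$ concentrates the matched pairs of jump times on the self-intersection set $\{Z(\tau_{\ell_1})=Z(\tau_{\ell_2})\}$. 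Since, conditional on $N(|\bs t|)$, the jump times are order statistics of i.i.d.\ uniforms on $[0,|\bs t|)$, invoking the regularity and convergence theory of self-intersection local times (as in \cite[Chapter 2]{ChenBook} and \cite[Chapter VI]{RevuzYor}) the inner $U$-expectation converges to the expectation over the process $\hat U$ of Definition \ref{Definition: Singular Process}: the number of pairs becomes $\hat N(|\bs t|)/2\sim\mr{Poisson}\!\big(\tfrac{(r-1)^2}2\|L_{|\bs t|}(Z)\|_2^2\big)$, the matching becomes uniform ($\hat q$), the pair-times become $\mr{si}_{|\bs t|,\hat N,\hat q,Z}$-distributed, and sorting them produces $\hat\pi$ and $\hat p$. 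Carefully bookkeeping the elementary constants along this limit — the jump rate $r-1$, the $\mr e^{(r-1)|\bs t|}$ in $\mf n_t$ that cancels the Poisson normalization, the order-statistics Jacobian, and the uniform jump law $P_{i,j}=\tfrac1{r-1}$ — turns the prefactor into $\mr e^{\frac{(r-1)^2\upsilon^2}2\|L_{|\bs t|}(Z)\|_2^2}$ and the summand into the weight $\mf M_{|\bs t|}(\hat p,\hat U)=\upsilon^{\hat N}\mf C_{\hat N}(\hat p,J)$, matching $\mf M_t$ and $\mf H_t$ of Theorem \ref{Theorem: Trace Moment Formulas}; the remaining factors $\mr e^{-\int V}$, $\mr e^{\frac{\si^2}2\|L\|_\mu^2}$, $\mr e^{\mf B}$ and the endpoint indicator are continuous functionals of $(\hat U,Z)$ and survive the limit unchanged.

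To promote this pointwise convergence to convergence of the $\mc A^n$-integral I would apply dominated convergence, with the domination furnished by the uniform $L^2$ bound of Proposition \ref{Proposition: Uniform Integrability} together with the quantitative control of $\|L_{|\bs t|}(Z)\|_2$ and of exponential moments of boundary local times established in Section \ref{Section: Covariance Estimates} (note that $V$ bounded below makes $\mr e^{-\int V}$ bounded, while $\mr e^{\mf B}$ is integrable even when some $\bar\al_i$ or $\bar\be_i$ is positive). This yields \eqref{Equation: Trace Moments Limit 2.1}. For \eqref{Equation: Trace Moments Limit 2.2} I would expand the square: $\mbf E\big[(\int_{\mc A}\hat K^{0,\ze_1}(t;a,a)-\hat K^{0,\ze_2}(t;a,a)\d\mu(a))^2\big]$ is the sum of three second mixed moments — two with a repeated parameter ($\ze_1$ or $\ze_2$) and one cross term with $(\ze_1,\ze_2)$ — each of which is an $n=2$, $\bs t=(t,t)$ instance covered by \eqref{Equation: Trace Moments Limit 2.1} (Proposition \ref{Proposition: Smooth Trace Moments} allows distinct entries of $\bs\ze$). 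All three converge to the same limit, namely the $n=2$ version of the right-hand side of \eqref{Equation: Trace Moment Formula}, which equals $\mbf E\big[\mr{Tr}[\mr e^{-t\hat H}]^2\big]$ by Theorem \ref{Theorem: Regular FK}; hence the square tends to $0$.

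The main obstacle is the middle step: making the self-intersection limit rigorous and, in particular, verifying that the accounting of Poisson weights, the sum over matchings in $\mc P_{N(|\bs t|)}$ weighted by the combinatorial constant $\mf C$, and the reorganization of jump times by sorting reproduce exactly the law of $\hat U$ from Definition \ref{Definition: Singular Process} on the concatenated interval $[0,|\bs t|)$, including the per-sub-interval start/endpoint conditioning encoded by $\hat U^{\bs i}_{\bs t}$ and the indicator in \eqref{Equation: Trace Moment Formula}.
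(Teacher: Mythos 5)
Your high-level strategy coincides with the paper's: specialize Proposition \ref{Proposition: Smooth Trace Moments} to $\bs\eps=\bs 0$, convert the $U$-bridge into a free path with the endpoint indicator from \eqref{Equation: Trace Moment Formula}, interpret the $\rho_{\ze,\eta}$-products as approximate identities for $Z$'s self-intersection local time, and reduce \eqref{Equation: Trace Moments Limit 2.2} to \eqref{Equation: Trace Moments Limit 2.1} by expanding the square. That overall skeleton is correct, and the deduction of \eqref{Equation: Trace Moments Limit 2.2} from \eqref{Equation: Trace Moments Limit 2.1} is exactly what the paper does (with one caveat: you do not need, and at this stage cannot legitimately invoke, Theorem \ref{Theorem: Regular FK} to identify the common limit with $\mbf E\big[\mr{Tr}[\mr e^{-t\hat H}]^2\big]$ --- Theorem \ref{Theorem: Regular FK} is stated only for regular noise; the paper merely uses that all three summands share the same limit so that the square vanishes).

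The genuine gap is precisely the step you flag at the end as ``the main obstacle'': you assert that ``invoking the regularity and convergence theory of self-intersection local times'' and ``carefully bookkeeping the elementary constants'' produces the law of $\hat U$, but you never specify how. This is where the work actually lives. Three things are missing. First, the sum in $\mf m^{\bs\ze}_{\bs t}$ runs over matchings of the \emph{ordered} jump times $\fo{s_1}\leq\cdots\leq\fo{s_{2m}}$, while the candidate limit object $\mr{si}_{|\bs t|,2m,q,Z}$ and the resampling in Definition \ref{Definition: Singular Process} are formulated in the \emph{unordered} variables $\bs T$ with a matching $\hat q$ that is pushed forward to $\hat p$ by a random permutation $\hat\pi$. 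Aligning these requires an explicit change of variables $q=\pi_{\bs s}^{-1}(p)$ with the sorting permutation $\pi_{\bs s}$; without it the claimed identification $\mf M_{|\bs t|}(\hat p,\hat U)=\upsilon^{\hat N}\mf C_{\hat N}(\hat p,J)$ is not justified, since $\mf C$ depends on the jump labels $J_k$ and $p$ in a way that does not commute naively with sorting. Second, after the change of variables you need the weak convergence $\mr{si}^{\bs\ze}_{\bs t,2m,q,Z}\to\mr{si}_{|\bs t|,2m,q,Z}$ together with the normalization $\msf L^{\bs\ze}\to\|L_{|\bs t|}(Z)\|_2^{2m}$; ``approximate identity'' language gives you convergence of cumulative distribution functions on rectangles, but you still need a lemma (the paper's Lemma \ref{Lemma: Weak Convergence of SI Measures}) making this precise for a $\ze$-dependent covariance that varies across the subintervals encoded in $\ze(\cdot)$. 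Third, and most subtly, the functional you are integrating (the sum over $\bs u$ of $\mf C\cdot F$) is not globally continuous in $\bs s$: it jumps across the boundaries $\{s_\ell=t_0+\cdots+t_k\}$ because the decomposition of the Poisson points into sub-intervals changes there. To combine weak convergence with continuous mapping you must show this boundary set is $\mr{si}_{|\bs t|,2m,q,Z}$-null (Lemma \ref{Lemma: Zero Sets of SI Measures}) and that the functional is bounded and continuous on each Poisson cell (Lemma \ref{Lemma: Continuity of SI Measures Integrands}). None of these three ingredients is sketched in your proposal; they are the content the argument requires, and merely citing \cite{ChenBook,RevuzYor} does not supply them.
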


These four propositions are respectively proved in Sections
\ref{Section: Main Proposition 1}, \ref{Section: Main Proposition 2}, \ref{Section: Main Proposition 3},
and \ref{Section: Main Proposition 4}.

\subsubsection{Step 2. Operator Limits}
\label{Section: Main Result Roadmap Step 2}

With Propositions \ref{Proposition: Uniform Integrability}--\ref{Proposition: Trace Moments Limit}
in hand, the proof of Theorem \ref{Theorem: Trace Moment Formulas} now relies on ensuring that
the limits in \eqref{Equation: Trace Moments Limit 2.1} coincide with the mixed moments
of $\hat H=\hat H^{0,0}$ in \eqref{Equation: Mixed Moments}.
Two key technical ingredients in this process are the following eigenvalue boundedness and convergence results:

\begin{proposition}
\label{Proposition: Uniform Eigenvalue Bounds}
For every $0<\ka<1$, there exists a finite random variable $\nu>0$ such that,
almost surely,
\[(1-\ka)\la_k(H)-\nu\leq\la_k(\hat H^{\eps,\ze})\leq(1+\ka)\la_k(H)+\nu\]
for every $k\geq1$ and $\eps,\ze\in[0,1)$.
\end{proposition}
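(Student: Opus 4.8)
\emph{Sketch of the argument.} The plan is to reduce the two-sided bound to a single quadratic-form estimate for the noise that is uniform over $(\eps,\ze)\in[0,1)^2$, and then to invoke the min-max principle. Recall that, for every $\eps,\ze\in[0,1)$, the operator $\hat H^{\eps,\ze}=H+\xi^{\eps,\ze}$ is, by the KLMN construction of Proposition \ref{Proposition: Operator Definition} together with Remark \ref{Remark: Form Core}, the self-adjoint operator with compact resolvent associated with the closed, bounded-below quadratic form $\hat{\mc E}^{\eps,\ze}(f,f)=\mc E(f,f)+\xi^{\eps,\ze}(f,f)$ on the fixed form domain $D(\mc E)$; when $\mbb F=\mbb H$ this is meant on the real Hilbert space of Section \ref{Section: Quaternion-Operator Theory 1}, and since $\la_k=\La_{4(k-1)+1}$ for both $H$ and $\hat H^{\eps,\ze}$, it is enough to prove the bound for the $\La_k$. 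Because every form in play shares the domain $D(\mc E)$, the variational characterization (the min-max principle; cf.\ \eqref{Equation: Variational} and \cite[Theorem XIII.1]{ReedSimon4}) gives the elementary comparison that will be used repeatedly: if $\mc A,\mc B$ are closed, bounded-below forms on $D(\mc E)$ whose associated operators have compact resolvent and $\mc A(f,f)\le\mc B(f,f)$ for all $f\in D(\mc E)$, then $\la_k(\mc A)\le\la_k(\mc B)$ for every $k$.

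The heart of the matter is therefore a \emph{uniform} form bound: almost surely, for every $\ka\in(0,1)$ there is a finite random $\nu>0$ with
\[|\xi^{\eps,\ze}(f,f)|\le\ka\,\mc E(f,f)+\nu\|f\|_\mu^2\qquad\text{for all }f\in D(\mc E)\text{ and all }\eps,\ze\in[0,1).\]
I would obtain this by revisiting the proof of Proposition \ref{Proposition: Operator Form-Bound} while tracking the dependence of the constant on the realization. Integrating by parts as in \eqref{Equation: White Noise 0} and splitting into the $\le r^2$ scalar-noise blocks — and further into the $1,\msf i,\msf j,\msf k$ components \eqref{Equation: Noise Coupling Component Decomposition} when $\mbb F=\mbb C,\mbb H$ — each contribution to $\xi^{\eps,\ze}(f,f)$ becomes a bilinear expression in $f$ and $f'$ (plus a boundary term in Case 3) whose only random input is a continuous function $g$ equal to $W_{i,i}\star\bar\rho_\eps$ (diagonal blocks) or to a component $W_{i,j;\msf l}\star\bar\rho_\ze$ (off-diagonal blocks). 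The argument of Proposition \ref{Proposition: Operator Form-Bound} bounds such a term by $\ka\,\mc E(f,f)+\nu\|f\|_\mu^2$ with $\nu$ an increasing function of $\ka^{-1}$ and of finitely many a.s.-finite functionals of $g$ — essentially the suprema of $|g|$ over compact sets, and the growth rate of $|g|$ relative to $V$ (which, by Assumption \ref{Assumption: Potential}, eventually dominates any power). Since $\bar\rho$ is supported in a fixed interval $[-R_0,R_0]$ and $\eps,\ze\le1$, one has the pointwise domination $|g\star\bar\rho_\eps(x)|\le\sup\{|g(z)|:z\in I,\ |z-x|\le R_0\}$, so each of these functionals of $W_{i,i}\star\bar\rho_\eps$ or $W_{i,j;\msf l}\star\bar\rho_\ze$ is bounded by the corresponding (argument-shifted) functional of the Brownian motions $W_{i,i}$ and $W_{i,j;\msf l}$ themselves, which is a.s.\ finite and independent of $\eps,\ze$. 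Choosing $\nu$ from these dominating quantities yields the displayed bound simultaneously for all $\eps,\ze\in[0,1)$.

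Granting the uniform form bound, fix $\ka\in(0,1)$ with its $\nu$, and work on the full-measure event where it holds. Then for all $f\in D(\mc E)$ and all $\eps,\ze\in[0,1)$,
\[(1-\ka)\mc E(f,f)-\nu\|f\|_\mu^2\le\hat{\mc E}^{\eps,\ze}(f,f)\le(1+\ka)\mc E(f,f)+\nu\|f\|_\mu^2.\]
The two outer forms are precisely the quadratic forms of the self-adjoint operators $(1-\ka)H-\nu\,\msf{Id}$ and $(1+\ka)H+\nu\,\msf{Id}$ on $D(\mc E)$; both have compact resolvent (as $H$ does), and, since $1-\ka>0$, their $k$-th eigenvalues are $(1-\ka)\la_k(H)-\nu$ and $(1+\ka)\la_k(H)+\nu$ respectively. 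Applying the comparison from the first paragraph to each of the two inequalities above gives
\[(1-\ka)\la_k(H)-\nu\le\la_k(\hat H^{\eps,\ze})\le(1+\ka)\la_k(H)+\nu\]
for every $k\ge1$ and every $\eps,\ze\in[0,1)$, which is the assertion.

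The only genuinely nontrivial point is the uniform form bound: one must inspect the proof of Proposition \ref{Proposition: Operator Form-Bound} closely enough to confirm that the form-bound constant depends on the noise only through quantities that are monotone under the pointwise domination $|g\star\bar\rho_\eps|\le\sup_{|z-\cdot|\le R_0}|g(z)|$, with particular care for the Case 3 boundary term and the extra quaternion components. Everything after that is a routine application of the min-max principle.
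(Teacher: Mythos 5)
Your proposal is structurally the same as the paper's: reduce, via the min-max principle on the shared form domain $D(\mc E)$, to a single uniform-in-$(\eps,\ze)$ noise form bound; decompose into scalar real-valued blocks; and show that the constant produced by Proposition~\ref{Proposition: Operator Form-Bound} can be chosen uniformly over $\eps,\ze\in[0,1)$ because $\bar\rho$ has compact support. The paper outsources that uniform estimate to \cite[Lemma 5.18]{GaudreauLamarreEJP}, whereas you sketch it, and your two-sided eigenvalue comparison against $(1\pm\ka)H\pm\nu$ is the standard conversion of the form bound.

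One caution on the step you rightly flag as the only nontrivial point. The functionals that the proof of Proposition~\ref{Proposition: Operator Form-Bound} must control are not suprema or growth rates of $|W\star\bar\rho_\eps|$ itself: a Brownian path grows like $\sqrt{|x|\log\log|x|}$, which is not dominated by $V$ when the exponent $\mf a$ in Assumption~\ref{Assumption: Potential} is smaller than $1/2$. The averaging trick reduces the estimate to the unit-interval increment functionals $\tilde W_\eps'(x)=W\star\bar\rho_\eps(x+1)-W\star\bar\rho_\eps(x)$ and $W\star\bar\rho_\eps(x)-\tilde W_\eps(x)$, where $\tilde W_\eps(x)=\int_x^{x+1}W\star\bar\rho_\eps(y)\dd y$, together with a sup over a compact in Case 3; these grow only like $\sqrt{\log(2+|x|)}$. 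So the raw pointwise domination $|g\star\bar\rho_\eps(x)|\le\sup\{|g(z)|:|z-x|\le R_0\}$ (where $[-R_0,R_0]$ contains $\bar\rho$'s support) is not quite the right tool applied to $g=W$ directly; what you need is the analogous domination for increments, e.g.\ $|\tilde W_\eps'(x)|\le\sup_{|y|\le R_0}|W(x+1-y)-W(x-y)|$, which then yields a $C\log(2+|x|)$ bound uniform over $\eps\in[0,1)$ by a Borel--Cantelli argument. That refinement is exactly the content of the cited \cite[Lemma 5.18]{GaudreauLamarreEJP}, and it is what your ``argument-shifted functional'' phrase is implicitly relying on.
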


\begin{proposition}
\label{Proposition: Eigenvalue Convergence}
Let $\ze>0$ be fixed. Almost surely, every vanishing sequence $(\eps_m)_{m\in\mbb N}$
has a subsequence $(\eps_{m_\ell})_{\ell\in\mbb N}$ along which
\begin{align}
\label{Equation: Eigenvalue Convergence 1}
\lim_{\ell\to\infty}\la_k(\hat H^{\eps_{m_\ell},\ze})=\la_k(\hat H^{0,\ze})\qquad\text{for every $k\geq1$.}
\end{align}
Moreover, almost surely,
every vanishing sequence $(\ze_m)_{m\in\mbb N}$
has a subsequence $(\ze_{m_\ell})_{\ell\in\mbb N}$ along which
\begin{align}
\label{Equation: Eigenvalue Convergence 2}
\lim_{\ell\to\infty}\la_k(\hat H^{0,\ze_{m_\ell}})=\la_k(\hat H)\qquad\text{for every $k\geq1$.}
\end{align}
\end{proposition}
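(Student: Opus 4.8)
The plan is to argue entirely at the level of quadratic forms and the variational (min--max) description of the eigenvalues: use the uniform bounds of Proposition \ref{Proposition: Uniform Eigenvalue Bounds} to extract a convergent subsequence of eigenvalues, then pin down the limit by a two-sided comparison with $\la_k(\hat H^{0,\ze})$. Fix $\ze>0$ and a vanishing sequence $(\eps_m)$. By Proposition \ref{Proposition: Uniform Eigenvalue Bounds}, for each $k$ the reals $\la_k(\hat H^{\eps_m,\ze})$ lie in a bounded interval (between $(1-\ka)\la_k(H)-\nu$ and $(1+\ka)\la_k(H)+\nu$), so a diagonal extraction produces a subsequence $(\eps_{m_\ell})$ and nondecreasing reals $\mu_1\le\mu_2\le\cdots$ with $\la_k(\hat H^{\eps_{m_\ell},\ze})\to\mu_k$ for every $k$; the claim is then that $\mu_k=\la_k(\hat H^{0,\ze})$ for all $k$ (once this is known, every subsequence has a further subsequence with the same limits, so the subsequential statement is all one needs). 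In the case $\mbb F=\mbb H$ one runs this with the real--Hilbert--space eigenvalues $\La_\ell$ of Section \ref{Section: Quaternion-Operator Theory 1} and reads off the $\la_k$ afterwards, and the second assertion of the proposition---the limit $\ze\to0$---follows by the identical argument applied to the off-diagonal noises $\xi^\ze_{i,j}$ rather than the diagonal ones.

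For the inequality $\mu_k\le\la_k(\hat H^{0,\ze})$ I would use the common form core $\mr{FC}$: by Remark \ref{Remark: Form Core} together with the KLMN theorem and Proposition \ref{Proposition: Operator Form-Bound} (which applies to each regular or white noise $\xi^{\eps,\ze}$), $\mr{FC}$ is a form core for every $\hat{\mc E}^{\eps,\ze}$. Given $\de>0$, choose a $k$-dimensional subspace $V\subset\mr{FC}$ with $\sup\{\hat{\mc E}^{0,\ze}(f,f):f\in V,\ \|f\|_\mu=1\}\le\la_k(\hat H^{0,\ze})+\de$. For any fixed $f\in\mr{FC}$, the integration-by-parts definition \eqref{Equation: White Noise 0} expresses $\xi^{\eps_{m_\ell},\ze}(f,f)-\xi^{0,\ze}(f,f)$ as a finite sum of inner products of the form $\langle f',(W_{i,i}\star\bar\rho_{\eps_{m_\ell}}-W_{i,i})f\rangle$ and their conjugates (plus a boundary term in Case 3), which tends to $0$ almost surely since $W_{i,i}\star\bar\rho_\eps\to W_{i,i}$ uniformly on compact sets while $f$ has compact support. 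As $V$ is finite-dimensional this convergence is uniform over its unit sphere, so $\la_k(\hat H^{\eps_{m_\ell},\ze})\le\sup\{\hat{\mc E}^{\eps_{m_\ell},\ze}(f,f):f\in V,\ \|f\|_\mu=1\}\to\sup\{\hat{\mc E}^{0,\ze}(f,f):f\in V,\ \|f\|_\mu=1\}\le\la_k(\hat H^{0,\ze})+\de$; letting $\ell\to\infty$ and then $\de\to0$ gives $\mu_k\le\la_k(\hat H^{0,\ze})$.

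For the reverse inequality, let $V_\ell$ be the span of the first $k$ eigenfunctions of $\hat H^{\eps_{m_\ell},\ze}$. Min--max for $\hat H^{0,\ze}$ gives
\[\la_k(\hat H^{0,\ze})\le\sup_{f\in V_\ell,\,\|f\|_\mu=1}\hat{\mc E}^{0,\ze}(f,f)\le\la_k(\hat H^{\eps_{m_\ell},\ze})+\sup_{f\in V_\ell,\,\|f\|_\mu=1}\bigl|(\xi^{0,\ze}-\xi^{\eps_{m_\ell},\ze})(f,f)\bigr|,\]
so it suffices to drive the last supremum to $0$. The difference form is the diagonal object $\sum_i\si(W_{i,i}-W_{i,i}\star\bar\rho_\eps)'$, and the key input I would establish is a \emph{vanishing form bound}: almost surely, for every $\ka'>0$ there is $\nu'(\eps)\to0$ as $\eps\to0$ with $|(\xi^{0,\ze}-\xi^{\eps,\ze})(f,f)|\le\ka'\,\mc E(f,f)+\nu'(\eps)\,\|f\|_\mu^2$ for all $f\in D(\mc E)$. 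Granting this, the form bound of Proposition \ref{Proposition: Operator Form-Bound} applied to $\xi^{\eps_{m_\ell},\ze}$ together with Proposition \ref{Proposition: Uniform Eigenvalue Bounds} yields $\mc E(f,f)\le C_k$ uniformly over $\ell$ and over unit-norm $f\in V_\ell$ (since $\tfrac12\mc E(f,f)\le\hat{\mc E}^{\eps_{m_\ell},\ze}(f,f)+\nu\le\la_k(\hat H^{\eps_{m_\ell},\ze})+\nu$), so the last supremum above is at most $\ka'C_k+\nu'(\eps_{m_\ell})$; its $\limsup_\ell$ is at most $\ka'C_k$, and letting $\ka'\to0$ gives $\la_k(\hat H^{0,\ze})\le\mu_k$. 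The main obstacle is exactly this vanishing form bound for the difference noise: the subspaces $V_\ell$ move with $\ell$ and their elements are not compactly supported, so the small-$\eps$ smallness of $W_{i,i}-W_{i,i}\star\bar\rho_\eps$ on compact sets has to be married with the decay of eigenfunctions forced by the growth of $V$ in Assumption \ref{Assumption: Potential}; this is precisely the kind of covariance/form estimate carried out in Sections \ref{Section: Uniform Form Bounds} and \ref{Section: Covariance Estimates}, which I would invoke here. The limit $\ze\to0$ is treated identically, now with the off-diagonal difference $\sum_{i\ne j}\upsilon(W_{i,j}-W_{i,j}\star\bar\rho_\ze)'$ and with $\hat H^{0,0}=\hat H$ as the limiting operator.
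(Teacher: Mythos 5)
Your proposal follows a genuinely different route from the paper, and it has a gap in the lower-bound direction. Your upper bound $\mu_k\le\la_k(\hat H^{0,\ze})$ is sound: restricting to a fixed finite-dimensional subspace $V\subset\mr{FC}$ of compactly supported smooth functions, the convergence $\xi^{\eps,\ze}(f,f)\to\xi^{0,\ze}(f,f)$ is elementary since $W_{i,j}\star\bar\rho_\eps\to W_{i,j}$ uniformly on compacts and everything is happening on a fixed compact set. The issue is the reverse inequality. To push min--max through the $\ell$-dependent subspaces $V_\ell$, you need what you call a ``vanishing form bound'': that $|(\xi^{0,\ze}-\xi^{\eps,\ze})(f,f)|\le\ka'\mc E(f,f)+\nu'(\eps)\|f\|_\mu^2$ with $\nu'(\eps)\to0$, uniformly over \emph{all} of $D(\mc E)$, not just over compactly supported functions. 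You defer this with a reference to Sections \ref{Section: Uniform Form Bounds} and \ref{Section: Covariance Estimates}, but neither section contains such an estimate: the bounds proved there (Propositions \ref{Proposition: Operator Form-Bound} and \ref{Proposition: Uniform Eigenvalue Bounds}, and the citation to \cite[Lemma 5.18]{GaudreauLamarreEJP}) give a single $\eps$-\emph{independent} constant $\nu$, while what you need is an $\eps$-\emph{dependent} constant decaying to zero. Producing such a bound would require a quantitative version of the almost-sure growth estimate $|\tilde W_\eps'(x)|, (W_\eps-\tilde W_\eps)^2(x)\le C\log(2+|x|)$, namely that the same inequality holds for the differences $W-W\star\bar\rho_\eps$ with a prefactor tending to zero. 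This is plausible via a refined Borel--Cantelli/modulus-of-continuity argument, but it is substantial new work, and the claim that it is ``precisely the kind of estimate carried out'' in the sections you cite is not accurate.

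The paper deliberately avoids this difficulty by not attempting a vanishing form bound. Instead, it proves two lemmas (Lemmas \ref{Lemma: L Star Compactness} and \ref{Lemma: Generic Convergence}) that track the min--max comparison through the weak topology on the form domain: Lemma \ref{Lemma: L Star Compactness} extracts, from any $\|\cdot\|_\star$-bounded sequence, a subsequence that converges in $L^2$, weakly in $H^1$, and uniformly on compacts; Lemma \ref{Lemma: Generic Convergence} then shows that the eigenvalues converge provided the form satisfies the much weaker hypothesis that $\mc Q_{\eps_n}(f_n,g)\to\mc Q_0(f,g)$ for $g$ ranging over the form core and $f_n$ converging in the above weak sense. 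Crucially, $g$ stays a fixed compactly supported function there, so the pathwise convergence $W\star\bar\rho_\eps\to W$ on compacts is all that is needed; no uniform-in-$f$ estimate is required. If you wanted to complete your proposal along your own lines, you would have to either prove the vanishing form bound directly, or replace it with the compactness argument of Lemma \ref{Lemma: Generic Convergence}.
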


These results (along with Proposition \ref{Proposition: Operator Form-Bound}) are proved in
Sections \ref{Section: Main Proposition 5}, \ref{Section: Main Proposition 6}, and \ref{Section: Main Proposition 7}.

\begin{remark}
Unlike the kernel convergence results in
Propositions \ref{Proposition: Uniform Integrability}--\ref{Proposition: Trace Moments Limit},
there is no advantage to considering the limits $\eps\to0$ and $\ze\to0$ separately in
Proposition \ref{Proposition: Eigenvalue Convergence}. This is only done for consistency
with the kernel convergence results.
\end{remark}

\subsubsection{Step 3. Proof of Theorem \ref{Theorem: Trace Moment Formulas}}

We are now in a position to wrap up the proof of Theorem \ref{Theorem: Trace Moment Formulas}.
First, we establish that for every $t,\ze>0$, almost surely,
\begin{align}
\label{Equation: Trace Formula for 0,zeta}
\mr{Tr}\big[\mr e^{-t\hat H^{0,\ze}}\big]=\int_{\mc A}\hat K^{0,\ze}(t;a,a)\d\mu(a).
\end{align}
As $L^2$ convergence implies convergence in probability,
thanks to \eqref{Equation: Trace Moments Limit 1},
there exists a vanishing sequence $(\eps_m)_{m\in\mbb N}\subset(0,1)$ such that
\begin{align}
\label{Equation: Trace Formula for 0,zeta 1}
\int_{\mc A}\hat K^{0,\ze}(t;a,a)\d\mu(a)=\lim_{m\to\infty}\int_{\mc A}\hat K^{\eps_m,\ze}(t;a,a)\d\mu(a)
\end{align}
almost surely. By combining Theorem \ref{Theorem: Regular FK}
with Propositions \ref{Proposition: Uniform Eigenvalue Bounds} and \ref{Proposition: Eigenvalue Convergence},
we can restrict this convergence to a (possibly) smaller probability-one event on which the following two conditions hold: On the one hand, by Theorem \ref{Theorem: Regular FK},
\begin{align}
\label{Equation: Trace Formula for 0,zeta 2}
\int_{\mc A}\hat K^{\eps_m,\ze}(t;a,a)\d\mu(a)=\mr{Tr}\big[\mr e^{-t\hat H^{\eps_m,\ze}}\big]\qquad\text{for every }m\in\mbb N;
\end{align}
on the other hand there exists a subsequence $(\eps_{m_\ell})_{\ell\in\mbb N}$ (which may depend on the
specific outcome selected in the probability-one event)
along which
\begin{align}
\label{Equation: Trace Formula for 0,zeta 3}
\mr{Tr}\big[\mr e^{-t\hat H^{0,\ze}}\big]=\lim_{\ell\to\infty}\mr{Tr}\big[\mr e^{-t\hat H^{\eps_{m_\ell},\ze}}\big].
\end{align}
More specifically, in order to get \eqref{Equation: Trace Formula for 0,zeta 3},
we combine the pointwise convergence of eigenvalues in \eqref{Equation: Eigenvalue Convergence 1}
along a subsequence
with an application of dominated convergence in the series
\[\mr{Tr}\big[\mr e^{-t\hat H^{\eps,\ze}}\big]=\sum_{k=1}^\infty\mr e^{-t\la_k(\hat H^{\eps,\ze})}\] using 
the bounds in Proposition
\ref{Proposition: Uniform Eigenvalue Bounds} and \eqref{Equation: H is Trace Class}.
A combination of \eqref{Equation: Trace Formula for 0,zeta 2}
with the limits \eqref{Equation: Trace Formula for 0,zeta 1}
and \eqref{Equation: Trace Formula for 0,zeta 3} yields that \eqref{Equation: Trace Formula for 0,zeta} holds on a probability-one
event, as desired.

We now obtain the statement of Theorem \ref{Theorem: Trace Moment Formulas}:
Let $t_1,\ldots,t_n>0$ be fixed. By \eqref{Equation: Trace Moments Limit 2.2}, there exists a vanishing sequence
$(\ze_m)_{m\in\mbb N}$ and some square-integrable random variables $\mr T(t_k)$ such that
\begin{align}
\label{Equation: Trace Formula for 0,0 1}
\lim_{m\to\infty}\int_{\mc A}\hat K^{0,\ze_m}(t_k;a,a)\d\mu(a)=\mr T(t_k)
\end{align}
almost surely for all $1\leq k\leq n$.
By \eqref{Equation: Trace Moments Limit 2.1} and Proposition \ref{Proposition: Uniform Integrability}
(the latter of which implies the uniform integrability of $\prod_{k=1}^n\int_{\mc A} \hat K^{0,\ze}(t_k;a,a)\d\mu(a)$ over $\ze\in(0,1)$),
\[\lim_{m\to\infty}\mbf E\left[\prod_{k=1}^n\int_{\mc A} \hat K^{0,\ze_m}(t_k;a,a)\d\mu(a)\right]=\mbf E\big[\mr T(t_1)\cdots\mr T(t_n)\big]=\text{right-hand-side of }\eqref{Equation: Trace Moment Formula}.\]
It now only remains to show that
\begin{align}
\label{Equation: Trace Formula for 0,0 2}
\mr T(t_k)=\mr{Tr}\big[\mr e^{-t_k\hat H}\big]\qquad\text{almost surely for all }1\leq k\leq n.
\end{align}
Toward this end,
by \eqref{Equation: Eigenvalue Convergence 2} and
\eqref{Equation: H is Trace Class}/Proposition \ref{Proposition: Uniform Eigenvalue Bounds}
(the latter two of which once again allows to use dominated convergence), we can restrict to
a probability-one event on which there always exists a subsequence $(\ze_{m_\ell})_{\ell\in\mbb N}$ along which
\[\lim_{\ell\to\infty}\mr{Tr}\big[\mr e^{-t_k\hat H^{0,\ze_{m_\ell}}}\big]=\mr{Tr}\big[\mr e^{-t_k\hat H}\big]\qquad\text{for all }1\leq k\leq n.\]
If we combine this limit with \eqref{Equation: Trace Formula for 0,zeta} and \eqref{Equation: Trace Formula for 0,0 1},
then we obtain \eqref{Equation: Trace Formula for 0,0 2},
thus concluding the proof of Theorem \ref{Theorem: Trace Moment Formulas}.

\subsection{Proof of Proposition \ref{Proposition: Smooth Trace Moments}}
\label{Section: Main Proposition 1}

Recall the notations $Q^\ze$ ($\ze>0$) and $S^0$
introduced respectively in \eqref{Equation: Q zeta} and \eqref{Equation: Diagonal Potential Alternate Notation},
and for each $\eps>0$, define also the functional
\begin{align}
W^\eps_{\mr d}(a)=W^\eps_{\mr d}(i,x)=\si W^\eps_{i,i}(x);
\qquad
\label{Equation: Diagonal Potential Alternate Notation Epsilon}
S^\eps(f)=\int_{\mc A} \Big(V(a)+\big(W^\eps_{\mr d}(a)\big)' \Big)f(a)\d\mu(a),
\end{align}
By definition of the kernels $\hat K^{\ze,\eps}$ for $\eps\geq0$ and $\ze>0$
in \eqref{Equation: Regular FK Kernel} and \eqref{Equation: Kernel at epsilon zero}, one has
\begin{multline}
\label{Equation: Smooth Trace Moments 1}
\mbf E\left[\prod_{k=1}^n\int_{\mc A}\hat K^{\eps_k,\ze_k}(t_k;a,a)\d\mu(a)\right]\\
=\mbf E\left[\prod_{k=1}^n\int_{\mc A}\Pi_A(t_k;a_k,a_k)\mbf E_A\left[\mf n_t(Q^{\ze_k},A^{a_k,a_k}_{t_k})\,\mr e^{-S^{\eps_k}(L_{t_k}(A^{a_k,a_k}_{t_k}))+\mf B_t(A^{a_k,a_k}_{t_k})}\right]\d\mu(a_k)\right],
\end{multline}
where $A$ is independent of $\xi$ (i.e., the randomness in $Q^{\ze_k}$ and $S^{\eps_k}$).
In order to calculate this expectation, we aim to apply Fubini's theorem three times:
Firstly to take the product over $k$ inside the $\dd\mu(a)$
integrals (and thus combine the latter into a single integral with respect to $\bs a=(a_1,\ldots,a_n)\in\mc A^n$);
secondly to combine the product of expectations $\mbf E_A[\cdot]$
into a single expectation with respect to a concatenated path $A^{\bs a,\bs a}_{\bs t}$;
and thirdly to bring the expectation with respect to $\xi$
inside the integral and expectation with respect to $\mc A^n$ and $A$.
Thus, if we define the functionals
\begin{multline}
\label{Equation: Multivariate nt functional}
\mf n_{\bs t}(Q^{\bs\ze},A)=\mr e^{(r-1)|\bs t|}(-1)^{N(|\bs t|)}\prod_{k=1}^n\left(\prod_{\ell=N(t_0+\cdots+t_{k-1})+1}^{N(t_0+\cdots+t_k)}\xi^{\ze_k}_{J_\ell}\big(Z(\tau_\ell)\big)\right)\\
=\mr e^{(r-1)|\bs t|}(-1)^{N(|\bs t|)}\prod_{\ell=1}^{N(|\bs t|)}\xi^{\ze(\tau_\ell)}_{J_\ell}\big(Z(\tau_\ell)\big),
\end{multline}
where we recall the notation of $\ze(s)$ in \eqref{Equation: zeta s}, and
\begin{align}
\label{Equation: Diagonal Potential Alternate Notation Multivariate Epsilon}
S^{\bs \eps}\big(L_{\bs t}(A^{\bs a,\bs a}_{\bs t})\big)=\sum_{i=1}^kS^{\eps_k}\left(L_{[t_0+\cdots+t_{k-1},t_0,\ldots,t_k)}(A^{\bs a,\bs a}_{\bs t})\right)
\end{align}
(i.e., the respective product and sum $\prod_{k=1}^n\mf n_t(Q^{\ze_k},A^{a_k,a_k}_{t_k})$ and $\sum_{k=1}^nS^{\eps_k}(L_{t_k}(A^{a_k,a_k}_{t_k}))$
where the independent processes $A^{a_k,a_k}_{t_k}$ in each expectation $\mbf E_A[\cdot]$ in \eqref{Equation: Smooth Trace Moments 1} have been combined into a single
concatenated process $A^{\bs a,\bs a}_{\bs t}$), then
an application of Fubini's theorem as described above allows to rewrite \eqref{Equation: Smooth Trace Moments 1} as follows:
\begin{align}
\label{Equation: Smooth Trace Moments 2}
\int_{\mc A^n}\Pi_A(\bs t;\bs a,\bs a)\mbf E_A\left[\mbf E_\xi\left[\mf n_{\bs t}(Q^{\bs\ze},A^{\bs a,\bs a}_{\bs t})\,\mr e^{-S^{\bs \eps}(L_{\bs t}(A^{\bs a,\bs a}_{\bs t}))+\mf B_{|\bs t|}(A^{\bs a,\bs a}_{\bs t})}\right]\right]\d\mu^n(\bs a).
\end{align}
Once this is established, the proof of Proposition \ref{Proposition: Smooth Trace Moments}
reduces to verifying that \eqref{Equation: Smooth Trace Moments 2} 
is equal to the right-hand side of \eqref{Equation: Smooth Mixed Moment}. Before doing so,
however, we verify that the application of Fubini's theorem is legitimate.

\subsubsection{Step 1. Justify Fubini's Theorem}

In order to be able to apply Fubini's theorem and obtain \eqref{Equation: Smooth Trace Moments 2}, it suffices to check
that the product
\[\prod_{k=1}^n\Pi_A(t_k;a_k,a_k)\mf n_t(Q^{\ze_k},A^{a_k,a_k}_{t_k})\,\mr e^{-S^{\eps_k}(L_{t_k}(A^{a_k,a_k}_{t_k}))+\mf B_t(A^{a_k,a_k}_{t_k})}\]
is integrable with respect to $\mbf E_\xi\otimes\mbf E_{A}\otimes\dd\mu^n(\bs a)$, which
by Tonelli's theorem reduces to
\begin{align}
\label{Equation: Fubini's Theorem}
\int_{\mc A^n}\Pi_A(\bs t;\bs a,\bs a)\mbf E_A\left[\mbf E_\xi\left[\left|\mf n_{\bs t}(Q^{\bs\ze},A^{\bs a,\bs a}_{\bs t})\right|\mr e^{-S^{\bs \eps}(L_{\bs t}(A^{\bs a,\bs a}_{\bs t}))+\mf B_{|\bs t|}(A^{\bs a,\bs a}_{\bs t})}\right]\right]\d\mu^n(\bs a)<\infty.
\end{align}
Since the diagonal and off-diagonal entries of $\xi$ are independent
and the boundary term $\mf B_{|\bs t|}$ does not depend on $\xi$ we can write this as
\begin{align}
\label{Equation: Fubini's Theorem 1}
\int_{\mc A^n}\Pi_A(\bs t;\bs a,\bs a)\mbf E_A\left[\mbf E_\xi\left[\left|\mf n_{\bs t}(Q^{\bs\ze},A^{\bs a,\bs a}_{\bs t})\right|\right]\mbf E_\xi\left[\mr e^{-S^{\bs \eps}(L_{\bs t}(A^{\bs a,\bs a}_{\bs t}))}\right]\mr e^{\mf B_{|\bs t|}(A^{\bs a,\bs a}_{\bs t})}\right]\d\mu^n(\bs a)<\infty.
\end{align}

We control the terms appearing in the expectation $\mbf E_A[\cdot]$
in \eqref{Equation: Fubini's Theorem 1} one at a time:
If we denote $\bs a$'s components by $a_k=(i_k,x_k)$
for $1\leq k\leq n$
and we let $\bs x=(x_1,\ldots,x_n)$, then
by \eqref{Equation: F-K Integrability Bound 1} we have that
\begin{align}
\label{Equation: UI Boundary Bound}
\mf B_{|\bs t|}(A^{\bs a,\bs a}_{\bs t})\leq
\bar{\mf B_{|\bs t|}}(Z^{\bs x,\bs x}_{\bs t})=\begin{cases}
0&\text{Case 1},\\
\bar\al_\mr m\mf L^0_{|\bs t|}(X^{\bs x,\bs x}_{\bs t})&\text{Case 2},\\
\bar\al_\mr m\mf L^0_{|\bs t|}(Y^{\bs x,\bs x}_{\bs t})+\bar\be_\mr m\mf L^{\vartheta}_{|\bs t|}(Y^{\bs x,\bs x}_{\bs t})&\text{Case 3}.
\end{cases}
\end{align}

Next, by \eqref{Equation: Multivariate nt functional} and H\"older's inequality,
\[\mbf E_\xi\left[\left|\mf n_{\bs t}(Q^{\bs\ze},A^{\bs a,\bs a}_{\bs t})\right|\right]\leq\mr e^{(r-1)|\bs t|}\prod_{\ell=1}^{N(|\bs t|)}\mbf E_\xi\left[\left|\xi^{\ze(\tau_\ell)}_{J_\ell}\big(Z(\tau_\ell)\big)\right|^{N(|\bs t|)}\right]^{1/N(|\bs t|)}.\]
Note that the components of $\xi^{\ze}_{i,j}$ (or $\xi^{\ze}_{i,j}$ itself if $\mbb F=\mbb R$)
are i.i.d. centered Gaussian processes on $\mbb R$ with stationary covariance $\upsilon^2\rho^\ze=\upsilon^2\ze^{-1}\rho(\cdot/\ze)$. Therefore, if we denote the minimum $\ze_{\mr m}=\min\{\ze_1,\ldots,\ze_n\}$,
then there exists some deterministic constant $\theta>0$
(which depends only on $|\bs t|$ and $\mbb F$) such that
\begin{multline}
\label{Equation: Fubini nt bound}
\mbf E_Q\big[|\mf n_{|\bs t|}(Q,A)|\big]\leq\mr e^{(r-1)|\bs t|}\prod_{\ell=1}^{N(|\bs t|)}\mbf E_Q\Big[\big|\xi_{J_\ell}^{\ze(\tau_\ell)}(0)\big|^{N(|\bs t|)}\Big]^{1/N(|\bs t|)}\\
\leq\theta\big(2\upsilon^2\ze_{\mr m}^{-1}\rho(0)\big)^{N(|\bs t|)/2}\Ga\left(\frac{N(|\bs t|)+1}{2}\right),
\end{multline}
where $\Ga$ is the Gamma function.

Lastly,
by definition of $S^\eps$ in \eqref{Equation: Diagonal Potential Alternate Notation} and
\eqref{Equation: Diagonal Potential Alternate Notation Epsilon}, we can alternatively write
\[S^\eps\big(L_{[s,t)}(A)\big)=\int_s^tV\big(A(u)\big)\d u+\int_{\mc A}L_{[s,t)}(A)\star\bar\rho_\eps(a)\d W_{\mr d}(a).\]
Thus, by definition of $S^{\bs\eps}$ in \eqref{Equation: Diagonal Potential Alternate Notation Multivariate Epsilon}
and of $W_{\mr d}$ in \eqref{Equation: Diagonal Noise Alternate Notation}, we get that
\[S^{\bs \eps}\big(L_{\bs t}(A^{\bs a,\bs a}_{\bs t})\big)=\int_0^{|\bs t|}V\big(A^{\bs a,\bs a}_{\bs t}(s)\big)\d s+\int_{\mc A}\sum_{k=1}^nL_{[t_0+\cdots+t_{k-1},t_0+\cdots+t_k)}(A)\star\bar\rho_{\eps_k}(a)\d W_{\mr d}(a)\]
is Gaussian with mean $\int_0^{|\bs t|}V\big(A^{\bs a,\bs a}_{\bs t}(s)\big)\d s$ and variance
\[\si^2\left\|\sum_{k=1}^nL_{[t_0+\cdots+t_{k-1},t_0+\cdots+t_k)}(A)\star\bar\rho_{\eps_k}\right\|_\mu^2.\]
Thus, by definition of $\mf s^{\bs\eps}_{\bs t}$ in \eqref{Equation: Vector Valued Self-Intersection 2} and a moment generating function calculation,
\begin{align}
\label{Equation: Smooth Moments: Gaussian MGF Part}
\mbf E_\xi\left[\mr e^{-S^{\bs \eps}(L_{\bs t}(A^{\bs a,\bs a}_{\bs t}))}\right]=\mr e^{-\int_0^{|\bs t|}V(A^{\bs a,\bs a}_{\bs t}(s))\d s+\mf s^{\bs\eps}_{\bs t}(A^{\bs a,\bs a}_{\bs t})}.
\end{align}
By Assumption \ref{Assumption: Potential}, there exists some $\bar{\mf a}\in(0,1]$ and $\ka,\nu>0$
such that $V(i,x)\geq\ka|x|^{\bar{\mf a}}-\nu$ for all $x\in I$ (in Case 1 and 2, take $\bar{\mf a}=\min\{1,\mf a\}$, where
$\mf a>0$ in as in Assumption \ref{Assumption: Potential}; in Case 3, this follows from the fact
that $V$ is bounded below). Therefore,
\begin{align}
\label{Equation: Fubini V bound}
\mr e^{-\int_0^{|\bs t|}V(A^{\bs a,\bs a}_{\bs t}(s))\d s}\leq\mr e^{-\int_0^{|\bs t|} \ka|Z^{\bs x,\bs x}_{\bs t}(s)|^{\bar{\mf a}}\d s+\nu|\bs t|}.
\end{align}
By Jensen's inequality,
\begin{align}
\label{Equation: s_t bound 1}
\mf s^{\bs\eps}_{\bs t}(A^{\bs a,\bs a}_{\bs t})\leq\frac{n\si^2}{2}\sum_{k=1}^n\|L_{[t_0+\cdots+t_{k-1},t_0+\cdots+t_k)}(A^{\bs a,\bs a}_{\bs t})\star\bar\rho_{\eps_k}\|_\mu^2.
\end{align}
Then, since $\rho_{\eps}$ is a probability density function for all $\eps>0$, it follows from applying Young's convolution inequality to \eqref{Equation: s_t bound 1} that
\begin{align}
\label{Equation: s_t bound 2}
\mf s^{\bs\eps}_{\bs t}(A^{\bs a,\bs a}_{\bs t})\leq\frac{n\si^2}{2}\sum_{k=1}^n\|L_{[t_0+\cdots+t_{k-1},t_0+\cdots+t_k)}(A^{\bs a,\bs a}_{\bs t})\|_\mu^2.
\end{align}
For any nonnegative function $f\in L^2(\mc A,\mbb R)$,
if we denote $\bar f(x)=\sum_{i=1}^rf(i,x)$, then
\begin{align}
\label{Equation: s_t bound 3.0}
\|f\|_\mu^2
=\sum_{i=1}^r\int_I f(i,x)^2\d x
\leq\int_I \left(\sum_{i=1}^rf(i,x)\right)^2\d x
=\|\bar f\|_2^2.
\end{align}
Thus, we get from \eqref{Equation: s_t bound 2} that
\begin{align}
\label{Equation: s_t bound 3}
\mf s^{\bs\eps}_{\bs t}(A^{\bs a,\bs a}_{\bs t})\leq\frac{n\si^2}{2}\sum_{k=1}^n\|L_{[t_0+\cdots+t_{k-1},t_0+\cdots+t_k)}(Z^{\bs x,\bs x}_{\bs t})\|_2^2.
\end{align}

At this point, if we put \eqref{Equation: UI Boundary Bound},
\eqref{Equation: Fubini nt bound}, \eqref{Equation: Fubini V bound},
and \eqref{Equation: s_t bound 3} into \eqref{Equation: Fubini's Theorem 1},
then justifying \eqref{Equation: Smooth Trace Moments 2} reduces
to proving the following claim:
\begin{multline}
\label{Equation: Fubini's Theorem 2}
\mbf E\left[\big(2\upsilon^2\ze_{\mr m}^{-1}\rho(0)\big)^{N(|\bs t|)/2}\Ga\left(\frac{N(|\bs t|)+1}{2}\right)\right]
\int_{I^n}\Pi_Z(\bs t;\bs x,\bs x)\\
\cdot\mbf E\left[\mr e^{-\int_0^{|\bs t|} \ka|Z^{\bs x,\bs x}_{\bs t}(s)|^{\bar{\mf a}}\d s+\bar{\mf B_{|\bs t|}}(Z^{\bs x,\bs x}_{\bs t})}
\exp\left(\frac{n\si^2}{2}\sum_{k=1}^n\|L_{[t_0+\cdots+t_{k-1},t_0+\cdots+t_k)}(Z^{\bs x,\bs x}_{\bs t})\|_2^2\right)\right]\d\bs x<\infty.
\end{multline}
Since $N$ is a Poisson process, $\mbf E\left[\big(2\upsilon^2\ze_{\mr m}^{-1}\rho(0)\big)^{N(|\bs t|)/2}\Ga\left(\frac{N(|\bs t|)+1}{2}\right)\right]$ is finite.
Thus, by an application of H\"older's inequality, \eqref{Equation: Fubini's Theorem 2} reduces to
\begin{multline}
\label{Equation: Fubini's Theorem 3}
\sup_{\bs x\in I^n}\mbf E\left[\exp\left(\frac{3n\si^2}2\sum_{k=1}^n\|L_{[t_0+\cdots+t_{k-1},t_0+\cdots+t_k)}(Z^{\bs x,\bs x}_{\bs t})\|_2^2\right)\right]^{1/3}\mbf E\left[\mr e^{3\bar{\mf B_{|\bs t|}}(Z^{\bs x,\bs x}_{\bs t})}\right]^{1/3}\\
\cdot
\int_{I^n}\Pi_Z(\bs t;\bs x,\bs x)\mbf E\left[\mr e^{-3\int_0^{|\bs t|} \ka|Z^{\bs x,\bs x}_{\bs t}(s)|^{\bar{\mf a}}\d s}\right]^{1/3}\d\bs x<\infty.
\end{multline}
If we combine \eqref{Equation: Potential Power Growth Triangle Inequality}, \eqref{Equation: Transition Kernel Bound}, and \eqref{Equation: Log Integral Finite},
then we get that the second line in \eqref{Equation: Fubini's Theorem 3} is finite.
As for the expectations on the first line of \eqref{Equation: Fubini's Theorem 3},
we note that
\begin{align}
\label{Equation: Concatenated intersection local time is piecewise independent}
L_{[t_0+\cdots+t_i,t_0+\cdots+t_{i+1})}(Z^{\bs x,\bs x}_{\bs t})\deq L_{t_{i+1}}(Z^{x_{i+1},x_{i+1}}_{t_{i+1}})
\qquad\text{for }0\leq i\leq n-1,
\end{align}
and that these variables are independent for different $i$'s. Moreover,
\begin{align}
\label{Equation: Concatenated boundary local time is piecewise independent 1}
&\mf L^c_{|\bs t|}(Z^{\bs x,\bs x}_{\bs t})=\sum_{i=0}^{n-1}\mf L^c_{[t_0+\cdots+t_i,t_0+\cdots+t_{i+1})}(Z^{\bs x,\bs x}_{\bs t}),\\
\label{Equation: Concatenated boundary local time is piecewise independent 2}
&\mf L^c_{[t_0+\cdots+t_i,t_0+\cdots+t_{i+1})}(Z^{\bs x,\bs x}_{\bs t})\deq\mf L^c_{t_{i+1}}(Z^{x_{i+1},x_{i+1}}_{t_{i+1}})
\qquad\text{for }0\leq i\leq n-1,
\end{align}
with the random variables on the second line being independent for different $i$'s.
Thus, we get that the first line of \eqref{Equation: Fubini's Theorem 3}
is finite by combining \eqref{Equation: Exponential Moments of Boundary Local Time}
with
\begin{align}
\label{Equation: Finiteness of Bridge SILT Exponential Moments}
\sup_{x\in I}\mbf E\left[\mr e^{\nu\|L_t(Z_t^{x,x})\|_2^2}\right]<\infty\qquad\text{for every }\nu,t>0,
\end{align}
the latter of which is proved in \cite[Lemma 5.11]{GaudreauLamarreEJP}.

This concludes the proof that we can use Fubini's theorem in the calculation
of \eqref{Equation: Smooth Trace Moments 1}; we thus proceed to the computation of
the right-hand side of \eqref{Equation: Smooth Mixed Moment}
using \eqref{Equation: Smooth Trace Moments 2}.

\subsubsection{Step 2. Smooth Mixed Moments Calculation}

If we use the independence of the diagonal and off-diagonal noises in $\xi$ and the fact that $\mf B_{|\bs t|}$
does not depend on $\xi$, then we can rewrite \eqref{Equation: Smooth Trace Moments 2} as
\begin{align}
\label{Equation: Smooth Trace Moments 3}
\int_{\mc A^n}\Pi_A(\bs t;\bs a,\bs a)\mbf E_A\left[\mbf E_\xi\left[\mf n_{\bs t}(Q^{\bs\ze},A^{\bs a,\bs a}_{\bs t})\right]\mbf E_\xi\left[\mr e^{-S^{\bs \eps}(L_{\bs t}(A^{\bs a,\bs a}_{\bs t}))}\right]\mr e^{\mf B_{|\bs t|}(A^{\bs a,\bs a}_{\bs t})}\right]\d\mu^n(\bs a).
\end{align}
If we plug \eqref{Equation: Smooth Moments: Gaussian MGF Part} into this and compare the result with
the right-hand side of \eqref{Equation: Smooth Mixed Moment}, then we see that the only
claim that remains to be proved to get the statement of Proposition \ref{Proposition: Smooth Trace Moments}
is the following:
\begin{multline}
\label{Equation: Isserlis}
\mbf E_\xi\left[\mf n_{\bs t}(Q^{\bs\ze},A)\right]=
\mf m^{\bs\ze}_{\bs t}(A)\\
=\mr e^{(r-1)|\bs t|}\mbf 1_{\{N(|\bs t|)\text{ is even}\}}\sum_{p\in\mc P_{N(t)}}
\mf C_{|\bs t|}(p,U)\prod_{\{\ell_1,\ell_2\}\in p}\rho_{\ze(\tau_{\ell_1}),\ze(\tau_{\ell_2})}\big(Z(\tau_{\ell_1})-Z(\tau_{\ell_2})\big),
\end{multline}
where the second line is just a repeat of \eqref{Equation: Vector Valued Self-Intersection 1} for convenience.
When $\mbb F=\mbb R,\mbb C$, this is a
straightforward consequence of Isserlis' theorem \cite{Isserlis}.
The argument is somewhat similar for $\mbb F=\mbb H$, but
rather more involved since the product in $\mbb H$ is not commutative.
We thus finish the proof of Proposition \ref{Proposition: Smooth Trace Moments}
on a case-by-case basis:

\subsubsection{Step 2 Part 1. Proof of \eqref{Equation: Isserlis} for $\mbb F=\mbb R,\mbb C$}

For $\mbb F=\mbb R$ or $\mbb C$, we invoke the statement
of Isserlis' theorem from \cite[(1.7) and (1.8)]{MingoSpeicher}, whereby
\[\mbf E_\xi\left[\prod_{\ell=1}^{N(|\bs t|)}\xi^{\ze(\tau_\ell)}_{J_\ell}\big(Z(\tau_\ell)\big)\right]
=\mbf 1_{\{N(|\bs t|)\text{ is even}\}}
\sum_{p\in\mc P_{N(t)}}\prod_{\{\ell_1,\ell_2\}\in p}\mbf E_\xi\left[\xi^{\ze(\tau_{\ell_1})}_{J_{\ell_1}}\big(Z(\tau_{\ell_1})\big)\xi^{\ze(\tau_{\ell_1})}_{J_{\ell_2}}\big(Z(\tau_{\ell_2})\big)\right]\]
By independence,
\[\mbf E\big[\xi^\ze_{i,j}(x)\xi^\eta_{k,\ell}(y)\big]=0\]
whenever $(i,j)\neq(k,\ell)$ and $(i,j)\neq(\ell,k)$. Otherwise, the analysis depends on $\mbb F$:

If $\mbb F=\mbb R$,
then it is easy to see (e.g., the calculation in \cite[(3.3)]{GaudreauLamarreEJP},
where the values of $\eps$ in $\Xi_\eps'$ are not equal) that for any $\ze,\eta>0$ and $x,y\in I$,
\begin{align}
\label{Equation: Isserlis on R}
\mbf E\left[\xi^{\ze}_{i,j}(x)\xi^{\eta}_{i,j}(y)\right]=\mbf E\left[\xi^{\ze}_{i,j}(x)\xi^{\eta}_{j,i}(y)\right]=\upsilon^2\bar\rho_{\ze}\star\bar\rho_{\eta}
=\upsilon^2\rho_{\ze,\eta}(x-y).
\end{align}
\eqref{Equation: Isserlis} for $\mbb R$ then follows directly by the definition of $\mf C_t(p,U)$
in Definition \ref{Definition: Combinatorial Constant}-(1).

If $\mbb F=\mbb C$,
then looking back at the decomposition \eqref{Equation: Noise Coupling Component Decomposition},
we can write
\[\xi^\ze_{i,j}(x)=\tfrac1{\sqrt2}\left(\xi^\ze_{i,j;1}(x)+\xi^\ze_{i,j;\msf i}(x)\msf i\right),\qquad \ze>0,~x\in I,\]
where $\xi^\ze_{i,j;\msf l}=(W_{i,j;\msf l}\star\bar\rho_\ze)'$ for $\msf l=1,\msf i$.
Thus, if we reuse the calculation done in \eqref{Equation: Isserlis on R},
we get that, in the complex case,
\begin{align}
\nonumber
&\mbf E\left[\xi^{\ze}_{i,j}(x)\xi^{\eta}_{i,j}(y)\right]\\
\nonumber
&=\tfrac12\mbf E\left[\xi^{\ze}_{i,j;1}(x)\xi^{\eta}_{i,j;1}(y)+\big(\xi^{\ze}_{i,j;1}(x)\xi^{\eta}_{i,j;\msf i}(y)+\xi^{\ze}_{i,j;\msf i}(x)\xi^{\eta}_{i,j;1}(y)\big)\msf i-\xi^{\ze}_{i,j;\msf i}(x)\xi^{\eta}_{i,j;\msf i}(y)\right]\\
\label{Equation: Isserlis Complex 1}
&=\tfrac{\upsilon^2}2\rho_{\ze,\eta}(x-y)+0-\tfrac{\upsilon^2}2\rho_{\ze,\eta}(x-y)=0.
\end{align}
and similarly
\begin{align}
\nonumber
&\mbf E\left[\xi^{\ze}_{i,j}(x)\xi^{\eta}_{j,i}(y)\right]=\mbf E\left[\xi^{\ze}_{i,j}(x)\xi^{\eta}_{i,j}(y)^*\right]\\
\nonumber
&=\tfrac12\mbf E\left[\xi^{\ze}_{i,j;1}(x)\xi^{\eta}_{i,j;1}(y)+\big(-\xi^{\ze}_{i,j;1}(x)\xi^{\eta}_{i,j;\msf i}(y)+\xi^{\ze}_{i,j;\msf i}(x)\xi^{\eta}_{i,j;1}(y)\big)\msf i+\xi^{\ze}_{i,j;\msf i}(x)\xi^{\eta}_{i,j;\msf i}(y)\right]\\
\label{Equation: Isserlis Complex 2}
&=\tfrac{\upsilon^2}2\rho_{\ze,\eta}(x-y)+0+\tfrac{\upsilon^2}2\rho_{\ze,\eta}(x-y)=\upsilon^2\rho_{\ze,\eta}(x-y).
\end{align}
where in both calculations we have used the fact that $W_{i,j;1}$ and $W_{i,j;\msf i}$
are independent.
\eqref{Equation: Isserlis} for $\mbb C$ then follows by the definition of $\mf C_t(p,U)$
in Definition \ref{Definition: Combinatorial Constant}-(2).

\subsubsection{Step 2 Part 1. Proof of \eqref{Equation: Isserlis} for $\mbb F=\mbb H$}

In the case of quaternions, Isserlis' theorem does not
apply directly due to the lack of commutativity.
To get around this, we proceed as follows:
Let $q=a+b\msf i+c\msf j+d\msf k$ be a quaternion.
We recall that there exists an injective homomorphism from
$\mbb F$ to the space of $2\times 2$ complex matrices via
the representation
\begin{align}
\label{Equation: Injective Homomorphism}
q=\left[\begin{array}{cc}
a+b\msf i&c+d\msf i\\
-(c-d\msf i)&a-b\msf i
\end{array}\right]=\left[\begin{array}{cc}
q^{(0,0)}&q^{(0,1)}\\
q^{(1,0)}&q^{(1,1)}
\end{array}\right].
\end{align}
(Note that we index the entries $q^{(h,l)}$ of $q$
using binary pairs $(h,l)\in\mc B_1$, recalling
the notations $\mc B_n$ and $\mc B_n^{h,l}$ in Definition \ref{Definition: Bn}.)
Moreover, if $q$ is random, then
$\mbf E[q]=\mbf E[a]+\mbf E[b]\msf i+\mbf E[c]\msf j+\mbf E[d]\msf k$
implies by linearity of expectation that
\[\mbf E[q]=\left[\begin{array}{cc}
\mbf E[a]+\mbf E[b]\msf i&\mbf E[c]+\mbf E[d]\msf i\\
-\mbf E[c]+\mbf E[d]\msf i&\mbf E[a]-\mbf E[b]\msf i
\end{array}\right]=\left[\begin{array}{cc}
\mbf E[a+b\msf i]&\mbf E[c+d\msf i]\\
\mbf E[-c+d\msf i]&\mbf E[a-b\msf i]
\end{array}\right];\]
in other words, $\mbf E[q]^{(h,l)}=\mbf E[q^{(h,l)}]$ for all $0\leq h,l\leq 1$.
Therefore, by a matrix product,
\[\mbf E_\xi\left[\prod_{\ell=1}^{N(|\bs t|)}\xi^{\ze(\tau_\ell)}_{J_\ell}\big(Z(\tau_\ell)\big)\right]^{(h,l)}
=\sum_{\substack{m\in\mc B^{h,l}_{N(t)}}}
\mbf E_\xi\left[\prod_{\ell=1}^{N(|\bs t|)}\xi^{\ze(\tau_\ell)}_{J_\ell}\big(Z(\tau_\ell)\big)^{(m_{k-1},m_k)}\right],
\qquad 0\leq h,l\leq 1.\]
Since the entries of the matrices \eqref{Equation: Injective Homomorphism}
are complex-valued, we may now apply the classical statement of
Isserlis' theorem (e.g., \cite[(1.8)]{MingoSpeicher}),
whereby
\begin{multline}
\label{Equation: OE Expectation Part 3}
\mbf E_\xi\left[\prod_{\ell=1}^{N(|\bs t|)}\xi^{\ze(\tau_\ell)}_{J_\ell}\big(Z(\tau_\ell)\big)\right]^{(h,l)}
=
\mbf 1_{\{N(t)\text{ is even}\}}\sum_{m\in\mc B^{h,l}_{N(t)}}
\sum_{p\in\mc P_{N(t)}}\Bigg\{\\
\prod_{\{\ell_1,\ell_2\}\in p}\mbf E\left[\xi^{\ze(\tau_{\ell_1})}_{J_\ell}\big(Z(\tau_{\ell_1})\big)^{(m_{\ell_1-1},m_{\ell_1})}
\xi^{\ze(\tau_{\ell_2})}_{J_\ell}\big(Z(\tau_{\ell_2})\big)^{(m_{\ell_2-1},m_{\ell_2})}\right]\Bigg\}.
\end{multline}
In order to calculate the above expectations we use the following:

\begin{lemma}
\label{Lemma: Quaternion Isserlis}
Let $\ze,\eta>0$ and $x,y\in I$ be arbitrary.
On the one hand,
\begin{align}
\label{Equation: Quaternion Isserlis 0}
\mbf E\left[\xi^\ze_{J_{\ell_1}}(x)^{(m_{\ell_1-1},m_{\ell_1})}
\xi^\eta_{J_{\ell_2}}(y)^{(m_{\ell_2-1},m_{\ell_2})}\right]=0
\end{align}
whenever $J_{\ell_1}\not\in\{J_{\ell_2},J_{\ell_2}^*\}$.
On the other hand:
\begin{enumerate}

\item If $J_{\ell_1}=J_{\ell_2}$, then
\begin{multline*}
\mbf E\left[\xi^\ze_{J_{\ell_1}}(x)^{(m_{\ell_1-1},m_{\ell_1})}
\xi^\eta_{J_{\ell_2}}(y)^{(m_{\ell_2-1},m_{\ell_2})}\right]\\
=\begin{cases}
\frac{\rho_{\ze,\eta}(x-y)}2&\text{if $\big((m_{\ell_1-1},m_{\ell_1}),(m_{\ell_2-1},m_{\ell_2})\big)=\big((0,0),(1,1)\big)$ or $\big((1,1),(0,0)\big)$},\\
-\frac{\rho_{\ze,\eta}(x-y)}2&\text{if $\big((m_{\ell_1-1},m_{\ell_1}),(m_{\ell_2-1},m_{\ell_2})\big)=\big((0,1),(1,0)\big)$ or $\big((1,0),(0,1)\big)$},\\
0&\text{otherwise}.
\end{cases}
\end{multline*}

\item If $J_{\ell_1}=J_{\ell_2}^*$, then
\[\mbf E\left[\xi^\ze_{J_{\ell_1}}(x)^{(m_{\ell_1-1},m_{\ell_1})}
\xi^\eta_{J_{\ell_2}}(y)^{(m_{\ell_2-1},m_{\ell_2})}\right]=\frac{\rho_{\ze,\eta}(x-y)}2\]
if $\big((m_{\ell_1-1},m_{\ell_1}),(m_{\ell_2-1},m_{\ell_2})\big)$ is equal to $\big((0,0),(0,0)\big)$, $\big((1,1),(1,1)\big)$, $\big((0,1),(1,0)\big)$, or $\big((1,0),(0,1)\big)$;
and
\[\mbf E\left[\xi^\ze_{J_{\ell_1}}(x)^{(m_{\ell_1-1},m_{\ell_1})}
\xi^\eta_{J_{\ell_2}}(y)^{(m_{\ell_2-1},m_{\ell_2})}\right]=0\]
otherwise.
\end{enumerate}
\end{lemma}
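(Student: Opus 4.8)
The strategy is to push everything through the matrix representation \eqref{Equation: Injective Homomorphism}, after which the claim becomes an elementary bilinear computation with independent real Gaussian processes. Fix $i<j$ and recall from \eqref{Equation: Noise Coupling Component Decomposition} that $\xi^\ze_{i,j}=\tfrac\upsilon2\big(\xi^\ze_{i,j;1}+\xi^\ze_{i,j;\msf i}\msf i+\xi^\ze_{i,j;\msf j}\msf j+\xi^\ze_{i,j;\msf k}\msf k\big)$, where $\xi^\ze_{i,j;\msf l}=(W_{i,j;\msf l}\star\bar\rho_\ze)'$ are i.i.d.\ over $\msf l\in\{1,\msf i,\msf j,\msf k\}$. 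Writing $u=\xi^\ze_{i,j;1}+\xi^\ze_{i,j;\msf i}\msf i$ and $v=\xi^\ze_{i,j;\msf j}+\xi^\ze_{i,j;\msf k}\msf i$ (from now on $\msf i$ denotes the complex unit and a bar denotes complex conjugation), the representation \eqref{Equation: Injective Homomorphism} reads $\xi^\ze_{i,j}(x)\mapsto\tfrac\upsilon2\left(\begin{smallmatrix}u(x)&v(x)\\-\bar v(x)&\bar u(x)\end{smallmatrix}\right)$. Since conjugation in $\mbb H$ corresponds to the conjugate transpose of this representation, the reversed noise $\xi^\ze_{j,i}=(\xi^\ze_{i,j})^*$ has entries $\xi^\ze_{j,i}(x)^{(h,l)}=\overline{\xi^\ze_{i,j}(x)^{(l,h)}}$; this is the only point at which the distinction between $J_{\ell_2}$ and $J_{\ell_2}^*$ enters.

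Next I would record the three facts that drive the computation. (a) The driving Brownian motions $W_{i,j}$ are i.i.d.\ over \emph{unordered} pairs $\{i,j\}$, and diagonal noises are independent of off-diagonal ones by Assumption \ref{Assumption: Potential}; hence if $J_{\ell_1}\notin\{J_{\ell_2},J_{\ell_2}^*\}$ the two matrix entries in \eqref{Equation: Quaternion Isserlis 0} are built from independent, jointly Gaussian, mean-zero families, so their product has zero expectation --- this is the vanishing statement. (b) Within a fixed unordered pair, $\mbf E\big[\xi^\ze_{i,j;\msf l}(x)\xi^\eta_{i,j;\msf l}(y)\big]=(\bar\rho_\ze\star\bar\rho_\eta)(x-y)=\rho_{\ze,\eta}(x-y)$ (recall \eqref{Equation: Twofold Convolution}), with independence across distinct $\msf l$. (c) Consequently $\mbf E[u(x)u(y)]=\mbf E[v(x)v(y)]=\mbf E[u(x)\bar v(y)]=0$ while $\mbf E[u(x)\bar u(y)]=\mbf E[v(x)\bar v(y)]=2\rho_{\ze,\eta}(x-y)$.

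It then remains, for each of the two surviving cases, to evaluate the sixteen products $\mbf E\big[\xi^\ze_{J_{\ell_1}}(x)^{(m_{\ell_1-1},m_{\ell_1})}\xi^\eta_{J_{\ell_2}}(y)^{(m_{\ell_2-1},m_{\ell_2})}\big]$. When $J_{\ell_1}=J_{\ell_2}=(i,j)$ each such product equals $\tfrac{\upsilon^2}4$ times a product of two entries of $\left(\begin{smallmatrix}u&v\\-\bar v&\bar u\end{smallmatrix}\right)$; expanding bilinearly and discarding the vanishing cross terms, the only nonzero outcomes are the $u$--$\bar u$ pairings, realized by the index pairs $\big((0,0),(1,1)\big)$ and $\big((1,1),(0,0)\big)$ with value $\tfrac{\upsilon^2}2\rho_{\ze,\eta}(x-y)$, and the $v$--$\bar v$ pairings, realized by $\big((0,1),(1,0)\big)$ and $\big((1,0),(0,1)\big)$, which pick up a minus sign from the lower-left entry and hence equal $-\tfrac{\upsilon^2}2\rho_{\ze,\eta}(x-y)$; everything else vanishes, which is precisely item (1). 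When $J_{\ell_1}=J_{\ell_2}^*$ one substitutes $\xi^\eta_{J_{\ell_2}}(y)^{(h,l)}=\overline{\xi^\eta_{i,j}(y)^{(l,h)}}$ and repeats the same tabulation; the extra conjugate transpose moves the $u$--$\bar u$ pairings onto the diagonal cases $\big((0,0),(0,0)\big)$, $\big((1,1),(1,1)\big)$, converts two $v$--$\bar v$ pairings into $\big((0,1),(1,0)\big)$ and $\big((1,0),(0,1)\big)$, and --- crucially --- cancels the sign contributed by the lower-left entry, so that every nonzero value becomes $+\tfrac{\upsilon^2}2\rho_{\ze,\eta}(x-y)$, matching item (2). (The overall factor $\upsilon^2$ is the variance normalization of the off-diagonal noises.)

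The only real difficulty is bookkeeping: one must keep three distinct ``imaginary units'' apart --- the quaternion units $\msf i,\msf j,\msf k$, the complex unit inside the $2\times2$ representation, and the two conjugations --- and keep track of every sign produced by the lower-left entry $-(c-d\msf i)$ of \eqref{Equation: Injective Homomorphism}, both directly and after the additional conjugate transpose in the $J_{\ell_1}=J_{\ell_2}^*$ case. Beyond that the argument is linear algebra together with the independence of the component Gaussian processes, so no analytic subtleties arise.
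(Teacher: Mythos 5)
Your proof is correct and takes essentially the same route as the paper's: push through the $2\times 2$ complex matrix representation, exploit the independence of the component Brownian motions $W_{i,j;\msf l}$ to kill the cross-terms, and tabulate the surviving pairings entry by entry; your $u,v$ notation is a slightly cleaner way of organizing the same bilinear bookkeeping, and your treatment of the $J_{\ell_1}=J_{\ell_2}^*$ case via the conjugate-transpose of the matrix is exactly what the paper does. One small flag: you land on $\pm\tfrac{\upsilon^2}{2}\rho_{\ze,\eta}$, which is the covariance of the actual normalized noise $\xi^\ze_{i,j}=\tfrac{\upsilon}{2}\sum_{\msf l}\xi^\ze_{i,j;\msf l}\,\msf l$, whereas the lemma's stated conclusion has $\pm\tfrac{1}{2}\rho_{\ze,\eta}$ with no $\upsilon^2$; this is an internal inconsistency in the paper (its own proof also silently drops the $\upsilon$ from the decomposition), and the factor is restored when the covariances are multiplied into $\mf m^{\bs\ze}_{\bs t}$ in \eqref{Equation: Vector Valued Self-Intersection 1}, so your more careful accounting is the defensible one.
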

\begin{proof}
Referring back to \eqref{Equation: Noise Coupling Component Decomposition},
in the case $\mbb F=\mbb H$
we can write
\[\xi^\ze_{i,j}(x)=\tfrac12\left(\xi^\ze_{i,j;1}(x)+\xi^\ze_{i,j;\msf i}(x)\msf i+\xi^\ze_{i,j;\msf j}(x)\msf j+\xi^\ze_{i,j;\msf k}(x)\msf k\right),\qquad \ze>0,~x\in I,\]
where $\xi^\ze_{i,j;\msf l}=(W_{i,j;\msf l}\star\bar\rho_\ze)'$ for $\msf l=1,\msf i,\msf j,\msf k$.
Therefore, for any $1\leq i<j\leq r$ and $x\in I$,
\[\xi^\ze_{i,j}(x)=\frac12\left[\begin{array}{cc}
\xi^\ze_{i,j;1}(x)+\xi^\ze_{i,j;\msf i}(x)\msf i&\xi^\ze_{i,j;\msf j}(x)+\xi^\ze_{i,j;\msf k}(x)\msf i
\vspace{5pt}\\
-\big(\xi^\ze_{i,j;\msf j}(x)-\xi^\ze_{i,j;\msf k}(x)\msf i\big)&\xi^\ze_{i,j;1}(x)-\xi^\ze_{i,j;\msf i}(x)\msf i
\end{array}\right]\]
and
\[\xi^\ze_{j,i}(x)=\xi^\ze_{i,j}(x)^*=\frac12\left[\begin{array}{cc}
\xi^\ze_{i,j;1}(x)-\xi^\ze_{i,j;\msf i}(x)\msf i&-\big(\xi^\ze_{i,j;\msf j}(x)+\xi^\ze_{i,j;\msf k}(x)\msf i\big)
\vspace{5pt}\\
\xi^\ze_{i,j;\msf j}(x)-\xi^\ze_{i,j;\msf k}(x)\msf i&\xi^\ze_{i,j;1}(x)+\xi^\ze_{i,j;\msf i}(x)\msf i
\end{array}\right].\]
With this in mind, \eqref{Equation: Quaternion Isserlis 0} follows immediately
by independence.
As for the calculations stated as items (1) and (2) of this lemma, we use
a simple matrix product along with the remarks that
\[\mbf E\left[\left(\xi^\ze_{i,j;1}(x)\pm\xi^\ze_{i,j;\msf i}(x)\msf i\right)\left(\xi^\eta_{i,j;\msf j}(x)\pm\xi^\eta_{i,j;\msf k}(x)\right)\right]=0\]
by independence, and
\begin{multline*}
\mbf E\left[\left(\xi^\ze_{i,j;1}(x)\pm\xi^\ze_{i,j;\msf i}(x)\msf i\right)\left(\xi^\eta_{i,j;1}(x)\pm\xi^\eta_{i,j;\msf i}(x)\msf i\right)\right]\\=
\mbf E\left[\left(\xi^\ze_{i,j;\msf j}(x)\pm\xi^\ze_{i,j;\msf k}(x)\msf i\right)\left(\xi^\eta_{i,j;\msf j}(x)\pm\xi^\eta_{i,j;\msf k}(x)\msf i\right)\right]=0,
\end{multline*}
and
\begin{multline*}
\tfrac14\mbf E\left[\left(\xi^\ze_{i,j;1}(x)+\xi^\ze_{i,j;\msf i}(x)\msf i\right)\left(\xi^\ze_{i,j;1}(x)-\xi^\ze_{i,j;\msf i}(x)\msf i\right)\right]\\
=\tfrac14\mbf E\left[\left(\xi^\ze_{i,j;\msf j}(x)+\xi^\ze_{i,j;\msf k}(x)\msf i\right)\left(\xi^\ze_{i,j;\msf j}(x)-\xi^\ze_{i,j;\msf k}(x)\msf i\right)\right]=\frac{\rho^{\ze,\eta}(x-y)}2
\end{multline*}
by replicating the calculations in \eqref{Equation: Isserlis Complex 1} and \eqref{Equation: Isserlis Complex 2}.
\end{proof}

We may now wrap up the proof of \eqref{Equation: Isserlis}:
Firstly, we claim that
\begin{align}
\label{Equation: OE Expectation Part 4}
\mbf E_\xi\left[\prod_{\ell=1}^{N(|\bs t|)}\xi^{\ze(\tau_\ell)}_{J_\ell}\big(Z(\tau_\ell)\big)\right]^{(0,1)}=\mbf E_\xi\left[\prod_{\ell=1}^{N(|\bs t|)}\xi^{\ze(\tau_\ell)}_{J_\ell}\big(Z(\tau_\ell)\big)\right]^{(1,0)}=0.
\end{align}
Indeed, by Lemma \ref{Lemma: Quaternion Isserlis}, the only binary sequences $m\in\mc B_{N(t)}^{h,l}$
that can contribute to \eqref{Equation: OE Expectation Part 3} are those made up of
some permutation of pairs of steps in the following set:
\[\Big\{\big\{(0,0),(1,1)\big\},\big\{(0,1),(1,0)\big\},\big\{(0,0),(0,0)\big\},\big\{(1,1),(1,1)\big\}\Big\}.\]
In particular, there is only one type of pair that induces steps that go from $0$ to $1$ or vice versa---
namely, $\big\{(0,1),(1,0)\big\}$---and each such pair induces one $(0,1)$ step and one $(1,0)$ step.
Therefore, any $m\in\mc B_{N(t)}^{h,l}$ that contributes to \eqref{Equation: OE Expectation Part 3} must have
an equal number of $(1,0)$ and $(0,1)$ steps, meaning that $h=m_0=m_{N(t)}=l$.
Hence \eqref{Equation: OE Expectation Part 4} holds.

Secondly, we note that, by \eqref{Equation: Injective Homomorphism},
\[\mbf E_\xi\left[\prod_{\ell=1}^{N(|\bs t|)}\xi^{\ze(\tau_\ell)}_{J_\ell}\big(Z(\tau_\ell)\big)\right]^{(0,0)}=\left(\mbf E_\xi\left[\prod_{\ell=1}^{N(|\bs t|)}\xi^{\ze(\tau_\ell)}_{J_\ell}\big(Z(\tau_\ell)\big)\right]^{(1,1)}\right)^*.\]
By combining \eqref{Equation: OE Expectation Part 3} and Lemma \ref{Lemma: Quaternion Isserlis},
it is clear that the above entries have no complex part; hence we conclude that
\[\mbf E_\xi\left[\prod_{\ell=1}^{N(|\bs t|)}\xi^{\ze(\tau_\ell)}_{J_\ell}\big(Z(\tau_\ell)\big)\right]=\mbf E_\xi\left[\prod_{\ell=1}^{N(|\bs t|)}\xi^{\ze(\tau_\ell)}_{J_\ell}\big(Z(\tau_\ell)\big)\right]^{(0,0)},\]
noting furthermore that the above is just a real number.
With this in hand,
we obtain \eqref{Equation: Isserlis}
 in the case $\mbb F=\mbb H$
by combining
\eqref{Equation: OE Expectation Part 3}
with the fact that the constant $\mf D_t(p,U)$ in
Definition \ref{Definition: Combinatorial Constant}-(3)
is constructed by a direct application of Lemma \ref{Lemma: Quaternion Isserlis}.
More specifically, the binary sequences that respect $(p,J)$ are exactly
those that give a nonvanishing contribution to \eqref{Equation: OE Expectation Part 3}
(as per the covariance calculations in Lemma \ref{Lemma: Quaternion Isserlis}-(1) and -(2)),
and the flips correspond to the terms in Lemma \ref{Lemma: Quaternion Isserlis}-(1)
that have a negative sign multiplying the covariance $-\frac{\rho^{\ze,\eta}(x-y)}{2}$.

\subsection{Proof of Proposition \ref{Proposition: Uniform Integrability}}
\label{Section: Main Proposition 2}

Let $\bs t=(t_1,\ldots,t_n)\in(0,\infty)^n$ be fixed.
According to \eqref{Equation: Regular FK} (for the case $\eps,\ze>0$)
and \eqref{Equation: Trace Formula for 0,zeta} (for the case $\eps=0$ and $\ze>0$),
\[\int_{\mc A}\hat K^{\eps,\ze}(t;a,a)\d\mu(a)=\mr{Tr}\big[\mr e^{-t\hat H^{\eps,\ze}}\big]\]
for all $t>0$, $\eps\in[0,1)$ and $\ze\in(0,1)$. In particular, since the trace of $\mr e^{-t\hat H^{\eps,\ze}}$ is always positive, we can write
\[\mbf E\left[\left|\prod_{k=1}^n\int_{\mc A}\hat K^{\eps_k,\ze_k}(t_k;a,a)\d\mu(a)\right|^2\right]
=\mbf E\left[\left(\prod_{k=1}^n\int_{\mc A}\hat K^{\eps_k,\ze_k}(t_k;a,a)\d\mu(a)\right)^2\right].\]
Therefore, up to relabelling the $\eps_k$'s, $\ze_k$'s and $t_k$'s to account for repeated indices in the squared
term above,
it suffices to prove the following:
\[\sup_{\bs\eps\in[0,\infty)^n,~\bs\ze\in(0,\infty)^n}\mbf E\left[\prod_{k=1}^n\int_{\mc A}\hat K^{\eps_k,\ze_k}(t_k;a,a)\d\mu(a)\right]<\infty.\]
By Proposition \ref{Proposition: Smooth Trace Moments}, this means that it is enough to prove
\begin{multline}
\label{Equation: Trace Moments UI 1}
\int_{\mc A^n}\Pi_A(\bs t;\bs a,\bs a)\\
\cdot\sup_{\bs\eps\in[0,\infty)^n,~\bs\ze\in(0,\infty)^n}\mbf E\left[\mf m^{\bs\ze}_{\bs t}(A^{\bs a,\bs a}_{\bs t})\mr e^{-\int_0^{|\bs t|}V(A^{\bs a,\bs a}_{\bs t}(s))\d s+\mf s^{\bs\eps}_{\bs t}(A^{\bs a,\bs a}_{\bs t})+\mf B_{|\bs t|}(A^{\bs a,\bs a}_{\bs t})}\right]\d\mu^n(\bs a)<\infty.
\end{multline}

Recall that we use $\bs x=(x_1,\ldots,x_n)$ and $\bs i=(i_1,\ldots,i_n)$ to denote the components $a_k=(i_k,x_k)$ of $\bs a$.
Thus, if we combine \eqref{Equation: UI Boundary Bound}, \eqref{Equation: Fubini V bound}, and
\eqref{Equation: s_t bound 3} with the fact that $\Pi_A(\bs t;\bs a,\bs a)=\Pi_Z(\bs t;\bs x,\bs x)\Pi_U(\bs t;\bs i,\bs i)$,
then \eqref{Equation: Trace Moments UI 1}
reduces to
\begin{multline}
\label{Equation: Trace Moments UI 2}
\int_{\mc A^n}\Pi_Z(\bs t;\bs x,\bs x)\sup_{\bs\ze\in(0,\infty)^n}\mbf E\Bigg[\Pi_U(\bs t;\bs i,\bs i)\mbf E_U\left[\mf m^{\bs\ze}_{\bs t}(A^{\bs a,\bs a}_{\bs t})\right]\mr e^{-\int_0^{|\bs t|} \ka|Z^{\bs x,\bs x}_{\bs t}(s)|^{\bar{\mf a}}\d s+\bar{\mf B_{|\bs t|}}(Z^{\bs x,\bs x}_{\bs t})}\\
\cdot\exp\left(\frac{n\si^2}{2}\sum_{k=1}^n\|L_{[t_0+\cdots+t_{k-1},t_0+\cdots+t_k)}(Z^{\bs x,\bs x}_{\bs t})\|_2^2\right)\Bigg]\d\mu^n(\bs i,\bs x)<\infty.
\end{multline}
Thanks to \eqref{Equation: Combinatorial constant bound}, \eqref{Equation: Vector Valued Self-Intersection 1},
and the fact that $\bar\rho$ (and thus $\rho_{\ze,\eta}$ for any choice of $\ze,\eta>0$) is nonnegative, we have that 
$\mf m^{\bs\ze}_{\bs t}(A)\leq\mr e^{(r-1)|\bs t|}\bar{\mf m}^{\bs\ze}_{\bs t}(A),$
where
\begin{align}
\label{Equation: Vector Valued Self-Intersection 1 Without C}
\bar{\mf m}^{\bs\ze}_{\bs t}(A)=\mbf 1_{\{N(|\bs t|)\text{ is even}\}}\sum_{p\in\mc P_{N(|\bs t|)}}
\prod_{\{\ell_1,\ell_2\}\in p}\upsilon^2\rho_{\ze(\tau_{\ell_1}),\ze(\tau_{\ell_2})}\big(Z(\tau_{\ell_1})-Z(\tau_{\ell_2})\big),
\end{align}
Calculating the expectation of this quantity with respect to $U$ (i.e., the jump process $N$) is easier without the endpoint conditionings on
$U^{\bs i,\bs i}_{\bs t}$; for this purpose, note that
\begin{multline}
\label{Equation: Trace Moments UI 3}
\Pi_U(\bs t;\bs i,\bs i)\mbf E_U\left[\mf m^{\bs\ze}_{\bs t}(A^{\bs a,\bs a}_{\bs t})\right]\leq
\mr e^{(r-1)|\bs t|}\Pi_U(\bs t;\bs i,\bs i)\mbf E_U\left[\bar{\mf m}^{\bs\ze}_{\bs t}(U^{\bs i,\bs i}_{\bs t},Z^{\bs x,\bs x}_{\bs t})\right]\\
=\mr e^{(r-1)|\bs t|}\mbf E_U\left[\bar{\mf m}^{\bs\ze}_{\bs t}(U^{\bs i}_{\bs t},Z^{\bs x,\bs x}_{\bs t})\mbf 1_{\{\forall k:~U^{\bs i}_{\bs t}(s)\to i_k\text{ as }s\to(t_0+\cdots+t_k)^-\}}\right]
\leq\mr e^{(r-1)|\bs t|}\mbf E_U\left[\bar{\mf m}^{\bs\ze}_{\bs t}(U^{\bs i}_{\bs t},Z^{\bs x,\bs x}_{\bs t})\right].
\end{multline}
Once we remove the endpoint conditioning on $\bs i$, the jump times in $U^{\bs i}_{\bs t}$ are given by a Poisson process
with rate $(r-1)$. In particular, conditional on $N(|\bs t|)=m$ for some $m\in\mbb N$, the times
$\tau_1\leq\tau_2\leq\cdots\leq\tau_{m}$ are the order statistics of $m$ i.i.d. uniform variables on $[0,|\bs t|)$.
Thus, by applying the law of total probability to
\eqref{Equation: Vector Valued Self-Intersection 1 Without C} (conditioning on $N(|\bs t|)=2m$ and then on the
values of $\tau_1,\ldots,\tau_{2m}$), we can directly calculate
\begin{multline}
\label{Equation: Trace Moments UI 4}\mbf E_{U}\bigg[\bar{\mf m}_{|\bs t|}^{\bs\ze}(U^{\bs i}_{\bs t},Z^{\bs x,\bs x}_{\bs t})\bigg]=\sum_{m=0}^\infty
\int_{[0,|\bs t|)_<^{2m}}\sum_{p\in\mc P_{2m}}\Bigg\{\cdots\\
\prod_{\{\ell_1,\ell_2\}\in p}\upsilon^2\rho_{\ze(s_{\ell_1}),\ze(s_{\ell_2})}\big(Z^{\bs x,\bs x}_{\bs t}(s_{\ell_1})-Z^{\bs x,\bs x}_{\bs t}(s_{\ell_2})\big)\Bigg\}
\,\frac{(2m)!}{|\bs t|^{2m}}\d\bs s\cdot\frac{\big((r-1)|\bs t|\big)^{2m}}{(2m)!}\mr e^{-(r-1)|\bs t|},
\end{multline}
where $[0,|\bs t|)_<^{2m}$ denotes the simplex of components $\bs s=(s_1,\ldots,s_{2m})$
in the interval $[0,|\bs t|)^{2m}$
such that $s_1<s_2<\cdots<s_{2m}$ (which has area $\frac{|\bs t|^{2m}}{(2m)!}$).
Since the function
\[\bs s\mapsto\sum_{p\in\mc P_{2m}}
\prod_{\{\ell_1,\ell_2\}\in p}\upsilon^2\rho_{\ze(s_{\ell_1}),\ze(s_{\ell_2})}\big(Z^{\bs x,\bs x}_{\bs t}(s_{\ell_1})-Z^{\bs x,\bs x}_{\bs t}(s_{\ell_2})\big)\]
is symmetric on $[0,|\bs t|)^{2m}$, we can extend the product of the integral in \eqref{Equation: Trace Moments UI 4}
to the whole interval---up to removing the term $(2m)!$---whereby
\begin{multline}
\label{Equation: Trace Moments UI 5}
\mbf E_{U}\bigg[\bar{\mf m}_{|\bs t|}^{\bs\ze}(U^{\bs i}_{\bs t},Z^{\bs x,\bs x}_{\bs t})\bigg]=\sum_{m=0}^\infty
\int_{[0,|\bs t|)^{2m}}\sum_{p\in\mc P_{2m}}\Bigg\{\cdots\\
\prod_{\{\ell_1,\ell_2\}\in p}\upsilon^2\rho_{\ze(s_{\ell_1}),\ze(s_{\ell_2})}\big(Z^{\bs x,\bs x}_{\bs t}(s_{\ell_1})-Z^{\bs x,\bs x}_{\bs t}(s_{\ell_2})\big)\Bigg\}
\,\frac{1}{|\bs t|^{2m}}\d\bs s\cdot\frac{\big((r-1)|\bs t|\big)^{2m}}{(2m)!}\mr e^{-(r-1)|\bs t|}.
\end{multline}
For any fixed $p\in\mc P_{2m}$, it follows from Young's convolution inequality that
\begin{align}
\nonumber
&\int_{[0,|\bs t|)^{2m}}\prod_{\{\ell_1,\ell_2\}\in p}\upsilon^2\rho_{\ze(s_{\ell_1}),\ze(s_{\ell_2})}\big(Z^{\bs x,\bs x}_{\bs t}(s_{\ell_1})-Z^{\bs x,\bs x}_{\bs t}(s_{\ell_2})\big)\d\bs s\\
\nonumber
&=\prod_{\{\ell_1,\ell_2\}\in p}\upsilon^2\int_{[0,|\bs t|)^2}\rho_{\ze(s_{\ell_1}),\ze(s_{\ell_2})}\big(Z^{\bs x,\bs x}_{\bs t}(s_1)-Z^{\bs x,\bs x}_{\bs t}(s_2)\big)\d\bs s\\
\nonumber
&=\prod_{\{\ell_1,\ell_2\}\in p}\upsilon^2\int_{I^2}L^y_{|\bs t|}(Z^{\bs x,\bs x}_{\bs t})\rho_{\ze(s_{\ell_1}),\ze(s_{\ell_2})}(y-z)L^z_{|\bs t|}(Z^{\bs x,\bs x}_{\bs t})\d y\dd z\\
\label{Equation: Young's for zeta}
&\leq\upsilon^2\|L_{|\bs t|}(Z^{\bs x,\bs x}_{\bs t})\|_2^{2m}.
\end{align}
With this in hand, if we recall
that $|\mc P_{2m}|=(2m-1)!!$ and that $\frac{(2m-1)!!}{(2m)!}=\frac{1}{2^mm!}$, then we obtain from \eqref{Equation: Trace Moments UI 5} that
\begin{align}
\nonumber
&\sup_{\bs\ze\in(0,\infty)^n}\mbf E_{U}\bigg[\bar{\mf m}_{|\bs t|}^{\bs\ze}(U^{\bs i}_{\bs t},Z^{\bs x,\bs x}_{\bs t})\bigg]
\leq\sum_{m=0}^\infty\frac{1}{m!}\left(\frac{(r-1)^2\upsilon^2\|L_{|\bs t|}(Z^{\bs x,\bs x}_{\bs t})\|_2^2}{2}\right)^m\mr e^{-(r-1)|\bs t|}\\
\nonumber
&=\mr e^{(r-1)^2\upsilon^2\|L_{|\bs t|}(Z^{\bs x,\bs x}_{\bs t})\|_2^2/2-(r-1)|\bs t|}\\
\label{Equation: Trace Moments UI 6}
&\leq\exp\left(\frac{n(r-1)^2\upsilon^2}{2}\sum_{k=1}^n\|L_{[t_0+\cdots+t_{k-1},t_0+\cdots+t_k)}(Z^{\bs x,\bs x}_{\bs t})\|_2^2-(r-1)|\bs t|\right),
\end{align}
where the last line follows from an application of Jensen's inequality.
Therefore, if we combine \eqref{Equation: Trace Moments UI 6} with \eqref{Equation: Trace Moments UI 3},
then we get that \eqref{Equation: Trace Moments UI 2}---and thus the proof of Proposition \ref{Proposition: Uniform Integrability}---reduces to
\begin{multline}
\label{Equation: Trace Moments UI 7}
\int_{I^n}\Pi_Z(\bs t;\bs x,\bs x)\mbf E\Bigg[\mr e^{-\int_0^{|\bs t|} \ka|Z^{\bs x,\bs x}_{\bs t}(s)|^{\bar{\mf a}}\d s+\bar{\mf B_{|\bs t|}}(Z^{\bs x,\bs x}_{\bs t})}\\
\cdot\exp\left(\frac{n\si^2+n(r-1)^2\upsilon^2}{2}\sum_{k=1}^n\|L_{[t_0+\cdots+t_{k-1},t_0+\cdots+t_k)}(Z^{\bs x,\bs x}_{\bs t})\|_2^2\right)\Bigg]\d\bs x<\infty.
\end{multline}
This follows from the same argument used to prove \eqref{Equation: Fubini's Theorem 2}.

\subsection{Proof of Proposition \ref{Proposition: Trace Moments Limit 1}}
\label{Section: Main Proposition 3}

By Fubini's theorem, we can write
\begin{multline}
\label{Equation: Trace Moment Limit 1 Fundamental Formula 1}
\mbf E\left[\left(\int_{\mc A} \hat K^{\eps,\ze}(t;a,a)-\hat K^{0,\ze}(t;a,a)\d\mu(a)\right)^2\right]
=\int_{\mc A}\mbf E\left[\hat K^{\eps,\ze}(t;a,a)\hat K^{\eps,\ze}(t;b,b)\right]\\
-2\mbf E\left[\hat K^{\eps,\ze}(t;a,a)\hat K^{0,\ze}(t;b,b)\right]+\mbf E\left[\hat K^{0,\ze}(t;a,a)\hat K^{0,\ze}(t;b,b)\right]\d\mu^2(a,b).
\end{multline}
Thus, in order to show that \eqref{Equation: Trace Moment Limit 1 Fundamental Formula 1} vanishes
as $\eps\to0$, it suffices to prove the following:
\begin{multline*}
\lim_{(\eps_1,\eps_2)\to(0,0)}\int_{\mc A^2}\mbf E\left[\hat K^{\eps_1,\ze}(t;a_1,a_1)\hat K^{\eps_2,\ze}(t;a_2,a_2)\right]\d\mu^2(\bs a)\\
=\int_{\mc A}\mbf E\left[\hat K^{0,\ze}(t;a_1,a_1)\hat K^{0,\ze}(t;a_2,a_2)\right]\d\mu^2(\bs a).
\end{multline*}
If we let $\bs\eps=(\eps_1,\eps_2)$, $\bs\ze=(\ze,\ze)$, and $\bs t=(t,t)$,
by Proposition \ref{Proposition: Smooth Trace Moments} this translates to
\begin{multline}
\label{Equation: Trace Moment Limit 1 Fundamental Formula 2}
\lim_{\bs\eps\to(0,0)}\int_{\mc A^2}\Pi_A(\bs t;\bs a,\bs a)\mbf E\left[\mf m^{\bs\ze}_{\bs t}(A^{\bs a,\bs a}_{\bs t})\mr e^{-\int_0^{|\bs t|}V(A^{\bs a,\bs a}_{\bs t}(s))\d s+\frac12\mf s^{\bs\eps}_{\bs t}(A^{\bs a,\bs a}_{\bs t})+\mf B_{|\bs t|}(A^{\bs a,\bs a}_{\bs t})}\right]\d\mu^2(\bs a)\\
=\int_{\mc A^2}\Pi_A(\bs t;\bs a,\bs a)\mbf E\left[\mf m^{\bs\ze}_{\bs t}(A^{\bs a,\bs a}_{\bs t})\mr e^{-\int_0^{|\bs t|}V(A^{\bs a,\bs a}_{\bs t}(s))\d s+\frac12\mf s^{(0,0)}_{\bs t}(A^{\bs a,\bs a}_{\bs t})+\mf B_{|\bs t|}(A^{\bs a,\bs a}_{\bs t})}\right]\d\mu^2(\bs a).
\end{multline}
The only term that depends on $\bs\eps$ above is
\begin{align}
\label{Equation: Vector Valued Self-Intersection for Trace Moments Limit 1}
\mf s^{\bs\eps}_{\bs t}(A^{\bs a,\bs a}_{\bs t})=\left\|L_{t}(A^{\bs a,\bs a}_{\bs t})\star\bar\rho_{\eps_1}+L_{[t,2t)}(A^{\bs a,\bs a}_{\bs t})\star\bar\rho_{\eps_2}\right\|_\mu^2.
\end{align}
Thus, the proof of \eqref{Equation: Trace Moment Limit 1 Fundamental Formula 2} can be done
in three steps. That is, first we examine the almost-sure limit of \eqref{Equation: Vector Valued Self-Intersection for Trace Moments Limit 1},
and then ensure that this limit persists at the level of the expectation and the integral over $\mc A^2$:

\subsubsection{Step 1. Convergence Inside the Expectations}

For any $f\in L^2(\mc A,\mbb F)$,
one has $\|f\star\bar\rho_\eps-f\|_\mu\to0$ as $\eps\to0$ (e.g., \cite[Lemma 2.2.2]{ChenBook}).
Since the local time process is almost-surely continuous and compactly supported (see Definition \ref{Definition: Regular Local Time}),
this means by \eqref{Equation: Vector Valued Self-Intersection for Trace Moments Limit 1}
that $\mf s^{\bs\eps}_{\bs t}(A^{\bs a,\bs a}_{\bs t})\to\mf s^{(0,0)}_{\bs t}(A^{\bs a,\bs a}_{\bs t})$ as $\bs\eps\to(0,0)$ almost surely; hence
\begin{multline}
\label{Equation: Trace Moment Limit 1 Fundamental Formula 3}
\lim_{\bs\eps\to(0,0)}\mf m^{\bs\ze}_{\bs t}(A^{\bs a,\bs a}_{\bs t})\mr e^{-\int_0^{|\bs t|}V(A^{\bs a,\bs a}_{\bs t}(s))\d s+\frac12\mf s^{\bs\eps}_{\bs t}(A^{\bs a,\bs a}_{\bs t})+\mf B_{|\bs t|}(A^{\bs a,\bs a}_{\bs t})}\\
=\mf m^{\bs\ze}_{\bs t}(A^{\bs a,\bs a}_{\bs t})\mr e^{-\int_0^{|\bs t|}V(A^{\bs a,\bs a}_{\bs t}(s))\d s+\frac12\mf s^{(0,0)}_{\bs t}(A^{\bs a,\bs a}_{\bs t})+\mf B_{|\bs t|}(A^{\bs a,\bs a}_{\bs t})}
\end{multline}
almost surely.

\subsubsection{Step 2. Convergence Inside the Integrals}

We now aim to prove
\begin{multline}
\label{Equation: Trace Moment Limit 1 Fundamental Formula 4}
\lim_{\bs\eps\to(0,0)}\mbf E\left[\mf m^{\bs\ze}_{\bs t}(A^{\bs a,\bs a}_{\bs t})\mr e^{-\int_0^{|\bs t|}V(A^{\bs a,\bs a}_{\bs t}(s))\d s+\frac12\mf s^{\bs\eps}_{\bs t}(A^{\bs a,\bs a}_{\bs t})+\mf B_{|\bs t|}(A^{\bs a,\bs a}_{\bs t})}\right]\\
=\mbf E\left[\mf m^{\bs\ze}_{\bs t}(A^{\bs a,\bs a}_{\bs t})\mr e^{-\int_0^{|\bs t|}V(A^{\bs a,\bs a}_{\bs t}(s))\d s+\frac12\mf s^{(0,0)}_{\bs t}(A^{\bs a,\bs a}_{\bs t})+\mf B_{|\bs t|}(A^{\bs a,\bs a}_{\bs t})}\right].
\end{multline}
By applying the dominated convergence theorem to \eqref{Equation: Trace Moment Limit 1 Fundamental Formula 3},
it suffices to dominate the left-hand side of \eqref{Equation: Trace Moment Limit 1 Fundamental Formula 4}
with an integrable random variable.
For this purpose, we invoke the fact that $V\geq-\nu$ for some constant $\nu>0$
(by Assumption \ref{Assumption: Potential}) together with \eqref{Equation: UI Boundary Bound},
\eqref{Equation: s_t bound 3}, and \eqref{Equation: Vector Valued Self-Intersection 1 Without C} to get
\begin{multline*}
\sup_{\bs\eps\in[0,\infty)^2}\mf m^{\bs\ze}_{\bs t}(A^{\bs a,\bs a}_{\bs t})\mr e^{-\int_0^{|\bs t|}V(A^{\bs a,\bs a}_{\bs t}(s))\d s+\frac12\mf s^{\bs\eps}_{\bs t}(A^{\bs a,\bs a}_{\bs t})+\mf B_{|\bs t|}(A^{\bs a,\bs a}_{\bs t})}\\
\leq\bar{\mf m}^{\bs\ze}_{\bs t}(A^{\bs a,\bs a}_{\bs t})\mr e^{(\nu+r-1)|\bs t|+\si^2(\|L_t(Z^{\bs x,\bs x}_{\bs t})\|_2^2+\|L_{[t,2t)}(Z^{\bs x,\bs x}_{\bs t})\|_2^2)+\bar{\mf B}_{|\bs t|}(Z^{\bs x,\bs x}_{\bs t})}.
\end{multline*}
In order to prove that the expectation of the dominating random variable on the second line of the above display is finite,
we first apply
\eqref{Equation: Trace Moments UI 3} and \eqref{Equation: Trace Moments UI 6} to
control the expectation of $\bar{\mf m}^{\bs\ze}_{\bs t}(A^{\bs a,\bs a}_{\bs t})$ with respect to $U$,
and then apply the same argument used to prove \eqref{Equation: Fubini's Theorem 2}
to deal with the remaining terms involving $Z$.

\subsubsection{Step 3. Convergence of the Integrals}

Thanks to
\eqref{Equation: Trace Moment Limit 1 Fundamental Formula 4} and
another application of the dominated convergence theorem,
the proof of \eqref{Equation: Trace Moment Limit 1 Fundamental Formula 2}
(and therefore Proposition \ref{Proposition: Trace Moments Limit 1})
is reduced to finding a $\mu^2$-integrable function that dominates
the integrands on the left-hand side of \eqref{Equation: Trace Moment Limit 1 Fundamental Formula 2}
for all $\eps_1,\eps_2\geq0$.
This follows from \eqref{Equation: Trace Moments UI 1}.

\subsection{Proof of Proposition \ref{Proposition: Trace Moments Limit}}
\label{Section: Main Proposition 4}

By Fubini's theorem, we can write the limit in \eqref{Equation: Trace Moments Limit 2.2} as follows:
\begin{align}
\label{Equation: Trace Moments Limit 2.3}
\lim_{\ze_1,\ze_2\to0}\sum_{1\leq u,v\leq 2}(-1)^{u+v}\mbf E\left[\int_{\mc A} \hat K^{0,\ze_u}(t;a,a)\d\mu(a)\int_{\mc A} \hat K^{0,\ze_v}(t;a,a)\d\mu(a)\right].
\end{align}
since \eqref{Equation: Trace Moments Limit 2.1} in the case $n=2$
implies that the summands in \eqref{Equation: Trace Moments Limit 2.3} cancel out,
the proof of Proposition \ref{Proposition: Trace Moments Limit} is reduced to establishing \eqref{Equation: Trace Moments Limit 2.1}.

By \eqref{Equation: Vector Valued Self-Intersection 2}, we note that we can simplify
\[\mf s^{(0,\ldots,0)}_{\bs t}(A)=\frac{\si^2}{2}\|L_{|\bs t|}(A)\|_\mu^2.\]
Thus, by Proposition \ref{Proposition: Smooth Trace Moments}, for every $\bs\ze=(\ze_1,\ldots,\ze_n)\in(0,\infty)^n$, we have
\begin{multline}
\label{Equation: Trace Moment Limit 1}
\mbf E\left[\prod_{k=1}^n\int_{\mc A}\hat K^{0,\ze_k}(t_k;a,a)\d\mu(a)\right]\\
=\int_{\mc A^n}\Pi_A(\bs t;\bs a,\bs a)\mbf E\left[\mf m^{\bs\ze}_{\bs t}(A^{\bs a,\bs a}_{\bs t})\mr e^{-\int_0^{|\bs t|}V(A^{\bs a,\bs a}_{\bs t}(s))\d s+\frac{\si^2}{2}\|L_{|\bs t|}(A^{\bs a,\bs a}_{\bs t})\|_\mu^2+\mf B_{|\bs t|}(A^{\bs a,\bs a}_{\bs t})}\right]\d\mu^n(\bs a).
\end{multline}
In order to recover the right-hand side of \eqref{Equation: Trace Moment Formula} from the limit
of this expression as $\bs\ze\to0$,
our aim is to write the right-hand side of \eqref{Equation: Trace Moment Limit 1}
as nested sums/integrals involving all the possible realizations of $U$ (i.e., the jumps and
jump times). That said, the fact that we are conditioning on the endpoints of $U$
in $A^{\bs a,\bs a}_{\bs t}$ changes the distribution of $N$'s jump times.
In order to get around this difficulty and simplify the analysis, we
recall the following: If we use $\bs x=(x_1,\ldots,x_n)$ and $\bs i=(i_1,\ldots,i_n)$ to denote the components $a_k=(i_k,x_k)$ of $\bs a$,
then we can factor $\Pi_A(\bs t;\bs a,\bs a)=\Pi_Z(\bs t;\bs x,\bs x)\Pi_U(\bs t;\bs i,\bs i)$, and thus rewrite the right-hand side of
\eqref{Equation: Trace Moment Limit 1} by absorbing the conditioning into an indicator as follows:
\begin{multline}
\label{Equation: Trace Moment Limit 2}
\int_{\mc A^n}\Pi_Z(\bs t;\bs x,\bs x)\mbf E\Bigg[\mf m^{\bs\ze}_{\bs t}(U^{\bs i}_{\bs t},Z^{\bs x,\bs x}_{\bs t})\mr e^{-\int_0^{|\bs t|}V(U^{\bs i}_{\bs t}(s),Z^{\bs x,\bs x}_{\bs t}(s))\d s+\frac{\si^2}{2}\|L_{|\bs t|}(U^{\bs i}_{\bs t},Z^{\bs x,\bs x}_{\bs t})\|_\mu^2+\mf B_{|\bs t|}(U^{\bs i}_{\bs t},Z^{\bs x,\bs x}_{\bs t})}\\
\cdot \mbf 1_{\{\forall k:~U^{\bs i}_{\bs t}(t_1+\cdots+t_{k-1}+s)\to i_k\text{ as }s\to t_k^-\}}\Bigg]\d\mu^n(\bs i,\bs x).
\end{multline}

In order to understand the limiting behavior of this object as $\bs\ze\to0$, we now point out that drawing a realization of $U^{\bs i}_{\bs t}$ requires
the following steps:
\begin{enumerate}[$\bullet$]
\item Sample $N(|\bs t|)$ according to a Poisson variable with parameter $(r-1)|\bs t|$:
\[\mbf P\big[N(|\bs t|)=m\big]=\frac{\big((r-1)|\bs t|\big)^m}{m!}\mr e^{-(r-1)|\bs t|},\qquad m=0,1,2,3,\ldots\]
\item Given $N(|\bs t|)=m$, sample $(\tau_1,\ldots,\tau_{m})$ by ordering i.i.d. uniform
variables on $[0,|\bs t|)$. That is, using the symbol $\fo{s_i}$ from
Notation \ref{Notation: Arrow means Ordered},
\[\mbf P\big[(\tau_1,\ldots,\tau_m)\in\dd(\fo{s_1},\ldots,\fo{s_m})\big]
=\frac1{|\bs t|^m}\dd\bs s,\qquad \bs s=(s_1,\ldots,s_m)\in[0,|\bs t|)^m.\]
\item
Given $N(|\bs t|)=m$ and $\tau_i=\fo{s_i}$ for $\bs s=[0,|\bs t|)^m$,
we let $\bs z(|\bs t|,m,\bs s)=(z_1,\ldots,z_n)$ denote the partition of
$m$ such that
\begin{align}
\label{Equation: Tuples of Jumps 0}
z_k=\big|\big\{1\leq\ell\leq m:\fo{s_\ell}\in[t_0+\cdots+t_{k-1},t_0+\cdots+t_k)\big\}\big|,\qquad t_0=0,~1\leq k\leq n.
\end{align}
That is, $z_k$ is the number of jumps $\tau_\ell=\fo{s_\ell}$ that land inside the time interval of the
$k^{\mr{th}}$ segment of the concatenated path $U^{\bs i}_{\bs t}$.
Finally, let $\mc W^{\bs i}_{\bs z(|\bs t|,m,\bs s)}$ denote the set of all tuples of the form
\begin{align}
\label{Equation: Tuples of Jumps}
\bs u=(u^{(1)}_0,\ldots,u^{(1)}_{z_1},u^{(2)}_0,\ldots,u^{(2)}_{z_2},\ldots,u^{(n)}_0,\ldots,u^{(n)}_{z_n}),
\end{align}
where for every $1\leq k\leq n$, the sequence
\[u^{(k)}_0,\ldots,u^{(k)}_{z_k}\]
form a random walk path on the complete graph with vertices $\{1,\ldots,r\}$
(without loops) with $z_k$ jumps that starts at $u^{(k)}_0=i_k$.
We note that
\[|\mc W^{\bs i}_{\bs z(|\bs t|,m,\bs s)}|=\prod_{k=1}^n(r-1)^{z_k}=(r-1)^m,\]
and that given $N(|\bs t|)=m$ and $\tau_i=\fo{s_i}$, the jumps $(J_k:1\leq k\leq m)$ in the concatenated path $U^{\bs i}_{\bs t}$
are sampled uniformly at random within $\mc W^{\bs i}_{\bs z(|\bs t|,m,\bs s)}$.
\end{enumerate}
Based on all of this, given some $m\geq0$, $\bs s\in[0,|\bs t|)^m$,
and $\bs u\in\mc W^{\bs i}_{\bs z(|\bs t|,m,\bs s)}$, we let
$\mc U_{(\bs s,\bs u)}$ denote the realization of $U^{\bs i}_{\bs t}$ with $m$ jumps,
whose jump times are given by $\fo{s_i}$, and whose jumps are
given by $\bs u$. Moreover, to alleviate notation, we henceforth denote
\begin{multline}
\label{Equation: Trace Moment Limit Functional F}
F(U^{\bs i}_{\bs t},Z^{\bs x,\bs x}_{\bs t})=\mr e^{-\int_0^{|\bs t|}V(U^{\bs i}_{\bs t}(s),Z^{\bs x,\bs x}_{\bs t}(s))\d s+\frac{\si^2}{2}\|L_{|\bs t|}(U^{\bs i}_{\bs t},Z^{\bs x,\bs x}_{\bs t})\|_\mu^2+\mf B_{|\bs t|}(U^{\bs i}_{\bs t},Z^{\bs x,\bs x}_{\bs t})}\\
\cdot \mbf 1_{\{\forall k:~U^{\bs i}_{\bs t}(t_1+\cdots+t_{k-1}+s)\to i_k\text{ as }s\to t_k^-\}}.
\end{multline}
By the law of total expectation
(conditioning with respect to $N(|\bs t|)$, $\tau_k$, and $J_k$)
and the fact that $U$ and $Z$ are independent,
we may write \eqref{Equation: Trace Moment Limit 2} as follows:
\begin{multline}
\label{Equation: Trace Moment Limit 3}
\int_{\mc A^n}\Pi_Z(\bs t;\bs x,\bs x)\mbf E\Bigg[\sum_{m=0}^\infty
\Bigg(\frac{1}{|\bs t|^{2m}}\int_{[0,|\bs t|)^{2m}}
\frac1{(r-1)^{2m}}
\sum_{\bs u\in\mc W^{\bs i}_{\bs z(|\bs t|,2m,\bs s)}}\Bigg\{
\mf m^{\bs\ze}_{\bs t}\big(\mc U_{(\bs s,\bs u)},Z^{\bs x,\bs x}_{\bs t}\big)\\
\cdot F\big(\mc U_{(\bs s,\bs u)},Z^{\bs x,\bs x}_{\bs t}\big)\Bigg\}\d\bs s\Bigg)
\frac{\mr e^{-(r-1)|\bs t|}\big((r-1)|\bs t|\big)^{2m}}{2m!}\Bigg]\d\mu^n(\bs i,\bs x);
\end{multline}
we note that we have only included the case where $N(|\bs t|)$ is an even number $2m$
in the above because of the indicator function in $\mf m^{\bs\ze}_{\bs t}$ (i.e., \eqref{Equation: Vector Valued Self-Intersection 1}).

Given a fixed choice of $m$, $\bs s$, $\bs u$, and $\mc U_{(\bs s,\bs u)}$ in \eqref{Equation: Trace Moment Limit 3},
it follows from \eqref{Equation: Vector Valued Self-Intersection 1} that
\[\mf m^{\bs\ze}_{\bs t}\big(\mc U_{(\bs s,\bs u)},Z\big)=\mr e^{(r-1)|\bs t|}\sum_{p\in\mc P_{2m}}
\mf C_{|\bs t|}\big(p,\mc U_{(\bs s,\bs u)}\big)\prod_{\{\ell_1,\ell_2\}\in p}\upsilon^2\rho_{\ze(\fo{s_{\ell_1}}),\ze(\fo{s_{\ell_2}})}\big(Z(\fo{s_{\ell_1}})-Z(\fo{s_{\ell_2}})\big).\]
As explained in \eqref{Equation: Difficulty when r>1 2}--\eqref{Equation: Probability Zero Events}, we aim to interpret
the limit of $\rho_{\ze,\eta}$ as $\ze,\eta\to0$ as the "density function" of $Z$'s self-intersection
probability measures. That said, the fact that we have the ordered points $\fo{s_{\ell_i}}$ in the above
display instead of $s_{\ell_i}$ is problematic for this purpose. We thus introduce a change of variables
to circumvent this:
Given any permutation $\pi$ of the set $\{1,\ldots,2m\}$
and $p\in\mc P_{2m}$,
we let $\pi(p)\in\mc P_{2m}$ denote the perfect matching such that
$\{\ell_1,\ell_2\}\in p$ if and only if $\{\pi(\ell_1),\pi(\ell_2)\}\in\pi(p)$.
Then, given $\bs s=(s_1,\ldots,s_{2m})$, we let $\pi_{\bs s}$ denote the specific permutation
of $\{1,\ldots,2m\}$ such that for any $1\leq \ell\leq 2m$, one has
$\fo{s_{\pi_{\bs s}(\ell)}}=s_\ell$, or equivalently
$\fo{s_\ell}=s_{\pi^{-1}_{\bs s}(\ell)}$.
Then, by the change of variables $q=\pi_{\bs s}^{-1}(p)$, we can rewrite 
\[\mf m^{\bs\ze}_{\bs t}\big(\mc U_{(\bs s,\bs u)},Z\big)=\mr e^{(r-1)|\bs t|}\sum_{q\in\mc P_{2m}}
\mf C_{|\bs t|}\big(\pi_{\bs s}(q),\mc U_{(\bs s,\bs u)}\big)\prod_{\{\ell_1,\ell_2\}\in q}\upsilon^2\rho_{\ze(s_{\ell_1}),\ze(s_{\ell_2})}\big(Z(s_{\ell_1})-Z(s_{\ell_2})\big).\]
At this point, if we insert this expression into \eqref{Equation: Trace Moment Limit 3}
(and take this opportunity to cancel out both appearances of $|\bs t|^{2m}$ and $\mr e^{(r-1)|\bs t|}$ therein,
as well as move $\upsilon^{2m}$ in front of the constant $\mf C_{|\bs t|}\big(\pi_{\bs s}(q),\mc U_{(\bs s,\bs u)}\big)$),
then we obtain the following expression for the mixed moment \eqref{Equation: Trace Moment Limit 1}:
\begin{multline}
\label{Equation: Trace Moment Limit 4}
\int_{\mc A^n}\Pi_Z(\bs t;\bs x,\bs x)\mbf E\Bigg[\sum_{m=0}^\infty
\Bigg(\sum_{q\in\mc P_{2m}}\int_{[0,|\bs t|)^{2m}}
\prod_{\{\ell_1,\ell_2\}\in q}\rho_{\ze(s_{\ell_1}),\ze(s_{\ell_2})}\big(Z(s_{\ell_1})-Z(s_{\ell_2})\big)\\
\cdot\frac1{(r-1)^{2m}}
\sum_{\bs u\in\mc W^{\bs i}_{\bs z(|\bs t|,2m,\bs s)}}
\upsilon^{2m}\mf C_{|\bs t|}\big(\pi_{\bs s}(q),\mc U_{(\bs s,\bs u)}\big)
F\big(\mc U_{(\bs s,\bs u)},Z^{\bs x,\bs x}_{\bs t}\big)\d\bs s\Bigg)
\frac{(r-1)^{2m}}{2m!}\Bigg]\d\mu^n(\bs i,\bs x).
\end{multline}

We are now in a position to establish the convergence of $\rho_{\ze,\eta}$ to the self-intersection measures.
For this purpose, we begin with the following definition:

\begin{definition}
\label{Definition: Approximate SI Local Time}
Let
\[\msf L^{\bs\ze}_{\bs t,2m,q,Z}=\int_{[0,|\bs t|)^{2m}}\prod_{\{\ell_1,\ell_2\}\in q}\rho_{\ze(s_{\ell_1}),\ze(s_{\ell_2})}\big(Z(s_{\ell_1})-Z(s_{\ell_2})\big)\d\bs s,\]
and let $\mr{si}^{\bs\ze}_{\bs t,2m,q,Z}$ be the random probability measure on $[0,|\bs t|)^{2m}$ with the density function
\[\bs s\mapsto\frac1{\msf L^{\bs\ze}_{\bs t,2m,q,Z}}\prod_{\{\ell_1,\ell_2\}\in q}\rho_{\ze(s_{\ell_1}),\ze(s_{\ell_2})}\big(Z(s_{\ell_1})-Z(s_{\ell_2})\big),\qquad\bs s\in[0,|\bs t|)^{2m}.\]
\end{definition}

If we sample the random vector
$\bs T^{\bs\ze}=(T^{\bs\ze}_1,\ldots,T^{\bs\ze}_{2m})$
according to $\mr{si}^{\bs\ze}_{\bs t,2m,q,Z^{\bs x,\bs x}_{\bs t}}$,
then we note that the $\dd\bs s$ integral inside
\eqref{Equation: Trace Moment Limit 4} can be written as the expectation
\begin{align}
\label{Equation: Trace Moment Limit 5}
\msf L^{\bs\ze}_{\bs t,2m,q,Z}\mbf E_{\bs T^{\bs\ze}}\left[\frac1{(r-1)^{2m}}
\sum_{\bs u\in\mc W^{\bs i}_{\bs z(|\bs t|,2m,\bs T^{\bs\ze})}}
\upsilon^{2m}\mf C_{|\bs t|}\big(\pi_{\bs T^{\bs\ze}}(q),\mc U_{(\bs T^{\bs\ze},\bs u)}\big)
F\big(\mc U_{(\bs T^{\bs\ze},\bs u)},Z^{\bs x,\bs x}_{\bs t}\big)\right].
\end{align}
Our claim is that this expression converges to
\begin{align}
\label{Equation: Trace Moment Limit 6}
\|L_{|\bs t|}(Z^{\bs x,\bs x}_{\bs t})\|_2^{2m}\mbf E_{\bs T}\left[\frac{1}{(r-1)^{2m}}
\sum_{\bs u\in\mc W^{\bs i}_{\bs z(|\bs t|,2m,\bs T)}}
\upsilon^{2m}\mf C_{|\bs t|}\big(\pi_{\bs T}(q),\mc U_{(\bs T,\bs u)}\big)
F\big(\mc U_{(\bs T,\bs u)},Z^{\bs x,\bs x}_{\bs t}\big)\right],
\end{align}
where $\bs T=(T_1,\ldots,T_{2m})$
is now sampled according to $\mr{si}_{|\bs t|,2m,q,Z^{\bs x,\bs x}_{\bs t}}$
(recall the definition of the latter in \eqref{Equation: SI Measures}).
If this can be established, then we get that the limit of \eqref{Equation: Trace Moment Limit 4}
as $\bs\ze\to0$ can be written as follows (in order to obtain the expression below,
we also use the facts that $|\mc P_{2m}|=(2m-1)!!$ and the simplification $\frac{(2m-1)!!}{(2m)!}=\frac{1}{2^mm!}$):
\begin{multline}
\label{Equation: Trace Moment Limit Final}
\int_{\mc A^n}\Pi_Z(\bs t;\bs x,\bs x)\mbf E\Bigg[\sum_{m=0}^\infty
\Bigg(\frac{1}{|\mc P_{2m}|}\sum_{q\in\mc P_{2m}}\mbf E_{\bs T}\Bigg[\frac1{(r-1)^{2m}}\\
\cdot\sum_{\bs u\in\mc W^{\bs i}_{\bs z(|\bs t|,2m,\bs T)}}
\upsilon^{2m}\mf C_{|\bs t|}\big(\pi_{\bs T}(q),\mc U_{(\bs T,\bs u)}\big)
F\big(\mc U_{(\bs T,\bs u)},Z^{\bs x,\bs x}_{\bs t}\big)\Bigg]\Bigg)\\
\cdot\left(\frac{(r-1)^2\|L_{|\bs t|}(Z^{\bs x,\bs x}_{\bs t})\|_2^2}{2}\right)^{m}\frac1{m!}\Bigg]\d\mu^n(\bs i,\bs x).
\end{multline}
Once established, this convergence concludes the proof of \eqref{Equation: Trace Moments Limit 2.1}.
Indeed, we now recognize that in \eqref{Equation: Trace Moment Limit Final}, the sums over $m$
and $q\in\mc P_{2m}$ correspond to sampling $\hat N(|\bs t|)$ and $\hat q$ as described in Definition \ref{Definition: Singular Process},
up to introducing a renormalization due to
\[\sum_{m=0}^\infty\left(\frac{(r-1)^2\|L_{|\bs t|}(Z^{\bs x,\bs x}_{\bs t})\|_2^2}{2}\right)^{m}\frac1{m!}=\mr e^{\frac{(r-1)^2}2\|L_{|\bs t|}(Z)\|_2^2},\]
which is necessary in order to turn $m\mapsto\left(\frac{(r-1)^2\|L_{|\bs t|}(Z^{\bs x,\bs x}_{\bs t})\|_2^2}{2}\right)^{m}\frac1{m!}$ into a probability measure
(this renormalization also explains the presence of $\frac{(r-1)^2}2\|L_{|\bs t|}(Z)\|_2^2$ in
\eqref{Equation: Trace Moment Exponential Term}).
This then also implies that
$\bs T$ is sampled according to $\mr{si}_{t,\hat N(t),\hat q,Z}$,
that $\hat p$ can be taken as $\pi_{\bs T}(\hat q)$ with $\hat q$ sampled uniformly at random in $\mc P_{\hat N(t)}$,
and that $\mc U_{(\bs T,\bs u)}$ generates a realization of $\hat U^{\bs i}_{\bs t}$, all of which are consistent with Definition \ref{Definition: Singular Process}.

Thus, in summary, the proof of Proposition \ref{Proposition: Trace Moments Limit} will be complete if we
establish that \eqref{Equation: Trace Moment Limit 5} converges to \eqref{Equation: Trace Moment Limit 6}
almost surely.
For this purpose, we use the following:

\begin{lemma}
\label{Lemma: Weak Convergence of SI Measures}
Let $\bs x\in I^n$, $\bs t\in[0,\infty)^n$ be fixed.
Almost surely,
\begin{align}
\label{Equation: Weak Convergence of SI Measures Normalization}
\lim_{\bs\ze\to0}\msf L^{\bs\ze}_{\bs t,2m,q,Z^{\bs x,\bs x}_{\bs t}}=\|L_{|\bs t|}(Z^{\bs x,\bs x}_{\bs t})\|_2^{2m}
\end{align}
and
\[\lim_{\bs\ze\to0}\mr{si}^{\bs\ze}_{\bs t,2m,q,Z^{\bs x,\bs x}_{\bs t}}=\mr{si}_{|\bs t|,2m,q,Z^{\bs x,\bs x}_{\bs t}}\qquad\text{weakly}\]
for every $m\in\mbb N$ and $q\in\mc P_{2m}$.
\end{lemma}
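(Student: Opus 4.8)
The plan is to reduce the whole statement to a single analytic fact about mollified self‑intersections of the local time: for any two time‑rectangles, the integral of $\rho_{\ze,\eta}\big(Z^{\bs x,\bs x}_{\bs t}(s_1)-Z^{\bs x,\bs x}_{\bs t}(s_2)\big)$ converges, \emph{uniformly} in the rectangle endpoints, to an inner product of local times. Throughout, $\bs x$ and $\bs t$ are fixed and we work on the almost‑sure event where $Z^{\bs x,\bs x}_{\bs t}$ has a jointly continuous, compactly supported local time $L^y_{[s,t)}(Z^{\bs x,\bs x}_{\bs t})$; this holds because $Z^{\bs x,\bs x}_{\bs t}$ is a finite concatenation of (reflected) Brownian bridges, each of which has such a local time by Definition \ref{Definition: Regular Local Time}, and the local time of the concatenation is the sum of the pieces' local times, hence continuous and compactly supported in $y$ and jointly continuous in $(s,t)$ in the uniform norm. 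Since $q\in\mc P_{2m}$ is a perfect matching, every index $\ell\in\{1,\dots,2m\}$ lies in exactly one pair, so both the normalization $\msf L^{\bs\ze}_{\bs t,2m,q,Z^{\bs x,\bs x}_{\bs t}}$ and the unnormalized measure $\msf L^{\bs\ze}_{\bs t,2m,q,Z^{\bs x,\bs x}_{\bs t}}\,\mr{si}^{\bs\ze}_{\bs t,2m,q,Z^{\bs x,\bs x}_{\bs t}}$ factor as products over the pairs of $q$ of objects in the two variables attached to that pair. It therefore suffices to treat one pair at a time.

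For a single pair and a product rectangle $[s_1,t_1)\times[s_2,t_2)\subset[0,|\bs t|)^2$, I would first split $[s_1,t_1)$ and $[s_2,t_2)$ along the sub‑intervals on which $\ze(\cdot)$ from \eqref{Equation: zeta s} is constant, and then apply the occupation‑time formula \eqref{Equation: Local Time Integral Definition} in each variable to rewrite
\[
\int_{[s_1,t_1)\times[s_2,t_2)}\rho_{\ze(u),\ze(v)}\big(Z^{\bs x,\bs x}_{\bs t}(u)-Z^{\bs x,\bs x}_{\bs t}(v)\big)\d u\d v
\]
as a finite sum of double space integrals $\int_{\mbb R^2}L^y_{R}(Z^{\bs x,\bs x}_{\bs t})\,\rho_{\ze',\ze''}(y-z)\,L^z_{R'}(Z^{\bs x,\bs x}_{\bs t})\d y\d z$ with $\ze',\ze''\in\{\ze_1,\dots,\ze_n\}$. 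Each $\rho_{\ze',\ze''}=\bar\rho_{\ze'}\star\bar\rho_{\ze''}$ is a nonnegative probability density supported in an interval of length $O(\ze'+\ze'')$, hence an approximate identity as $\bs\ze\to0$; since $y\mapsto L^y_{R}(Z^{\bs x,\bs x}_{\bs t})$ is uniformly continuous and uniformly bounded (uniformly over all sub‑rectangles $R\subset[0,|\bs t|)$), the convolution $L^{\cdot}_{R}(Z^{\bs x,\bs x}_{\bs t})\star\rho_{\ze',\ze''}$ converges to $L^{\cdot}_{R}(Z^{\bs x,\bs x}_{\bs t})$ in the uniform norm at a rate independent of $R$. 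Passing to the limit term by term and recombining the split pieces by bilinearity of $\langle\cdot,\cdot\rangle$ gives
\[
\int_{[s_1,t_1)\times[s_2,t_2)}\rho_{\ze(u),\ze(v)}\big(Z^{\bs x,\bs x}_{\bs t}(u)-Z^{\bs x,\bs x}_{\bs t}(v)\big)\d u\d v\;\longrightarrow\;\big\langle L_{[s_1,t_1)}(Z^{\bs x,\bs x}_{\bs t}),L_{[s_2,t_2)}(Z^{\bs x,\bs x}_{\bs t})\big\rangle
\]
as $\bs\ze\to0$, uniformly over all endpoints in $[0,|\bs t|]$. Taking $R_1=R_2=[0,|\bs t|)$ in each of the $m$ factors yields \eqref{Equation: Weak Convergence of SI Measures Normalization}; and $\|L_{|\bs t|}(Z^{\bs x,\bs x}_{\bs t})\|_2>0$ almost surely (the local time integrates to $|\bs t|>0$), so the normalizations stay bounded away from zero.

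Finally I would upgrade the rectangle‑wise convergence to weak convergence. Each $\mr{si}^{\bs\ze}_{\bs t,2m,q,Z^{\bs x,\bs x}_{\bs t}}$ is absolutely continuous, so it charges no coordinate hyperplane, and the limiting closed‑rectangle masses $\prod_{\{\ell_1,\ell_2\}\in q}\langle L_{[s_{\ell_1},t_{\ell_1})}(Z^{\bs x,\bs x}_{\bs t}),L_{[s_{\ell_2},t_{\ell_2})}(Z^{\bs x,\bs x}_{\bs t})\rangle$ are jointly continuous in all endpoints by the $L^2$‑continuity of $(s,t)\mapsto L_{[s,t)}(Z^{\bs x,\bs x}_{\bs t})$, so the limit also charges no hyperplane. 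These rectangle functions are additive and nonnegative, hence define a unique finite Borel measure, which is a product over the pairs of $q$; the closed rectangles therefore form a convergence‑determining class on the compact cube $[0,|\bs t|]^{2m}$, and convergence of all their masses (together with convergence of the total mass, the case $R_i=[0,|\bs t|)$) forces weak convergence of the unnormalized measures to that product measure. Dividing by $\msf L^{\bs\ze}_{\bs t,2m,q,Z^{\bs x,\bs x}_{\bs t}}\to\|L_{|\bs t|}(Z^{\bs x,\bs x}_{\bs t})\|_2^{2m}>0$ and comparing with \eqref{Equation: SI Measures} identifies the weak limit as $\mr{si}_{|\bs t|,2m,q,Z^{\bs x,\bs x}_{\bs t}}$. (This last step could alternatively be quoted from the mollified construction of intersection local times in \cite[Chapters 2--3]{ChenBook}.) The main obstacle, to the extent there is one, is purely bookkeeping: tracking the piecewise‑constant mollification scales $\ze(\cdot)$ through the occupation‑time identity and making the uniformity in the rectangle endpoints clean enough to pass to weak convergence; the probabilistic content is confined to the elementary occupation‑time formula and the continuity of the local time.
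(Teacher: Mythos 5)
Your argument is correct and follows essentially the same route as the paper: split $[0,|\bs t|)^{2m}$ into the rectangles on which $\ze(\cdot)$ is constant, apply the occupation-time identity \eqref{Equation: Local Time Integral Definition} coordinate by coordinate to rewrite each piece as a double space integral against $\rho_{\ze',\ze''}$, let the mollifier collapse, recombine by bilinearity of $\langle\cdot,\cdot\rangle$, and then upgrade to weak convergence via a convergence-determining class of rectangles, on the a.s. event where the local time is jointly continuous and compactly supported. The only cosmetic differences are that the paper works with $L^2$-convergence of the mollified local times (citing \cite[Lemma 2.2.2]{ChenBook}) rather than uniform convergence, and proves convergence only for rectangles of the form $[0,u_1)\times\cdots\times[0,u_{2m})$ directly to the already-defined probability measure $\mr{si}_{|\bs t|,2m,q,Z^{\bs x,\bs x}_{\bs t}}$, whereas you prove a slightly stronger uniform-in-endpoints statement and reconstruct the limit measure from its rectangle masses; both routes work, and yours buys nothing essential but is certainly not wrong.
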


\begin{lemma}
\label{Lemma: Zero Sets of SI Measures}
Let $\bs x\in I^n$, $\bs t\in[0,\infty)^n$ be fixed. Define the set
\begin{multline*}
\mr{B}_{\bs t,2m}=\big\{(s_1,\ldots,s_{2m})\in[0,|\bs t|)^{2m}:s_\ell=t_0+\cdots+t_k\\
\text{ for at least one pair }1\leq \ell\leq 2m,~1\leq k\leq n-1\big\}.
\end{multline*}
Almost surely, $\mr{si}_{|\bs t|,2m,q,Z^{\bs x,\bs x}_{\bs t}}\big(\mr{B}_{\bs t,2m}\big)=0$ for all $m\in\mbb N$ and $q\in\mc P_{2m}$.
\end{lemma}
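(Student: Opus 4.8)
The plan is to reduce everything to a statement about the one-dimensional marginals of $\mr{si}_{|\bs t|,2m,q,Z^{\bs x,\bs x}_{\bs t}}$: I will show that each of these marginals is absolutely continuous with respect to Lebesgue measure on $[0,|\bs t|)$, and since $\mr{B}_{\bs t,2m}$ sits inside a finite union of coordinate hyperplanes, it then follows at once that $\mr{B}_{\bs t,2m}$ is $\mr{si}$-null. Throughout I write $Z=Z^{\bs x,\bs x}_{\bs t}$, and recall that almost surely $0<\|L_{|\bs t|}(Z)\|_2<\infty$ --- positivity because $Z$ spends a positive amount of time near $x_1$ and hence accumulates strictly positive local time there, and finiteness, e.g., from \cite[Lemma 5.11]{GaudreauLamarreEJP} --- so that $\mr{si}_{|\bs t|,2m,q,Z}$ is a genuine random probability measure on $[0,|\bs t|)^{2m}$.

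First I would record the product structure. Inspecting \eqref{Equation: SI Measures}, on a product rectangle $[s_1,u_1)\times\cdots\times[s_{2m},u_{2m})$ the right-hand side factorizes over the $m$ pairs of $q$, the factor indexed by $\{\ell_1,\ell_2\}$ depending only on the coordinates labelled $\ell_1$ and $\ell_2$. Hence $\mr{si}_{|\bs t|,2m,q,Z}=\bigotimes_{\{\ell_1,\ell_2\}\in q}\nu_{\ell_1,\ell_2}$, where $\nu_{\ell_1,\ell_2}$ is the random probability measure on the two coordinates of its pair obtained by normalizing the intersection local time,
\[\nu_{\ell_1,\ell_2}\big([s,u)\times[s',u')\big)=\frac{\big\langle L_{[s,u)}(Z),L_{[s',u')}(Z)\big\rangle}{\|L_{|\bs t|}(Z)\|_2^2}.\]

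Next I would compute a marginal. Fix $\ell\in\{1,\dots,2m\}$, matched by $q$ to some $\ell'$; the marginal of $\mr{si}_{|\bs t|,2m,q,Z}$ on the $\ell$-th coordinate equals the first marginal $\mu_\ell$ of $\nu_{\ell,\ell'}$ (the second marginal is identical by symmetry), so $\mu_\ell([s,u))=\|L_{|\bs t|}(Z)\|_2^{-2}\int_I L^y_{[s,u)}(Z)\,L^y_{|\bs t|}(Z)\,\d y$. Applying the occupation-times formula \eqref{Equation: Local Time Integral Definition} with the bounded measurable integrand $y\mapsto L^y_{|\bs t|}(Z)$ (continuous and compactly supported, since $Z$ lives on a compact time interval) turns this into $\mu_\ell([s,u))=\|L_{|\bs t|}(Z)\|_2^{-2}\int_s^u L^{Z(v)}_{|\bs t|}(Z)\,\d v$; thus $\mu_\ell$ is absolutely continuous with respect to Lebesgue measure on $[0,|\bs t|)$ with density $v\mapsto\|L_{|\bs t|}(Z)\|_2^{-2}L^{Z(v)}_{|\bs t|}(Z)$, a bona fide probability density since one further use of \eqref{Equation: Local Time Integral Definition} gives $\int_0^{|\bs t|}L^{Z(v)}_{|\bs t|}(Z)\,\d v=\|L_{|\bs t|}(Z)\|_2^2$. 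In particular $\mu_\ell$ is atomless. To finish, note $\mr{B}_{\bs t,2m}\subset\bigcup_{\ell=1}^{2m}\bigcup_{k=1}^{n-1}\{\bs s:s_\ell=t_0+\cdots+t_k\}$ and that, by the product structure, $\mr{si}_{|\bs t|,2m,q,Z}(\{\bs s:s_\ell=c\})=\mu_\ell(\{c\})$; subadditivity then gives $\mr{si}_{|\bs t|,2m,q,Z}(\mr{B}_{\bs t,2m})\le\sum_{\ell=1}^{2m}\sum_{k=1}^{n-1}\mu_\ell(\{t_0+\cdots+t_k\})=0$ almost surely for each fixed $(m,q)$, and a countable intersection over $m\in\mbb N$ and $q\in\mc P_{2m}$ upgrades this to hold simultaneously for all $m,q$ on one probability-one event.

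The argument is essentially bookkeeping, and I expect the only point needing genuine care is the marginal computation: one must justify the occupation-times formula \eqref{Equation: Local Time Integral Definition} for the integrand $y\mapsto L^y_{|\bs t|}(Z^{\bs x,\bs x}_{\bs t})$, and verify the almost-sure positivity and finiteness of $\|L_{|\bs t|}(Z^{\bs x,\bs x}_{\bs t})\|_2$, the latter being precisely what makes $\mr{si}_{|\bs t|,2m,q,Z^{\bs x,\bs x}_{\bs t}}$ a well-defined probability measure in the first place.
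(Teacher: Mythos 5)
Your proposal is correct. The paper proves the lemma by a shorter, more direct route: it covers $\mr{B}_{\bs t,2m}$ by the hyperplanes $\mr{B}^0_{\ell,t}=\{\bs s : s_\ell=t\}$, applies a union bound to reduce to showing $\mr{si}_{|\bs t|,2m,q,Z^{\bs x,\bs x}_{\bs t}}\big(\mr{B}^0_{\ell,t}\big)=0$, realizes this hyperplane as the decreasing limit of the thin slabs $\{\bs s : s_\ell\in[t,t+\eps)\}$, reads off the slab's measure from \eqref{Equation: SI Measures} as $\|L_{|\bs t|}(Z)\|_2^{-2}\big\langle L_{[t,t+\eps)}(Z^{\bs x,\bs x}_{\bs t}),L_{|\bs t|}(Z^{\bs x,\bs x}_{\bs t})\big\rangle$, and sends $\eps\to0$ using the continuity of the local time process plus dominated convergence. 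You instead exploit the tensor-product structure of $\mr{si}_{|\bs t|,2m,q,Z}$ over the pairs of $q$ and then use the occupation-times formula \eqref{Equation: Local Time Integral Definition} to produce the exact Lebesgue density $v\mapsto \|L_{|\bs t|}(Z)\|_2^{-2}L^{Z(v)}_{|\bs t|}(Z)$ of each one-dimensional marginal, concluding atomlessness from absolute continuity. The two arguments rest on the same underlying fact (atomlessness of the one-coordinate marginal, which itself boils down to joint continuity of $L^x_{[s,u)}(Z)$), but yours is a strictly stronger statement: you exhibit the marginal density explicitly, rather than merely showing singletons are null. That extra information could be reused elsewhere (e.g., in Lemma \ref{Lemma: Continuity of SI Measures Integrands}, where one needs to control integrals against $\mr{si}$-measures), at the price of a slightly longer argument and the need to explicitly invoke the uniqueness clause in Definition \ref{Definition: Regular Local Time} to justify the product-measure identification. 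Your sanity checks --- measurability of the integrand $y\mapsto L^y_{|\bs t|}(Z)$, positivity and finiteness of $\|L_{|\bs t|}(Z)\|_2$ --- are the right ones, and the closing remark about intersecting the probability-one events over the countable index set $(m,q)$ matches what the paper implicitly does.
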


\begin{lemma}
\label{Lemma: Continuity of SI Measures Integrands}
Let $\bs x\in I^n$, $\bs t\in[0,\infty)^n$ be fixed.
Almost surely, the function
\[\bs s\mapsto\sum_{\bs u\in\mc W^{\bs i}_{\bs z(|\bs t|,2m,\bs s)}}
\mf C_{|\bs t|}\big(\pi_{\bs s}(q),\mc U_{(\bs s,\bs u)}\big)
F\big(\mc U_{(\bs s,\bs u)},Z^{\bs x,\bs x}_{\bs t}\big)\]
is bounded and continuous on $[0,|\bs t|)^{2m}\setminus\mr{B}_{\bs t,2m}$
for all $m\in\mbb N$ and $q\in\mc P_{2m}$.
\end{lemma}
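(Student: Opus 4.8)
Throughout the argument I would fix $\bs x\in I^n$ and $\bs t\in[0,\infty)^n$, set $Z=Z^{\bs x,\bs x}_{\bs t}$, and restrict to the probability-one event on which $Z$ has continuous sample paths on each of the $n$ sub-intervals and admits jointly continuous, compactly supported local times $L^y_{[a,b)}(Z)$ together with jointly continuous boundary local times $\mf L^c_{[a,b)}(Z)$ (standard; see \cite{RevuzYor}). The observation that drives everything is that the ordered tuple $\overrightarrow{\bs s}=(\fo{s_1},\ldots,\fo{s_{2m}})$, the partition $\bs z(|\bs t|,2m,\bs s)$, the walk set $\mc W^{\bs i}_{\bs z(|\bs t|,2m,\bs s)}$, the path $\mc U_{(\bs s,\bs u)}$, and hence also $F(\mc U_{(\bs s,\bs u)},Z)$, depend on $\bs s$ only through $\overrightarrow{\bs s}$; the genuine dependence of the integrand on the \emph{unordered} vector $\bs s$ enters solely through the sorting permutation $\pi_{\bs s}$ inside the factor $\mf C_{|\bs t|}(\pi_{\bs s}(q),\cdot)$.

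Boundedness is the easy half. There are $|\mc W^{\bs i}_{\bs z(|\bs t|,2m,\bs s)}|=(r-1)^{2m}$ summands, independently of $\bs s$, and $|\mf C_{|\bs t|}(\cdot,\cdot)|\le1$ by \eqref{Equation: Combinatorial constant bound}. For $F$ itself I would use $V\ge-\nu$ (Assumption \ref{Assumption: Potential}) to bound $-\int_0^{|\bs t|}V(\mc U_{(\bs s,\bs u)}(s),Z(s))\,\dd s\le\nu|\bs t|$, the inequality $\|L_{|\bs t|}(A)\|_\mu^2\le\|L_{|\bs t|}(Z)\|_2^2$ of \eqref{Equation: s_t bound 3.0} to bound $\tfrac{\si^2}2\|L_{|\bs t|}(\mc U_{(\bs s,\bs u)},Z)\|_\mu^2\le\tfrac{\si^2}2\|L_{|\bs t|}(Z)\|_2^2$, and \eqref{Equation: F-K Integrability Bound 1} to bound $\mf B_{|\bs t|}(\mc U_{(\bs s,\bs u)},Z)\le\bar\al_\mr m\mf L^0_{|\bs t|}(Z)+\bar\be_\mr m\mf L^{\vartheta}_{|\bs t|}(Z)$. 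Thus $0\le F(\mc U_{(\bs s,\bs u)},Z)\le\exp\big(\nu|\bs t|+\tfrac{\si^2}2\|L_{|\bs t|}(Z)\|_2^2+\bar\al_\mr m\mf L^0_{|\bs t|}(Z)+\bar\be_\mr m\mf L^{\vartheta}_{|\bs t|}(Z)\big)$, which is finite on the good event, and the displayed sum is at most $(r-1)^{2m}$ times this constant, uniformly in $\bs s$ and $q$.

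For continuity I would partition $[0,|\bs t|)^{2m}\setminus\mr{B}_{\bs t,2m}$ into the open cells determined by (i) which sub-interval each coordinate lies in and (ii) the relative order of the coordinates; datum (i) is locally constant precisely because $\mr{B}_{\bs t,2m}$ has been removed. On any cell the full combinatorial data — the partition $\bs z(|\bs t|,2m,\bs s)$, the permutation $\pi_{\bs s}$, the assignment of jumps to ordered positions, and the segment-endpoint indicator in $F$ — is constant, so each $\mf C_{|\bs t|}(\pi_{\bs s}(q),\mc U_{(\bs s,\bs u)})$ is constant in $\bs s$ and it suffices to show $\bs s\mapsto F(\mc U_{(\bs s,\bs u)},Z)$ is continuous there; this follows because $F$ depends on $\bs s$ only through the jump times $\fo{s_1}\le\cdots\le\fo{s_{2m}}$ and each constituent functional is continuous in those times — e.g.\ by the occupation-times formula $\int_a^bV(i,Z(s))\,\dd s=\int_I V(i,y)L^y_{[a,b)}(Z)\,\dd y$ is continuous in $(a,b)$ since $V(i,\cdot)$ is locally integrable and $y\mapsto L^y_{[a,b)}(Z)$ is continuous, compactly supported, and jointly continuous in $(a,b)$, and likewise $\|L_{|\bs t|}(\mc U_{(\bs s,\bs u)},Z)\|_\mu^2$ and $\mf B_{|\bs t|}(\mc U_{(\bs s,\bs u)},Z)$ are finite combinations of ordinary and boundary local times of $Z$ over intervals whose endpoints are among the $\fo{s_\ell}$. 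This handles the interior of each cell. The remaining — and only delicate — point is to check that the function extends continuously across the cell interfaces lying in $[0,|\bs t|)^{2m}\setminus\mr{B}_{\bs t,2m}$, i.e.\ across the diagonals $\{s_\ell=s_{\ell'}\}$ with $s_\ell,s_{\ell'}$ in the same sub-interval (interfaces of the form $\{s_\ell=t_0+\cdots+t_k\}$ sit inside $\mr{B}_{\bs t,2m}$). At such a diagonal two consecutive jump times $\tau_a=\tau_{a+1}$ coincide; $\overrightarrow{\bs s}$, and hence each $\mc U_{(\bs s,\bs u)}$ and each $F(\mc U_{(\bs s,\bs u)},Z)$, stays continuous, but $\pi_{\bs s}(q)$ jumps by the transposition of positions $a$ and $a+1$, so for a fixed walk $\bs u$ the factor $\mf C_{|\bs t|}(\pi_{\bs s}(q),\mc U_{(\bs s,\bs u)})$ may be discontinuous. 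The plan is to show these discontinuities cancel once summed over $\bs u\in\mc W^{\bs i}_{\bs z(|\bs t|,2m,\bs s)}$: since the interval $[\tau_a,\tau_{a+1})$ is empty, $F(\mc U_{(\bs s,\bs u)},Z)$ is insensitive to the vertex visited strictly between the two coinciding jumps, which provides enough freedom to construct a bijection of $\mc W^{\bs i}_{\bs z(|\bs t|,2m,\bs s)}$ intertwining $\mf C_{|\bs t|}$ evaluated at $\pi_{\bs s}(q)$ and at its transpose while leaving $F$ fixed; equivalently, one can invoke the $\bs s$-symmetry of the aggregate $\sum_{p\in\mc P_{2m}}\mf C_{|\bs t|}(p,\mc U_{(\bs s,\bs u)})\big[\cdots\big]_p$ already used to derive \eqref{Equation: Trace Moment Limit 4} to organize the cancellation. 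Matching the two one-sided limits then gives equality, which together with the cellwise continuity yields the lemma. I expect this diagonal cancellation to be the main obstacle; everything else is routine bookkeeping with jointly continuous local times.
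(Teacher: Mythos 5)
You have correctly identified the genuine soft spot. The connected components $\mc K$ of $[0,|\bs t|)^{2m}\setminus\mr{B}_{\bs t,2m}$ are rectangles that are not cut by the diagonals $\{s_\ell=s_{\ell'}\}$, so the sorting permutation $\pi_{\bs s}$, and with it the matching $\pi_{\bs s}(q)$, is not constant on a cell. The paper's own argument asserts that $\mf C_{|\bs t|}\big(\pi_{\bs s}(q),\mc U_{(\bs s,\bs u)}\big)$ ``becomes constant'' once $\mc K$ and $\bs u$ are fixed, on the grounds that $\mf C_t(p,U)$ depends only on the matching $p$ and the jumps; but $p=\pi_{\bs s}(q)$ does depend on $\bs s$, which is exactly the dependence you isolate. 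So your reduction to cellwise continuity of $F$ matches the paper, and the delicate diagonal issue you flag is real: the paper does not confront it.

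Where your proposal fails is the claimed cancellation over $\bs u$. Take $\mbb F=\mbb R$, $r=3$, $n=1$ (so $\mr{B}_{\bs t,2m}=\varnothing$), $\bs i=(1)$, $2m=4$, $q=\{\{1,2\},\{3,4\}\}$, and approach $\{s_2=s_3\}$ from the region $s_1<s_2<s_3<s_4$, where $\pi_{\bs s}(q)=\{\{1,2\},\{3,4\}\}$, and from $s_1<s_3<s_2<s_4$, where $\pi_{\bs s}(q)=\{\{1,3\},\{2,4\}\}$. The six length-four closed walks at $1$ on the complete graph with three vertices are $(1,2,1,2,1)$, $(1,2,1,3,1)$, $(1,2,3,2,1)$, $(1,3,1,2,1)$, $(1,3,1,3,1)$, $(1,3,2,3,1)$; by Definition \ref{Definition: Combinatorial Constant}-(1) the walks with $\mf C=1$ are the first, second, fourth, and fifth for the first matching, but only the first and fifth for the second. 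Meanwhile $F$ is continuous across this diagonal, and the two extra walks produce the distinct reduced trajectories $1\to2\to3\to1$ and $1\to3\to2\to1$ on the limit, whose $F$-values are strictly positive. So the two one-sided limits of $\sum_{\bs u}\mf C\cdot F$ differ, no intertwining bijection of $\mc W^{\bs i}_{\bs z(|\bs t|,2m,\bs s)}$ can exist, and the lemma as literally stated (continuity on all of $[0,|\bs t|)^{2m}\setminus\mr{B}_{\bs t,2m}$) is false. The correct route is not cancellation but measure theory: enlarge $\mr{B}_{\bs t,2m}$ to include the diagonals $\{s_\ell=s_{\ell'}\}$ and extend Lemma \ref{Lemma: Zero Sets of SI Measures} to show they are $\mr{si}_{|\bs t|,2m,q,Z^{\bs x,\bs x}_{\bs t}}$-null (joint continuity of $(s,t,x)\mapsto L^x_{[s,t)}(Z)$ handles matched pairs in $q$, and independence plus atomlessness of the marginals handles the rest). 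That, combined with the cellwise continuity of $F$ and the uniform bound you established, is what the Portmanteau step in Proposition \ref{Proposition: Trace Moments Limit} actually requires.
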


With these three lemmas in hand, the convergence of
\eqref{Equation: Trace Moment Limit 5} to \eqref{Equation: Trace Moment Limit 6}
is a consequence of the following: If probability measures $(P_n)_{n\in\mbb N}$ converge weakly to
$P$ as $n\to\infty$ and $h$ is a bounded function whose discontinuities have probability zero
with respect to $P$,
then $\mbf E_{P_n}[h]\to\mbf E_P[h]$
(this can be established by combining the continuous mapping theorem \cite[Theorem 2.7]{Billingsley},
the Skorokhod representation theorem \cite[Theorem 6.7]{Billingsley}, and the dominated convergence theorem).
We now finish this section with the proofs of Lemmas \ref{Lemma: Weak Convergence of SI Measures},
\ref{Lemma: Zero Sets of SI Measures}, and \ref{Lemma: Continuity of SI Measures Integrands}:

\begin{proof}[Proof of Lemma \ref{Lemma: Weak Convergence of SI Measures}]
We begin with \eqref{Equation: Weak Convergence of SI Measures Normalization}.
The analysis of the expression
\[\msf L^{\bs\ze}_{\bs t,2m,q,Z}=\int_{[0,|\bs t|)^{2m}}\prod_{\{\ell_1,\ell_2\}\in q}\rho_{\ze(s_{\ell_1}),\ze(s_{\ell_2})}\big(Z(s_{\ell_1})-Z(s_{\ell_2})\big)\d\bs s\]
is made difficult by the fact that the parameters $\ze(s)$ change with $s$. In order to get around this,
for any $1\leq k_1,\ldots,k_{2m}\leq n$, we denote
\[\mc K_{k_\ell}=\big[t_0+\cdots+t_{k_\ell-1},t_0+\cdots+t_{k_\ell}\big),\qquad
\mc C_{k_1,\ldots,k_{2m}}=\mc K_{k_1}\times\cdots\times\mc K_{k_{2m}},\]
with the convention $t_0=0$,
and we write
\[[0,|\bs t|)^{2m}=\bigcup_{1\leq k_1,\ldots,k_{2m}\leq n}\mc C_{k_1,\ldots,k_{2m}},\]
noting that the union is disjoint. With this convention, if $\bs s=(s_1,\ldots,s_{2m})\in\mc C_{k_1,\ldots,k_{2m}}$,
then we have that $\ze(s_\ell)=\ze_{k_\ell}$ for all $1\leq\ell\leq2m$ thanks to \eqref{Equation: zeta s}.
Therefore, by definition of local time in \eqref{Equation: Local Time Integral Definition}, we can write
\begin{align}
\nonumber
\msf L^{\bs\ze}_{\bs t,2m,q,Z}
&=\sum_{1\leq k_1,\ldots,k_{2m}\leq n}\int_{\mc C_{k_1,\ldots,k_{2m}}}\prod_{\{\ell_1,\ell_2\}\in q}\rho_{\ze_{k_{\ell_1}}\ze_{k_{\ell_2}}}\big(Z(s_{\ell_1})-Z(s_{\ell_2})\big)\d\bs s\\
\label{Equation: SI Weak Convergence 1}
&=\sum_{1\leq k_1,\ldots,k_{2m}\leq n}\prod_{\{\ell_1,\ell_2\}\in q}\int_{I^2}
L_{\mc K_{k_{\ell_1}}}^x(Z)\rho_{\ze_{k_{\ell_1}}\ze_{k_{\ell_2}}}(x-y)L_{\mc K_{k_{\ell_1}}}^y(Z)\d x\dd y.
\end{align}
Recall that $\rho_{\ze,\eta}=\bar\rho_\ze\star\bar\rho_{\eta}$ is a convolution of mollifiers
(Definition \ref{Definition: Noise Coupling}). Thus,
on the probability-one event where the local time is continuous (and thus, in particular,
square-integrable, see Definition \ref{Definition: Regular Local Time}),
\eqref{Equation: SI Weak Convergence 1} converges to
\begin{align}
\label{Equation: SI Weak Convergence 2}
\sum_{1\leq k_1,\ldots,k_{2m}\leq n}\prod_{\{\ell_1,\ell_2\}\in q}
\left\langle L_{\mc K_{k_{\ell_1}}}(Z),L_{\mc K_{k_{\ell_2}}}(Z)\right\rangle
\end{align}
as $\bs\ze\to0$, since convolution against a mollifier is an approximate identity
with respect to any $L^p$ norm (e.g., \cite[Lemma 2.2.2]{ChenBook}).
We then obtain \eqref{Equation: Weak Convergence of SI Measures Normalization}
by noting that if we compute the sum in \eqref{Equation: SI Weak Convergence 2} one index
$k_\ell$ at a time (i.e., keeping all other indices fixed), then the bilinearity of the inner product yields that
\eqref{Equation: SI Weak Convergence 2} is equal to
\[\left\langle \sum_{k=1}^{2m}L_{\mc K_k}(Z),\sum_{k=1}^{2m}L_{\mc K_k}(Z)\right\rangle^{m}=\|L_{|\bs t|}(Z)\|_2^{2m},\]
as desired.

We now prove the weak convergence statement.
Since measurable rectangles are a convergence-determining class
for weak convergence (e.g., \cite[Page 18, Example 2.3]{Billingsley}),
it suffices to prove that, almost surely,
\[\lim_{\bs\ze\to0}\mr{si}^{\bs\ze}_{\bs t,2m,q,Z^{\bs x,\bs x}_{\bs t}}\big([0,u_1)\times\cdots\times[0,u_{2m})\big)
=\mr{si}_{|\bs t|,2m,q,Z^{\bs x,\bs x}_{\bs t}}\big([0,u_1)\times\cdots\times[0,u_{2m})\big)\]
for all $0\leq u_k\leq |\bs t|$, $1\leq k\leq 2m$.
By definition of $\mr{si}^{\bs\ze}_{\bs t,2m,q,Z^{\bs x,\bs x}_{\bs t}}$ and
$\mr{si}_{|\bs t|,2m,q,Z^{\bs x,\bs x}_{\bs t}}$  (i.e., \eqref{Equation: SI Measures})
and the limit \eqref{Equation: Weak Convergence of SI Measures Normalization},
this reduces to showing that, almost surely,
\begin{multline}
\label{Equation: SI Weak Convergence 3}
\lim_{\bs\ze\to0}\int_{[0,u_1)\times\cdots\times[0,u_n)}\prod_{\{\ell_1,\ell_2\}\in q}\rho_{\ze(s_{\ell_1}),\ze(s_{\ell_2})}\big(Z^{\bs x,\bs x}_{\bs t}(s_{\ell_1})-Z^{\bs x,\bs x}_{\bs t}(s_{\ell_2})\big)\d\bs s\\
=\prod_{\{\ell_1,\ell_2\}\in q}\left\langle L_{u_{\ell_1}}(Z^{\bs x,\bs x}_{\bs t}),L_{u_{\ell_2}}(Z^{\bs x,\bs x}_{\bs t})\right\rangle
\end{multline}
for all $0\leq u_k\leq |\bs t|$, $1\leq k\leq 2m$.
The proof of this fact is exactly the same
as that of \eqref{Equation: Weak Convergence of SI Measures Normalization},
except that we replace the intervals $\mc K_k$ and rectangles $\mc C_{k_1,\ldots,k_{2m}}$ therein by
\[\mc K^{(u)}_{k_\ell}=\big[t_0+\cdots+t_{k_\ell-1},t_0+\cdots+t_{k_\ell}\big)\cap[0,u_\ell),\qquad
\mc C^{(u)}_{k_1,\ldots,k_{2m}}=\mc K^{(u)}_{k_1}\times\cdots\times\mc K^{(u)}_{k_{2m}}.\]
(The fact that we consider uncountably many choices of $u_k$'s in \eqref{Equation: SI Weak Convergence 3}
is not a problem for the almost-sure convergence; the limit
\eqref{Equation: Weak Convergence of SI Measures Normalization} holds on the event where the local time
process is continuous, which is independent of $\bs t$ or the choices of $u_k$.)
\end{proof}

\begin{proof}[Proof of Lemma \ref{Lemma: Zero Sets of SI Measures}]
Given $1\leq\ell\leq 2m$ and $t\in[0,|\bs t|)$, let us denote
\[\mr{B}^0_{\ell,t}=\big\{(s_1,\ldots,s_{2m})\in[0,|\bs t|)^{2m}:s_\ell=t\big\}=[0,|\bs t|)^{\ell-1}\times\{t\}\times[0,|\bs t|)^{2m-\ell}.\]
Then, we have that
\[\mr{B}_{\bs t,2m}\subset\bigcup_{\substack{1\leq\ell\leq 2m\\1\leq k\leq n-1}}\mr{B}^0_{\ell,t_1+\cdots+t_k}.\]
Thus, by a union bound, it suffices to show that for any choice of $1\leq\ell\leq 2m$ and $t\in[0,|\bs t|)$,
one has
$\mr{si}_{|\bs t|,2m,q,Z^{\bs x,\bs x}_{\bs t}}\big(\mr{B}^0_{\ell,t}\big)=0$
almost surely.
By \eqref{Equation: SI Measures} and continuity from above of probability measures, one has
\[\mr{si}_{|\bs t|,2m,q,Z^{\bs x,\bs x}_{\bs t}}\big(\mr{B}^0_{\ell,t}\big)
=\lim_{\eps\to0}\frac{1}{\|L_t(Z)\|_2^2}\left\langle L_{[t,t+\eps)}(Z^{\bs x,\bs x}_{\bs t}),L_{[0,|\bs t|)}(Z^{\bs x,\bs x}_{\bs t})\right\rangle.\]
This converges to zero
on the probability-one event where $Z^{\bs x,\bs x}_{\bs t}$'s local time is continuous:
\eqref{Equation: Local Time Integral Definition} implies that $L_{[t,t+\eps)}(Z^{\bs x,\bs x}_{\bs t})\to0$ pointwise,
and we then get the result by dominated convergence.
\end{proof}

\begin{proof}[Proof of Lemma \ref{Lemma: Continuity of SI Measures Integrands}]
Referring back to \eqref{Equation: Tuples of Jumps}, we remark that the set-valued function $\bs s\mapsto\mc W^{\bs i}_{\bs z(|\bs t|,2m,\bs s)}$ remains constant
so long as we do not change the number of components of $\bs s$ that are inside each sub-interval of $[0,|\bs t|)$ the form
\[[t_0+\cdots+t_{k-1},t_0+\cdots+t_k).\]
In other words, $\bs s\mapsto\mc W^{\bs i}_{\bs z(|\bs t|,2m,\bs s)}$ is constant inside each connected component of the set $[0,|\bs t|)^{2m}\setminus\mr{B}_{\bs t,2m}$.
Thus, if we let $\mc K$ denote one such connected component and we restrict to $\bs s\in\mc K$,
it suffices to show that the map
\[\bs s\mapsto\mf C_{|\bs t|}\big(\pi_{\bs s}(q),\mc U_{(\bs s,\bs u)}\big)
F\big(\mc U_{(\bs s,\bs u)},Z^{\bs x,\bs x}_{\bs t}\big),\qquad\bs s\in\mc K\]
is almost-surely continuous and bounded on $\mc K$ for any fixed $\bs u\in\mc W^{\bs i}_{\bs z(|\bs t|,2m,\bs s)}$.

Once we fix the number of coordinates in $\bs s$ that are in each sub-interval of $[0,|\bs t|)$
and the random walk path $\bs u$, the quantity $\mf C_{|\bs t|}\big(\pi_{\bs s}(q),\mc U_{(\bs s,\bs u)}\big)$
becomes constant (this is because, by Definition \ref{Definition: Combinatorial Constant}, $\mf C_t(p,U)$ actually
only depends on the jumps $J_k$ and the matching $p$; it does not depend on the jump times $\tau_k$).
Thus, we only need to verify the almost-sure continuity and boundedness of $F\big(\mc U_{(\bs s,\bs u)},Z^{\bs x,\bs x}_{\bs t}\big)$
on $\mc K$.

If we examine the definition of $F$ in \eqref{Equation: Trace Moment Limit Functional F}, then we note that the indicator
function that fixes the endpoints of $\mc U_{(\bs s,\bs u)}$ does not depend on $\bs s$ once $\bs u$ is fixed.
Thus, we need only verify the almost-sure continuity and boundedness of
\[\bs s\mapsto\mr e^{-\int_0^{|\bs t|}V(\mc U_{(\bs s,\bs u)}(s),Z^{\bs x,\bs x}_{\bs t}(s))\d s+\frac{\si^2}{2}\|L_{|\bs t|}(\mc U_{(\bs s,\bs u)},Z^{\bs x,\bs x}_{\bs t})\|_\mu^2+\mf B_{|\bs t|}(\mc U_{(\bs s,\bs u)},Z^{\bs x,\bs x}_{\bs t})}\]
on $\mc K$.
That this is bounded is easily obtained from the assumption that $V$ is bounded below (Assumption \ref{Assumption: Potential}),
as well as the fact that we can obtain upper bounds for $\mf s^{(0,\ldots,0)}_{\bs t}(A)=\frac{\si^2}{2}\|L_{|\bs t|}(A)\|_\mu^2$
and $\mf B_{|\bs t|}(A)$ that only depend on $Z$
(e.g., \eqref{Equation: UI Boundary Bound} and \eqref{Equation: s_t bound 3}). As for the continuity,
recall the definitions of $z_k$ and $u^{(k)}_\ell$ (with $1\leq k\leq n$ and $0\leq\ell\leq z_k$) in
\eqref{Equation: Tuples of Jumps 0} and \eqref{Equation: Tuples of Jumps}, and let us henceforth
define
\[\mc S^{(k)}_{\ell}\subset[t_0+\cdots+t_{k-1},t_0+\cdots+t_k)\] as the interval of time that corresponds to the step $u^{(k)}_\ell$
in the continuous-time path $\mc U_{(\bs s,\bs u)}$, in the sense that
\[\mc U_{(\bs s,\bs u)}=\sum_{k=1}^n\sum_{\ell=0}^{z_k}u^{(k)}_\ell\mbf 1_{\mc S^{(k)}_\ell};\]
for instance, for $k=1$ we have
\[\mc S^{(1)}_0=\big[0,\fo{s_1}\big),~\mc S^{(1)}_1=\big[\fo{s_1},\fo{s_2}\big),\ldots,\mc S^{(1)}_{z_1-1}=\big[\fo{s_{z_1-1}},\fo{s_{z_1}}\big),~\mc S^{(1)}_{z_1}=\big[\fo{s_{z_1}},t_1\big),\]
and so on for $k\geq2$.
Then, by \eqref{Equation: Multivariate Boundary Local Time 1} and \eqref{Equation: Multivariate Local Time}, for any $(i,x)\in\mc A$ one has
\begin{align}
\label{Equation: Continuity of SI Functional 1}
L_{|\bs t|}^{(i,x)}(\mc U_{(\bs s,\bs u)},Z^{\bs x,\bs x}_{\bs t})=\sum_{k=1}^{n}\sum_{\ell\leq z_k:~u^{(k)}_\ell=i}L_{\mc S^{(k)}_\ell}^x(Z^{\bs x,\bs x}_{\bs t}),
\end{align}
and for any $c\in\partial I$ (in the two cases where $I$ has a boundary), one has
\begin{align}
\label{Equation: Continuity of SI Functional 2}
\mf L_{|\bs t|}^{(i,c)}(\mc U_{(\bs s,\bs u)},Z^{\bs x,\bs x}_{\bs t})=\sum_{k=1}^{n}\sum_{\ell\leq z_k:~u^{(k)}_\ell=i}\mf L^c_{\mc S^{(k)}_\ell}(Z^{\bs x,\bs x}_{\bs t}).
\end{align}
Given that we chose versions of
the local time processes $L^x_{[u,v)}(Z)$ and $\mc L^c_{[u,v)}(Z)$ that are jointly continuous
with respect to $u,v$, and $x$ in Definitions \ref{Definition: Combined Stochastic Processes} and \ref{Definition: Regular Local Time}, this implies that
the functionals \eqref{Equation: Multivariate Boundary Local Time 1} and \eqref{Equation: Multivariate Local Time}
are continuous with respect to $\bs s\in\mc K$ with probability one,
since changing $\bs s$'s components only changes the lengths of the intervals $\mc S^{(k)}_\ell$.
Since $x\mapsto L^x_{|\bs t|}(Z^{\bs x,\bs x}_{\bs t})$ is almost-surely compactly supported, the continuity of $\|L_{|\bs t|}(\mc U_{(\bs s,\bs u)},Z^{\bs x,\bs x}_{\bs t})\|_\mu^2$ and
$\mf B_{|\bs t|}(\mc U_{(\bs s,\bs u)},Z^{\bs x,\bs x}_{\bs t})$ follow immediately from this
(recall the definition of the latter in \eqref{Equation: Multivariate Boundary Local Time 2}). As for the integral involving $V$, we note that its continuity
follows from that of \eqref{Equation: Continuity of SI Functional 1} by writing
\[\int_0^{|\bs t|}V\big(\mc U_{(\bs s,\bs u)}(s),Z^{\bs x,\bs x}_{\bs t}(s)\big)\d s=\int_{\mc A}V(a)L^a_{|\bs t|}(\mc U_{(\bs s,\bs u)},Z^{\bs x,\bs x}_{\bs t})\d\mu(a),\]
noting once again that the above integral can be restricted to a compact set.
This completes the proof of Lemma \ref{Lemma: Continuity of SI Measures Integrands}, and thus also of Proposition \ref{Proposition: Trace Moments Limit}.
\end{proof}

\subsection{A Scalar Real-Valued Reduction for Propositions \ref{Proposition: Operator Form-Bound},
\ref{Proposition: Uniform Eigenvalue Bounds}, and \ref{Proposition: Eigenvalue Convergence}}
\label{Section: Main Proposition 5}

Propositions \ref{Proposition: Operator Form-Bound},
\ref{Proposition: Uniform Eigenvalue Bounds}, and \ref{Proposition: Eigenvalue Convergence} can all be reduced to similar results from \cite{GaudreauLamarreEJP} in the scalar setting with $\mbb F=\mbb R$.
This is because, by linearity/bilinearity, the form $\xi^{\eps,\ze}(f,g)$
evaluated in any functions $f,g:\mc A\to\mbb F$ can be written as a linear combination
of white noises in $\mbb R$ evaluated in functions from $I$ to $\mbb R$. In this section,
we make this observation more precise.

To see this, we first recall that we can write
\begin{align}
\label{Equation: Smoothed Noise Sum Decomposition}
\xi^{\eps,\ze}(f,g)=\sum_{i=1}^r\xi^{\eps}_{i,i}\big(f(i,\cdot),g(i,\cdot)\big)+\sum_{i\neq j}\xi^{\ze}_{i,j}\big(f(i,\cdot),g(j,\cdot)\big),\qquad\eps,\ze\geq0;
\end{align}
moreover, letting $(W_{i,j})_{i\leq i<j\leq r}$ be the Brownian motions introduced in Definition \ref{Definition: Noise Coupling},
we have that
\begin{align}
\label{Equation: Multivariate to Scalar for Convergence 1}
\xi^{\eps}_{i,i}\big(f(i,\cdot),g(i,\cdot)\big)=\si (W_{i,i}\star\bar\rho_\eps)'\big(f(i,\cdot),g(i,\cdot)\big)\qquad\text{for all }1\leq i\leq r,
\end{align}
and
\begin{align}
\label{Equation: Multivariate to Scalar for Convergence 2}
\xi^{\ze}_{i,j}\big(f(i,\cdot),g(j,\cdot)\big)=\upsilon (W_{i,j}\star\bar\rho_\ze)'\big(f(i,\cdot),g(j,\cdot)\big)\qquad\text{for all }1\leq i<j\leq r.
\end{align}
We note that $W_{i,i}$ are already standard Brownian motions in $\mbb R$; for $1\leq i<j\leq r$, we recall from Definition \ref{Definition: Noise Coupling} that we can expand \eqref{Equation: Multivariate to Scalar for Convergence 2} as
\begin{multline}
\label{Equation: Multivariate to Scalar for Convergence 3}
(W_{i,j}\star\bar\rho_\ze)'\big(f(i,\cdot),g(j,\cdot)\big)\\
=\begin{cases}
(W_{i,j;1}\star\bar\rho_\ze)'\big(f(i,\cdot),g(j,\cdot)\big)&\text{if }\mbb F=\mbb R,\\
\frac{1}{\sqrt2}\Big((W_{i,j;1}\star\bar\rho_\ze)'\big(f(i,\cdot),g(j,\cdot)\big)+(W_{i,j;\msf i}\star\bar\rho_\ze)'\big(f(i,\cdot),\msf ig(j,\cdot)\big)\Big)&\text{if }\mbb F=\mbb C,\\
\frac{1}{2}\Big(
\sum_{\ell\in\{1,\msf i,\msf j,\msf k\}}(W_{i,j;\ell}\star\bar\rho_\ze)'\big(f(i,\cdot),\ell g(j,\cdot)\big)\Big)&\text{if }\mbb F=\mbb H,
\end{cases}
\end{multline}
where $W_{i,j;\ell}$ are i.i.d. standard Brownian motions in $\mbb R$.

\eqref{Equation: Multivariate to Scalar for Convergence 1} and \eqref{Equation: Multivariate to Scalar for Convergence 3}
show that $\xi^{\eps,\ze}(f,g)$ can always be written as a linear combination of white noises in $\mbb R$
evaluated in scalar-valued functions from $I$ to $\mbb F$. We now finish this introductory section by showing
that we need only consider functions from $I$ to $\mbb R$.
When $\mbb F=\mbb R$, this observation
is of course entirely vacuous, as $f(i,x),g(j,x)\in\mbb R$ for all $x\in I$.
Otherwise,
pick an arbitrary basis element
\[\ell\in\begin{cases}
\{1,\msf i\}&\text{if }\mbb F=\mbb C,\\
\{1,\msf i,\msf j,\msf k\}&\text{if }\mbb F=\mbb H;
\end{cases}\]
define $h=\ell g$; and finally expand the components of $f$ as
\[f(i,x)=\begin{cases}
f_1(i,x)+f_2(i,x)\msf i&\text{if }\mbb F=\mbb C\\
f_1(i,x)+f_2(i,x)\msf i+f_3(i,x)\msf j+f_4(i,x)\msf k&\text{if }\mbb F=\mbb H
\end{cases}\]
and similarly for $\big(h_{m}(i,\cdot)\big)_{m\geq1}$.
To simplify the presentation, we use the notational shorthand $\mc N_\eps=(W\star\bar\rho_\eps)'$. Then,
by bilinearity and the complex product, one has
\begin{multline}
\label{Equation: Multivariate to Scalar for Convergence 4}
\mc N_\eps\big(f(i,\cdot),h(j,\cdot)\big)=
\mc N_\eps\big(f_1(i,\cdot),h_1(j,\cdot)\big)-\mc N_\eps\big(f_2(i,\cdot),h_2(j,\cdot)\big)\\
+\Big(\mc N_\eps\big(f_1(i,\cdot),h_2(j,\cdot)\big)+\mc N_\eps\big(f_2(i,\cdot),h_1(j,\cdot)\big)\Big)\msf i.
\end{multline}
when $\mbb F=\mbb C$,
and by bilinearity and the definition of quaternion product,
\begin{align}
\label{Equation: Multivariate to Scalar for Convergence 5}
&\mc N_\eps\big(f(i,\cdot),h(j,\cdot)\big)\\
\nonumber
&=\mc N_\eps\big(f_1(i,\cdot),h_1(j,\cdot)\big)-\mc N_\eps\big(f_2(i,\cdot),h_2(j,\cdot)\big)-\mc N_\eps\big(f_3(i,\cdot),h_3(j,\cdot)\big)-\mc N_\eps\big(f_2(i,\cdot),h_2(j,\cdot)\big)\\
\nonumber
&+\Big(\mc N_\eps\big(f_1(i,\cdot),h_2(j,\cdot)\big)+\mc N_\eps\big(f_2(i,\cdot),h_1(j,\cdot)\big)+\mc N_\eps\big(f_3(i,\cdot),h_4(j,\cdot)\big)-\mc N_\eps\big(f_4(i,\cdot),h_3(j,\cdot)\big)\Big)\msf i\\
\nonumber
&+\Big(\mc N_\eps\big(f_1(i,\cdot),h_3(j,\cdot)\big)-\mc N_\eps\big(f_2(i,\cdot),h_4(j,\cdot)\big)+\mc N_\eps\big(f_3(i,\cdot),h_1(j,\cdot)\big)+\mc N_\eps\big(f_4(i,\cdot),h_2(j,\cdot)\big)\big)\msf j\\
\nonumber
&+\Big(\mc N_\eps\big(f_1(i,\cdot),h_4(j,\cdot)\big)+\mc N_\eps\big(f_2(i,\cdot),h_3(j,\cdot)\big)-\mc N_\eps\big(f_3(i,\cdot),h_2(j,\cdot)\big)+\mc N_\eps\big(f_4(i,\cdot),h_1(j,\cdot)\big)\Big)\msf k
\end{align}
when $\mbb F=\mbb H$,
noting that $f_m(i,x),h_m(j,x)\in\mbb R$ for all $x\in I$ and $m\geq1$.
We are now ready to embark on the proofs of
Propositions \ref{Proposition: Operator Form-Bound},
\ref{Proposition: Uniform Eigenvalue Bounds}, and \ref{Proposition: Eigenvalue Convergence}.

\subsection{Proof of Propositions \ref{Proposition: Operator Form-Bound} and \ref{Proposition: Uniform Eigenvalue Bounds}}
\label{Section: Main Proposition 6}

\subsubsection{Proof of Proposition \ref{Proposition: Operator Form-Bound} Part 1: \eqref{Equation: +1 Norm}
and Equivalence}

For every $1\leq i\leq r$, let us denote the norm
\begin{align}
\label{Equation: L Star Direct Sum 1}
\|f\|_{\star;i}^2=\|f'\|_2^2+\|f\|_2^2+\|V_+(i,\cdot)^{1/2}f\|_2^2,\qquad f\in L^2(I,\mbb F).
\end{align}
By \cite[Proposition 3.2]{GaudreauLamarreEJP}, there exists constants $C_i>0$ large
enough so that
\[\|f\|_{+1;i}=\sqrt{\mc E_i(f,f)+(C_i+1)\|f\|_2}\]
is a norm on $D(\mc E_i)$, which is equivalent to $\|f\|_{\star;i}^2$
(in \cite[Proposition 3.2]{GaudreauLamarreEJP}, this result is only stated
for real-valued functions, but it can be trivially extended to $\mbb C$ and $\mbb H$
by linearity and noting that $L^2(I,\mbb C)\cong L^2(I,\mbb R)^{\oplus 2}$ and $L^2(I,\mbb H)\cong L^2(I,\mbb R)^{\oplus 4}$, the latter with $\prec\cdot,\cdot\succ$).
We then obtain the claim regarding \eqref{Equation: +1 Norm} in Proposition \ref{Proposition: Operator Form-Bound}
by noting that, for $f\in L^2(\mc A,\mbb F)$,
if we let $C=C_1+\cdots+C_i$, then
\begin{align}
\label{Equation: L Star Direct Sum 2}
\|f\|_\star^2=\sum_{i=1}^r\|f(i,\cdot)\|_{\star;i}^2
\qquad\text{and}\qquad\|f\|_{+1}^2=\sum_{i=1}^r\|f(i,\cdot)\|_{+1;i}^2.
\end{align}

\subsubsection{Proof of Proposition \ref{Proposition: Operator Form-Bound} Part 2: \eqref{Equation: Noise Form Bound}}

Suppose first that $\xi_{i,j}$ are white noises for all $1\leq i,j\leq r$.
By equivalence of
$\|\cdot\|_{+1}$ and $\|\cdot\|_{\star}$, it is enough to show that almost surely,
for every $\ka\in(0,1)$, there exists $\nu>0$
such that $|\xi(f,f)|\leq\ka\|f\|^2_{\star}+\nu\|f\|_\mu^2$ for every $f\in C_0^\infty(\mc A,\mbb F)$.
Toward this end, by
\eqref{Equation: Multivariate to Scalar for Convergence 1}--\eqref{Equation: Multivariate to Scalar for Convergence 3}
(with $\eps=\ze=0$)
and the triangle inequality,
it suffices to show the following: Let $W$ be a standard Brownian motion in $\mbb R$.
For any choice of $\ka\in(0,1)$ and
\[\ell\in\begin{cases}
\{1\}&\text{if }\mbb F=\mbb R\\
\{1,\msf i\}&\text{if }\mbb F=\mbb C\\
\{1,\msf i,\msf j,\msf k\}&\text{if }\mbb F=\mbb H,
\end{cases}\]
there exists $\nu>0$ such that
\begin{align}
\label{Equation: White Noise Form Bound in fi and fj}
\big|W'\big(f(i,\cdot),\ell f(j,\cdot)\big)\big|\leq\ka\|f\|_{\star}^2+\nu\|f\|_\mu^2
\end{align}
for every $f\in C_0^\infty(\mc A,\mbb F)$ and $1\leq i,j\leq r$.

Suppose first that we are in Case 1 or Case 2.
Consider the averaged process
\[\tilde W(x)=\int_x^{x+1}W(x)\d x,\qquad x\in\mbb R,\]
and write $W=\tilde W+(W-\tilde W)$. Then, by definition of $W'$
in \eqref{Equation: White Noise 0}
and an integration by parts, we have that
\begin{multline*}
W'\big(f(i,\cdot),\ell f(j,\cdot)\big)=f(i,0)^*\tilde W(0)\msf\ell f(j,0)+\big\langle f(i,\cdot),\tilde W'\ell f(j,\cdot)\big\rangle\\
+\big\langle f'(i,\cdot),(\tilde W-W)\ell f(j,\cdot)\big\rangle+\big\langle f(i,\cdot),(\tilde W-W)\ell f'(j,\cdot)\big\rangle.
\end{multline*}
By \cite[Lemma 3.1]{GaudreauLamarreEJP}, for every $\ka'>0$, there exists some $\nu'>0$
such that
\begin{align}
\label{Equation: Pointwise Bound in f}
|f(i,x)|^2\leq\ka'\|f'(i,\cdot)\|_2^2+\nu'\|f(i,\cdot)\|_2^2\qquad\text{for every }x\in I.
\end{align}
Thus, since $\tilde W(0)$ is finite and $xy\leq x^2+y^2$ for $x,y\geq0$, the latter of which implies
\begin{align}
\label{Equation: Pointwise Bound in f 2}
|f(i,0)^*\tilde W(0)\ell f(j,0)|\leq|f(i,0)|^2|\tilde W(0)|+|f(j,0)|^2|\tilde W(0)|,
\end{align}
in order to prove \eqref{Equation: White Noise Form Bound in fi and fj},
it suffices to show that almost surely, for every $\ka\in(0,1)$, there exists $\nu>0$ such that,
\begin{multline}
\label{Equation: White Noise Form Bound in fi and fj 2}
\Big|\big\langle f(i,\cdot),\tilde W'\ell f(j,\cdot)\big\rangle\Big|
+\Big|\big\langle f'(i,\cdot),(\tilde W-W)\ell f(j,\cdot)\big\rangle\Big|+\Big|\big\langle f(i,\cdot),(\tilde W-W)\ell f'(j,\cdot)\big\rangle\Big|\\
\leq\ka\|f\|_{\star}^2+\nu\|f\|_\mu^2.
\end{multline}
For this purpose,
by combining Jensen's inequality with the fact that $xy\leq\frac{\theta}{2}x^2+\frac{1}{2\theta}y^2$ for any $x,y\geq0$ and $\theta>0$,
\begin{multline}
\label{Equation: White Noise Form Bound in fi and fj 3}
\Big|\big\langle f(i,\cdot),\tilde W'\ell f(j,\cdot)\big\rangle\Big|
\leq\int_I|f(i,x)||\tilde W'(x)|^{1/2}|f(j,x)||\tilde W'(x)|^{1/2}\d x\\
\leq\frac12\int_I|f(i,x)|^2|\tilde W'(x)|\d x+\frac12\int_I|f(j,x)|^2|\tilde W'(x)|\d x;
\end{multline}
\begin{multline}
\label{Equation: White Noise Form Bound in fi and fj 4}
\Big|\big\langle f'(i,\cdot),(\tilde W-W)\ell f(j,\cdot)\big\rangle\Big|
\leq\int_I |f'(i,x)||W(x)-\tilde W(x)||f(j,x)|\d x\\
\leq\frac{\theta}{2}\|f'(i,\cdot)\|_2^2+\frac{1}{2\theta}\int_I|f(j,x)|^2\big(W(x)-\tilde W(x)\big)^2\d x;
\end{multline}
and similarly,
\begin{align}
\label{Equation: White Noise Form Bound in fi and fj 5}
\Big|\big\langle f'(i,\cdot),(\tilde W-W)\ell f(j,\cdot)\big\rangle\Big|
\leq\frac{\theta}{2}\|f'(j,\cdot)\|_2^2+\frac{1}{2\theta}\int_I|f(i,x)|^2\big(W(x)-\tilde W(x)\big)^2\d x.
\end{align}
By \cite[(4.2)]{GaudreauLamarreEJP}, there exists a finite random variable $C>0$ such that,
almost surely,
\[|\tilde W'(x)|,\big(W(x)-\tilde W(x)\big)^2\leq C\log(2+|x|)\]
for all $x\in I$. In particular, by the growth lower bound in Assumption \ref{Assumption: Potential},
this means that almost surely, for any $\theta>0$, there exists a $\nu_\theta>0$ such that for every $1\leq i\leq r$,
\[|\tilde W'(x)|,\big(W(x)-\tilde W(x)\big)^2\leq\theta^2 V(i,x)_++\nu_\theta\]
for all $x\in I$. If we plug this back into \eqref{Equation: White Noise Form Bound in fi and fj 3}--\eqref{Equation: White Noise Form Bound in fi and fj 5},
then we get
\begin{multline}
\label{Equation: White Noise Form Bound in fi and fj 6}
\Big|\big\langle f(i,\cdot),\tilde W'\ell f(j,\cdot)\big\rangle\Big|\\
\leq\tfrac{\theta^2}2\big(\|f(i,\cdot)V(i,\cdot)_+^{1/2}\|_2^2+\|f(j,\cdot)V(j,\cdot)_+^{1/2}\|_2^2\big)+\tfrac{\nu_\theta}2\big(\|f(i,\cdot)\|_2^2+\|f(j,\cdot)\|_2^2\big);
\end{multline}
\begin{align}
\label{Equation: White Noise Form Bound in fi and fj 7}
\Big|\big\langle f(i,\cdot),\tilde W'\ell f(j,\cdot)\big\rangle\Big|\leq\tfrac{\theta}{2}\|f'(i,\cdot)\|_2^2+\tfrac{\theta}2\|f(j,\cdot)V(j,\cdot)_+^{1/2}\|_2^2+\tfrac{\nu_\theta}{2\theta}\|f(j,\cdot)\|_2^2;
\end{align}
and similarly
\begin{align}
\label{Equation: White Noise Form Bound in fi and fj 8}
\Big|\big\langle f'(i,\cdot),(\tilde W-W)\ell f(j,\cdot)\big\rangle\Big|\leq\tfrac{\theta}{2}\|f'(j,\cdot)\|_2^2+\tfrac\theta2\|f(i,\cdot)V(i,\cdot)_+^{1/2}\|_2^2+\tfrac{\nu_\theta}{2\theta}\|f(i,\cdot)\|_2^2
\end{align}
Given that $\theta>0$ above can be taken arbitrarily small, and that 
\[\|f'(i,\cdot)\|_2^2,\|f(i,\cdot)V(i,\cdot)_+^{1/2}\|_2^2\leq\|f\|_\star^2\qquad\text{and}\qquad\|f(i,\cdot)\|_2^2\leq\|f\|_\mu^2\]
for all $1\leq i\leq r$, we obtain \eqref{Equation: White Noise Form Bound in fi and fj 2}
from \eqref{Equation: White Noise Form Bound in fi and fj 6}--\eqref{Equation: White Noise Form Bound in fi and fj 8}.

We now finish the proof of \eqref{Equation: Noise Form Bound}---and thus of Proposition \ref{Proposition: Operator Form-Bound} in the case of white noises---by establishing
\eqref{Equation: White Noise Form Bound in fi and fj} in Case 3. In that case, by definition of $W'$, we can write
\[\big|W'\big(f(i,\cdot),\ell f(j,\cdot)\big)\big|\leq|f(i,\th)||f(j,\th)||W(\th)|
+\big|\big\langle f'(i,\cdot),Wf(j,\cdot)\big\rangle\big|+\big|\big\langle f(i,\cdot),Wf'(j,\cdot)\big\rangle\big|.\]
With this in hand, we obtain \eqref{Equation: Noise Form Bound} by controlling $|f(i,\th)||f(j,\th)||W(\th)|$ using \eqref{Equation: Pointwise Bound in f}
and \eqref{Equation: Pointwise Bound in f 2}, and then noting that
\begin{multline}
\big|\big\langle f'(i,\cdot),W\ell f(j,\cdot)\big\rangle\big|\leq\sup_{y\in[0,\th]}|W(y)|\int_I |f'(i,x)||f(j,x)|\d x\\
\leq\sup_{y\in[0,\th]}|W(y)|\big(\tfrac\theta2\|f'(i,\cdot)\|_2^2+\tfrac1{2\theta}\|f(j,x)\|_2^2\big)
\end{multline}
for any choice of $\theta>0$ (a similar bound holds for $\langle f(i,\cdot),W\ell f'(j,\cdot)\rangle$).

As a final remark, we note that exactly the same argument applies if $\xi$ contain
regular noises, as per Definition \ref{Definition: Regular Noise}. For this,
we simply replace $W$ in the above argument by the integral $\Xi(x)=\int_0^x\mc X(x)\d x$,
where $\Xi$ is one of the regular noises used to define any of the $\xi_{i,j}$.
This case is also covered by \cite[(4.2)]{GaudreauLamarreEJP}
(in the terminology of \cite{GaudreauLamarreEJP}, the case of a regular noise
is called {\it bounded}, which comes from the fact that the noise's covariance
function is in $L^\infty(\mbb R)$; see \cite[Example 2.28]{GaudreauLamarreEJP}).
Thus, the proof of Proposition \ref{Proposition: Operator Form-Bound} is now complete.

\subsubsection{Proof of Proposition \ref{Proposition: Uniform Eigenvalue Bounds}}

We now conclude this subsection by proving Proposition \ref{Proposition: Uniform Eigenvalue Bounds}.
The reason why we prove this in the same subsection as Proposition \ref{Proposition: Operator Form-Bound}
is that its proof is very similar to \eqref{Equation: Noise Form Bound}. Indeed, by the variational characterization
of the eigenvalues in \eqref{Equation: Variational} and the equivalence of $\|\cdot\|_\star$ with
the norm induced by $H$'s quadratic form in \eqref{Equation: +1 Norm}, the statement of Proposition
\ref{Proposition: Uniform Eigenvalue Bounds} can be reduced to the following:
Almost surely, for every $\ka\in(0,1)$, there exists some $\nu>0$ such that for every
$\eps,\ze\in[0,1)$ and $f\in C_0^\infty(\mc A,\mbb F)$, one has
\begin{align}
\label{Equation: Uniform Form Bounds}
|\xi^{\eps,\ze}(f,f)|\leq\ka\|f\|^2_{\star}+\nu\|f\|_\mu^2.
\end{align}
In similar fashion to the reduction we obtained in \eqref{Equation: White Noise Form Bound in fi and fj},
by
\eqref{Equation: Multivariate to Scalar for Convergence 1}--\eqref{Equation: Multivariate to Scalar for Convergence 3},
this latter claim can be reduced to showing that
\begin{align}
\label{Equation: Uniform Noise Form Bound in fi and fj}
\big|(W\star\bar\rho_\eps)'\big(f(i,\cdot),\ell f(j,\cdot)\big)\big|\leq\ka\|f\|_{\star}^2+\nu\|f\|_\mu^2
\end{align}
for every $\eps>0$ (here $\eps$ plays the role of both $\eps$ and $\ze$) and $f\in C_0^\infty(\mc A,\mbb F)$.
By replicating the proof of \eqref{Equation: White Noise Form Bound in fi and fj} above verbatim,
for this it suffices to show that if we define
\[\tilde W_\eps(x)=\int_x^{x+1}W\star\bar\rho_\eps(x)\d x,\qquad x\in\mbb R,\]
then there exists a finite random variable $C>0$ such that, almost surely,
\[|\tilde W_\eps'(x)|,\big(W_\eps(x)-\tilde W_\eps(x)\big)^2\leq C\log(2+|x|)\]
for all $\eps\in[0,1)$, and moreover that
\[\sup_{\eps\in[0,1)}\sup_{x\in[0,\th]}|W\star\bar\rho_\eps(x)|<\infty.\]
These are proved in \cite[Lemma 5.18]{GaudreauLamarreEJP}.

\subsection{Proof of Proposition \ref{Proposition: Eigenvalue Convergence}}
\label{Section: Main Proposition 7}

\subsubsection{Part 1: Two Lemmas}

This proof relies on two technical lemmas:

\begin{lemma}\label{Lemma: L Star Compactness}
If $(f_n)_{n\in\mbb N}\subset D(\mc E)$ is such that $\sup_n\|f_n\|_\star<\infty$, then
there exists $f\in D(\mc E)$ and a subsequence $(n_i)_{i\in\mbb N}$ along which
\begin{enumerate}
\item $\displaystyle\lim_{i\to\infty}\|f_{n_i}-f\|_\mu=0;$
\item $\displaystyle\lim_{i\to\infty}\langle g,f'_{n_i}\rangle_\mu=\langle g,f'\rangle_\mu$ for every $g\in L^2(\mc A,\mbb F)$;
\item $\displaystyle\lim_{i\to\infty}f_{n_i}=f$ uniformly on compact sets; and
\item $\displaystyle\lim_{i\to\infty}\langle g,f_{n_i}\rangle_\star=\langle g,f\rangle_\star$
for every $g\in D(\mc E)$.
\end{enumerate}
\end{lemma}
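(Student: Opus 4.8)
The plan is to exploit the fact that $\|f\|_\star^2 = \|f'\|_\mu^2 + \|f\|_\mu^2 + \|V_+^{1/2}f\|_\mu^2$ controls the $H^1$-type norm together with a growing weight, which is exactly the setup for a Rellich--Kondrachov-style compactness argument. Since $L^2(\mc A,\mbb F) = L^2(I,\mbb F)^{\oplus r}$ and $\|f\|_\star^2 = \sum_i \|f(i,\cdot)\|_{\star;i}^2$ as recorded in \eqref{Equation: L Star Direct Sum 2}, it suffices to treat each component $f(i,\cdot)$ on $I$ separately and then take a common subsequence over the finitely many indices $i$. First I would invoke the scalar analogue: a bounded sequence in $D(\mc E_i)$ with $\sup_n \|f_n(i,\cdot)\|_{\star;i} < \infty$ has a subsequence converging in $L^2(I,\mbb F)$, weakly in $H^1$, and uniformly on compacts. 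This is the content of standard local compactness for one-dimensional Schr\"odinger form domains with confining potentials; in the terminology of the paper it follows from \cite[Proposition 3.2 and its proof]{GaudreauLamarreEJP} together with the Banach--Alaoglu theorem applied to the $H^1$-bounded sequence (the derivatives $f_n'(i,\cdot)$ are bounded in $L^2(I,\mbb F)$ by hypothesis).

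The key steps, in order: (i) Reduce to a single component $i$ via the direct-sum decomposition. (ii) Since $(f_n(i,\cdot))$ is bounded in the Hilbert space $D(\mc E_i)$ (equivalently in $H^1_{V(i,\cdot)}(I,\mbb F)$ by the norm equivalence \eqref{Equation: +1 Norm}, working over $\mbb R$ with $\prec\cdot,\cdot\succ$ when $\mbb F = \mbb H$), extract a weakly convergent subsequence $f_{n_j}(i,\cdot) \rightharpoonup f(i,\cdot)$ in $D(\mc E_i)$; this immediately gives item (4) for that component and, via $f_{n_j}'(i,\cdot)\rightharpoonup f'(i,\cdot)$ in $L^2$, item (2). (iii) Upgrade weak to strong $L^2$-convergence: on any bounded subinterval $I \cap [-R,R]$ the embedding $H^1 \hookrightarrow L^2$ is compact, so a further subsequence converges in $L^2_{\mathrm{loc}}$ and, by the one-dimensional Sobolev embedding $H^1 \hookrightarrow C$, uniformly on compacts, giving item (3). (iv) Promote $L^2_{\mathrm{loc}}$ convergence to $L^2(I,\mbb F)$ convergence using the confining potential: for $|x|$ large, $V_+(i,x) \geq c(R)$ with $c(R)\to\infty$, and $\int_{|x|>R}|f_{n_j}(i,x)|^2\,\dd x \leq c(R)^{-1}\|V_+^{1/2}f_{n_j}(i,\cdot)\|_2^2 \leq c(R)^{-1}\sup_n\|f_n\|_\star^2 \to 0$ uniformly in $j$; combined with $L^2_{\mathrm{loc}}$ convergence this yields item (1). (v) Finally diagonalize over $i \in \{1,\ldots,r\}$ to obtain a single subsequence $(n_i)$ working for all components, and set $f = f(1,\cdot)\oplus\cdots\oplus f(r,\cdot) \in D(\mc E)$.

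The main obstacle is step (iv), the tightness-at-infinity argument, but it is not serious here because Assumption \ref{Assumption: Potential} forces $V(i,x)/|x|^{\mf a}\to\infty$ (or $I$ is bounded, where there is nothing to prove), so the uniform smallness of the tails is immediate from the $\|V_+^{1/2}f_n\|_\mu$ bound. A secondary technical point is the quaternionic case $\mbb F = \mbb H$: one must be careful that weak convergence is taken with respect to the real inner product $\prec\cdot,\cdot\succ$ (under which $D(\mc E_i)$ is a genuine Hilbert space, per Remark \ref{Remark: Quaternion on R 1}), but since $\langle g,h\rangle_\mu = \prec g,h\succ_\mu + \prec g\msf i,h\succ_\mu\msf i + \prec g\msf j,h\succ_\mu\msf j + \prec g\msf k,h\succ_\mu\msf k$, convergence of all four real pairings (which follows by applying weak convergence to $g, g\msf i, g\msf j, g\msf k$ in turn) upgrades to convergence of the $\mbb H$-valued pairings in items (2) and (4). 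No new ideas beyond the scalar results of \cite{GaudreauLamarreEJP} are needed; the lemma is essentially a clean packaging of them in the vector-valued notation of this paper.
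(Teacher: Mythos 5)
Your proposal is correct and follows the same route as the paper: decompose $\|\cdot\|_\star^2 = \sum_i \|\cdot\|_{\star;i}^2$ to reduce to the scalar-component case, then invoke the one-dimensional compactness result from \cite{GaudreauLamarreEJP} (the paper cites Lemma 5.15 there rather than Proposition 3.2, but the content you unpack --- Banach--Alaoglu for weak $H^1$ convergence, Rellich--Kondrachov plus the 1D Sobolev embedding for local and uniform convergence, and the $\|V_+^{1/2}f_n\|$ bound for tightness at infinity --- is exactly what underlies that citation). Your treatment of the quaternionic subtlety via the real inner product $\prec\cdot,\cdot\succ$ is also consistent with how the paper handles $\mbb F=\mbb H$ throughout.
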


\begin{lemma}
\label{Lemma: Generic Convergence}
Let $(\mc Q_{\eps})_{\eps\in[0,1)}$ be a collection of bilinear forms such that the following conditions hold:
\begin{enumerate}
\item There exists $\ka\in(0,1)$ and $\nu>0$ such that
\[|\mc Q_\eps(f,f)|\leq\ka\mc E(f,f)+\nu\|f\|_\mu^2\qquad\text{for every }\eps\in[0,1)\text{ and }f\in D(\mc E).\]
\item $(\mc Q_{\eps})_{\eps\in[0,1)}$ have a common form core $\mr{FC}$. For every $f,g\in\mr{FC}$, one has
\begin{align}
\label{Equation: Generic Convergence 1}
\lim_{\eps\to0}\mc Q_\eps(f,g)=\mc Q_0(f,g).
\end{align}
\item If $(\eps_n)_{n\in\mbb N}\subset(0,1)$ is a vanishing sequence, $\sup_{n}\|f_n\|_\star<\infty$, and $f_n\to f$ in the
sense of Lemma \ref{Lemma: L Star Compactness} (1)--(4), then for every $g\in\mr{FC}$, one has
\begin{align}
\label{Equation: Generic Convergence 2}
\lim_{n\to\infty}\mc Q_{\eps_n}(f_n,g)=\mc Q_0(f,g).
\end{align}
\end{enumerate}
If we let $T_\eps$ denote the unique self-adjoint operators with form $\mc E+\mc Q_\eps$, then
every vanishing sequence $(\eps_n)_{n\in\mbb N}\subset(0,1)$ has a subsequence $(\eps_{n_\ell})_{\ell\in\mbb N}$ along which
\begin{align}
\label{Equation: Generic Lemma Eigenvalue Convergence}
\lim_{\ell\to\infty}\la_k(T_{\eps_{n_\ell}})=\la_k(T_0)\qquad\text{for every }k\geq1.
\end{align}
\end{lemma}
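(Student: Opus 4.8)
The plan is to prove Lemma \ref{Lemma: Generic Convergence} by combining the variational characterization of eigenvalues with the compactness afforded by Lemma \ref{Lemma: L Star Compactness}. First I would record that, by hypothesis (1) and the equivalence of $\|\cdot\|_{+1}$ with $\|\cdot\|_\star$ from Proposition \ref{Proposition: Operator Form-Bound}, the forms $\mc E+\mc Q_\eps$ are all closed, bounded below uniformly in $\eps$, and their form domain is $D(\mc E)$; moreover the quadratic forms $f\mapsto(\mc E+\mc Q_\eps)(f,f)+\nu'\|f\|_\mu^2$ (for a large fixed $\nu'$) induce norms uniformly equivalent to $\|\cdot\|_\star$. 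Hence the $T_\eps$ have compact resolvent with discrete spectrum, and the variational formula
\[
\la_k(T_\eps)=\inf_{\substack{L\subset D(\mc E)\\\dim L=k}}\ \sup_{f\in L,~\|f\|_\mu=1}(\mc E+\mc Q_\eps)(f,f)
\]
holds, with the infimum over $k$-dimensional subspaces.

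Next I would fix a vanishing sequence $(\eps_n)$ and extract a diagonal subsequence along which $\la_k(T_{\eps_n})$ converges (to some $\la_k^\infty\in[-\nu',\infty]$, but the uniform lower bound from (1) plus the uniform upper bound $\la_k(T_{\eps_n})\le(1+\ka)\la_k(H)+\nu$ obtained by testing on eigenfunctions of $H$ keep these finite) for every $k\ge1$; this uses a Cantor diagonal argument since $k$ ranges over $\mbb N$. The goal is then the two inequalities $\la_k^\infty\le\la_k(T_0)$ and $\la_k^\infty\ge\la_k(T_0)$. For the upper bound, I would take the first $k$ eigenfunctions $\psi_1(T_0),\ldots,\psi_k(T_0)$; since $\mr{FC}$ is a form core for $\mc E+\mc Q_0=T_0$'s form, I approximate them in $\|\cdot\|_{+1}$-norm (hence in $\|\cdot\|_\star$, and in $\|\cdot\|_\mu$) by elements $\phi_1,\ldots,\phi_k\in\mr{FC}$; by (2), $(\mc E+\mc Q_{\eps_n})(\phi_i,\phi_j)\to(\mc E+\mc Q_0)(\phi_i,\phi_j)$ for each $i,j$, so applying the variational formula to $L=\mr{span}\{\phi_1,\ldots,\phi_k\}$ and letting $n\to\infty$, then letting the approximation error tend to $0$, yields $\la_k^\infty\le\la_k(T_0)$.

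For the reverse inequality I would argue with near-minimizers: let $f_n$ be (normalized) linear combinations of the first $k$ eigenfunctions of $T_{\eps_n}$ chosen so that $f_n\perp_\mu$ the first $k-1$ limiting eigenfunctions being constructed; then $(\mc E+\mc Q_{\eps_n})(f_n,f_n)\le\la_k(T_{\eps_n})+o(1)$, so $\|f_n\|_\star$ is uniformly bounded by hypothesis (1). By Lemma \ref{Lemma: L Star Compactness} pass to a further subsequence with $f_n\to f$ in the four senses listed there. Using (3) together with the lower-semicontinuity of $f\mapsto\mc E(f,f)$ under the weak-type convergence of Lemma \ref{Lemma: L Star Compactness}(2)--(4) — concretely, $\mc E(f,f)\le\liminf_n\mc E(f_n,f_n)$, combined with $\mc Q_{\eps_n}(f_n,g)\to\mc Q_0(f,g)$ for $g\in\mr{FC}$ and a density/diagonal argument to upgrade to $g=f$ — one gets $(\mc E+\mc Q_0)(f,f)\le\liminf_n(\mc E+\mc Q_{\eps_n})(f_n,f_n)\le\la_k^\infty$. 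Since $\|f\|_\mu=1$ and $f$ is orthogonal to the previously constructed limiting eigenfunctions, Min-Max for $T_0$ gives $\la_k(T_0)\le\la_k^\infty$. Running this inductively over $k$ produces the eigenfunctions needed at each stage. The main obstacle I anticipate is the passage from (3), which only controls $\mc Q_{\eps_n}(f_n,g)$ for $g$ in the core $\mr{FC}$, to the diagonal quantity $\mc Q_{\eps_n}(f_n,f_n)$ needed in the lower bound: this requires exploiting the uniform form-bound (1) to show that $\mc Q_{\eps_n}(f_n,f_n-\phi)$ and $\mc Q_{\eps_n}(f_n-f,\phi)$ are small when $\phi\in\mr{FC}$ approximates $f$ in $\|\cdot\|_\star$, i.e. a uniform continuity estimate $|\mc Q_\eps(u,v)|\le C\|u\|_\star\|v\|_\star$ (which follows from polarizing (1)), so that the error is controlled uniformly in $n$. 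Finally, to deduce Proposition \ref{Proposition: Eigenvalue Convergence} from the lemma I would verify its three hypotheses for $\mc Q_\eps=\xi^{\eps,\ze}$ (with $\ze$ fixed) and for $\mc Q_\ze=\xi^{0,\ze}$: (1) is Proposition \ref{Proposition: Uniform Eigenvalue Bounds}/\eqref{Equation: Uniform Form Bounds}, (2) is the common form core $\mr{FC}$ from Remark \ref{Remark: Form Core} together with the $L^2_{\mr{loc}}$-type convergence $W\star\bar\rho_\eps\to W$ tested against smooth compactly supported functions, and (3) follows by the same testing combined with the uniform-in-compact-sets convergence $f_n\to f$ and $\|f_n'\|_\mu$ boundedness from Lemma \ref{Lemma: L Star Compactness}, after an integration by parts to move the derivative off the (merely continuous) noise.
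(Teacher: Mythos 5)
Your high-level architecture — min-max variational formula, Cantor diagonal extraction, upper bound via core test functions and hypothesis (2), lower bound via near-minimizers, compactness from Lemma \ref{Lemma: L Star Compactness}, and weak lower semicontinuity of the $\mc E$-part — matches the strategy the paper delegates to \cite[Pages 33--34]{GaudreauLamarreEJP}, and you correctly isolate the technical crux: upgrading $\mc Q_{\eps_n}(f_n,g)$ ($g$ in the core) to the diagonal $\mc Q_{\eps_n}(f_n,f_n)$ needed for the lower bound.

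Your resolution of that crux, however, does not work. You propose to show $\mc Q_{\eps_n}(f_n,f_n-\phi)$ is small via the polarized continuity estimate $|\mc Q_\eps(u,v)|\le C\|u\|_\star\|v\|_\star$ once $\phi\in\mr{FC}$ approximates $f$ in $\|\cdot\|_\star$. This bounds the error by $C\|f_n\|_\star\,\|f_n-\phi\|_\star$, but $\|f_n-\phi\|_\star$ is \emph{not} small: Lemma \ref{Lemma: L Star Compactness}-(4) gives only weak $\|\cdot\|_\star$-convergence of $f_n$ to $f$, so $\|f_n-f\|_\star$ does not tend to zero, and $\|f_n-\phi\|_\star\ge\|f_n-f\|_\star-\|f-\phi\|_\star$ stays bounded away from zero however well $\phi$ approximates $f$. (Strong $\|\cdot\|_\star$-convergence of the near-minimizers would essentially be the conclusion you are trying to prove, so assuming it would be circular.) In fact the term $\mc Q_{\eps_n}(f_n,f_n-\phi)$ really is $O(1)$, not $o(1)$, and the argument must replace a size estimate with a sign estimate. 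The way out is the quadratic decomposition
\[
(\mc E+\mc Q_{\eps_n})(f_n,f_n)=(\mc E+\mc Q_{\eps_n})(f_n-\phi,f_n-\phi)+2(\mc E+\mc Q_{\eps_n})(f_n,\phi)-(\mc E+\mc Q_{\eps_n})(\phi,\phi),
\]
in which the error is a \emph{diagonal} form evaluated at $f_n-\phi$. Hypothesis (1) together with the equivalence of $\|\cdot\|_{+1}$ and $\|\cdot\|_\star$ yields a one-sided bound of the form
\[
(\mc E+\mc Q_{\eps_n})(f_n-\phi,f_n-\phi)\geq(1-\ka)\|f_n-\phi\|_{+1}^2-\nu'\|f_n-\phi\|_\mu^2
\]
for a suitable $\nu'>0$, uniformly in $n$. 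Now pass to $\liminf_n$: the first term is handled by weak lower semicontinuity of $\|\cdot\|_{+1}$ (giving $\liminf\geq(1-\ka)\|f-\phi\|_{+1}^2$), the second by the strong $\|\cdot\|_\mu$-convergence from Lemma \ref{Lemma: L Star Compactness}-(1), the middle term by (3) together with $\|\cdot\|_\star$-weak continuity of $\mc E(\cdot,\phi)$, and the last term by (2). Only afterwards does one send $\phi\to f$ in $\|\cdot\|_\star$, at which point the polarized continuity estimate \emph{is} used, but only on the $\mc Q_0$-terms involving $f$ and $\phi$, where it is legitimate. This yields $\liminf_n(\mc E+\mc Q_{\eps_n})(f_n,f_n)\geq(\mc E+\mc Q_0)(f,f)$ as required. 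Your second claimed error term $\mc Q_{\eps_n}(f_n-f,\phi)\to0$ is in fact true, but because of (3) applied twice (once to the sequence $f_n$ and once to the constant sequence $f$), not because of the continuity estimate. The remainder of your proposal (upper bound, diagonal extraction, verification of the hypotheses for Proposition \ref{Proposition: Eigenvalue Convergence}) is consistent with the paper.
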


Since the proof of these lemmas are mostly technical in nature and amount to trivial modifications
of results previously proved in \cite{GaudreauLamarreEJP}, we omit a full proof and opt instead
to provide precise references:

On the one hand,
Lemma \ref{Lemma: L Star Compactness}
follows from expanding $\|\cdot\|_\star^2=\sum_i\|\cdot\|_{\star;i}^2$
using \eqref{Equation: L Star Direct Sum 1} and \eqref{Equation: L Star Direct Sum 2},
and noting that \cite[Lemma 5.15]{GaudreauLamarreEJP} proved
the same statement as Lemma \ref{Lemma: Generic Convergence}
with $\|\cdot\|_{\star}$, $D(\mc E)$, and $\langle\cdot,\cdot\rangle_\mu$ / $\|\cdot\|_\mu$ / $L^2(\mc A,\mbb F)$
respectively replaced by $\|\cdot\|_{\star;i}$, $D(\mc E_i)$, and $\langle\cdot,\cdot\rangle$ / $\|\cdot\|_2$ / $L^2(I,\mbb F)$
(see also \cite[Fact 2.2]{RamirezRiderVirag}, \cite[Fact 2.2]{BloemendalVirag}, and \cite[Fact 3.2]{BloemendalVirag2}).

On the other hand,
Lemma \ref{Lemma: Generic Convergence} follows from a verbatim application
of the argument presented in \cite[Pages 33 and 34, starting from (5.31)]{GaudreauLamarreEJP}.
More specifically:
\begin{enumerate}
\item Lemma \ref{Lemma: L Star Compactness} plays the role of
\cite[Lemma 5.15]{GaudreauLamarreEJP};
\item the forms $\mc Q_\eps$ and the operators $T_\eps$ play the role of
$\mc E_\eps$ and $\hat H_\eps$ in \cite{GaudreauLamarreEJP};
\item Lemma \ref{Lemma: Generic Convergence}-(1) plays the role of
\cite[Lemmas 5.17 and 5.18 and Remark 5.19]{GaudreauLamarreEJP};
\item Lemma \ref{Lemma: Generic Convergence}-(2) plays the role of
\cite[Lemma 5.20, (5.29)]{GaudreauLamarreEJP}; and
\item Lemma \ref{Lemma: Generic Convergence}-(3) plays the role of
\cite[Lemma 5.20, (5.30)]{GaudreauLamarreEJP}.
\end{enumerate}

\subsubsection{Part 2: Application of Lemmas}

At this point, in order to prove Proposition \ref{Proposition: Eigenvalue Convergence},
it suffices to show that for every fixed $\ze>0$,
almost surely,
the sequence of forms $(\mc Q_\eps)_{\eps\in[0,1)}=(\xi^{\eps,\ze})_{\eps\in[0,1)}$
satisfies the conditions (1)--(3) in the statement of Lemma \ref{Lemma: Generic Convergence},
and similarly for $(\mc Q_\eps)_{\eps\in[0,1)}=(\xi^{0,\eps})_{\eps\in[0,1)}$
(where in this latter sequence $\eps$ now plays the role of the parameter $\ze$ in \eqref{Equation: Eigenvalue Convergence 2}).

For this purpose, we first note that the claim of Lemma \ref{Lemma: Generic Convergence}-(1)
for both sequences follows from \eqref{Equation: Uniform Form Bounds}
and the equivalence of the norms $\|\cdot\|_\star$ and $\|\cdot\|_{+1}$
(the latter of which was established in the proof of Proposition \ref{Proposition: Operator Form-Bound}
above). Next, regarding items (2) and (3), we have by Remark
\ref{Remark: Form Core} that all forms have a common core $\mr{FC}=\mr{FC}_1\oplus\cdots\oplus\mr{FC}_r$, where the $\mr{FC}_i$ are defined
in Remark \ref{Remark: Form Core for Hi}. If we combine this with
the decompositions
\eqref{Equation: Smoothed Noise Sum Decomposition},
\eqref{Equation: Multivariate to Scalar for Convergence 4},
and \eqref{Equation: Multivariate to Scalar for Convergence 5},
then the claims in \eqref{Equation: Generic Convergence 1} and 
\eqref{Equation: Generic Convergence 2} can be reduced to the following:
Let $W$ be a standard Brownian motion, and let $1\leq i\leq r$ be fixed.
For every real-valued $f,g\in\mr{FC}_i$, one has
\begin{align}
\label{Equation: Generic Convergence 1 - scalar}
\lim_{\eps\to0}(W\star\bar\rho_\eps)'(f,g)=W'(f,g).
\end{align}
Moreover, if $(\eps_n)_{n\in\mbb N}\subset(0,1)$ is a vanishing sequence,
the sequence of functions
$f_n\in L^2(I,\mbb R)$ is such that
$\sup_{n}\|f_n\|_{\star;i}<\infty$, and $f_n\to f$ in the
sense of Lemma \ref{Lemma: L Star Compactness} (1)--(4)
(with $\|\cdot\|_{\star}$ and $\langle\cdot,\cdot\rangle_\mu$ / $\|\cdot\|_\mu$ / $L^2(\mc A,\mbb F)$
respectively replaced by $\|\cdot\|_{\star;i}$ and $\langle\cdot,\cdot\rangle$ / $\|\cdot\|_2$ / $L^2(I,\mbb F)$), then for every real-valued $g\in\mr{FC}_i$, one has
\begin{align}
\label{Equation: Generic Convergence 2 - scalar}
\lim_{n\to\infty}(W\star\bar\rho_{\eps_n})'(f_n,g)=W'(f,g).
\end{align}
These are the statements proved in \cite[Lemma 5.20]{GaudreauLamarreEJP},
in the special case where, in the notation of \cite{GaudreauLamarreEJP}, $\Xi=W$
and $\xi=W'$.
The proof of Proposition \ref{Proposition: Eigenvalue Convergence} is thus complete.

\section{Proof of Theorem \ref{Theorem: Rigidity}: Trace Covariance Asymptotics}
\label{Section: Covariance Estimates}

\begin{notation}
Throughout this section, we use $C,c>0$
to denote constants that are independent of $t_1,t_2$,
whose exact value may change from one display to the next.
\end{notation}

\begin{remark}
In Cases 1 and 2, we assume that there exists
$\ka,\nu>0$ such that
\begin{align}
\label{Equation: Always Assume Linear Growth}
V(i,x)\geq\ka|x|-\nu\qquad\text{ for all $1\leq i\leq r$ and $x\in I$}.
\end{align}
In these two cases, the fact that $I$ is infinite makes this a nontrivial strengthening 
of Assumption \ref{Assumption: Potential}.
However, the assumption that $V$ is bounded below and the fact that $I$ is bounded
in Case 3 implies that \eqref{Equation: Always Assume Linear Growth} also holds in that case.
Thus, for convenience we assume that \eqref{Equation: Always Assume Linear Growth} holds
in all cases in this section.
\end{remark}

\subsection{Proof Outline}

The only technical input into the proof of Theorem \ref{Theorem: Rigidity}
is the following:

\begin{proposition}
\label{Proposition: Covariance Estimate}
Let the hypotheses of Theorem \ref{Theorem: Rigidity} hold,
and recall the definition of $\hat H^{0,\ze}$ for $\ze>0$ in \eqref{Equation: Double Approximation Operator}.
There exists $C>0$ such that for every $t_1,t_2\in(0,1]$,
\begin{align}
\label{Equation: Rigidity RHS}
\sup_{\ze>0}\left|\mbf{Cov}\Big[\mr{Tr}\big[\mr e^{-t_1\hat H^{0,\ze}}\big],\mr{Tr}\big[\mr e^{-t_2\hat H^{0,\ze}}\big]\Big]\right|\leq C\cdot
\begin{cases}
\displaystyle\frac{(t_1t_2)^{1/4}}{\sqrt{\max\{t_1,t_2\}}}&\text{Cases 1 and 2},
\vspace{10pt}\\
(t_1t_2)^{1/4}&\text{Case 3}.
\end{cases}
\end{align}
\end{proposition}

By \eqref{Equation: Trace Formula for 0,zeta}, for any $\ze>0$ we can write
\begin{multline*}
\mbf{Cov}\Big[\mr{Tr}\big[\mr e^{-t_1\hat H^{0,\ze}}\big],\mr{Tr}\big[\mr e^{-t_2\hat H^{0,\ze}}\big]\Big]
=\mbf E\left[\int_{\mc A}\hat K^{0,\ze}(t_1;a,a)\d\mu(a)\int_{\mc A}\hat K^{0,\ze}(t_2;a,a)\d\mu(a)\right]\\-\mbf E\left[\int_{\mc A}\hat K^{0,\ze}(t_1;a,a)\d\mu(a)\right]\mbf E\left[\int_{\mc A}\hat K^{0,\ze}(t_2;a,a)\d\mu(a)\right].
\end{multline*}
By combining \eqref{Equation: Trace Moments Limit 2.1} and Theorem \ref{Theorem: Trace Moment Formulas}, this converges
as $\ze\to0$ to
\[\mbf E\Big[\mr{Tr}\big[\mr e^{-t_1\hat H}\big]\mr{Tr}\big[\mr e^{-t_2\hat H}\big]\Big]-\mbf E\Big[\mr{Tr}\big[\mr e^{-t_1\hat H}\big]\Big]\mbf E\Big[\mr{Tr}\big[\mr e^{-t_2\hat H}\big]\Big]
=\mbf{Cov}\Big[\mr{Tr}\big[\mr e^{-t_1\hat H}\big],\mr{Tr}\big[\mr e^{-t_2\hat H}\big]\Big].\]
Thus, we conclude that
\begin{align}
\label{Equation: Rigidity RHS Limit}
\lim_{\ze\to0}\left|\mbf{Cov}\Big[\mr{Tr}\big[\mr e^{-t_1\hat H^{0,\ze}}\big],\mr{Tr}\big[\mr e^{-t_2\hat H^{0,\ze}}\big]\Big]\right|
=\left|\mbf{Cov}\Big[\mr{Tr}\big[\mr e^{-t_1\hat H}\big],\mr{Tr}\big[\mr e^{-t_2\hat H}\big]\Big]\right|.
\end{align}
If we combine Proposition \ref{Proposition: Covariance Estimate} with \eqref{Equation: Rigidity RHS Limit}, we obtain that
\begin{align}
\label{Equation: Rigidity RHS ze=0}
\left|\mbf{Cov}\Big[\mr{Tr}\big[\mr e^{-t_1\hat H}\big],\mr{Tr}\big[\mr e^{-t_2\hat H}\big]\Big]\right|\leq C\cdot
\begin{cases}
\displaystyle\frac{(t_1t_2)^{1/4}}{\sqrt{\max\{t_1,t_2\}}}&\text{Cases 1 and 2},
\vspace{10pt}\\
(t_1t_2)^{1/4}&\text{Case 3},
\end{cases}
\end{align}
whenever the hypotheses of Theorem \ref{Theorem: Rigidity} hold.
This obviously implies \eqref{Equation: Rigidity Sufficient Condition 1}
and \eqref{Equation: Rigidity Sufficient Condition 2}, hence the proof of
Theorem \ref{Theorem: Rigidity} is complete.

\begin{remark}
Given that we spend much of this paper proving exact formulas for the mixed moments
of $\hat H$ with white noise (as opposed to its smooth approximations $\hat H^{\eps,\ze}$), it is natural to
wonder why we prove \eqref{Equation: Rigidity RHS} instead of
establishing \eqref{Equation: Rigidity RHS ze=0} directly. The reason for this is entirely technical,
and lies in the fact that manipulating the conditioned process $\hat U$ is more difficult
than the random walk $U$. To get around this, on the way to establishing 
\eqref{Equation: Rigidity RHS}, we use several bounds that simplify the expression for the
covariance before taking $\ze\to0$, the latter of which allow to sidestep
$\hat U$ entirely.
\end{remark}

It now only remains to prove Proposition \ref{Proposition: Covariance Estimate}:

\subsection{Proof of Proposition \ref{Proposition: Covariance Estimate}}

\subsubsection{Step 1. Covariance Formula}

Our first step toward Proposition \ref{Proposition: Covariance Estimate}
is to use the trace moment formula in Proposition \ref{Proposition: Smooth Trace Moments}
to provide an expression for
\begin{multline}
\label{Equation: Covariance Formula 1}
\mbf{Cov}\Big[\mr{Tr}\big[\mr e^{-t_1\hat H^{0,\ze}}\big],\mr{Tr}\big[\mr e^{-t_2\hat H^{0,\ze}}\big]\Big]\\
=\mbf E\Big[\mr{Tr}\big[\mr e^{-t_1\hat H^{0,\ze}}\big]\mr{Tr}\big[\mr e^{-t_2\hat H^{0,\ze}}\big]\Big]
-\mbf E\Big[\mr{Tr}\big[\mr e^{-t_1\hat H^{0,\ze}}\big]\Big]\mbf E\Big[\mr{Tr}\big[\mr e^{-t_2\hat H^{0,\ze}}\big]\Big]
\end{multline}
that is amenable to computation. For the first term on the right-hand side of \eqref{Equation: Covariance Formula 1},
Proposition \ref{Proposition: Smooth Trace Moments} in the case $n=2$ yields:
\begin{multline}
\label{Equation: Covariance Formula 2}
\mbf E\Big[\mr{Tr}\big[\mr e^{-t_1\hat H^{0,\ze}}\big]\mr{Tr}\big[\mr e^{-t_2\hat H^{0,\ze}}\big]\Big]
=\int_{\mc A^2}\Pi_A(\bs t;\bs a,\bs a)\mbf E\Bigg[\mf m^{\bs\ze}_{\bs t}(A^{\bs a,\bs a}_{\bs t})\\
\cdot\mr e^{-\int_0^{|\bs t|}V(A^{\bs a,\bs a}_{\bs t}(s))\d s+\mf B_{|\bs t|}(A^{\bs a,\bs a}_{\bs t})+\frac{\si^2}{2}\|L_{[0,t_1)}(A^{\bs a,\bs a}_{\bs t})+L_{[t_1,t_1+t_2)}(A^{\bs a,\bs a}_{\bs t})\|_\mu^2}\Bigg]\d\mu^2(\bs a),
\end{multline}
where we denote $\bs t=(t_1,t_2)$ and $\bs\ze=(\ze,\ze)$.
In contrast, two applications of Proposition \ref{Proposition: Smooth Trace Moments} in the case $n=1$
yields
\begin{multline}
\label{Equation: Covariance Formula 3}
\mbf E\Big[\mr{Tr}\big[\mr e^{-t_1\hat H^{0,\ze}}\big]\Big]\mbf E\Big[\mr{Tr}\big[\mr e^{-t_2\hat H^{0,\ze}}\big]\Big]
=\prod_{k=1}^2\int_{\mc A}\Pi_A(t_k;a_k,a_k)\mbf E\Bigg[\mf m^{\ze}_{t_k}(A^{a_k,a_k}_{t_k})\\
\cdot\mr e^{-\int_0^{t_k}V(A^{a_k,a_k}_{t_k}(s))\d s+\mf B_{t_k}(A^{a_k,a_k}_{t_k})+\frac{\si^2}{2}\|L_{[0,t_k)}(A^{a_k,a_k}_{t_k})\|_\mu^2}\Bigg]\d\mu(a_k).
\end{multline}
In order to relate this to \eqref{Equation: Covariance Formula 3}, we introduce the following couplings:

\begin{notation}
\label{Notation: Z and Z bar coupling}
For any given $\bs a=(a_1,a_2)\in\mc A^2$, where $a_k=(i_k,x_k)$ for $k=1,2$,
we let $A^{a_1,a_1}_{t_1}=(U^{i_1,i_1}_{t_1},Z^{x_1,x_1}_{t_1})$ be as in
Definitions \ref{Definition: Combined Stochastic Processes}
and \ref{Definition: Combined Stochastic Processes 2}, and let
$\bar A^{a_2,a_2}_{t_2}=(\bar U^{i_2,i_2}_{t_2},\bar Z^{x_2,x_2}_{t_2})$
be a process that has the same distribution as $A_{t_2}^{a_2,a_2}$, but which we assume
is independent of $A^{a_1,a_1}_{t_1}$. Then we can couple these two processes with
the concatenated process $A^{\bs a,\bs a}_{\bs t}$ in \eqref{Equation: Covariance Formula 3}
in such a way that
\[A^{\bs a,\bs a}_{\bs t}(s)=\begin{cases}
A^{a_1,a_1}_{t_1}(s)&\text{if }0\leq s< t_1,
\vspace{5pt}\\
\bar A^{a_2,a_2}_{t_2}(s-t_1)&\text{if }t_1\leq s\leq t_1+t_2.
\end{cases}\]
\end{notation}

Under this coupling, the additivity of integrals and boundary local time yields
\[\int_0^{t_1}V\big(A^{a_1,a_1}_{t_1}(s)\big)\d s+\int_0^{t_2}V\big(\bar A^{a_2,a_2}_{t_2}(s)\big)\d s=\int_0^{|\bs t|}V\big(A^{\bs a,\bs a}_{\bs t}(s)\big)\d s,\]
\[\mf B_{t_1}(A^{a_1,a_1}_{t_1})+\mf B_{t_2}(\bar A^{a_2,a_2}_{t_2})=\mf B_{|\bs t|}(A^{\bs a,\bs a}_{\bs t}),\]
where $\bs a=(a_1,a_2)$. Moreover,
\[L_{[0,t_1)}(A^{a_1,a_1}_{t_1})=L_{[0,t_1)}(A^{\bs a,\bs a}_{\bs t});\qquad L_{[0,t_2)}(\bar A^{a_2,a_2}_{t_2})=L_{[t_1,t_1+t_2)}(A^{\bs a,\bs a}_{\bs t});\]
and $\prod_{k=1}^2\Pi_A(t_k;a_k,a_k)=\Pi(\bs t;\bs a,\bs a)$. Therefore, an application of Fubini's theorem yields that
\eqref{Equation: Covariance Formula 3} is equal to
\begin{multline}
\label{Equation: Covariance Formula 4}
\int_{\mc A^2}\Pi_A(\bs t;\bs a,\bs a)\mbf E\Bigg[\mf m^{\ze}_{t_1}(A^{a_1,a_1}_{t_1})\mf m^{\ze}_{t_2}(\bar A^{a_2,a_2}_{t_2})\\
\cdot\mr e^{-\int_0^{|\bs t|}V(A^{\bs a,\bs a}_{\bs t}(s))\d s+\mf B_{|\bs t|}(A^{\bs a,\bs a}_{\bs t})+\frac{\si^2}{2}(\|L_{[0,t_1)}(A^{\bs a,\bs a}_{\bs t})\|_\mu^2+\|L_{[t_1,t_1+t_2)}(A^{\bs a,\bs a}_{\bs t})\|_\mu^2)}\Bigg]\d\mu^2(\bs a).
\end{multline}
In summary, the only two differences between \eqref{Equation: Covariance Formula 2} and \eqref{Equation: Covariance Formula 4} are
\[\mf m^{\bs\ze}_{\bs t}(A^{\bs a,\bs a}_{\bs t})
\quad\text{vs.}\quad
\mf m^{\ze}_{t_1}(A^{a_1,a_1}_{t_1})\mf m^{\ze}_{t_2}(\bar A^{a_2,a_2}_{t_2})\]
and
\[\tfrac{\si^2}{2}\|L_{[0,t_1)}(A^{\bs a,\bs a}_{\bs t})+L_{[t_1,t_1+t_2)}(A^{\bs a,\bs a}_{\bs t})\|_\mu^2
\quad\text{vs.}\quad
\tfrac{\si^2}{2}(\|L_{[0,t_1)}(A^{\bs a,\bs a}_{\bs t})\|_\mu^2+\|L_{[t_1,t_1+t_2)}(A^{\bs a,\bs a}_{\bs t})\|_\mu^2).\]
In order to analyze the contributions of these differences
to the covariance one at a time,
we add and subtract the intermediary term 
\begin{multline*}
\int_{\mc A^2}\Pi_A(\bs t;\bs a,\bs a)\mbf E\Bigg[\mf m^{\bs\ze}_{\bs t}(A^{\bs a,\bs a}_{\bs t})\\
\cdot\mr e^{-\int_0^{|\bs t|}V(A^{\bs a,\bs a}_{\bs t}(s))\d s+\mf B_{|\bs t|}(A^{\bs a,\bs a}_{\bs t})+\frac{\si^2}{2}(\|L_{[0,t_1)}(A^{\bs a,\bs a}_{\bs t})\|_\mu^2+\|L_{[t_1,t_1+t_2)}(A^{\bs a,\bs a}_{\bs t})\|_\mu^2)}\Bigg]\d\mu^2(\bs a);
\end{multline*}
in doing so, the covariance in \eqref{Equation: Covariance Formula 1} (which we write as the difference between
\eqref{Equation: Covariance Formula 2} and \eqref{Equation: Covariance Formula 4} with the intermediary term
added and subtracted)
can be written as the sum of the following two terms:
\begin{multline}
\label{Equation: Covariance Formula Piece 1}
\int_{\mc A^2}\Pi_A(\bs t;\bs a,\bs a)\mbf E\Bigg[\mf m^{\bs\ze}_{\bs t}(A^{\bs a,\bs a}_{\bs t})\mr e^{-\int_0^{|\bs t|}V(A^{\bs a,\bs a}_{\bs t}(s))\d s+\mf B_{|\bs t|}(A^{\bs a,\bs a}_{\bs t})}\\
\cdot\left(\mr e^{\frac{\si^2}{2}\|L_{[0,t_1)}(A^{\bs a,\bs a}_{\bs t})+L_{[t_1,t_1+t_2)}(A^{\bs a,\bs a}_{\bs t})\|_\mu^2}-\mr e^{\frac{\si^2}{2}(\|L_{[0,t_1)}(A^{\bs a,\bs a}_{\bs t})\|_\mu^2+\|L_{[t_1,t_1+t_2)}(A^{\bs a,\bs a}_{\bs t})\|_\mu^2)}\right)\Bigg]\d\mu^2(\bs a);
\end{multline}
and
\begin{multline}
\label{Equation: Covariance Formula Piece 2}
\int_{\mc A^2}\Pi_A(\bs t;\bs a,\bs a)\mbf E\Bigg[\left(\mf m^{\bs\ze}_{\bs t}(A^{\bs a,\bs a}_{\bs t})-\mf m^{\ze}_{t_1}(A^{a_1,a_1}_{t_1})\mf m^{\ze}_{t_2}(\bar A^{a_2,a_2}_{t_2})\right)\\
\cdot\mr e^{-\int_0^{|\bs t|}V(A^{\bs a,\bs a}_{\bs t}(s))\d s+\mf B_{|\bs t|}(A^{\bs a,\bs a}_{\bs t})+\frac{\si^2}{2}(\|L_{[0,t_1)}(A^{\bs a,\bs a}_{\bs t})\|_\mu^2+\|L_{[t_1,t_1+t_2)}(A^{\bs a,\bs a}_{\bs t})\|_\mu^2)}\Bigg]\d\mu^2(\bs a).
\end{multline}
The proof of 
Proposition \ref{Proposition: Covariance Estimate} now relies on showing that for any choice of $\ze>0$,
\begin{align}
\label{Equation: Rigidity RHS Reduced to 2 Cases}
|\eqref{Equation: Covariance Formula Piece 1}|~,|\eqref{Equation: Covariance Formula Piece 2}|\leq\text{right-hand side of }\eqref{Equation: Rigidity RHS}.
\end{align}
We bound these terms one at a time.

\subsubsection{Step 2. Upper Bound for \eqref{Equation: Covariance Formula Piece 1}}

Noting that
\begin{multline*}
\|L_{[0,t_1)}(A^{\bs a,\bs a}_{\bs t})+L_{[t_1,t_1+t_2)}(A^{\bs a,\bs a}_{\bs t})\|_\mu^2\\=
\|L_{[0,t_1)}(A^{\bs a,\bs a}_{\bs t})\|_\mu^2+\|L_{[t_1,t_1+t_2)}(A^{\bs a,\bs a}_{\bs t})\|_\mu^2+2\big\langle L_{[0,t_1)}(A^{\bs a,\bs a}_{\bs t}),L_{[t_1,t_1+t_2)}(A^{\bs a,\bs a}_{\bs t})\big\rangle_\mu,
\end{multline*}
and then recalling the bound $|\mf m^{\bs\ze}_{\bs t}(A)|\leq\mr e^{(r-1)|\bs t|}\bar{\mf m}^{\bs\ze}_{\bs t}(A),$
where $\bar{\mf m}^{\bs\ze}_{\bs t}(A)$ was defined in \eqref{Equation: Vector Valued Self-Intersection 1 Without C},
for any $t_1,t_2\in(0,1]$ we can bound the absolute value of \eqref{Equation: Covariance Formula Piece 1} as follows:
\begin{multline}
\label{Equation: Covariance Formula Piece 1 - 1}
|\eqref{Equation: Covariance Formula Piece 1}|\leq C\int_{\mc A^2}\Pi_A(\bs t;\bs a,\bs a)\mbf E\Bigg[\bar{\mf m}^{\bs\ze}_{\bs t}(A^{\bs a,\bs a}_{\bs t})\mr e^{-\int_0^{|\bs t|}V(A^{\bs a,\bs a}_{\bs t}(s))\d s+\mf B_{|\bs t|}(A^{\bs a,\bs a}_{\bs t})}\\
\cdot\mr e^{\frac{\si^2}{2}(\|L_{[0,t_1)}(A^{\bs a,\bs a}_{\bs t})\|_\mu^2+\|L_{[t_1,t_1+t_2)}(A^{\bs a,\bs a}_{\bs t})\|_\mu^2)}\left(\mr e^{\si^2\langle L_{[0,t_1)}(A^{\bs a,\bs a}_{\bs t}),L_{[t_1,t_1+t_2)}(A^{\bs a,\bs a}_{\bs t})\rangle_\mu}-1\right)\Bigg]\d\mu^2(\bs a).
\end{multline}
Recall from \eqref{Equation: Multivariate Local Time} that
\[\sum_{i=1}^rL_{[0,t_1)}^{(i,x)}(A^{\bs a,\bs a}_{\bs t})=L_{t_1}^{x}(Z^{x_1,x_1}_{t_1})
\qquad\text{and}\qquad
\sum_{i=1}^rL_{[t_1,t_1+t_2)}^{(i,x)}(A^{\bs a,\bs a}_{\bs t})=L_{t_2}^{x}(\bar Z^{x_2,x_2}_{t_2}).\]
In particular, since local time is nonnegative,
\[\langle L_{[0,t_1)}(A^{\bs a,\bs a}_{\bs t}),L_{[t_1,t_1+t_2)}(A^{\bs a,\bs a}_{\bs t})\rangle_\mu\leq\langle L_{t_1}(Z^{x_1,x_1}_{t_1}),L_{t_2}(\bar Z^{x_2,x_2}_{t_2})\rangle.\]
Moreover, by \eqref{Equation: s_t bound 3.0} applied to the case where $f$ is $A^{\bs a,\bs a}_{\bs t}$'s local time,
\[\|L_{[0,t_1)}(A^{\bs a,\bs a}_{\bs t})\|_\mu^2\leq\|L_{t_1}(Z^{x_1,x_1}_{t_1})\|_2^2,
\quad\|L_{[t_1,t_1+t_2)}(A^{\bs a,\bs a}_{\bs t})\|_\mu^2\leq\|L_{t_2}(\bar Z^{x_2,x_2}_{t_2})\|_2^2.\]
Next, by \cite[Proposition A.1]{Prev2}, there exists some $C>0$ such that
\[\sup_{\bs x\in I^2}\Pi_Z(\bs t;\bs x,\bs x)\leq C(t_1t_2)^{-1/2},\qquad \text{for all }t_1,t_2\in(0,1].\]
If we combing all this with \eqref{Equation: UI Boundary Bound} and \eqref{Equation: Always Assume Linear Growth}, we get
from \eqref{Equation: Covariance Formula Piece 1 - 1} that
\begin{multline}
\label{Equation: Covariance Formula Piece 1 - 2}
|\eqref{Equation: Covariance Formula Piece 1}|\leq C(t_1t_2)^{-1/2}\int_{\mc A^2}\Pi_U(\bs t;\bs i,\bs i)\mbf E\Bigg[\bar{\mf m}^{\bs\ze}_{\bs t}(A^{\bs a,\bs a}_{\bs t})\mr e^{-\int_0^{|\bs t|} \ka|Z^{\bs x,\bs x}_{\bs t}(s)|\d s+\bar{\mf B_{|\bs t|}}(Z^{\bs x,\bs x}_{\bs t})}\\
\cdot\mr e^{\frac{\si^2}{2}(\|L_{t_1}(Z^{x_1,x_1}_{t_1})\|_2^2+\|L_{t_2}(\bar Z^{x_2,x_2}_{t_2})\|_2^2)}\left(\mr e^{\si^2\langle L_{t_1}(Z^{x_1,x_1}_{t_1}),L_{t_2}(\bar Z^{x_2,x_2}_{t_2})\rangle}-1\right)\Bigg]\d\mu^2(\bs a).
\end{multline}
At this point, we notice that the only term that depends on $U$ in \eqref{Equation: Covariance Formula Piece 1 - 2}
is $\bar{\mf m}^{\bs\ze}_{\bs t}(A^{\bs a,\bs a}_{\bs t})$. Thus, by replicating the analysis performed in
\eqref{Equation: Trace Moments UI 3}--\eqref{Equation: Trace Moments UI 6} (with $n=2$), if we first average
over $U$ only in \eqref{Equation: Covariance Formula Piece 1 - 2}, then we get from \eqref{Equation: Covariance Formula Piece 1 - 2}
the $\ze$-independent bound
\begin{multline}
\label{Equation: Covariance Formula Piece 1 - 3}
|\eqref{Equation: Covariance Formula Piece 1}|\leq C(t_1t_2)^{-1/2}\int_{I^2}\mbf E\Bigg[\mr e^{-\int_0^{|\bs t|} \ka|Z^{\bs x,\bs x}_{\bs t}(s)|\d s+\bar{\mf B_{|\bs t|}}(Z^{\bs x,\bs x}_{\bs t})}\\
\cdot\mr e^{\frac{2(r-1)^2\upsilon^2+\si^2}{2}(\|L_{t_1}(Z^{x_1,x_1}_{t_1})\|_2^2+\|L_{t_2}(\bar Z^{x_2,x_2}_{t_2})\|_2^2)}\left(\mr e^{\si^2\langle L_{t_1}(Z^{x_1,x_1}_{t_1}),L_{t_2}(\bar Z^{x_2,x_2}_{t_2})\rangle}-1\right)\Bigg]\d\bs x.
\end{multline}
Then, an application of H\"older's inequality in \eqref{Equation: Covariance Formula Piece 1 - 3} yields
\begin{multline}
\label{Equation: Covariance Formula Piece 1 - 4}
|\eqref{Equation: Covariance Formula Piece 1}|\leq C(t_1t_2)^{-1/2}\int_{I^2}\mbf E\left[\mr e^{-4\int_0^{|\bs t|} \ka|Z^{\bs x,\bs x}_{\bs t}(s)|\d s}\right]^{1/4}\mbf E\left[\mr e^{4\bar{\mf B_{|\bs t|}}(Z^{\bs x,\bs x}_{\bs t})}\right]^{1/4}\\
\cdot\mbf E\left[\mr e^{2(n(r-1)^2\upsilon^2+\si^2)(\|L_{t_1}(Z^{x_1,x_1}_{t_1})\|_2^2+\|L_{t_2}(\bar Z^{x_2,x_2}_{t_2})\|_2^2)}\right]^{1/4}\\
\cdot\mbf E\left[\left(\mr e^{\si^2\langle L_{t_1}(Z^{x_1,x_1}_{t_1}),L_{t_2}(\bar Z^{x_2,x_2}_{t_2})\rangle}-1\right)^4\right]^{1/4}\d\bs x.
\end{multline}

If we use exactly the same Brownian scaling arguments as in \cite[Lemma 4.6]{Prev2}, then we see that there exists a constant $C>0$ such that
\begin{align}
\label{Equation: SI Local Time Exponential Moment Bound}
\sup_{\bs t\in(0,1]^2,~\bs x\in I^2}\mbf E\left[\mr e^{4\bar{\mf B_{|\bs t|}}(Z^{\bs x,\bs x}_{\bs t})}\right]^{1/4}
\mbf E\left[\mr e^{2(n(r-1)^2\upsilon^2+\si^2)(\|L_{t_1}(Z^{x_1,x_1}_{t_1})\|_2^2+\|L_{t_2}(\bar Z^{x_2,x_2}_{t_2})\|_2^2)}\right]^{1/4}<C.
\end{align}
Thus, we get from \eqref{Equation: Covariance Formula Piece 1 - 4} that
\begin{multline}
\label{Equation: Covariance Formula Piece 1 - 5}
|\eqref{Equation: Covariance Formula Piece 1}|\leq C(t_1t_2)^{-1/2}\int_{I^2}\mbf E\left[\mr e^{-4\int_0^{|\bs t|} \ka|Z^{\bs x,\bs x}_{\bs t}(s)|\d s}\right]^{1/4}\\
\cdot\mbf E\left[\left(\mr e^{\si^2\langle L_{t_1}(Z^{x_1,x_1}_{t_1}),L_{t_2}(\bar Z^{x_2,x_2}_{t_2})\rangle}-1\right)^4\right]^{1/4}\d\bs x.
\end{multline}
At this point, in order to prove that \eqref{Equation: Covariance Formula Piece 1 - 5} is bounded by the right-hand side of \eqref{Equation: Rigidity RHS},
we separate our analysis for Cases 1 and 2 versus 3.

We begin with Case 3, as it is the simplest. In this case, we notice that since the interval $I=(0,\th)$ is bounded and $\mr e^{-4\int_0^{|\bs t|} \ka|Z^{\bs x,\bs x}_{\bs t}(s)|\d s}\leq1$,
\eqref{Equation: Covariance Formula Piece 1 - 5} yields
\begin{align}
\label{Equation: Covariance Formula Piece 1 - 6 pre 0}
|\eqref{Equation: Covariance Formula Piece 1}|\leq C(t_1t_2)^{-1/2}\sup_{\bs x\in I^2}\mbf E\left[\left(\mr e^{\si^2\langle L_{t_1}(Z^{x_1,x_1}_{t_1}),L_{t_2}(\bar Z^{x_2,x_2}_{t_2})\rangle}-1\right)^4\right]^{1/4}.
\end{align}
Then, if we apply the inequality $(\mr e^{x}-1)^4\leq (x\mr e^{|x|})^4$, followed by H\"older's inequality, we get the upper bound
\begin{align}
\label{Equation: Covariance Formula Piece 1 - 6 pre 1}
|\eqref{Equation: Covariance Formula Piece 1}|\leq C(t_1t_2)^{-1/2}\sup_{\bs x\in I^2}\prod_{k=1}^2\mbf E\left[\|L_{t_k}(Z^{x_k,x_k}_{t_k})\|_2^{8}\right]^{1/8}\mbf E\left[\mr e^{8\si^2\|L_{t_k}(Z^{x_k,x_k}_{t_k})\|_2}\right]^{1/8},
\end{align}
where the expectation factors into a product over $k=1,2$ because of the independence of $Z^{x_1,x_1}_{t_1}$
and $\bar Z^{x_2,x_2}_{t_2}$.
Thanks to \eqref{Equation: SI Local Time Exponential Moment Bound}, this further reduces to
\begin{align}
\label{Equation: Covariance Formula Piece 1 - 6}
|\eqref{Equation: Covariance Formula Piece 1}|\leq C(t_1t_2)^{-1/2}\sup_{\bs x\in I^2}\prod_{k=1}^2\mbf E\left[\|L_{t_k}(Z^{x_k,x_k}_{t_k})\|_2^{8}\right]^{1/8}.
\end{align}
By \cite[(4.31)]{Prev2} (in the notation of \cite{Prev2}, the parameter $\mf d$
is equal to $3/2$ in the case of white noise; see \cite[(2.16)]{Prev2}), we have the following estimate:
\begin{lemma}
\label{Lemma: SI Local Time Moment Scaling}
In Cases 1, 2, and 3, for any $k=1,2$ and $\theta\geq1$, there exists a constant $C$ such that
\[\sup_{x_k\in I}\mbf E\left[\|L_{t_k}(Z^{x_k,x_k}_{t_k})\|_2^{\theta}\right]^{1/\theta}\leq Ct_k^{3/4}\qquad\text{for all $t_k\in(0,1]$.}\]
\end{lemma}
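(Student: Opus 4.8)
## Proof Proposal for Lemma \ref{Lemma: SI Local Time Moment Scaling}

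The plan is to reduce the statement to a Brownian scaling identity, so that the entire $t_k$-dependence is extracted as an explicit power of $t_k$, after which all that remains is to bound a single $t_k$-independent moment. First I would fix $k$ and drop the subscript, writing $t = t_k$, $x = x_k$, and $Z = Z^{x,x}_t$ for the reflected Brownian bridge on $I$ from $x$ to $x$ of duration $t$. The key observation is that self-intersection local time of a Brownian bridge scales in a controlled way: if $\beta$ is a standard Brownian bridge on $[0,1]$, then $t^{1/2}\beta(s/t)$ is (in law) a Brownian bridge on $[0,t]$, and under this rescaling a local time $L^y_t(\cdot)$ picks up a factor $t^{1/2}$ (from the time integral) and the spatial argument rescales by $t^{1/2}$. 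Consequently $\|L_t(Z)\|_2^2 = \int_I L^y_t(Z)^2\,\d y$ scales like $t^{1/2} \cdot t^{1/2} \cdot t^{1/2} = t^{3/2}$ — i.e. $\|L_t(Z^{x,x}_t)\|_2 \deq t^{3/4}\|L_1(Z^{x/\sqrt t, x/\sqrt t}_1)\|_2$ when $I = \mbb R$, and similarly in Cases 2 and 3 after accounting for reflection. This is exactly the computation carried out in \cite[(4.31)]{Prev2} for the scalar Schr\"odinger operator with $\mf d = 3/2$ in the white-noise case (see \cite[(2.16)]{Prev2}), so I would invoke that result directly rather than redo the scaling bookkeeping.

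The main work, then, is to control $\sup_{x\in I}\mbf E\big[\|L_1(\tilde Z^{x,x}_1)\|_2^\theta\big]^{1/\theta}$ uniformly in the (rescaled) starting point, where $\tilde Z$ is the unit-time reflected bridge. In Case 3 ($I$ bounded) this is immediate since $\th/\sqrt t$ drifts off but the interval is compact; in Cases 1 and 2 one uses that $\|L_1(Z^{x,x}_1)\|_2$ is stochastically controlled — after a translation in Case 1, and after a reflection/translation argument in Case 2 — by the corresponding quantity for a fixed-starting-point process, so the supremum over $x$ is finite. The finiteness of all moments of $\|L_1(Z^{x,x}_1)\|_2$ follows from \eqref{Equation: Finiteness of Bridge SILT Exponential Moments} (i.e. \cite[Lemma 5.11]{GaudreauLamarreEJP}), which already gives finite exponential moments of $\|L_t(Z^{x,x}_t)\|_2^2$ uniformly in $x$, hence in particular finite $\theta$-th moments of $\|L_t(Z^{x,x}_t)\|_2$ for every $\theta \geq 1$. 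Combining the scaling identity with this uniform moment bound yields
\[
\sup_{x\in I}\mbf E\big[\|L_t(Z^{x,x}_t)\|_2^\theta\big]^{1/\theta} \leq C\, t^{3/4}\,\sup_{x\in I}\mbf E\big[\|L_1(\tilde Z^{x,x}_1)\|_2^\theta\big]^{1/\theta} = C\, t^{3/4}
\]
for $t\in(0,1]$, as claimed.

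The step I expect to be the main obstacle is making the Brownian scaling rigorous in Cases 2 and 3, where $Z$ is a \emph{reflected} Brownian bridge rather than a free one: reflection at $0$ (and at $\th$ in Case 3) does not commute with spatial rescaling in a completely transparent way, because rescaling the state space moves the reflecting barrier. The clean way around this is to note that a reflected Brownian bridge is the absolute value (resp. the folded version for a slab) of a free Brownian bridge, so that $L^y_t(Z)$ is a sum of free-bridge local times at $\pm y$ (and their images under the reflection group generated by $0$ and $\th$); applying the free-bridge scaling to each summand and collecting terms recovers the $t^{3/4}$ rate, with the reflection-image sum being finite and uniformly controlled. Since \cite[(4.31)]{Prev2} already records the final scaling estimate in precisely these three cases with $\mf d = 3/2$, I would lean on that citation and only sketch the reduction, as the lemma statement itself signals.
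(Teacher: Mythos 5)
The paper does not actually supply a proof of this lemma; it simply invokes it by citation (the sentence preceding the lemma reads ``By \cite[(4.31)]{Prev2} \dots we have the following estimate''), which is exactly the route you take. Your sketch of the underlying Brownian scaling $\|L_t(Z^{x,x}_t)\|_2 \deq t^{3/4}\|L_1(\cdot)\|_2$ and the appeal to \cite[Lemma 5.11]{GaudreauLamarreEJP} for finiteness of the unit-time moments are both consistent with how that estimate is established in \cite{Prev2}, so the proposal matches the paper's treatment.
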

The combination of this with \eqref{Equation: Covariance Formula Piece 1 - 6} implies that \eqref{Equation: Rigidity RHS Reduced to 2 Cases}
holds for \eqref{Equation: Covariance Formula Piece 1} in Case 3.

Consider now Cases 1 and 2.
If we replicate the bound \eqref{Equation: Potential Power Growth Triangle Inequality} with $\bar{\mf a}=1$
and apply \eqref{Equation: Exponential Moments of Brownian Maxima}, then we get from
\eqref{Equation: Covariance Formula Piece 1 - 5} that
\begin{align}
\label{Equation: Covariance Formula Piece 1 - 7}
|\eqref{Equation: Covariance Formula Piece 1}|\leq C(t_1t_2)^{-1/2}\int_{I^2}\mr e^{-t_1|x_1|-t_2|x_2|}\mbf E\left[\left(\mr e^{\si^2\langle L_{t_1}(Z^{x_1,x_1}_{t_1}),L_{t_2}(\bar Z^{x_2,x_2}_{t_2})\rangle}-1\right)^4\right]^{1/4}\d\bs x.
\end{align}
In order for the random variable inside the expectation in \eqref{Equation: Covariance Formula Piece 1 - 7}
to be nonzero, it is necessary that the supports of the independent Brownian bridges $Z^{x_1,x_1}_{t_1}$
and $\bar Z^{x_2,x_2}_{t_2}$ intersect. Thus, by introducing the indicator of this event in the expectation
and applying H\"older's inequality, we get from \eqref{Equation: Covariance Formula Piece 1 - 7} that
\begin{multline}
\label{Equation: Covariance Formula Piece 1 - 8}
|\eqref{Equation: Covariance Formula Piece 1}|\leq C(t_1t_2)^{-1/2}\int_{I^2}\mr e^{-t_1|x_1|-t_2|x_2|}
\mbf P\left[Z^{x_1,x_1}_{t_1}\text{ and }\bar Z^{x_2,x_2}_{t_2}\text{ intersect}\right]^{1/8}\\
\cdot\mbf E\left[\left(\mr e^{\si^2\langle L_{t_1}(Z^{x_1,x_1}_{t_1}),L_{t_2}(\bar Z^{x_2,x_2}_{t_2})\rangle}-1\right)^8\right]^{1/8}\d\bs x.
\end{multline}
By replicating the analysis in \eqref{Equation: Covariance Formula Piece 1 - 6 pre 0}--\eqref{Equation: Covariance Formula Piece 1 - 6}
and Lemma \ref{Lemma: SI Local Time Moment Scaling}, \eqref{Equation: Covariance Formula Piece 1 - 8}
implies that
\begin{align}
\label{Equation: Covariance Formula Piece 1 - 9}
|\eqref{Equation: Covariance Formula Piece 1}|\leq C(t_1t_2)^{1/4}\int_{I^2}\mr e^{-t_1|x_1|-t_2|x_2|}
\mbf P\left[Z^{x_1,x_1}_{t_1}\text{ and }\bar Z^{x_2,x_2}_{t_2}\text{ intersect}\right]^{1/8}\d\bs x.
\end{align}
By \cite[(4.14) and the two following displays]{RigiditySAO}, there exists $C,c>0$ such that
\[\mbf P\left[Z^{x_1,x_1}_{t_1}\text{ and }\bar Z^{x_2,x_2}_{t_2}\text{ intersect}\right]^{1/8}\leq C\left(\mr e^{-c(x_1-x_2)^2/t_1}+\mr e^{-c(x_1-x_2)^2/t_2}\right)\]
for all $t_1,t_2\in(0,1]$; hence
\eqref{Equation: Covariance Formula Piece 1 - 9} yields
\begin{align}
\label{Equation: Covariance Formula Piece 1 - 10}
|\eqref{Equation: Covariance Formula Piece 1}|\leq C(t_1t_2)^{1/4}\int_{\mbb R^2}\mr e^{-t_1|x_1|-t_2|x_2|}
\left(\mr e^{-c(x_1-x_2)^2/t_1}+\mr e^{-c(x_1-x_2)^2/t_2}\right)\d\bs x,
\end{align}
where we have replaced $I$ by $\mbb R$ in Case 2, which only increases the size of the integral.
Denote $t_*=\max\{t_1,t_2\}$. Then,
\eqref{Equation: Covariance Formula Piece 1 - 10} implies that
\begin{align}
\label{Equation: Covariance Formula Piece 1 - 11}
|\eqref{Equation: Covariance Formula Piece 1}|\leq C(t_1t_2)^{1/4}\int_{\mbb R^2}\mr e^{-t_*|x_1|}\mr e^{-c(x_1-x_2)^2/t_*}\d\bs x=C(t_1t_2)^{1/4}\cdot\frac{2 \sqrt{\pi }}{\sqrt{c t_*}};
\end{align}
hence \eqref{Equation: Rigidity RHS Reduced to 2 Cases}
holds for \eqref{Equation: Covariance Formula Piece 1} in Case 1 and 2 as well.

\subsubsection{Step 3. Upper Bound for \eqref{Equation: Covariance Formula Piece 2}}

Recall the coupling introduced in Definition \ref{Notation: Z and Z bar coupling}.
Given that $\bs\ze=(\ze,\ze)$, we have from \eqref{Equation: Vector Valued Self-Intersection 1} that
the difference
$\mf m^{\bs\ze}_{\bs t}(A^{\bs a,\bs a}_{\bs t})-\mf m^{\ze}_{t_1}(A^{a_1,a_1}_{t_1})\mf m^{\ze}_{t_2}(A^{a_2,a_2}_{t_2})$
can be written as
\begin{multline}
\label{Equation: Covariance Formula Piece 2 - 1}
\mr e^{(r-1)|\bs t|}\Bigg(\mbf 1_{\{N(|\bs t|)\text{ is even}\}}\sum_{p\in\mc P_{N(|\bs t|)}}\mf C_{|\bs t|}(p,U^{\bs i,\bs i}_{\bs t})\prod_{\{\ell_1,\ell_2\}\in p}\upsilon^2\rho_{\ze}\big(Z^{\bs x,\bs x}_{\bs t}(\tau_{\ell_1})-Z^{\bs x,\bs x}_{\bs t}(\tau_{\ell_2})\big)\\
-\mbf 1_{\{N(t_1),N(|\bs t|)-N(t_1)\text{ are even}\}}\sum_{p_1\in\mc P_{N(t_1)},~p_2\in\mc P_{N(|\bs t|)-N(t_1)}}\Bigg\{\cdots\\
\cdots\mf C_{t_1}(p_1,U^{i_1,i_1}_{t_1})\mf C_{t_2}(p_2,\bar U^{i_2,i_2}_{t_2})\prod_{\{\ell_1,\ell_2\}\in p_1\cup p_2}\upsilon^2\rho_{\ze}\big(Z^{\bs x,\bs x}_{\bs t}(\tau_{\ell_1})-Z^{\bs x,\bs x}_{\bs t}(\tau_{\ell_2})\big)\Bigg\}
\Bigg).
\end{multline}

Suppose that $N(t_1)$ and $N(|\bs t|)-N(t_1)$ are even, and
let $p\in\mc P_{N(|\bs t|)}$ be such that we can write $p=p_1\cup p_2$ for some
$p_1\in\mc P_{N(t_1)}$ and $p_2\in\mc P_{N(|\bs t|)-N(t_1)}$.
We claim that in such a case, one has
\begin{align}
\label{Equation: Covariance Formula Piece 2 - 2}
\mf C_{|\bs t|}(p,U^{\bs i,\bs i}_{\bs t})=\mf C_{|\bs t|}(p_1\cup p_2,U^{\bs i,\bs i}_{\bs t})=\mf C_{t_1}(p_1,U^{i_1,i_1}_{t_1})\mf C_{t_2}(p_2,\bar U^{i_2,i_2}_{t_2}).
\end{align}
When $\mbb F=\mbb R$ or $\mbb C$, this follows directly by Definition \ref{Definition: Combinatorial Constant}-(1) and -(2):
The matching $p=p_1\cup p_2$ only contains pairs $\{\ell_1,\ell_2\}$ such that $J_{\ell_1}=J_{\ell_2}$ and/or $J_{\ell_1}=J_{\ell_2}^*$
if and only if the same is true for both $p_1$ and $p_2$. When $\mbb F=\mbb H$,
by Definition \ref{Definition: Combinatorial Constant}-(3) we obtain \eqref{Equation: Covariance Formula Piece 2 - 2} if we additionally ensure that
\begin{align*}
\mf D_{|\bs t|}(p,U^{\bs i,\bs i}_{\bs t})=\mf D_{|\bs t|}(p_1\cup p_2,U^{\bs i,\bs i}_{\bs t})=\mf D_{t_1}(p_1,U^{i_1,i_1}_{t_1})\mf D_{t_2}(p_2,\bar U^{i_2,i_2}_{t_2})
\end{align*}
whenever $J_{\ell_1}=J_{\ell_2}$ or $J_{\ell_1}=J_{\ell_2}^*$ for all $\{\ell_1,\ell_2\}\in p$.
By \eqref{Equation: D constant for H}, this reduces to
\begin{multline}
\label{Equation: Covariance Formula Piece 2 - 3}
\sum_{m\in\mc B^{0,0}_{N(|\bs t|)},~m\text{ respects }(p,J)}(-1)^{f(m,p,J)}=\sum_{m\in\mc B^{0,0}_{N(|\bs t|)},~m\text{ respects }(p_1\cup p_2,J)}(-1)^{f(m,p_1\cup p_2,J)}\\
=\left(\sum_{\substack{m_1\in\mc B^{0,0}_{N(t_1)}\\m_1\text{ resp. }(p_1,J_1)}}(-1)^{f(m_1,p_1,J_1)}\right)\left(\sum_{\substack{m_2\in\mc B^{0,0}_{N(|\bs t|)-N(t_1)}\\m_2\text{ resp. }(p_2,\bar J_2)}}(-1)^{f(m_2,p_2,\bar J_2)}\right),
\end{multline}
where $J_1$ denotes the jumps in the path $U^{i_1,i_1}_{t_1}$, and $\bar J_2$ the jumps in the
path $\bar U^{i_2,i_2}_{t_2}$. If $m\in\mc B^{0,0}_{N(|\bs t|)}$ can be written as the concatenation of $m_1\in\mc B^{0,0}_{N(t_1)}$
and $m_2\in\mc B^{0,0}_{N(|\bs t|)-N(t_1)}$, then clearly $f(m_1,p_1,J_1)+f(m_2,p_2,\bar J_2)=f(m,p,J)$ since flips can only occur in pairs matched by $p=p_1\cup p_2$; moreover,
the fact that $m_1$ respects $(p_1,J_1)$ and $m_2$ respects $(p_2,\bar J_2)$ implies that $m$ respects $(p,J)$.
Thus, \eqref{Equation: Covariance Formula Piece 2 - 3} follows if we show that every
$m\in\mc B^{0,0}_{N(|\bs t|)}$ that respects $(p_1\cup p_2,J)$ can be written as a concatenation of
$m_1\in\mc B^{0,0}_{N(t_1)}$ which respects $(p_1,J_1)$ and
$m_2\in\mc B^{0,0}_{N(|\bs t|)-N(t_1)}$ which respects $(p_2,\bar J_2)$.
For this, we recall from Definition \ref{Definition: Combinatorial Constant}-(3.1) and -(3.2) that in order for $m$ to respect $(p_1\cup p_2,J)$,
the steps in $m$ that are paired up by $p$ must be taken from the following set of pairs of steps:
\[\Big\{\big\{(0,0),(1,1)\big\},\big\{(0,1),(1,0)\big\},\big\{(0,0),(0,0)\big\},\big\{(1,1),(1,1)\big\}\Big\}.\]
In the above, the only steps that move from $0$ to $1$ or vice versa are $(0,1)$ and $(1,0)$,
and these must be paired by $p$. In particular, any uninterrupted segment of $m$ that only contains pairs of steps
matched by $p$ must start and end at the same element.
Consequently, the first $N(t_1)$ steps in $m$, which are matched by $p_1$,
must form a path in $\mc B^{0,0}_{N(t_1)}$ that respects $(p_1,J_1)$,
and the following $N(|\bs t|)-N(t_1)$ steps in $m$, which are matched by $p_2$,
must form a path in $\mc B^{0,0}_{N(|\bs t|)-N(t_1)}$ that respects $(p_2,\bar J_2)$.
This then concludes the proof of \eqref{Equation: Covariance Formula Piece 2 - 2}.

Thanks to \eqref{Equation: Covariance Formula Piece 2 - 2},
we now see that every $p\in\mc P_{N(|\bs t|)}$ that we can write $p=p_1\cup p_2$ for some
$p_1\in\mc P_{N(t_1)}$ and $p_2\in\mc P_{N(|\bs t|)-N(t_1)}$
is cancelled out by the difference in
\eqref{Equation: Covariance Formula Piece 2 - 1}.
Thus, if we let
$\mc R_{m_1,m_2}$
denote the set of $p\in\mc P_{m_1+m_2}$ that cannot be written as
$p=p_1\cup p_2$ for some
$p_1\in\mc P_{m_1}$ and $p_2\in\mc P_{m_2}$, then 
\eqref{Equation: Covariance Formula Piece 2 - 1} simplifies to
\begin{multline*}
\mf m^{\bs\ze}_{\bs t}(A^{\bs a,\bs a}_{\bs t})-\mf m^{\ze}_{t_1}(A^{a_1,a_1}_{t_1})\mf m^{\ze}_{t_2}(A^{a_2,a_2}_{t_2})\\
=\mr e^{(r-1)|\bs t|}\mbf 1_{\{N(|\bs t|)\text{ is even}\}}\sum_{p\in\mc R_{N(t_1),N(|\bs t|)-N(t_1)}}\mf C_{|\bs t|}(p,U^{\bs i,\bs i}_{\bs t})\prod_{\{\ell_1,\ell_2\}\in p}\upsilon^2\rho_{\ze}\big(Z^{\bs x,\bs x}_{\bs t}(\tau_{\ell_1})-Z^{\bs x,\bs x}_{\bs t}(\tau_{\ell_2})\big).
\end{multline*}
At this point, if we combine \eqref{Equation: Combinatorial constant bound} with the fact that
$\mc R_{0,0}$ is empty (as $\varnothing=\varnothing\cup\varnothing$),
then we obtain the bound
\begin{multline}
\label{Equation: Covariance Formula Piece 2 - 4}
\big|\mf m^{\bs\ze}_{\bs t}(A^{\bs a,\bs a}_{\bs t})-\mf m^{\ze}_{t_1}(A^{a_1,a_1}_{t_1})\mf m^{\ze}_{t_2}(A^{a_2,a_2}_{t_2})\big|\\
\leq\mr e^{(r-1)|\bs t|}\mbf 1_{\{N(|\bs t|)\geq2\text{ is even}\}}\sum_{p\in\mc R_{N(t_1),N(|\bs t|)-N(t_1)}}\prod_{\{\ell_1,\ell_2\}\in p}\upsilon^2\rho_{\ze}\big(Z^{\bs x,\bs x}_{\bs t}(\tau_{\ell_1})-Z^{\bs x,\bs x}_{\bs t}(\tau_{\ell_2})\big).
\end{multline}
Thus, plugging this back into \eqref{Equation: Covariance Formula Piece 2} yields
\begin{multline}
\label{Equation: Covariance Formula Piece 2 - 5}
|\eqref{Equation: Covariance Formula Piece 2}|\leq
\int_{\mc A^2}\Pi_A(\bs t;\bs a,\bs a)\mbf E\Bigg[\mr e^{(r-1)|\bs t|}\mbf 1_{\{N(|\bs t|)\geq2\text{ is even}\}}\\
\cdot\sum_{p\in\mc R_{N(t_1),N(|\bs t|)-N(t_1)}}\prod_{\{\ell_1,\ell_2\}\in p}\upsilon^2\rho_{\ze}\big(Z^{\bs x,\bs x}_{\bs t}(\tau_{\ell_1})-Z^{\bs x,\bs x}_{\bs t}(\tau_{\ell_2})\big)\\
\cdot\mr e^{-\int_0^{|\bs t|}V(A^{\bs a,\bs a}_{\bs t}(s))\d s+\mf B_{|\bs t|}(A^{\bs a,\bs a}_{\bs t})+\frac{\si^2}{2}(\|L_{[0,t_1)}(A^{\bs a,\bs a}_{\bs t})\|_\mu^2+\|L_{[t_1,t_1+t_2)}(A^{\bs a,\bs a}_{\bs t})\|_\mu^2)}\Bigg]\d\mu^2(\bs a).
\end{multline}
Then, if we replicate the bounds on $V$, $\Pi_{Z}$, $\mf B_t(A)$, and the local time $L_{[u,v)}(A)$
used to arrive to \eqref{Equation: Covariance Formula Piece 1 - 2}---together with an application
of the tower property in the expectation with respect to $U$---we obtain from
\eqref{Equation: Covariance Formula Piece 2 - 5} that
\begin{multline}
\label{Equation: Covariance Formula Piece 2 - 6}
|\eqref{Equation: Covariance Formula Piece 2}|\leq
C(t_1t_2)^{-1/2}\int_{\mc A^2}\mbf E\Bigg[\cdots\\
\cdot\Pi_U(\bs t;\bs i,\bs i)\mbf E_{U^{\bs i,\bs i}_{\bs t}}\Bigg[\mbf 1_{\{N(|\bs t|)\geq2\text{ is even}\}}\sum_{p\in\mc R_{N(t_1),N(|\bs t|)-N(t_1)}}\prod_{\{\ell_1,\ell_2\}\in p}\upsilon^2\rho_{\ze}\big(Z^{\bs x,\bs x}_{\bs t}(\tau_{\ell_1})-Z^{\bs x,\bs x}_{\bs t}(\tau_{\ell_2})\big)\Bigg]\\
\cdot\mr e^{-\int_0^{|\bs t|} \ka|Z^{\bs x,\bs x}_{\bs t}(s)|\d s+\bar{\mf B_{|\bs t|}}(Z^{\bs x,\bs x}_{\bs t})+\frac{\si^2}{2}(\|L_{t_1}(Z^{x_1,x_1}_{t_1})\|_2^2+\|L_{t_2}(\bar Z^{x_2,x_2}_{t_2})\|_2^2)}\Bigg]\d\mu^2(\bs a).
\end{multline}

Using the fact that
\[\Pi_U(\bs t;\bs i,\bs i)\mbf E\big[F(U^{\bs i,\bs i}_{\bs t})\big]=\mbf E\big[F(U^{\bs i}_{\bs t})\mbf 1_{\{\forall k:~U^{\bs i}_{\bs t}(s)\to i_k\text{ as }s\to(t_0+\cdots+t_k)^+\}}\big]
\leq\mbf E\big[F(U^{\bs i}_{\bs t})\big]\]
for any nonnegative functional $F$, we note that
\begin{align}
\nonumber
\Pi_U(\bs t;\bs i,\bs i)\mbf E_{U^{\bs i,\bs i}_{\bs t}}\Bigg[\mbf 1_{\{N(|\bs t|)\geq2\text{ is even}\}}\sum_{p\in\mc R_{N(t_1),N(|\bs t|)-N(t_1)}}\prod_{\{\ell_1,\ell_2\}\in p}\upsilon^2\rho_{\ze}\big(Z^{\bs x,\bs x}_{\bs t}(\tau_{\ell_1})-Z^{\bs x,\bs x}_{\bs t}(\tau_{\ell_2})\big)\Bigg]\\
\label{Equation: Covariance Formula Piece 2 - 7}
\leq
\mbf E_{U^{\bs i}_{\bs t}}\Bigg[\mbf 1_{\{N(|\bs t|)\geq2\text{ is even}\}}\sum_{p\in\mc R_{N(t_1),N(|\bs t|)-N(t_1)}}\prod_{\{\ell_1,\ell_2\}\in p}\upsilon^2\rho_{\ze}\big(Z^{\bs x,\bs x}_{\bs t}(\tau_{\ell_1})-Z^{\bs x,\bs x}_{\bs t}(\tau_{\ell_2})\big)\Bigg].
\end{align}
Then, using the law of total expectation, and the facts that $N(|\bs t|)$ is a Poisson random variable
with parameter $(r-1)|\bs t|$ and that conditional on $N(t_1)=2m$,
the jump times $(\tau_1,\ldots,\tau_{2m})$ are the order statistics of i.i.d. uniform
random points on $[0,|\bs t|)^{2m}$, we can bound \eqref{Equation: Covariance Formula Piece 2 - 7} by:
\begin{multline}
\label{Equation: Covariance Formula Piece 2 - 8}
\eqref{Equation: Covariance Formula Piece 2 - 7}\leq\sum_{m=1}^{\infty}\Bigg(\int_{[0,|\bs t|)^{2m}_<}
\sum_{p\in\mc R_{m_1(\bs s),m_2(\bs s)}}\left\{\prod_{\{\ell_1,\ell_2\}\in p}\upsilon^2\rho_{\ze}\big(Z^{\bs x,\bs x}_{\bs t}(s_{\ell_1})-Z^{\bs x,\bs x}_{\bs t}(s_{\ell_2})\big)\right\}\\
\cdot\frac{(2m)!}{|\bs t|^{2m}}\d\bs s\Bigg)\frac{\big((r-1)|\bs t|\big)^{2m}}{(2m)!}\mr e^{-(r-1)|\bs t|},
\end{multline}
where we recall that $[0,|\bs t|)_<^{2m}$ denotes the tuples $\bs s=(s_1,\ldots,s_{2m})\in[0,|\bs t|)^{2m}$
such that $s_1<s_2<\cdots<s_{2m}$, and we use $m_1(\bs s)$ and $m_2(\bs s)$ respectively to denote
the number of components of $\bs s$ that land in $[0,t_1)$ and $[t_1,t_1+t_2)$
(i.e., $m_1(\bs s)$ is the largest integer $\ell$ such that $s_\ell< t_1$, and $2m=m_1(\bs s)+m_2(\bs s)$).
If $p\in\mc R_{m_1(\bs s),m_2(\bs s)}$, then this means that there exists at least one pair of indices $1\leq \ell_1<\ell_2\leq 2m$ such that $s_{\ell_1}\in[0,t_1)$,
$s_{\ell_2}\in[t_1,|\bs t|)$, and $\{\ell_1,\ell_2\}\in p$. The indicator function that the tuple $\bs s$ satisfies this is obviously bounded by the sum of indicators
\[\mc I^{t_1,t_2}(\bs s)=\sum_{u,v=1}^{2m}\mbf 1_{\{s_u\in[0,t_1)\}}\mbf 1_{\{s_v\in[t_1,t_1+t_2)\}}.\]
If we combine this upper bound in \eqref{Equation: Covariance Formula Piece 2 - 8} with the replacement of the sum over $p\in\mc R_{m_1(\bs s),m_2(\bs s)}$
to the sum over $p\in\mc P_{2m}$, then we get the further upper bound
\begin{multline}
\label{Equation: Covariance Formula Piece 2 - 9}
\eqref{Equation: Covariance Formula Piece 2 - 7}\leq\sum_{m=1}^{\infty}\Bigg(\int_{[0,|\bs t|)^{2m}_<}
\sum_{p\in\mc P_{2m}}\mc I^{t_1,t_2}(\bs s)\left\{\prod_{\{\ell_1,\ell_2\}\in p}\upsilon^2\rho_{\ze}\big(Z^{\bs x,\bs x}_{\bs t}(s_{\ell_1})-Z^{\bs x,\bs x}_{\bs t}(s_{\ell_2})\big)\right\}\\
\cdot\frac{(2m)!}{|\bs t|^{2m}}\d\bs s\Bigg)\frac{\big((r-1)|\bs t|\big)^{2m}}{(2m)!}\mr e^{-(r-1)|\bs t|}.
\end{multline}
Now, we note that the function being integrated in \eqref{Equation: Covariance Formula Piece 2 - 9} is invariant
with respect to permuting the entries of $\bs s$, hence we can rewrite \eqref{Equation: Covariance Formula Piece 2 - 9} by
removing the condition that the $s_i$'s be ordered as follows:
\begin{multline*}
\eqref{Equation: Covariance Formula Piece 2 - 7}\leq\sum_{m=1}^{\infty}\Bigg(\int_{[0,|\bs t|)^{2m}}
\sum_{p\in\mc P_{2m}}\mc I^{t_1,t_2}(\bs s)\left\{\prod_{\{\ell_1,\ell_2\}\in p}\upsilon^2\rho_{\ze}\big(Z^{\bs x,\bs x}_{\bs t}(s_{\ell_1})-Z^{\bs x,\bs x}_{\bs t}(s_{\ell_2})\big)\right\}\\
\cdot\frac{1}{|\bs t|^{2m}}\d\bs s\Bigg)\frac{\big((r-1)|\bs t|\big)^{2m}}{(2m)!}\mr e^{-(r-1)|\bs t|}.
\end{multline*}
Now, if we cancel out both appearances of $|\bs t|^{2m}$, pull out the sums over $p$ and with in $\mc I_{t_1,t_2}(\bs s)$,
and then apply a change of variables such that the variables $u$ and $v$ in $\mbf 1_{\{s_u\in[0,t_1)\}}\mbf 1_{\{s_v\in[t_1,t_1+t_2)\}}$
are always $u=1$ and $v=2$, then Tonelli's theorem yields
\begin{multline*}
\eqref{Equation: Covariance Formula Piece 2 - 7}\leq\sum_{m=1}^{\infty}(2m-1)!!(2m)(2m-1)\Bigg(\int_{[0,t_1]\times[t_1,t_1+t_2]}\upsilon^2\rho_{\ze}\big(Z^{\bs x,\bs x}_{\bs t}(s_1)-Z^{\bs x,\bs x}_{\bs t}(s_2)\big)\d\bs s\Bigg)\\
\Bigg(\int_{[0,|\bs t|)^2}\upsilon^2\rho_{\ze}\big(Z^{\bs x,\bs x}_{\bs t}(s_1)-Z^{\bs x,\bs x}_{\bs t}(s_2)\big)\d\bs s\Bigg)^{m-1}
\frac{\big((r-1)\big)^{2m}}{(2m)!}\mr e^{-(r-1)|\bs t|},
\end{multline*}
where the prefactors in front of the integral come from the fact that $|\mc P_{2m}|=(2m-1)!!$ and that there are $(2m)(2m-1)$ summands in
$\mc I_{t_1,t_2}(\bs s)$. Replicating the application of Young's convolution inequality in \eqref{Equation: Young's for zeta},
we get that for intervals $\mc K_1,\mc K_2\subset[0,\infty)$,
\[\int_{\mc K_1\times\mc K_2}\upsilon^2\rho_{\ze}\big(Z^{\bs x,\bs x}_{\bs t}(s_1)-Z^{\bs x,\bs x}_{\bs t}(s_2)\big)\d\bs s
\leq\upsilon^2\big\langle L_{\mc K_1}(Z^{\bs x,\bs x}_{\bs t}), L_{\mc K_2}(Z^{\bs x,\bs x}_{\bs t})\big\rangle.\]
Therefore, we get that \eqref{Equation: Covariance Formula Piece 2 - 7}$\cdot\mr e^{(r-1)|\bs t|}$ is bounded above by
\begin{multline*}
\frac{\big\langle L_{t_1}(Z^{x_1,x_1}_{t_1}), L_{t_2}(\bar Z^{x_2,x_2}_{t_2})\big\rangle}{\|L_{|\bs t|}(Z^{\bs x,\bs x}_{\bs t})\|_2^2}\sum_{m=1}^{\infty}\frac{(2m-1)!!(2m)(2m-1)}{(2m)!}\left(\frac{(r-1)^2\upsilon^2\|L_{|\bs t|}(Z^{\bs x,\bs x}_{\bs t})\|_2^2}{2}\right)^m\\
=(r-1)^2 \upsilon ^2\big\langle L_{t_1}(Z^{x_1,x_1}_{t_1}), L_{t_2}(\bar Z^{x_2,x_2}_{t_2})\big\rangle\left(\|L_{|\bs t|}(Z^{\bs x,\bs x}_{\bs t})\|_2^2 (r-1)^2 \upsilon ^2+1\right)
  \mr e^{\frac{1}{2}(r-1)^2 \upsilon ^2\|L_{|\bs t|}(Z^{\bs x,\bs x}_{\bs t})\|_2^2},
  \end{multline*}
where the second line follows from direct calculation.

If we now combine this with \eqref{Equation: Covariance Formula Piece 2 - 6} and \eqref{Equation: Covariance Formula Piece 2 - 7}, then we obtain
\begin{multline*}
|\eqref{Equation: Covariance Formula Piece 2}|\leq
C(t_1t_2)^{-1/2}\int_{I^2}\mbf E\Bigg[\cdots\\
\cdot\big\langle L_{t_1}(Z^{x_1,x_1}_{t_1}), L_{t_2}(\bar Z^{x_2,x_2}_{t_2})\big\rangle\left(\|L_{|\bs t|}(Z^{\bs x,\bs x}_{\bs t})\|_2^2 (r-1)^2 \upsilon ^2+1\right)
  \mr e^{\frac{1}{2}(r-1)^2 \upsilon ^2\|L_{|\bs t|}(Z^{\bs x,\bs x}_{\bs t})\|_2^2}\\
\cdot\mr e^{-\int_0^{|\bs t|} \ka|Z^{\bs x,\bs x}_{\bs t}(s)|\d s+\bar{\mf B_{|\bs t|}}(Z^{\bs x,\bs x}_{\bs t})+\frac{\si^2}{2}(\|L_{t_1}(Z^{x_1,x_1}_{t_1})\|_2^2+\|L_{t_2}(\bar Z^{x_2,x_2}_{t_2})\|_2^2)}\Bigg]\d\bs x.
\end{multline*}
At this point, we obtain
\eqref{Equation: Rigidity RHS Reduced to 2 Cases}
for
\eqref{Equation: Covariance Formula Piece 2}
by replicating essentially the same analysis that we performed in \eqref{Equation: Covariance Formula Piece 1 - 6 pre 0}:
The only difference is that we replace
\[\mr e^{\si^2\langle L_{t_1}(Z^{x_1,x_1}_{t_1}),L_{t_2}(\bar Z^{x_2,x_2}_{t_2})\rangle}-1\]
by
\[\big\langle L_{t_1}(Z^{x_1,x_1}_{t_1}), L_{t_2}(\bar Z^{x_2,x_2}_{t_2})\big\rangle\left(\|L_{|\bs t|}(Z^{\bs x,\bs x}_{\bs t})\|_2^2 (r-1)^2 \upsilon ^2+1\right),\]
but both of these terms are controlled in the same way. That is,
we introduce an indicator that the paths of $Z^{x_1,x_1}_{t_1}$ and $\bar Z^{x_2,x_2}_{t_2}$ must intersect,
and then use Cauchy-Schwarz with Lemma \ref{Lemma: SI Local Time Moment Scaling}.
With this in hand, the proof of Theorem \ref{Theorem: Rigidity}, and thus of all of our results,
is complete.

%

%

\appendix
\section{Illustrations}
\label{Appendix: Illustrations}

\begin{figure}[htbp]
\begin{center}
\includegraphics[width=\textwidth]{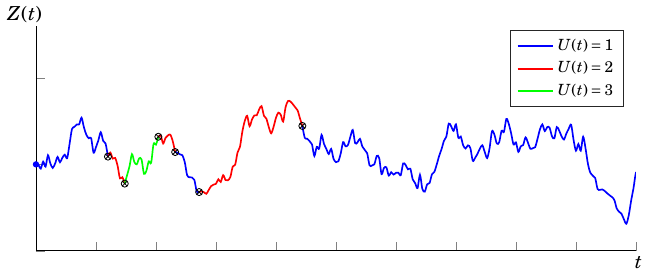}

\caption{This figure shows a realization $A=(U,Z)$.
The $y$-axis represents the output of $Z$, and 
the colors represent the value of the process $U$ (the jump times $\tau_k$ therein
are represented by the symbol $\otimes$).}
\label{Figure: Generic Path of A}
\end{center}
\end{figure}

\begin{figure}[htbp]
\begin{center}
\includegraphics[width=\textwidth]{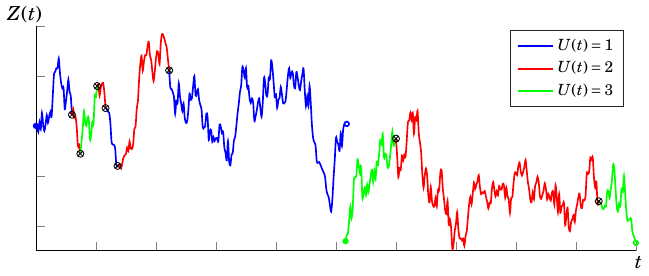}

\caption{This figure shows a realization of the c\`{a}dl\`{a}g concatenated path
$A^{\bs a,\bs b}_{\bs t}$ with $\bs a=\big((1,1),(3,0)\big)$, $\bs b=\big((1,1),(3,0)\big)$,
and $\bs t=(1,1)$ (i.e., the first path goes from $(i,x)=(1,1)$ to itself in time $1$,
and the second path goes from $(i,x)=(3,0)$ to itself in time $1$).}
\label{Figure: Concatenated Path of A}
\end{center}
\end{figure}

\begin{figure}[htbp]
\begin{center}
\includegraphics[width=\textwidth]{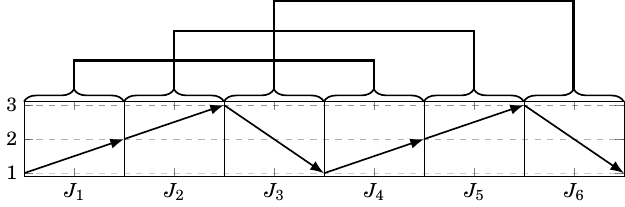}
\caption{In this figure, the arrows represent the first six jumps of $U$ by time $t$,
namely, $(1,2),(2,3),(3,1),(1,2),(2,3),(3,1)$. The partition above the
arrows serve to highlight which jumps were paired by the perfect matching $p=\big\{\{1,4\},\{2,5\},\{3,6\}\big\}$.
With this in hand, we conclude that this particular combination of $(p,U)$
is such that $\mf C_t(p,U)\neq0$ when $\mbb F=\mbb R,\mbb H$, since $J_1=J_4$, $J_2=J_5$,
and $J_3=J_6$. However, $\mf C_t(p,U)=0$ when $\mbb F=\mbb C$
because paired jumps are not reversed.}
\label{Figure: Walk 1 and 4}
\end{center}
\end{figure}

\begin{figure}[htbp]
\begin{center}
\includegraphics[width=\textwidth]{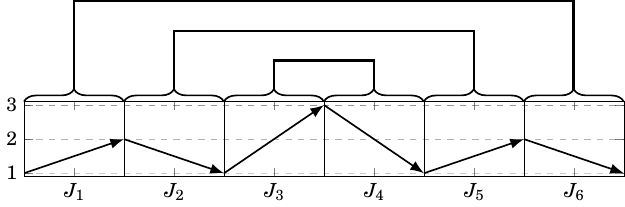}
\caption{In this figure, the six jumps by time $t$ are $(1,2),(2,1),(1,3),(3,1),(1,2),(2,1)$,
and $p=\big\{\{1,6\},\{2,5\},\{3,4\}\big\}$.
In this case, $\mf C_t(p,U)\neq0$ for all $\mbb F=\mbb R,\mbb C,\mbb H$, since every paired jumps
are reversed (i.e., $J_1=J_6^*$, $J_2=J_5^*$, and $J_3=J_4^*$).}
\label{Figure: Walk 2}
\end{center}
\end{figure}

\begin{figure}[htbp]
\begin{center}
\includegraphics[width=\textwidth]{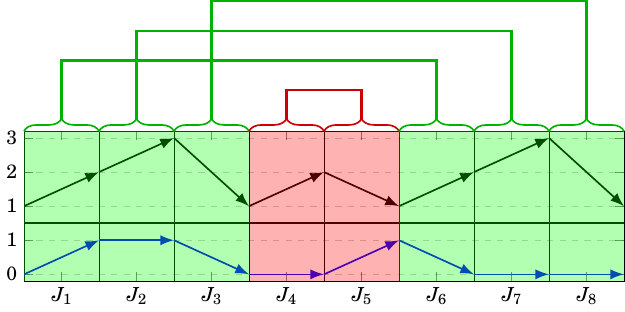}
\caption{In this figure, the black arrows are the steps $J_k$ until $N(t)$,
the blue arrows are the steps of a binary sequence $m\in\mc B^{0,0}_{N(t)}$,
and the partition above all that represents the pairs given by $p$. In order to help
decipher whether this choice of $m$ contributes to $\mf D_t(p,U)$,
the matched pairs of steps such that $J_{\ell_1}=J_{\ell_2}$ are colored in green,
and the matched pairs such that $J_{\ell_1}=J_{\ell_2}^*$ are colored in red.
In this case,
$m$ does not respect $(p,J)$, and thus it does not contribute to $\mf D_t(p,U)$.
(E.g., $J_4$ and $J_5$ are matched and in opposite directions, but the corresponding steps
in the binary sequence, $\big((0,0),(0,1)\big)$, are not among those listed in
Definition \ref{Definition: Combinatorial Constant}-(3.2))}
\label{Figure: Binary non-example}
\end{center}
\end{figure}

\begin{figure}[htbp]
\begin{center}
\includegraphics[width=\textwidth]{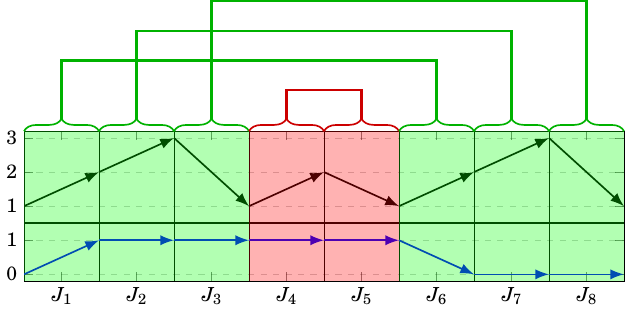}
\caption{In this figure, $m$ contributes to $\mf D_t(p,U)$,
since every matched pairs of jumps $J_{\ell_1},J_{\ell_2}$ are accompanied by steps
in the binary sequence that satisfy Definition \ref{Definition: Combinatorial Constant}-(3.1) and -(3.2).
Moreover, there is one flip that occurs in this sequence:
$J_1=J_6$ and $\big((m_0,m_1),(m_5,m_6)\big)=\big((0,1),(1,0)\big)$;
hence $f(m,p,J)=1$.}
\label{Figure: Binary 1}
\end{center}
\end{figure}

\begin{figure}[htbp]
\begin{center}
\includegraphics[width=\textwidth]{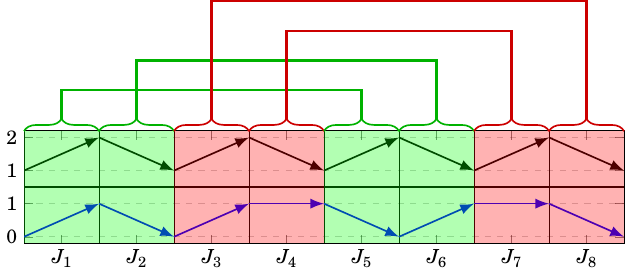}
\caption{This is another example where $m$ contributes
to $\mf D_t(p,U)$. Moreover, in this case we have $f(m,p,J)=2$;
the two flips occur at the pairs $J_1,J_5$ and $J_2,J_6$.}
\label{Figure: Binary 2}
\end{center}
\end{figure}

\begin{figure}[htbp]
\begin{center}
\includegraphics[width=\textwidth]{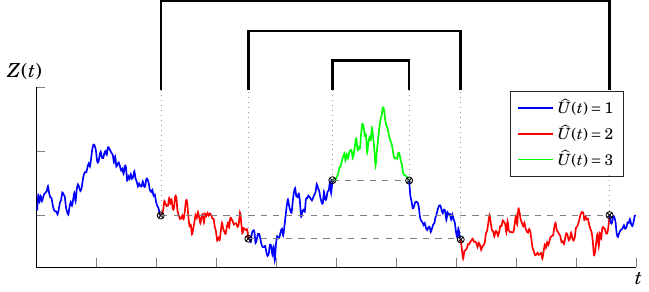}

\caption{In this figure, the colored path represents a realization of $(\hat U,Z)$
with $Z(0)=1=Z(1)$.
The symbols $\otimes$ along the path represent the jump times $\hat\tau_k$,
and the different colors represent the values of the process $\hat U(t)$.
The jumps $J_k$ and the random matching $\hat\pi(P)$ (the latter of which
is represented by the partition above $Z$'s path) are the same as $J_k$ and $p$ in Figure \ref{Figure: Walk 2}.
In this illustration, we observe that
$Z(\hat\tau_{\ell_1})=Z(\hat\tau_{\ell_2})$ for every matched pair $\{\ell_1,\ell_2\}\in \hat\pi(P)$, which is consistent with the fact that
the pairs $\hat\tau_{\ell_1}$ and $\hat\tau_{\ell_2}$ were sampled
according to $Z$'s self-intersection measure.}
\label{Figure: Condition on Jump Times and Values of X}
\end{center}
\end{figure}

\end{document}